\definecolor{darkgreen}{rgb}{0.0, 0.6, 0.13}
\newcommand{\newterm}[1]{{\textbf{#1}}}
\newcommand{\pb}{{\boldsymbol{p}}}
\newcommand{\Dirac}{\boldsymbol{\Delta}}
\newcommand{\dirac}{\boldsymbol{\delta}}
\newcommand{\Eb}{\mathbb E}
\newcommand{\Mb}{\mathbb M}
\newcommand{\Nb}{\mathbb N}
\newcommand{\Pb}{\mathbb P}
\newcommand{\Rb}{\mathbb R}
\newcommand{\Sb}{\mathbb S}
\newcommand{\Tb}{\mathbb T}
\newcommand{\Zb}{\mathbb Z}
\newcommand{\Ac}{\mathcal A}
\newcommand{\Bc}{\mathcal B}
\newcommand{\Cc}{\mathcal C}
\newcommand{\Dc}{\mathcal D}
\newcommand{\Ec}{\mathcal E}
\newcommand{\Fc}{\mathcal F}
\newcommand{\Hc}{\mathcal H}
\newcommand{\Ic}{\mathcal I}
\newcommand {\Jc}{\mathcal J}
\newcommand{\Lc}{\mathcal L}
\renewcommand{\Mc}{\mathcal M}
\newcommand{\Nc}{\mathcal N}
\newcommand{\Pc}{\mathcal P}
\newcommand{\Qc}{\mathcal Q}
\newcommand{\Rc}{\mathcal R}
\newcommand{\Sc}{\mathcal S}
\newcommand{\Xc}{\mathcal X}
\newcommand{\Yc}{\mathcal Y}
\newcommand{\Zc}{\mathcal Z}
\newcommand{\mf}{\mathfrak m}
\newcommand{\pf}{\mathfrak p}
\newcommand{\rf}{\mathfrak r}
\newcommand{\qf}{\mathfrak q}
\newcommand{\Lf}{\mathfrak L}
\newcommand{\Mf}{\mathfrak M}
\newcommand{\nf}{\mathfrak n}
\newcommand{\vz}{\boldsymbol{z}}
\newcommand{\vt}{\boldsymbol{t}}
\theoremstyle{definition}
\newtheorem{mainthm}{Theorem}
\newtheorem{theorem}{Theorem}[section]
\newtheorem{lemma}[theorem]{Lemma}
\newtheorem{proposition}[theorem]{Proposition}
\newtheorem{definition}[theorem]{Definition}
\theoremstyle{remark}
\newtheorem{remark}[theorem]{Remark}
\numberwithin{equation}{section}
\begin{document}

\title{Hilbert's Sixth Problem: derivation of fluid equations via Boltzmann's kinetic theory}

\author{Yu Deng}
\address{\textsc{Department of Mathematics, University of Chicago, Chicago, IL, USA}}
\email{\texttt{yudeng@uchicago.edu}}
\author{Zaher Hani}
\address{\textsc{Department of Mathematics, University of Michigan, Ann Arbor, MI, USA}}
\email{\texttt{zhani@umich.edu}}
\author{Xiao Ma}
\address{\textsc{Department of Mathematics, University of Michigan, Ann Arbor, MI, USA}}
\email{\texttt{mxiao@umich.edu}}
\date{}

\begin{abstract} In this paper, we rigorously derive the fundamental PDEs of fluid mechanics, such as the compressible Euler and incompressible Navier-Stokes-Fourier equations, starting from the hard sphere particle systems undergoing elastic collisions. This resolves Hilbert's sixth problem, as it pertains to the program of deriving the fluid equations from Newton's laws by way of Boltzmann's kinetic theory. The proof relies on the derivation of Boltzmann's equation on 2D and 3D tori, which is an extension of our previous work \cite{DHM24}. 
\end{abstract}

\maketitle
\tableofcontents

\section{Introduction}\label{sec.intro}

\subsection{Hilbert's sixth problem}
In his address to the International Congress of Mathematics in 1900, David Hilbert proposed a list of problems as challenges for the mathematics of the new century. Of those problems, the sixth problem asked for an \emph{axiomatic derivation of the laws of physics}. In his description of the problem, Hilbert says: 
\begin{center}
\tt{"The investigations on the foundations of geometry suggest the problem: To treat in the same manner, by means of axioms, those physical sciences in which already today mathematics plays an important part; in the first rank are the theory of probabilities and mechanics.}"
\end{center}
Broadly interpreted, this problem can encompass all of modern mathematical physics. However, in his followup article \cite{H01}, which describes and elaborates on the list of problems, Hilbert specified two concrete questions under this sixth problem. The first concerns the axiomatic foundation of probability, which was settled in the first half of the twentieth century. The second question was described as follows:
\begin{center}
\tt{"...Boltzmann's work on the principles of mechanics suggests the problem of developing mathematically the limiting processes, there merely indicated, which lead from the atomistic view to the laws of motion of continua."}
\end{center}

In this question, Hilbert suggests a program, referred to as \emph{Hilbert's program} below, which aims at giving a rigorous derivation of the laws of fluid motion, starting from Newton's laws on the atomistic level, using Boltzmann's kinetic theory as an intermediate step. More precisely, this refers to giving a rigorous justification of the following diagram:
\begin{figure}[h!]
    \centering
    \includegraphics[width=14cm]{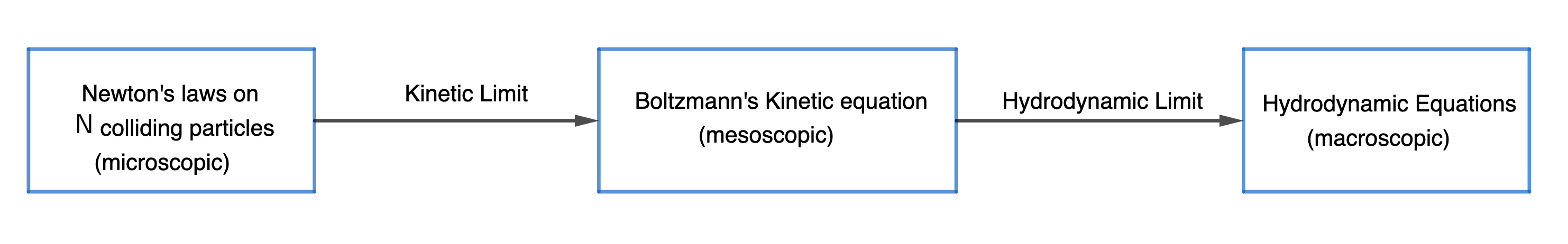} 
    \caption{From Newton to Boltzmann to fluid equations}
    \label{fig:microtomacro}  \end{figure}

In this paper we will complete Hilbert's program and justify the limiting process in Figure \ref{fig:microtomacro}. We start by describing the scope of this program and its two steps, as well as the historical accounts.

 In the first step of this program, one gives a rigorous derivation of Boltzmann's kinetic theory starting from Newton's laws, taken as axioms, on a \emph{microscopic} system formed of $N$ particles of diameter $\varepsilon$ undergoing elastic collisions. This is done by taking the \emph{kinetic limit} in which $N\to \infty, \,\varepsilon \to 0$, and we aim to show that the one-particle density of the particle system is well-approximated by the solution $n(t, x,v)$ to the Boltzmann equation:
\begin{equation}\label{eq.Boltzintro}
 (\partial_t +v\cdot \nabla_x) n(t, x, v) =\alpha \mathcal Q(n, n).
 \end{equation}
 Here $\Qc(n, n)$ is the hard-sphere collision kernel defined below in \eqref{eq.boltzmann}, and $\alpha:=N \varepsilon^{d-1}$ stands for the collision rate of the particle system, which is kept constant in this kinetic limit. The necessity of this scaling relation between $N$ and $\varepsilon$, which corresponds to the setting of dilute gases, was discovered by Grad \cite{Grad58}, and is referred to as the \emph{Boltzmann-Grad limit}. In the second step, called the \emph{hydrodynamic limit}, one derives the equations of fluid mechanics (like compressible Euler, incompressible Euler, incompressible Navier-Stokes etc.) as appropriate limits of Boltzmann's kinetic equation when the collision rate $\alpha$ is taken to infinity (i.e. the Knudson number, or the length of mean free path, is taken to zero).

To complete Hilbert's original program, one needs to take a proper combination of these two limits, to pass from the atomistic view of matter to the laws of motion of continua, as Hilbert put it in his own words. Crucially, here we note that establishing the link between these two limits requires deriving the Boltzmann equation as stated above on time intervals of length $O(1)$, which corresponds (by rescaling $t$ and $x$) to solutions of the $\alpha=1$ Boltzmann equation that exist on time intervals of length $\gtrsim \alpha$. Since $\alpha\to\infty$ in the hydrodynamic limit, it is thus necessary to obtain \emph{long time derivation} of the Boltzmann equation.
 
 \medskip
 
In the two limits, the first (kinetic) limit turned out to be more challenging. In fact, it even took some time to properly clarify it as a concrete mathematical question. This was eventually done by Grad \cite{Grad58} who specified the precise Boltzmann-Grad scaling law $N\varepsilon^{d-1}=O(1)$ mentioned above, which is assumed between the number of particles $N$ and their diameter $\varepsilon$, in order for Boltzmann's kinetic theory to be valid in the limit $N\to \infty,\,\varepsilon\to 0$. Following several pioneering works of Grad \cite{Grad58} and Cercignani \cite{Cer72}, the first major landmark in the derivation of Boltzmann's equation happened in 1975, when Lanford \cite{Lan75} first completed this derivation for sufficiently small times. After Lanford's work, the problem of deriving Boltzmann's equation has attracted a tremendous amount of research interest from various angles and perspectives \cite{King75, Kac56, MS11, MT12, MM13, IP89, OVY93, SpohnBook, UT06, R03}. We single out particularly the progress in the past fifteen years which reinvigorated the effort of fully executing Hilbert's program, thanks to the deep works of various subsets of the following authors: Bodineau, Gallagher, Pulvirenti, Saint Raymond, Simonella, Texier \cite{GST14, PS17, PS21, BGS16, BGS17, BGS18, BGSS18, BGSS22_2, BGSS20, BGSS20_2, BGSS22, BGSS23}. We note that all those results are restricted to either  short times, or small (near-vaccuum) solutions, or various linearized settings. Such restrictions represented, for the longest time, the major obstruction to fully executing Hilbert's program, which requires accessing long-time solutions of the Boltzmann equation, as described in the above paragraph.  

This barrier was finally broken in the authors' recent work \cite{DHM24} which gave the rigorous derivation of Boltzmann's equation on $\Rb^d (d\geq 2)$  for arbitrarily long times, namely as long as the strong solution to Boltzmann's equation exists. 

Compared to the first kinetic limit, the second hydrodynamic limit was better understood earlier on. The rough idea here, which was explained by Hilbert himself in \cite{H12}, is that the local Maxwellians 
\begin{equation*}
\Mf=\Mf(t,x,v)=\frac{\rho(t,x)}{(2\pi T(t,x))^{d/2}}e^{-\frac{|v-u(t,x)|^2}{2T(t,x)}},
\end{equation*}
are approximate solutions to the Boltzmann equation, in the limit $\alpha\to\infty$, provided that the macroscopic quantities $(\rho, u, T)$ solve a corresponding hydrodynamic equation. The results in this direction can be split according to whether they address strong or weak solutions. In the case of strong solutions, we mention the works of Nishida \cite{Nishida78} and Caflisch \cite{Caf80} for the incompressible and compressible Euler limit and the works of DeMasi-Esposito-Liebowitz \cite{DEL89}, Bardos-Ukai \cite{BU91}, Guo \cite{Guo06} for the incompressible Euler and Navier-Stokes limits. Below we also rely on the more recent works of Gallagher and Tristani \cite{GT20} for the incompressible Navier-Stokes limit, and of Guo-Jang-Jiang \cite{GJJ10} and Jiang-Luo-Tang \cite{JLT21} for the compressible Euler limit. In the case of weak solutions, we mention the works of Bardos-Golse-Levermore \cite{BGL91, BGL93}, Lions-Masmoudi \cite{LM01, LM01-2}, Golse-Saint Raymond \cite{GSR04}, and refer the reader to the textbook treatment in \cite{Sai09}. 

\medskip

The purpose of this work is twofold. First we extend the derivation of Boltzmann's equation in \cite{DHM24} to the periodic setting $\Tb^d \,(d=2,3)$. Second, we connect this kinetic limit to the hydrodynamic limit in the above-cited works, to obtain a full derivation of the fluid equations starting from Newton's laws on the particle system, thereby completing Hilbert's original program. We summarize the main theorems as follows:
\begin{itemize}
\item {\bf Theorem \ref{th.main}: Derivation of the Boltzmann equation on $\Tb^d$ $(d=2,3)$}. Starting from a Newtonian hard-sphere particle system on the torus $\Tb^d$ $(d=2,3)$ formed of $N$ particles of diameter $\varepsilon$ undergoing elastic collisions, and in the Boltzmann-Grad limit $N\varepsilon^{d-1}=\alpha$, we derive the Boltzmann equation \eqref{eq.Boltzintro} as the effective equation for the one-particle density function of the particle system.
\item {\bf Theorem \ref{th.main2}: Derivation of the incompressible Navier-Stokes-Fourier system from Newton's laws.} Starting from the same Newtonian hard-sphere particle system on the torus $\Tb^d$ $(d=2,3)$ close to global equilibrium, and in an iterated limit where first $N\to\infty,\,\varepsilon\to 0$ with $\alpha=N\varepsilon^{d-1}$ fixed and then $\alpha\to \infty$ separately (there are also other variants, see Theorem \ref{th.main2}), we derive the incompressible Navier-Stokes-Fourier system as the effective equation for the macroscopic density and velocity of the particle system.
\item {\bf Theorem \ref{th.main3}: Derivation of the compressible Euler equation from Newton's laws.} Starting from the same Newtonian hard-sphere particle system on the torus $\Tb^d$ $(d=2,3)$, and in an iterated limit where first $N\to\infty,\,\varepsilon\to 0$ with $\alpha=N\varepsilon^{d-1}$ fixed and then $\alpha\to \infty$ separately (there are also other variants, see Theorem \ref{th.main3}), we derive the compressible Euler equation as the effective equation for the macroscopic density, velocity, and temperature of the particle system.

\end{itemize}

A fundamental intriguing question is the justification of the passage from the time-reversible microscopic Newton's theory to the time-irreversible mesoscopic Boltzmann theory. It is well-known (see \cite{Uka74} for smooth data and \cite{Guo10} for $L^\infty$ data, also \cite{DL89} for large-amplitude renormalized solutions) that solutions to Boltzmann's equation are global-in-time near any Maxwellian, and the celebrated Boltzmann H-theorem indicates the increase of physical entropy and time irreversibility in the Boltzmann theory. Since Theorem \ref{th.main} covers the full lifespan of the Boltzmann solution, it could be viewed as a justification of the emergence of the time irrevsesible Boltzmann theory from the time reversible Newton's theory near Maxwellian.

We shall give the precise statement of Theorems \ref{th.main}--\ref{th.main3} in the next sections, together with the proofs of Theorems \ref{th.main2} and \ref{th.main3} which will follow directly by combining Theorem \ref{th.main} with the existing results on the hydrodynamic limit. The proof of Theorem \ref{th.main} follows the same lines as the Euclidean setting in \cite{DHM24}, except for one particular part of the proof which requires substantial modifications. For this, a different algorithm is introduced to deal with a particular set of collision histories that can arise in the periodic setting. The key difference, which necessitates both new algorithms and new integral estimates in the latter setting, is the absence of upper bound on the number of collisions that can happen among a fixed number of particles (in contrast with the Euclidean case where the particles eventually move away from each other). 

\subsection{From Newton to Boltzmann}\label{sec.setup} We start by describing the microscopic hard-sphere particle dynamics, and the Boltzmann equation which describes the limits of the $s$-particle density functions determined by such dynamics.

\subsubsection{The hard sphere dynamics}\label{sec.hard_sphere}
Throughout this paper, we fix the periodic torus $\Tb^d=\Rb^d/\Zb^d$ \textbf{in dimension $d\in\{2,3\}$}. For $x\in\Tb^d$, define $|x|_\Tb$ to be the distance between $x$ and the origin $0$; if we view $x$ as a vector in $\Rb^d$ (understood modulo $\Zb^d$), then
\begin{equation}\label{eq.Tdnorm}|x|_\Tb=\min\big\{|x-y|:y\in\Zb^d\big\}.
\end{equation} If $x\in\Tb^d$ satisfies $|x|_\Tb<1/2$, then $x$ is represented by a unique vector $\widetilde{x}\in\Rb^d$ satisfying $|\widetilde{x}|=|x|_\Tb<1/2$ in the Euclidean norm of $\Rb^d$. Below we will always identify this $\widetilde{x}$ with $x$ (and do not distinguish it with $x$ in writing) without further explanations. 
\begin{definition}[The hard-sphere dynamics \cite{Alexander}]\label{def.hard_sphere} We define the hard sphere system of $N$ particles with diameter $\varepsilon<1/2$, in dimension $d\in\{2,3\}$.
\begin{enumerate}
\item\label {it.O1}\emph{State vectors $z_j$ and $\vz_N$}. We define $z_j=(x_j,v_j)\in \Tb^d\times\Rb^d$ and $\vz_N=(z_j)_{1\leq j\leq N}$, where $x_j$ and $v_j$ are the center of mass and velocity of particle $j$; this $z_j$ and $\vz_N$ are called the \textbf{state vector} of the $j$-th particle and the collection of all $N$ particles, respectively.
\item\label{it.O2}\emph{The domain $\Dc_N$}. We define the \textbf{non-overlapping domain} $\Dc_N$ as
\begin{equation}\label{eq.setDN}\Dc_N:=\big\{\vz_N=(z_1,\cdots,z_N)\in \Tb^{dN}\times\Rb^{dN}:|x_i-x_j|_\Tb\geq\varepsilon\,\,(\forall i\neq j)\big\}.
\end{equation}
\item\label{it.O3} \emph{The hard-sphere dynamics $\vz_N(t)$}. Given initial configuration $\vz_N^0=(z_1^0,\cdots,z_N^0)\in\Dc_N$, we define the \textbf{hard sphere dynamics} $\vz_N(t)=(z_j(t))_{1\leq j\leq N}$ as the time evolution of the following dynamical system:
\begin{enumerate}
\item\label{it.O31} We have $\vz_N(0)=\vz_N^0$.
\item\label{it.O32} Suppose $\vz_N(t')$ is known for $t'\in[0,t]$. If $|x_i(t)-x_j(t)|_\Tb=\varepsilon$ for some $(i,j)$, then we have
\begin{equation}\label{eq.hardsphere}
\left\{
\begin{aligned}x_i(t^+)&=x_i(t),\quad x_j(t^+)=x_j(t),\\
v_i(t^+)&=v_i(t)-\big((v_i(t)-v_j(t))\cdot\omega\big)\omega,\quad \omega:=(x_i(t)-x_j(t))/\varepsilon\in \Sb^{d-1};\\
v_j(t^+)&=v_j(t)+\big((v_i(t)-v_j(t))\cdot\omega\big)\omega.
\end{aligned}
\right.
\end{equation} Here $t^+$ indicates right limit at time $t$, see Figure \ref{fig.1}. Note that $x_j(t)$ are always continuous in $t$; in this definition we also always require $v_j(t)$ to be left continuous in $t$, so $v_j(t)=v_j(t^-)$.
\begin{figure}[h!]
\includegraphics[scale=0.2]{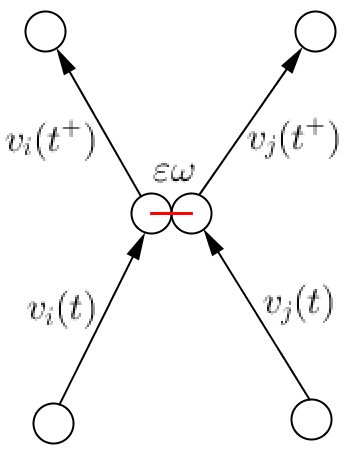}
\caption{An illustration of the hard sphere dynamics (Definition \ref{def.hard_sphere} (\ref{it.O32})): $(v_{i}(t),v_j(t))=(v_{i}(t^-),v_j(t^-))$ and $(v_{i}(t^+),v_j(t^+))$ are incoming and outgoing velocities, and $\varepsilon\omega$ is the vector connecting the centers of the two colliding particles, which has length $\varepsilon$.}
\label{fig.1}
\end{figure}
\item \label{it.O33} If for certain $i$ there is no $j$ such that the scenario in (\ref{it.O32}) happens, then we have
\begin{equation}\label{eq.hardsphere2}\frac{\mathrm{d}}{\mathrm{d}t}(x_i,v_i)=(v_i,0).
\end{equation} Here (and same below) the $\mathrm{d}x_i/\mathrm{d}t$ in (\ref{eq.hardsphere2}) is understood as the time evolution of a suitable $\Rb^d$ representative of $x_i\in\Tb^d$.
\end{enumerate}
\item \label{it.O4}\emph{The flow map $\Hc_N(t)$ and flow operator $\Sc_N(t)$}. We define the \textbf{flow map} $\Hc_N(t)$ by
\begin{equation}\label{eq.defH^O}\Hc_N(t):\Dc_N\to\Dc_N,\quad\Hc_N(t)(\vz_N^0)=\vz_N(t)\end{equation} where $\vz_N(t)$ is defined by (\ref{it.O3}). Define also the \textbf{flow operator} $\Sc_N(t)$, for functions $f=f(\vz_N)$, by
\begin{equation}\label{eq.defS^O}
(\Sc_N(t) f)(\vz_N)=f\big((\Hc_N(t))^{-1}\vz_N\big).
\end{equation}
\item \label{it.O5} \emph{Collision}. We define a \textbf{collision} to be an instance that (\ref{eq.hardsphere}) in (\ref{it.O32}) occurs. Note that these collisions exactly correspond to the discontinuities of $v_j(t)$.
\end{enumerate}
\end{definition}
\begin{proposition}\label{prop.hardsphere} Up to some closed Lebesgue zero subset of $\Dc_N$, the time evolution defined by the hard sphere system (Definition \ref{def.hard_sphere} (\ref{it.O3})) exists and is unique, and satisfies that
\begin{enumerate}
\item No two collisions happen at the same time $t$;
\item The flow maps $\Hc_N(t)$ defined in Definition \ref{def.hard_sphere} (\ref{it.O4}) are measure preserving diffeomorphisms from $\Dc_N$ to itself, and satisfy the flow or semi-group property $\Hc_N(t+s)=\Hc_N(t)\Hc_N(s)$ for $t,s\geq 0$.
\end{enumerate}
\end{proposition}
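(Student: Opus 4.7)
The plan is to follow the classical approach of Alexander \cite{Alexander}, adapted to the periodic setting. I will identify a closed Lebesgue-null ``pathological set'' $\mathcal{B}\subset\Dc_N$ on whose complement the dynamics of Definition \ref{def.hard_sphere} extends uniquely to all $t\geq 0$, and then read off the listed properties from the construction.

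First I define $\mathcal{B}$ as the union of three closed sets: (i) \emph{triple-collision} data, for which at some time $t>0$ there exist three distinct indices $i,j,k$ with $|x_i(t)-x_j(t)|_\Tb=|x_i(t)-x_k(t)|_\Tb=\varepsilon$ under the dynamics; (ii) \emph{grazing} data, for which at some collision time the relative velocity satisfies $(v_i-v_j)\cdot\omega=0$; and (iii) data admitting an accumulation point of collision times in a bounded interval. Conditions (i) and (ii) are handled inductively between successive collisions: on each free-flight segment they each impose at least one scalar equation on the $2dN$-dimensional phase space (summed over countably many pairs/triples, time windows, and lattice shifts in $\Zb^d$ arising from the torus distance \eqref{eq.Tdnorm}), producing a countable union of positive-codimension analytic sets, hence a Lebesgue-null set. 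Given $\vz_N^0\notin\mathcal{B}$, I construct $\vz_N(t)$ inductively: on each interval between consecutive collisions the free-flow ODE \eqref{eq.hardsphere2} has a unique smooth solution, and at each collision the transversality guaranteed by (ii) lets the reflection rule \eqref{eq.hardsphere} produce a well-defined outgoing state with $(v_i'-v_j')\cdot\omega<0$. This yields existence, uniqueness, and the no-simultaneous-collision property (1); the semi-group property follows by uniqueness together with the invariance of $\mathcal{B}$ under the flow. Measure preservation then follows by combining the obvious invariance under free flow with the fact that the collision map $(v_i,v_j)\mapsto(v_i',v_j')$ from \eqref{eq.hardsphere} is an orthogonal involution (reflection in $\omega$ on the relative-velocity line); a standard Jacobian computation at the boundary of $\Dc_N$ in adapted coordinates, with $\omega$ as the normal direction and transversality from (ii), confirms that the full flow preserves Lebesgue measure on $\Dc_N$. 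Smoothness of $\Hc_N(t)$ on each open stratum with a fixed collision count, together with the explicit time-reversed inverse, yields the piecewise diffeomorphism claim.

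The delicate point, emphasized in the introduction, is (iii): on $\Tb^d$ there is no a priori uniform bound on the number of collisions among $N$ particles, in contrast with the Euclidean setting where classical Burago-Ferleger-Kononenko-type bounds apply. My plan is to argue that for a.e.\ initial datum the collision count on any $[0,T]$ is finite by means of a quantitative version of transversality: conditional on every post-collision normal relative speed $|(v_i-v_j)\cdot\omega|$ exceeding a threshold $\delta>0$, conservation of total kinetic energy $\sum_j|v_j|^2$ together with an impact-parameter bookkeeping yields a finite upper bound $C(N,T,\delta)$ on the collision count on $[0,T]$; as $\delta\to 0$, the transversality encoded in (ii) shows that the measure of the exceptional event tends to zero, so the set of data with accumulating collisions is Lebesgue-null. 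Combining the three null sets finishes the construction of $\mathcal{B}$ and completes the proof.
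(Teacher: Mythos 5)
Your plan follows the classical Alexander framework --- a closed null set $\mathcal{B}$ assembled from triple collisions, grazing collisions, and accumulation of collision times, with the flow constructed iteratively on $\Dc_N\setminus\mathcal{B}$ --- and this is exactly the route the paper takes: its proof of Proposition~\ref{prop.hardsphere} is a one-line citation to Proposition~1.2 of \cite{DHM24}, which in turn rests on \cite{Alexander}, plus the remark that the torus lacks a universal bound on collision counts. You correctly identify that last point as the torus-specific subtlety: the Burago--Ferleger--Kononenko theorem, which in $\Rb^d$ bounds the total number of collisions for \emph{all} data by a constant depending only on $N$, is unavailable, so finiteness of collisions on $[0,T]$ must be obtained separately.

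The mechanism you propose for ruling out accumulation is where the argument has a gap. You claim that if every impact has normal relative speed $|(v_i-v_j)\cdot\omega|\geq\delta$, then energy conservation plus ``impact-parameter bookkeeping'' yields a finite bound $C(N,T,\delta)$ on the collision count. This is not justified for $N\geq 3$: after an $i$--$j$ impact the pair separates at radial speed $\geq\delta$, but an intervening collision of $j$ with a third particle $k$ can send $j$ straight back toward $i$, so the per-collision transversality margin alone does not preclude a given pair from re-colliding after arbitrarily short times. (Your argument is fine for $N=2$, where re-collision forces a traversal of the torus.) There is also a definitional issue in letting $\delta\to 0$: the exceptional sets are phrased in terms of the full flow on $[0,T]$ before that flow is known to exist, so they should be built from the truncated dynamics up to the $k$-th collision and the measure estimate passed to the limit in $k$. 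A cleaner, and in fact unconditional, resolution --- which the paper itself uses in Section~\ref{sec.LB} --- is the short-time BFK argument: fix an energy level so that all speeds are $\leq R$; on any window of length $\tau\ll 1/(NR)$ the particles that can interact split into clusters of diameter $\ll 1$, each of which lifts consistently to a single fundamental domain, so the $\Tb^d$ dynamics there coincides with the $\Rb^d$ dynamics and BFK caps the collisions in that window by a constant depending only on $N$; summing over $\lceil T/\tau\rceil$ windows gives a bound $C(N,T,R,\varepsilon)$ valid for \emph{every} datum of bounded energy, not merely almost every one. Substituting this for your item (iii) and dropping the $\delta$-conditioning, the rest of your construction of $\mathcal{B}$, and the deductions of uniqueness, the no-simultaneous-collisions property, measure preservation, and the semigroup identity, go through as you sketch.
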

\begin{proof} This is the same as Proposition 1.2 in \cite{DHM24}. \textbf{Note that unlike the Euclidean case \cite{DHM24}, there is no upper bound on the number of collisions.}
\end{proof}
\subsubsection{The grand canonical ensemble}\label{sec.grand_canon} Define the \textbf{grand canonical domain} $\Dc:=\bigcup_{N}\Dc_N$, so $\vz\in\Dc$ always represents $\vz=\vz_N\in\Dc_N$ for some $N$. We can define the hard sphere dynamics $\vz(t)$ and the flow map $\Hc(t)$ on $\Dc$, by specifying them to be the $\vz_N(t)$ and $\Hc_N(t)$ in Definition \ref{def.hard_sphere} (\ref{it.O3})--(\ref{it.O4}), for initial configuration $\vz^0=\vz_N^0\in\Dc_N$.

The grand canonical ensemble is defined by a density function $W_0:\Dc\to\Rb_{\geq 0}$ (or equivalently a sequence of density functions $W_{0,N}:\Dc_N\to\Rb_{\geq 0}$), which determines the law of the random initial configuration for the hard-sphere system, as follows.
\begin{definition}[The grand canonical ensemble] \label{def.grand_canon} Fix $0<\varepsilon\ll 1$ and $\alpha>0$, and a nonnegative function $n_0=n_0(z)$ with $\int n_0(z)\,\mathrm{d}z=1$. We define the \textbf{grand canonical ensemble} as follows.
\begin{enumerate}
\item \label{it.Can1} \emph{Random configuration and initial density}. Assume $\vz^0\in \Dc$ is a random variable, whose law is given by the \textbf{initial density function} $(W_{0,N})$, in the sense that \begin{equation}\label{eq.inti_density}
\Pb\big(\vz^0=\vz_N^0\in A\subseteq\Dc_N\big)=\frac{1}{N!}\int_{A} W_{0,N}(\vz_N)\,\mathrm{d}\vz_N
\end{equation}for any $N$ and any $A\subseteq\Dc_N$, where $(W_{0,N})$ is given by
\begin{equation}\label{eq.N_par_ensemble} \frac{1}{N!}W_{0,N}(\vz_N):=\frac{1}{\Zc}\frac{(\alpha\cdot\varepsilon^{-(d-1)})^N}{N!}\prod_{j=1}^N n_0(z_j)\cdot \mathbbm{1}_{\Dc_N}(\vz_N).
\end{equation}
Here, $\mathbbm{1}_{\Dc_N}$ is the indicator function of the set $\Dc_N$, and the partition function $\Zc$ is defined to be
\begin{equation}\label{eq.partition_func}\Zc:=1+\sum_{N=1}^\infty\frac{(\alpha\cdot\varepsilon^{-(d-1)})^N}{N!}\int_{\Tb^{dN}\times\Rb^{dN}}\prod_{j=1}^N n_0(z_j)\cdot \mathbbm{1}_{\Dc_N}(\vz_N)\,\mathrm{d}\vz_N.\end{equation}
\item \label{it.Can2}\emph{Evolution of random configuration}. Let $\vz^0\in \Dc$ be the random variable defined in (\ref{it.Can1}) above, and let $\vz(t)=\Hc(t)\vz^0$ be the evolution of initial configuration $\vz^0$ by hard sphere dynamics, defined in Definition \ref{def.hard_sphere} (\ref{it.O3})--(\ref{it.O4}), then $\vz(t)$ is also a $\Dc$-valued random variable. We define the \textbf{density functions} $W_N(t,\vz_N)$ for the law of the random variable $\vz(t)$ by the relation
\begin{equation}\label{eq.inti_density_t}
\Pb\big(\vz(t)=\vz_N(t)\in A\subseteq\Dc_N\big)=\frac{1}{N!}\int_{A} W_{N}(t,\vz_N)\,\mathrm{d}\vz_N.
\end{equation} Then it is easy to see (using the volume preserving property in Proposition \ref{prop.hardsphere}) that
\begin{equation}\label{eq.time_t_ensemble}W_N(t,\vz_N):=(\Sc_N(t)W_{0,N})(\vz_N).
\end{equation}
\item\label{it.Can3} \emph{The correlation function $f_s(t)$}. Given $s\in\Nb$, we define the $s$-particle (rescaled) \textbf{correlation function} $f_s(t,\vz_s)$ by the following formula
\begin{equation}\label{eq.s_par_cor}
f_s(t,\vz_s):=(\alpha^{-1}\varepsilon^{d-1})^s\sum_{n=0}^\infty\frac{1}{n!}\int_{\Tb^{dn}\times\Rb^{dn}}W_{s+n}(t,\vz_{s+n})\,\mathrm{d}z_{s+1}\cdots \mathrm{d}z_{s+n},
\end{equation} where we abbreviate $\vz_s=(z_1,\cdots,z_s)$ and $\vz_{s+n}=(z_1,\cdots,z_{s+n})$ similar to $\vz_N$.
\item\label{it.Can4} \emph{The Boltzmann-Grad scaling law}. The definition (\ref{eq.N_par_ensemble}), and particularly the choice of the factor $(\alpha\cdot\varepsilon^{-(d-1)})^N$, implies that
\begin{equation}\label{eq.Bol-Grad}\Eb(N)\cdot \varepsilon^{d-1}\approx \alpha
\end{equation} up to error $O(\varepsilon)$, where $\vz^0\in\Dc$ is the random initial data defined in (\ref{it.Can1}) and $N$ is the random variable (i.e. number of particles) determined by $\vz^0=\vz_N^0\in\Dc_N$. If $\alpha$ is a constant, then we are considering the hard sphere dynamics with $\approx \alpha\cdot\varepsilon^{-(d-1)}$ particles (on average). This is referred to as the \textbf{Boltzmann-Grad scaling law}.
\end{enumerate}
\end{definition}
\subsubsection{The Boltzmann equation} The kinetic theory predicts that the one-particle correlation function $f_1(t,z)=f_1(t,x,v)$ should solve the Boltzmann equation in the kinetic limit $\varepsilon\to 0$, which is defined as follows.
\begin{definition}[The hard-sphere Boltzmann equation]\label{def.boltzmann} We define the Cauchy problem for the \textbf{Boltzmann equation} for hard sphere collisions, with initial data $n(0,z)=n_0(z)$ where $z=(x,v)\in\Tb^d\times\Rb^{d}$, as follows:
\begin{equation}\label{eq.boltzmann}
\left\{
\begin{aligned}
(\partial_t+v\cdot\nabla_x)n&=\alpha\cdot\int_{\Rb^d}\int_{\Sb^{d-1}}\big((v-v_1)\cdot\omega\big)_+\cdot(n'n_1'-nn_1)\,\mathrm{d}\omega\mathrm{d}v_1;\\
n(0,x,v)&=n_0(x,v).
\end{aligned}
\right.
\end{equation}
The right hand side of (\ref{eq.boltzmann}) is referred to as the \textbf{collision operator}, where $\big((v-v_1)\cdot\omega\big)_+$ is the positive part of $(v-v_1)\cdot\omega$, and we denote
\[
\begin{aligned}
n&=n(x,v),\quad n_1=n(x,v_1),&n'&=n(x,v'),\quad n_1'=n(x,v_1'),\\
 v'&=v-\big((v-v_1)\cdot\omega\big)\omega,&v_1'&=v_1+\big((v-v_1)\cdot\omega\big)\omega.
 \end{aligned}
 \]
\end{definition}
\subsubsection{Main result 1} We now state the first main result of this paper.
\begin{mainthm}\label{th.main} Fix $d\in\{2,3\}$ and $\beta>0$. Let $(\alpha,A,t_{\mathrm{fin}})$ be three parameters that satisfy
\begin{equation}\label{eq.loglog}
\max(1,\alpha)\cdot\max(1,A)\cdot\max(1,t_{\mathrm{fin}})\ll (\log|\log\varepsilon|)^{1/2},
\end{equation} where the implicit constant in (\ref{eq.loglog}) depends only on $(\beta,d)$. Let $n_0=n_0(z)$ be a nonnegative function with $\int n_0(z)\,\mathrm{d}z=1$, and suppose the solution $n(t,z)$ to the Boltzmann equation (\ref{eq.boltzmann}) exists on the time interval $[0,t_{\mathrm{fin}}]$, such that
\begin{equation}\label{eq.boltzmann_decay_1}
\big\|e^{2\beta  |v|^2}n(t)\big\|_{L_{x,v}^\infty}\leq A,\,\,\forall t\in[0,t_{\mathrm{fin}}];\qquad \big\|e^{2\beta  |v|^2}\nabla_xn_0\big\|_{L_{x,v}^\infty}\leq A.
\end{equation}

Consider the $d$ dimensional hard sphere system of diameter $\varepsilon$ particles (Definition \ref{def.hard_sphere}), with random initial configuration $\vz^0$ given by the grand canonical ensemble (Definition \ref{def.grand_canon}), under the Boltzmann-Grad scaling law \eqref{eq.Bol-Grad}. Let $\varepsilon$ be small enough depending on $(\beta,d)$ and the implicit constant in (\ref{eq.loglog}). Then, uniformly in $t\in[0,t_{\mathrm{fin}}]$ and in $s\leq |\log\varepsilon|$, the $s$-particle correlation functions $f_s(t)$ defined in \eqref{eq.N_par_ensemble}--\eqref{eq.s_par_cor}, satisfy that
\begin{equation}\label{eq.maineqn}
\bigg\|f_s(t,\vz_s)-\prod_{j=1}^s n(t,z_j)\cdot \mathbbm{1}_{\Dc_s}(\vz_s)\bigg\|_{L^1(\Tb^{ds}\times\Rb^{ds})}\leq \varepsilon^\theta,
\end{equation} where $\theta>0$ is an absolute constant depending only on the dimension $d$. 
\end{mainthm}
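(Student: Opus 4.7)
The plan is to follow the overall strategy of the Euclidean case \cite{DHM24}, with substantial modifications to handle the periodic geometry. The starting point is the BBGKY hierarchy satisfied by $(f_s)$: Duhamel iteration expresses $f_s(t,\vz_s)$ as a series over \emph{collision trees} (pseudo-trajectories with branching at successive creation times $t_1>t_2>\cdots$, equipped with collision parameters $\omega_i$ and recollision velocities), integrated against $f_{s+n}(0)$. Via the grand canonical formula \eqref{eq.N_par_ensemble}--\eqref{eq.s_par_cor}, the latter further expands as a cluster sum which, after controlling the partition function $\Zc$ by a standard Mayer/cluster estimate, is close to $n_0^{\otimes(s+n)}$. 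The target $\prod_{j=1}^s n(t,z_j)\cdot\mathbbm{1}_{\Dc_s}(\vz_s)$ admits an entirely parallel expansion using the Boltzmann hierarchy and the same initial datum.

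The second step is the term-by-term comparison of the two expansions. For each combinatorial tree shape, the BBGKY integrand differs from the corresponding Boltzmann integrand only by (i) the $\varepsilon$-shift at each collision vertex (replacing the idealized point collision by a shift of $\varepsilon\omega_i$) and (ii) the restriction to configurations where the pseudo-trajectory is genuinely consistent with the hard-sphere flow on $\Tb^d$ — namely, where no particles \emph{recollide} outside the prescribed collision times and no two initial particles \emph{overlap}. The shift contributions are small by smoothness of $n$, so the main task is to bound, in $L^1(\Tb^{ds}\times\Rb^{ds})$, the total contribution of recollision/overlap configurations by $\varepsilon^\theta$.

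The heart of the new work, and the main obstacle, is the fact flagged in Proposition \ref{prop.hardsphere}: on $\Tb^d$ a fixed group of $k$ particles can undergo an \emph{unbounded} number of collisions among themselves, whereas in $\Rb^d$ a universal $O_k(1)$ bound holds. Consequently, the Euclidean algorithm of \cite{DHM24}, which processes bad trees by induction on a finite total collision count within clusters, no longer applies. I would introduce a new layered algorithm that groups collisions by how far the relevant relative trajectories have wrapped around $\Tb^d$, together with refined integral estimates showing that each additional recollision constrains the relative velocity of the two involved particles to lie within an $O(\varepsilon)$-neighborhood of a codimension-one subvariety defined by some nonzero lattice vector in $\Zb^d$. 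Iterating, every extra collision within a fixed cluster gains a factor of $\varepsilon$, which must be shown to dominate both the combinatorial growth from the number of admissible periodic collision patterns and the factorial growth in the tree size.

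Combining this new periodic combinatorial analysis with the cumulant/cluster expansion machinery from \cite{DHM24}, which propagates quasi-Gaussian bounds on $f_s$ compatible with \eqref{eq.boltzmann_decay_1}, then yields the bound \eqref{eq.maineqn} uniformly for $s\leq|\log\varepsilon|$ and $t\in[0,t_{\mathrm{fin}}]$. The double-logarithmic smallness condition \eqref{eq.loglog} emerges, as in \cite{DHM24}, from the need to absorb the factorial growth of the series uniformly in $\alpha$, $A$, and $t_{\mathrm{fin}}$, here augmented by the extra combinatorial complexity introduced by the layered handling of periodic recollisions.
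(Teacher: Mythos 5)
Your proposal correctly identifies the new obstruction on $\Tb^d$ (loss of the Burago--Ferleger--Kononenko bound on collisions) and the correct physical source of a gain (a repeated overlap of two linear trajectories forces their relative velocity to be nearly parallel to a nonzero lattice vector). However, there are two substantive gaps between what you propose and what actually works.

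First, the framework. You begin from the BBGKY hierarchy, a Duhamel/Lanford iteration, and a series over classical collision trees. The paper (following \cite{DHM24}) does \emph{not} proceed this way: it uses the layered cluster forest expansion and the ``molecule'' graph framework, precisely because the classical tree expansion does not survive out to the full lifespan of the Boltzmann solution. The cluster forest/molecule machinery is what makes the long-time derivation possible at all, and the periodic-specific arguments (new elementary molecules, cutting sequences, excess bookkeeping) live entirely inside that framework. Re-deriving the result inside the Lanford tree expansion is not a cosmetic variant; it would reintroduce exactly the short-time limitation that \cite{DHM24} was built to remove.

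Second, and more concretely, your central quantitative claim --- that ``each additional recollision constrains the relative velocity to an $O(\varepsilon)$-neighborhood of a codimension-one subvariety, so every extra collision gains a factor of $\varepsilon$'' --- is not correct as stated. The lattice-vector constraint $v-v'\parallel m$ with $m\in\Zb^d\setminus\{0\}$ cuts out a \emph{line} in $\Rb^d$, hence has codimension $d-1$, and the associated measure gain is $\varepsilon^{d-1}$; more importantly, this constraint only arises when the \emph{same two} relative trajectories re-overlap across different fundamental domains (double bonds, or collisions separated by an $O(1)$ time). For a generic recollision between two \emph{different} pairs in a short-time cluster this mechanism gives nothing, and the paper's generic gain from a \{33A\}-molecule is the usual $\varepsilon^{1-}$ coming from the dispersive collision integral, not from a lattice condition. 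The paper's actual resolution of the unbounded-collision problem is the ``long bond'' dichotomy: apply the BFK theorem \emph{within short time windows}, where the torus dynamics coincides with the Euclidean one (the trajectories stay inside a single fundamental domain), to conclude that if a small cluster has too many collisions, then some adjacent pair of collisions must be separated by time $\gtrsim\varepsilon^*$; that long bond is what forces the lattice-vector constraint and yields the $\varepsilon^{d-1-\theta}$ excess for a single \{33A\}-molecule. Closing the remaining sub-polynomial gap then requires the new \{333A\}- and \{334T\}-molecule estimates and the two-layer algorithm of Section \ref{sec.newcase}, none of which are captured by a per-collision $\varepsilon$ gain. Without the short-time BFK step producing long bonds, and without the refined elementary-molecule estimates, the proposed iteration does not close.
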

\begin{remark} We make a few remarks concerning Theorem \ref{th.main}.

(1) For simplicity of the proof, we have restricted to dimension $d\in\{2,3\}$ in Theorems \ref{th.main}--\ref{th.main3}. For $d\geq 4$ we believe the result should still be true, but both the algorithm and the integral parts of the proof will get considerably more sophisticated for larger $d$, see Remark \ref{rem.4d}.

(2) In (\ref{eq.loglog}) we have assumed $t_{\mathrm{fin}}\ll(\log|\log\varepsilon|)^{1/2}$. Improving this bound amounts to allowing for a wider range of $\delta$ in the limit in Theorems \ref{th.main2}--\ref{th.main3}. However, such an improvement seems to be hard with our methods, see \textbf{Part 7} of Section \ref{sec.proof_main}.

(3) Note that (\ref{eq.boltzmann_decay_1}) requires one more derivative on $n_0$ than the $L^1$ bound we can prove in (\ref{eq.maineqn}). This loss of derivatives can easily be reduced to $|\nabla|^a$ for any small constant $a$ or $\log\langle \nabla\rangle$, by exactly the same proof; it can also be eliminated (i.e. $a=0$), but one will need to assume the $(\alpha,A,t_{\mathrm{fin}})$ to be independent of $\varepsilon$ instead of allowing the growth in (\ref{eq.loglog}).
\end{remark}
\subsection{From Newton to Euler and Navier-Stokes} It is well known \cite{Caf80,DEL89,GT20,GSR04} that in suitable limits where $\alpha\to\infty$, solutions to the Boltzmann equation (\ref{eq.boltzmann}) that have local Maxwellian form (i.e. being Gaussian in $v$) can be described by solutions to the corresponding fluid equations, which is referred to as the \emph{hydrodynamic limit} in the literature.

Now, by combining Theorem \ref{th.main} with such results, we can obtain these fluid limits \emph{from the colliding particle systems in Definitions \ref{def.hard_sphere} and \ref{def.grand_canon}, via the Boltzmann equation (\ref{eq.boltzmann})}. We consider only two examples in this subsection, which correspond to the \emph{incompressible Navier-Stokes-Fourier} and the \emph{compressible Euler} equations.
\subsubsection{Main result 2: the incompressible Navier-Stokes-Fourier limit} We first recall the following hydrodynamic limit result in Gallagher-Tristani \cite{GT20}; see also Bardos-Ukai \cite{BU91}.
\begin{proposition}[\cite{GT20}]\label{prop.fluid_2} Let $d\in\{2,3\}$, consider the coupled incompressible Navier-Stokes-Fourier equations on $\Tb^d$:
\begin{equation}\label{eq.nsb}
\left\{
\begin{aligned}
\partial_tu+u\cdot\nabla u-\mu_1\Delta u &=-\nabla p,\\
\partial_t\rho+u\cdot\nabla\rho-\mu_2\Delta\rho&=0,\\
\mathrm{div}(u) &=0,
\end{aligned}
\right.
\end{equation} where $\mu_1$ and $\mu_2$ are two positive absolute constants depending on $d$; for exact expressions see \cite{GT20}. Here the $(u,\rho)$ in (\ref{eq.nsb}) is valued in $\Rb^d\times\Rb$, and have zero mean in $x$ (which is preserved by (\ref{eq.nsb})). We fix a smooth solution $(u,\rho)$ to (\ref{eq.nsb}) on an arbitrary time interval $[0,T_{\mathrm{fin}}]$ with initial data $(u_0,\rho_0)$, and also fix an initial perturbation $g_R=g_R(x,v)$ such that it has zero mean in $x$ for each $v$, and 
\begin{equation}\label{eq.error_nsb}\sup_{|\mu|,|\nu|\leq 2d}\big\|\langle v\rangle^{2d}\partial_x^\mu\partial_v^\nu g_R\big\|_{L^\infty}\leq 1.
\end{equation}

Now, for sufficiently small $\delta>0$, consider the Boltzmann equation (\ref{eq.boltzmann}) with $\alpha=\delta^{-1}$, and (well prepared) initial data given by
\begin{equation}\label{eq.boltzmann_nsb_1}
n_0(x,v)=\frac{1}{(2\pi)^{d/2}}e^{-\frac{|v|^2}{2}}\bigg[1+\delta\cdot\bigg(\frac{2+d-|v|^2}{2}\cdot\rho_0(x)+v\cdot u_0(x)\bigg)+\delta^4\cdot e^{\frac{|v|^2}{4}}g_R(x,v)\bigg]\geq 0.
\end{equation} Note that we may choose $g_R$ to make $n_0(x,v)$ nonnegative, and it also satisfies $\int n_0(z)\,\mathrm{d}z=1$ by (\ref{eq.boltzmann_nsb_1}). Then, for $t\in[0,{\delta^{-1}T_{\mathrm{fin}}}]$, we have
\begin{equation}\label{eq.boltzmann_nsb_2}
n(t,x,v)=\frac{1}{(2\pi)^{d/2}}e^{-\frac{|v|^2}{2}}\bigg[1+\delta\cdot\bigg(\frac{2+d-|v|^2}{2}\cdot\rho({\delta t},x)+v\cdot u({\delta t},x)\bigg)\bigg]+h({\delta t},x,v),
\end{equation} where the remainder $h=h({\tau},x,v)$ satisfies the following estimate uniformly in ${\tau}\in[0,T_{\mathrm{fin}}]$:
\begin{equation}\label{eq.boltzmann_nsb_3}
\big\|e^{|v|^2/4}\cdot h({\tau},x,v)\big\|_{L_{x,v}^\infty}+\big\|e^{|v|^2/4}\cdot\nabla_x h({\tau},x,v)\big\|_{L_{x,v}^\infty}\lesssim\delta^{3/2}.
\end{equation}
\end{proposition}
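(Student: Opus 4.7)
The plan is to follow the Gallagher--Tristani strategy: expand the solution in a truncated Hilbert series around the global Maxwellian $M(v)=(2\pi)^{-d/2}e^{-|v|^2/2}$ and close an estimate on the remainder using the spectral gap of the linearized collision operator. First I would pass to the macroscopic time $\tau=\delta t$; with $\alpha=\delta^{-1}$ the Boltzmann equation (\ref{eq.boltzmann}) becomes $\delta\partial_\tau n+v\cdot\nabla_x n=\delta^{-1}\Qc(n,n)$, and writing $n=M(1+\delta f)$ turns this into
\begin{equation*}
\delta\partial_\tau f+v\cdot\nabla_x f+\delta^{-1}Lf=\Gamma(f,f),
\end{equation*}
where $L$ is the linearized collision operator with kernel $\mathrm{span}\{1,v_1,\ldots,v_d,|v|^2\}$ in $L^2(M\,\mathrm{d}v)$, and $\Gamma$ is its symmetric bilinear companion.

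Next I would construct the approximation $f_{\mathrm{app}}=f_1+\delta f_2+\delta^2 f_3$ by matching powers of $\delta$. Order $\delta^0$ forces $f_1\in\ker L$, so $f_1=\rho+v\cdot u+\theta\cdot(|v|^2-d)/2$; projecting the $\delta^1$ equation onto $\ker L$ yields the incompressibility $\mathrm{div}\,u=0$ and the Boussinesq relation $\rho+\theta=0$, which collapses $f_1$ to the ansatz $\tfrac{2+d-|v|^2}{2}\rho(\tau,x)+v\cdot u(\tau,x)$ appearing in (\ref{eq.boltzmann_nsb_2}). The orthogonal part determines $f_2=L^{-1}(v\cdot\nabla_x f_1-\Gamma(f_1,f_1))$ on $(\ker L)^\perp$, and the solvability condition at order $\delta^2$ produces the incompressible Navier--Stokes--Fourier system (\ref{eq.nsb}) for $(u,\rho)$ with $\mu_1,\mu_2$ expressed as moments of $L^{-1}$ applied to centered polynomials; $f_3$ is then chosen to absorb the surviving order-$\delta^2$ terms that lie in the kernel. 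The assumed smoothness and existence of $(u,\rho)$ on $[0,T_{\mathrm{fin}}]$ make $f_1,f_2,f_3$ smooth and Gaussian-decaying in $v$.

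Set $h=n-M(1+\delta f_1)$. Then $h$ satisfies a perturbed linearized Boltzmann equation whose source is the Hilbert residual of size $O(\delta^{3})$ (after accounting for $f_2,f_3$) plus the quadratic coupling $\delta\Gamma(f_{\mathrm{app}},h)+\Gamma(h,h)$, with initial data of weighted-$L^\infty$ norm $\lesssim\delta^4$ by (\ref{eq.error_nsb}) and (\ref{eq.boltzmann_nsb_1}). To propagate $\|e^{|v|^2/4}h\|_{L^\infty_{x,v}}\lesssim\delta^{3/2}$ on $\tau\in[0,T_{\mathrm{fin}}]$, I would run a coercive $L^2(M^{-1}\mathrm{d}v)$ estimate using $\langle Lg,g\rangle\gtrsim\|(I-\Pi)g\|_{L^2}^2$ for the microscopic part of $h$, coupled with a closed moment system for the hydrodynamic projection $\Pi h$, and then upgrade to the exponentially weighted $L^\infty$ bound by Guo's $L^2\to L^\infty$ characteristic method; the $\nabla_x$ estimate in (\ref{eq.boltzmann_nsb_3}) follows by differentiating in $x$ and repeating the argument, exploiting the periodic geometry of $\Tb^d$ to avoid boundary terms.

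The main obstacle is uniform-in-$\delta$ control of this remainder equation over the long window $[0,T_{\mathrm{fin}}]$ in $\tau$ (equivalently, a microscopic interval of length $\delta^{-1}T_{\mathrm{fin}}$): both the convective term $\delta^{-1}v\cdot\nabla_x h$ and the collision term $\delta^{-2}Lh$ are singular in $\delta$, and only the cancellation between the hydrodynamic projection (absorbed into the Navier--Stokes--Fourier evolution of $(u,\rho)$) and the spectral gap of $L$ on $(\ker L)^\perp$ produces a closable energy inequality with the correct $\delta^{3/2}$ gain in the exponentially weighted $L^\infty$ norm. This is precisely the content of the Gallagher--Tristani theorem \cite{GT20}, whose conclusion I would invoke rather than re-derive.
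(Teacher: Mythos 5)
Your proposal is correct and takes essentially the same route as the paper: both reduce the statement to an invocation of Gallagher--Tristani \cite{GT20}, after noting that the initial data here is the well-prepared case of that theorem plus a perturbation $\delta^4 e^{|v|^2/4}g_R$ that is negligible at the target accuracy $\delta^{3/2}$. Your detailed sketch of the Hilbert-expansion / spectral-gap mechanism underlying \cite{GT20} is accurate but not required; the paper's proof contents itself with three brief remarks (the equivalence of the $(u,\rho,\theta)$ formulation under $\rho+\theta\equiv 0$, the harmlessness of the $g_R$ term, and the source of the explicit rate $\delta^{3/2}$ in Remark 1.4 of \cite{GT20}), all of which your write-up implicitly covers.
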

\begin{proof} This is the same as the proof of Theorem 1.1 in \cite{GT20}. We only make a few remarks here:

(1) In \cite{GT20} the Navier-Stokes-Fourier system (\ref{eq.nsb}) is stated in terms of three unknown functions $(u,\rho,\theta)$, but also with the restriction $\rho+\theta\equiv 0$, which makes it equivalent to the current formulation (\ref{eq.nsb}).

(2) The initial data (\ref{eq.boltzmann_nsb_1}) corresponds to the \emph{well-prepared} case in the notion of \cite{GT20}, plus a small remainder term $\delta^4 e^{-|v|^2/4}g_R$. It is clear, by examining the proof in \cite{GT20}, that this perturbation does not affect any of the linear and bilinear estimates there, thus the same proof in \cite{GT20} carries out and leads to the same result (\ref{eq.boltzmann_nsb_2}).

(3) Finally, the explicit convergence rate (\ref{eq.boltzmann_nsb_3}) also follows from the same proof in \cite{GT20} (at least when the solution $(u,\rho)$ is smooth), see Remark 1.4 in \cite{GT20}.
\end{proof}
Now we can state our second main result, concerning the passage from colliding particle systems to the incompressible Navier-Stokes-Fourier equation:
\begin{mainthm}
\label{th.main2}
Let $d\in\{2,3\}$, consider two small parameters $\varepsilon,\delta>0$. Also fix a smooth solution $(u,\rho)$ to the Navier-Stokes-Fourier equation (\ref{eq.nsb}) on an arbitrary time interval $[0,T_{\mathrm{fin}}]$. Here we understand that, if the solution $(u,\rho)$ is global in time, then $T_{\mathrm{fin}}$ is allowed to grow to infinity as $\varepsilon,\delta\to 0$; otherwise it is independent of $(\varepsilon,\delta)$, and in any case we assume the following relation between the parameters
\begin{equation}\label{eq.loglog_nsb}
\max(1,\delta^{-1})\cdot\max(1,{\delta^{-1}T_{\mathrm{fin}}})\ll (\log|\log\varepsilon|)^{1/2}
\end{equation} with the same implicit constant as in (\ref{eq.loglog}). We also fix $g_R$ as in (\ref{eq.error_nsb}).

Now, consider the hard sphere system of diameter $\varepsilon$ particles (Definition \ref{def.hard_sphere}), with random initial configuration $\vz^0$ given by the grand canonical ensemble (Definition \ref{def.grand_canon}), where $\alpha=\delta^{-1}$ and $n_0=n_0(x,v)$ defined as in (\ref{eq.boltzmann_nsb_1}). Note that this corresponds to the (expected) number of particles
\begin{equation}\label{eq.particle_nsb_1}\Eb(N)\approx \varepsilon^{-(d-1)}\cdot\delta^{-1}
\end{equation} which is the Boltzmann-Grad scaling \eqref{eq.Bol-Grad} when $\delta$ is constant, or barely more dense than that when {$\delta$ is a negative power of $\log|\log\varepsilon|$} (see (\ref{eq.loglog_nsb})). Let $f_1=f_1(t,x,v)$ be the $1$-particle correlation function defined in \eqref{eq.N_par_ensemble}--\eqref{eq.s_par_cor} with $s=1$. Then we have the followings:
\begin{enumerate}
\item Uniformly in $t\in[0,{\delta^{-1}T_{\mathrm{fin}}}]$, we have
\begin{equation}\label{eq.particle_nsb_2}
\bigg\|f_1(t,x,v)-\frac{1}{(2\pi)^{d/2}}e^{-\frac{|v|^2}{2}}\bigg[1+\delta\cdot\bigg(\frac{2+d-|v|^2}{2}\cdot\rho({\delta t},x)+v\cdot u({\delta t},x)\bigg)\bigg]\bigg\|_{L_{x,v}^1}\lesssim\delta^{3/2};
\end{equation} as a consequence, we also have the following limit results (with $\kappa=\theta/4$ and $\theta$ as in Theorem \ref{th.main})
\begin{equation}\label{eq.particle_nsb_3}
\begin{aligned}\lim_{\varepsilon,\delta\to 0}\frac{1}{\delta}\int_{|v|\leq \varepsilon^{-\kappa}}f_1(t,s,v)\cdot v\,\mathrm{d}v&=u({\delta t},x),\\
\lim_{\varepsilon,\delta\to 0}\frac{1}{\delta}\bigg(\int_{|v|\leq \varepsilon^{-\kappa}}f_1(t,s,v)\,\mathrm{d}v-1\bigg)&=\rho({\delta t},x).
\end{aligned}
\end{equation}
Here in (\ref{eq.particle_nsb_3}), the limit $\varepsilon,\delta\to 0$ is taken arbitrarily under the assumption (\ref{eq.loglog_nsb}) (or is taken as the iterated limit with $\varepsilon\to 0$ first followed by the $\delta\to 0$ limit), and is measured in the $L_t^\infty L_x^1$ space.
\item Let $\psi(x)$ be any test function in $x$. For each fixed $t\in[0,{\delta^{-1}T_{\mathrm{fin}}}]$, consider the random variables
\begin{equation}\label{eq.empirical_1}\begin{aligned}
u_{\mathrm{em}}[\psi]&:=\frac{1}{\delta}\cdot\frac{1}{N}\sum_{j=1}^N \mathbbm{1}_{|v_j(t)|\leq \varepsilon^{-\kappa}}\cdot\psi(x_j(t))\cdot v_j(t),\\
\rho_{\mathrm{em}}[\psi]&:=\frac{1}{\delta}\cdot\bigg(\frac{1}{N}\sum_{j=1}^N \mathbbm{1}_{|v_j(t)|\leq \varepsilon^{-\kappa}}\cdot\psi(x_j(t))-\int_{\Tb^d}\psi(x)\,\mathrm{d}x\bigg),
\end{aligned}
\end{equation} associated with the hard sphere system under the random initial configuration assumption. Then, in the limit $\varepsilon,\delta\to 0$ as described in (1) above, we have the convergence
\begin{equation}\label{eq.empirical_2}
\begin{aligned}(u_{\mathrm{em}}[\psi],\rho_{\mathrm{em}}[\psi])&\xrightarrow[\varepsilon,\delta\to 0]{\mathrm{prob.}}\int_{\Tb^d}\psi(x)\cdot (u({\delta t},x),\rho({\delta t},x))\,\mathrm{d}x
\end{aligned}
\end{equation} in probability.
\end{enumerate}
\end{mainthm}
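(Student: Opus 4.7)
\medskip

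\noindent\textbf{Proof proposal.} The strategy is to compose Theorem~\ref{th.main} (Newton $\to$ Boltzmann) with Proposition~\ref{prop.fluid_2} (Boltzmann $\to$ Navier--Stokes--Fourier), and then upgrade the resulting $f_1$-level convergence to convergence of empirical measures. First I would verify that the Boltzmann solution produced by Proposition~\ref{prop.fluid_2} with initial data \eqref{eq.boltzmann_nsb_1} satisfies the hypotheses of Theorem~\ref{th.main} with parameters $\alpha=\delta^{-1}$, $t_{\mathrm{fin}}=\delta^{-1}T_{\mathrm{fin}}$, and some absolute constants $\beta,A$. Concretely, the Maxwellian factor $\tfrac{1}{(2\pi)^{d/2}}e^{-|v|^2/2}$ together with the $e^{|v|^2/4}$-weighted $L^\infty$ control \eqref{eq.boltzmann_nsb_3} on $h$ gives $\|e^{2\beta|v|^2}n(t)\|_{L^\infty_{x,v}}\lesssim 1$ for any $\beta<1/8$, uniformly on $[0,\delta^{-1}T_{\mathrm{fin}}]$; the analogous control on $\nabla_x n_0$ is direct from \eqref{eq.boltzmann_nsb_1} and the smoothness of $(u_0,\rho_0,g_R)$. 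The smallness condition \eqref{eq.loglog_nsb} is exactly \eqref{eq.loglog} with the chosen $(\alpha,A,t_{\mathrm{fin}})$.

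\medskip

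\noindent For part (1), applying Theorem~\ref{th.main} with $s=1$ yields $\|f_1(t)-n(t)\|_{L^1}\lesssim\varepsilon^\theta$, and substituting the expansion \eqref{eq.boltzmann_nsb_2}--\eqref{eq.boltzmann_nsb_3} produces \eqref{eq.particle_nsb_2}, since $\varepsilon^\theta\ll \delta^{3/2}$ under \eqref{eq.loglog_nsb}. The two limits in \eqref{eq.particle_nsb_3} then follow by testing $1$ and $v$ against the difference, using the Gaussian identities
\[
\int_{\Rb^d}\frac{e^{-|v|^2/2}}{(2\pi)^{d/2}}\,\mathrm{d}v=1,\qquad \int_{\Rb^d}v\cdot\frac{e^{-|v|^2/2}}{(2\pi)^{d/2}}\bigg(\frac{2+d-|v|^2}{2}\rho+v\cdot u\bigg)\mathrm{d}v=u,
\]
together with the analogous identity for the density slot; the velocity truncation $|v|\leq\varepsilon^{-\kappa}$ only drops a super-polynomially small Gaussian tail, so dividing the resulting $O(\delta^{3/2})$ error by $\delta$ yields convergence in $L^\infty_tL^1_x$.

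\medskip

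\noindent For part (2), I would proceed by a first- and second-moment computation. Using \eqref{eq.inti_density_t}--\eqref{eq.s_par_cor}, the standard identity for the grand canonical ensemble gives
\[
\Eb\Big[\sum_{j=1}^N F(z_j(t))\Big]=\alpha\varepsilon^{-(d-1)}\int F(z)\,f_1(t,z)\,\mathrm{d}z,\quad \Eb\Big[\sum_{j\neq k}F(z_j)F(z_k)\Big]=(\alpha\varepsilon^{-(d-1)})^2\int F(z_1)F(z_2)\,f_2(t,z_1,z_2)\,\mathrm{d}z_1\mathrm{d}z_2.
\]
Applied to $F(z)=\mathbbm{1}_{|v|\leq\varepsilon^{-\kappa}}\psi(x)v$ (resp.\ the scalar analogue for $\rho_{\mathrm{em}}$), the first moment identity, combined with part (1) and Poissonian concentration $N/\Eb[N]\to 1$ in probability (with $\Eb[N]\approx \alpha\varepsilon^{-(d-1)}$), shows that $\Eb[u_{\mathrm{em}}[\psi]]\to \int\psi(x)u(\delta t,x)\,\mathrm{d}x$; the passage from $1/N$ to $1/\Eb[N]$ costs at most $|N-\Eb[N]|/\Eb[N]=O(\Eb[N]^{-1/2})$, which is negligible under \eqref{eq.loglog_nsb}. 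For the variance, Theorem~\ref{th.main} applied with $s=2$ gives $\|f_2-f_1\otimes f_1\cdot\mathbbm{1}_{\Dc_2}\|_{L^1}\lesssim \varepsilon^\theta$, so
\[
\mathrm{Var}\Big[\sum_j F(z_j)\Big]=\alpha\varepsilon^{-(d-1)}\int|F|^2f_1+(\alpha\varepsilon^{-(d-1)})^2\bigg(\int F\otimes F\,(f_2-f_1\otimes f_1)\bigg)= O\big(\Eb[N]\|F\|_\infty^2\big)+O\big(\Eb[N]^2(\varepsilon^\theta+\varepsilon^d)\big),
\]
where the $\varepsilon^d$ term accounts for the missing near-collisional slice in $\mathbbm{1}_{\Dc_2}$. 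Dividing by $(\delta N)^2\sim \delta^2\Eb[N]^2$ and using \eqref{eq.loglog_nsb} shows the variance of $u_{\mathrm{em}}[\psi]$ and $\rho_{\mathrm{em}}[\psi]$ tends to $0$; Chebyshev then yields \eqref{eq.empirical_2}.

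\medskip

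\noindent The genuinely substantive input is Theorem~\ref{th.main} itself; given that input, the only technical point requiring care is the interplay of the random $N$ in the denominator with the velocity truncation at $|v|\leq \varepsilon^{-\kappa}$, since the $L^\infty$-norm of $F$ (which enters the self-term in the variance) grows polynomially in $\varepsilon^{-1}$. I expect this to be absorbed comfortably by the gain $\Eb[N]\sim\delta^{-1}\varepsilon^{-(d-1)}$ provided $\kappa$ is chosen sufficiently small relative to $\theta$ and $d$, which is why the statement fixes $\kappa=\theta/4$.
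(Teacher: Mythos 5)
Your proof of part (1) is essentially identical to the paper's: verify the hypotheses of Theorem~\ref{th.main} via Proposition~\ref{prop.fluid_2}, apply the $s=1$ estimate, substitute the expansion \eqref{eq.boltzmann_nsb_2}--\eqref{eq.boltzmann_nsb_3}, and integrate against $1$ and $v$, with the Gaussian tail absorbing the cutoff $|v|\leq\varepsilon^{-\kappa}$.

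For part (2) your moment method is the same in spirit, but there is a gap in how you handle the random $1/N$. You write the variance formula for $\sum_jF(z_j)$ and then divide by $(\delta N)^2\sim\delta^2\Eb[N]^2$, but what Chebyshev actually requires is a bound on $\mathrm{Var}\big[\tfrac1N\sum_j F(z_j)\big]$, and here $N$ is random and correlated with the sum (more particles, more terms). Your earlier appeal to $|N-\Eb N|/\Eb N=O(\Eb[N]^{-1/2})$ controls typical fluctuations but is not carried through into the second-moment estimate, so as written the variance step does not close. The paper avoids this by introducing the ``actual'' densities $f_s^*$ in \eqref{eq.s_par_cor_1}, which have the exact $\prod_{j=1}^s\frac{\alpha\varepsilon^{-(d-1)}}{n+j}$ normalization built in so that $\Eb\big[\tfrac1N\sum_j F\big]=\int F\,f_1^*$ and $\Eb\big[\tfrac1{N^2}\sum_{j\neq k}F(z_j)F(z_k)\big]$ is expressed in terms of $f_2^*$. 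It then proves $\|f_s^*-f_s\|_{L^1}\leq\varepsilon^\theta$ by splitting the sum over $n$ into $|n-\alpha\varepsilon^{-(d-1)}|\gtrless\alpha\varepsilon^{-(d-1)+\theta}$ (the Poissonian concentration you invoke, made quantitative), and plugs this directly into the expansion of $\Eb|u_{\mathrm{em}}[\psi]-\delta^{-1}\lambda|^2$. This gives $\Eb|u_{\mathrm{em}}[\psi]-\delta^{-1}\lambda|^2\leq\delta^{-2}\varepsilon^\theta\to 0$ without ever separating first and second moments. Your argument will go through once you either adopt $f_s^*$ or, equivalently, condition on $N$ and sum the conditional variance together with a tail bound for $N$; the underlying idea is the right one, but the $1/N$ bookkeeping in the variance must be made exact.
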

\begin{remark}\label{rem.th2} The equalities (\ref{eq.particle_nsb_3}) and (\ref{eq.empirical_2}) show that the fluid parameters $u$ and $\rho$ can be obtained by directly taking limits of the associated statistical quantities coming from the hard sphere dynamics. The velocity truncation $|v|\leq \varepsilon^{-\kappa}$ in (\ref{eq.particle_nsb_3}) (and similarly the $|v_j(t)|\leq \varepsilon^{-\kappa}$ in (\ref{eq.empirical_1})) is merely for technical reasons, and is also natural from the physical point of view, as the probability of a particle having high velocity ($>\varepsilon^{-\kappa}$) is negligible in the kinetic limit, which easily follows from our proof.
\end{remark}
\begin{proof} (1) First, by Proposition \ref{prop.fluid_2} we get the estimates (\ref{eq.boltzmann_nsb_2})--(\ref{eq.boltzmann_nsb_3}) for the solution $n=n(t,x,v)$ to the Boltzmann equation. Using this information and the assumption (\ref{eq.loglog_nsb}), we see that the assumption (\ref{eq.loglog}) in Theorem \ref{th.main} is satisfied with (say) $\beta=1/10$. Now for fixed $t\in[0,{\delta^{-1}T_{\mathrm{fin}}}]$, by applying Theorem \ref{th.main} we get
\begin{equation}\label{eq.fluid_proof_1}\|f_1(t,x,v)-n(t,x,v)\|_{L_{x,v}^1}\leq \varepsilon^{\theta},
\end{equation}which also implies that
\begin{equation}\label{eq.fluid_proof_2}\bigg\|\int_{|v|\leq\varepsilon^{-\kappa}}|f_1(t,x,v)\cdot v-n(t,x,v)\cdot v|\,\mathrm{d}v\bigg\|_{L_x^1}\leq \varepsilon^{\kappa},
\end{equation} where $\kappa=\theta/4$. The desired bound (\ref{eq.particle_nsb_2}) then follows from (\ref{eq.fluid_proof_1}) and (\ref{eq.boltzmann_nsb_2})--(\ref{eq.boltzmann_nsb_3}), where we note that the $L_{x,v}^1$ norm is trivially controlled by the norm in (\ref{eq.boltzmann_nsb_3}) in which $h$ is bounded. As for (\ref{eq.particle_nsb_3}), we just use (\ref{eq.fluid_proof_2}), the estimate
\begin{equation}\label{eq.fluid_proof_3}\bigg\|\int_{\Rb^d}|h\cdot v|\,\mathrm{d}v\bigg\|_{L_x^1}\leq \delta^{3/2}
\end{equation} which trivially follows from (\ref{eq.boltzmann_nsb_3}), and the fact that
\begin{equation}\label{eq.fluid_proof_4}
\int_{\Rb^d}\frac{1}{(2\pi)^{d/2}}e^{-\frac{|v|^2}{2}}\bigg[1+\delta\cdot\bigg(\frac{2+d-|v|^2}{2}\cdot\rho({\tau},x)+v\cdot u({\tau},x)\bigg)\bigg]\cdot v\,\mathrm{d}v=\delta\cdot u({\tau},x)
\end{equation} (with the corresponding integral in $|v|>\varepsilon^{-\kappa}$ being trivially bounded by $\varepsilon^{10}$ in $L_x^1$) to get
\begin{equation}\label{eq.fluid_proof_5}
\bigg|\frac{1}{\delta}\int_{|v|\leq \varepsilon^{-\kappa}}f_1(t,s,v)\cdot v\,\mathrm{d}v-u({\delta t},x)\bigg|\lesssim \varepsilon^{\kappa}\delta^{-1}+\delta^{1/2},
\end{equation} which implies the first limit in (\ref{eq.particle_nsb_3}). The second limit follows in the same way.

(2) We only consider $u_{\mathrm{em}}[\psi]$, as the proof for $\rho_{\mathrm{em}}[\psi]$ is the same. Define
\begin{equation}\label{eq.fluid_proof_6}
\lambda:=\int_{|v|\leq \varepsilon^{-\kappa}}n(t,x,v)\cdot\psi(x)v\,\mathrm{d}x\mathrm{d}v=\delta\cdot \int_{\Tb^d}\psi(x)u({\delta t},x)\,\mathrm{d}x+O(\delta^{3/2}),
\end{equation} then we calculate
\begin{equation}\label{eq.fluid_proof_7}
\begin{aligned}
\Eb\big|u_{\mathrm{em}}[\psi]-\delta^{-1}\lambda\big|^2=\delta^{-2}\bigg(&\lambda^2-2\lambda\int_{|v|\leq \varepsilon^{-\kappa}}\psi(x)v\cdot f_1^*(t)\,\mathrm{d}x\mathrm{d}v+\frac{1}{N}\int_{|v|\leq \varepsilon^{-\kappa}}\psi(x)^2\cdot f_1^*\,\mathrm{d}x\mathrm{d}v\\
&+\frac{N-1}{N}\int_{|v_1|,|v_2|\leq\varepsilon^{-\kappa}}\psi(x_1)\psi(x_2)\cdot f_2^*(t)\,\mathrm{d}x_1\mathrm{d}v_1\mathrm{d}x_2\mathrm{d}v_2\bigg),
\end{aligned}
\end{equation} where $f_s^*(t,\vz_s)$ is the \emph{actual} $s$-particle density function associated with the random hard sphere dynamics, defined similar to (\ref{eq.s_par_cor}) but with different coefficients:
\begin{equation}\label{eq.s_par_cor_1}f_s^*(t,\vz_s)=(\alpha^{-1}\varepsilon^{d-1})^s\sum_{n=0}^{\infty}\frac{1}{n!}\bigg(\prod_{j=1}^s\frac{\alpha\varepsilon^{-(d-1)}}{n+j}\bigg)\cdot \int W_{s+n}(t,\vz_{s+n})\,\mathrm{d}z_{s+1}\cdots \mathrm{d}z_{s+n}.
\end{equation} Now $s\in\{1,2\}$. If we sum over $|n-\alpha\varepsilon^{-(d-1)}|\geq \alpha\varepsilon^{-(d-1)+\theta}$ in (\ref{eq.s_par_cor_1}), then we can simply control the $L_{\vz_s}^1$ norm of the resulting expression, using the $L^1$ conservation property of $\Sc_N(t)$ and the initial configuration (\ref{eq.N_par_ensemble})--(\ref{eq.partition_func}). On the other hand, if we sum over $|n-\alpha\varepsilon^{-(d-1)}|\leq \alpha\varepsilon^{-(d-1)+\theta}$ in (\ref{eq.s_par_cor_1}), then the difference between the coefficients in (\ref{eq.s_par_cor_1}) and (\ref{eq.s_par_cor}) gains an extra power $\varepsilon^{\theta}$. This then leads to 
\begin{equation}\label{eq.fluid_proof_8}\|f_s^*(t)-f_s(t)\|_{L_{x,v}^1}\leq \varepsilon^{\theta};
\end{equation} now by combining (\ref{eq.fluid_proof_8}) with (\ref{eq.maineqn}) in Theorem \ref{th.main} and plugging into (\ref{eq.fluid_proof_7}), we obtain that \begin{equation}\label{eq.fluid_proof_9}\Eb\big|u_{\mathrm{em}}[\psi]-\delta^{-1}\lambda\big|^2\leq \delta^{-2}\varepsilon^{\theta}\to 0,\end{equation} which implies the desired convergence of $u_{\mathrm{em}}[\psi]$. The proof for $\rho_{\mathrm{em}}[\psi]$ is the same.
\end{proof}
\subsubsection{Main result 3: the compresible Euler limit} We apply the compressible Euler limit result of Caflisch \cite{Caf80}, later extended by Guo-Jang-Jiang \cite{GJJ10} and Jiang-Luo-Tang \cite{JLT21}, stated as follows:
\begin{proposition}[\cite{Caf80,GJJ10,JLT21}]\label{prop.fluid_1} Let $d\in\{2,3\}$, consider the compressible Euler equations in $\Tb^d$:
\begin{equation}\label{eq.euler}
\left\{
\begin{aligned}
&\partial_t\rho+\nabla\cdot(\rho u)=0,\\
&\partial_t(\rho u)+\nabla\cdot(\rho u\otimes u)+\nabla p=0,\\
&\partial_t\bigg[\rho\bigg(\frac{dT+|u|^2}{2}\bigg)\bigg]+\nabla\cdot\bigg[\rho u\bigg(\frac{dT+|u|^2}{2}\bigg)\bigg]+\nabla\cdot(pu)=0,\\
&p=\rho\cdot T.
\end{aligned}
\right.
\end{equation} Here the $(\rho,u,T)$ in (\ref{eq.euler}) is valued in $\Rb^+\times\Rb^d\times\Rb^+$. We fix a smooth solution $(\rho,u,T)$ to (\ref{eq.euler}) on an arbitrary time interval $[0,T_{\mathrm{fin}}]$ with initial data $(\rho_0,u_0,T_0)$, and also fix an initial perturbation $F_R=F_R(x,v)$. 
Define the Maxwellian
\begin{equation}\label{eq.maxwell}\Mf=\Mf(t,x,v)=\frac{\rho(t,x)}{(2\pi T(t,x))^{d/2}}e^{-\frac{|v-u(t,x)|^2}{2T(t,x)}},
\end{equation} and subsequently define the Hilbert expansion terms $(F_0,\cdots,F_6)$ as in \cite{Caf80}:
\begin{equation}\label{eq.hilbert_exp}
\begin{aligned}
F_0&=\Mf,\\
F_{n}&=\Lc^{-1}\bigg((\partial_t+v\cdot\nabla_x)F_{n-1}-\sum_{i=1}^{n-1}\Qc(F_i,F_{n-i})\bigg)+\Mf\cdot\bigg(\rho_n\cdot\frac{1}{\rho}+u_n\cdot\frac{v-u}{\rho T}+T_n\cdot\frac{|v-u|^2-dT}{2d\rho T^2}\bigg).
\end{aligned}
\end{equation} Here $(\rho,u,T)$ etc. are functions of $(t,x)$, and $(\rho_n,u_n,T_n)$ are solutions to certain explicit linearized compressible Euler systems, see \cite{Caf80} for the exact expressions. Moreover $\Qc$ is the bilinear collision operator on the right hand side of (\ref{eq.boltzmann}) (without the pre-factor $\alpha=\delta^{-1}$), and the linear operator $\Lc$ is defined by $\Lc F=\Qc(\Mf,F)+\Qc(F,\Mf)$. Define the initial data $F_j^0(x,v)=F_j(0,x,v)$.

Now, for sufficiently small $\delta>0$, consider the Boltzmann equation (\ref{eq.boltzmann}) with $\alpha=\delta^{-1}$, and initial data given by
\begin{equation}\label{eq.boltzmann_euler_1}
n_0(x,v)=\sum_{j=0}^6\delta^j\cdot F_j^0(x,v)+\delta^3\cdot F_R(x,v)\geq 0,
\end{equation} where $F_R(x,v)$ satisfies the bound that
\begin{equation}\label{eq.boltzmann_euler_2}
\sup_{|\mu|,|\nu|\leq 2d}\big\|\Mf^{-1/2}\langle v\rangle^{2d}\partial_x^\mu\partial_v^\nu F_R\big\|_{L^\infty}\leq 1.
\end{equation} Note that we may choose the initial data of $(\rho_n,u_n,T_n)$ and $F_R$ suitably, to make $n_0(x,v)$ nonnegative and satisfy $\int n_0(z)\,\mathrm{d}z=1$. Then, for $t\in[0,T_{\mathrm{fin}}]$, we have
\begin{equation}\label{eq.boltzmann_euler_3}
n(t,x,v)=\sum_{j=0}^6\delta^j\cdot F_j(t,x,v)+h(t,x,v),
\end{equation} where the remainder $h=h(t,x,v)$ satisfies the following estimate uniformly in $t\in[0,T_{\mathrm{fin}}]$:
\begin{equation}\label{eq.boltzmann_euler_4}
\big\|\Mf^{-1/2}\cdot h(t,x,v)\big\|_{L_{x,v}^\infty}+\big\|\Mf^{-1/2}\cdot \nabla_x h(t,x,v)\big\|_{L_{x,v}^\infty}\lesssim\delta^{3/2}.
\end{equation}
\end{proposition}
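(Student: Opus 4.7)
The plan is to implement the Hilbert expansion method originating in Caflisch \cite{Caf80} and refined in \cite{GJJ10, JLT21}. The ansatz is $n(t,x,v) = \sum_{j=0}^{6} \delta^j F_j(t,x,v) + h(t,x,v)$, and the strategy is to first construct the deterministic profiles $F_0,\dots,F_6$ so that their formal substitution cancels every power of $\delta$ from $\delta^{-1}$ through $\delta^{5}$ in the Boltzmann equation $(\partial_t+v\cdot\nabla_x)n=\delta^{-1}\Qc(n,n)$, and then control the remainder $h$ by a weighted energy/$L^\infty$ estimate using the spectral gap of $\Lc$.

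First I would construct the expansion. Matching the $\delta^{-1}$ term forces $\Qc(F_0,F_0)=0$, which gives $F_0=\Mf$ as in \eqref{eq.maxwell}. The $\delta^0$ equation reads $(\partial_t+v\cdot\nabla_x)F_0=\Lc F_1$; Fredholm solvability in $v$ requires the left-hand side to be orthogonal to $\ker\Lc=\mathrm{span}\{1,v_i,|v|^2\}\cdot\Mf$, and these five orthogonality conditions are precisely the compressible Euler system \eqref{eq.euler} for $(\rho,u,T)$. Once satisfied, $F_1$ is determined up to an element of $\ker\Lc$ parametrized by $(\rho_1,u_1,T_1)$. Iterating, the solvability of $\Lc F_n=(\partial_t+v\cdot\nabla_x)F_{n-1}-\sum_{i=1}^{n-1}\Qc(F_i,F_{n-i})$ at order $\delta^{n-1}$ yields linearized compressible Euler systems with lower-order source terms for $(\rho_n,u_n,T_n)$, each solvable by standard linear theory on $[0,T_{\mathrm{fin}}]$ given smoothness of $(\rho,u,T)$. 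The resulting $F_j$ and all their derivatives are bounded by $\Mf^{1/2}\langle v\rangle^{-2d}$ up to constants depending on the Euler solution.

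Next I would control the remainder. Substituting the ansatz into \eqref{eq.boltzmann} produces the equation
\[
(\partial_t+v\cdot\nabla_x)h-\delta^{-1}\Lc_{\Mf}h=\delta^{-1}\Qc(h,h)+\sum_{j=1}^{6}\delta^{j-1}\big(\Qc(F_j,h)+\Qc(h,F_j)\big)+\delta^{6}\cdot R,
\]
where $\Lc_{\Mf}h:=\Qc(\Mf,h)+\Qc(h,\Mf)$ and $R$ collects the expansion residual. The coercivity $\langle\Lc_{\Mf}h,h\rangle_{\Mf^{-1}}\lesssim-\|(I-\mathbf{P})h\|_\nu^2$ on the microscopic part, combined with macroscopic damping provided by the expansion design, yields an $L^2(\Mf^{-1})$ energy estimate with damping rate $\delta^{-1}$. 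Following Guo's $L^2$--$L^\infty$ framework \cite{GJJ10}, one then upgrades to the weighted $L^\infty$ bound using the mild formulation along characteristics and a double Duhamel iteration on the collision operator. Commuting a single $\nabla_x$ through the same scheme (using that $\Lc_{\Mf}$ has $x$-dependent coefficients only through $\Mf$, which is smooth) gives the second bound in \eqref{eq.boltzmann_euler_4}.

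The main obstacle is the singular nonlinearity $\delta^{-1}\Qc(h,h)$: the claimed bound $\|\Mf^{-1/2}h\|_\infty\lesssim\delta^{3/2}$ only makes this term of size $\delta^{-1}\cdot\delta^{3}=\delta^{2}$, which is larger than the $\delta^{6}$ source $R$, so it must be absorbed by the $\delta^{-1}$ linear damping through a careful continuity/bootstrap argument on $[0,T_{\mathrm{fin}}]$. This is exactly the reason for truncating the expansion at $j=6$ rather than at lower order: enough powers of $\delta$ are needed in the source so that the bootstrap closes. Finally, nonnegativity of $n_0$ in \eqref{eq.boltzmann_euler_1} is arranged by choosing the data $(\rho_n^0,u_n^0,T_n^0)$ and $F_R$ so that $\sum_{j\geq 1}\delta^j F_j^0+\delta^3 F_R$ is dominated pointwise by $F_0^0=\Mf|_{t=0}$, and the normalization $\int n_0\,\mathrm{d}z=1$ is achieved by an additional scalar adjustment absorbed into the free parameters of $(\rho_n^0,T_n^0)$.
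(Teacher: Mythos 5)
The paper does not prove this proposition from scratch; it cites it directly from \cite{GJJ10}, noting only one subtlety: the Hilbert-expansion-plus-$L^2$--$L^\infty$ argument in \cite{GJJ10} uses a weight built on a \emph{global} Maxwellian $\mu_M$ chosen so that $\mu_M^{1/2}$ dominates $\Mf^{1/2}$ uniformly, and this domination forces the hypothesis $\sup_{(t,x)} T(t,x) < 2\cdot\inf_{(t,x)} T(t,x)$. The paper records that this extra hypothesis can be removed by the refined weighted analysis of \cite{JLT21} (developed there in the boundary-layer setting), and that is the entirety of the paper's proof.

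Your sketch is a faithful outline of the underlying mechanism in \cite{Caf80, GJJ10}: the order-by-order Fredholm solvability producing compressible Euler at order $\delta^0$ and linearized Euler systems at orders $\delta^1,\ldots,\delta^5$, followed by Guo's $L^2$--$L^\infty$ bootstrap for the remainder with the spectral gap of $\Lc$ supplying the $\delta^{-1}$ damping, and the truncation at $j=6$ chosen so the residual $\delta^6 R$ closes the bootstrap against the singular nonlinearity $\delta^{-1}\Qc(h,h)$. That matches the cited literature. The one genuine gap relative to what the proposition claims is the temperature condition: as written, your $L^\infty$ scheme with a fixed global Maxwellian weight would inherit the restriction $\sup T < 2\inf T$, whereas the proposition is stated for an arbitrary smooth Euler solution. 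To close this you would need to either adopt the local-Maxwellian weight and the modified energy functional of \cite{JLT21}, or explicitly state the temperature condition as a hypothesis. Since the paper itself flags exactly this issue, it is worth addressing rather than leaving implicit.
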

\begin{proof} This is contained in \cite{GJJ10}. Note that \cite{GJJ10} make an extra technical assumption 
\begin{equation*}
\sup_{(t,x)}T(t,x)<2\cdot\inf_{(t,x)}T(t,x),
\end{equation*} but this can be removed by more refined analysis, for example by adapting the methods in \cite{JLT21} in the boundary layer case.
\end{proof}
Now we can state our third main result, concerning the passage from colliding particle systems to the compressible Euler equation:
\begin{mainthm}
\label{th.main3} Let $d\in\{2,3\}$, consider two small parameters $\varepsilon,\delta>0$. Also fix a smooth solution $(\rho,u,T)$ to the Euler equation (\ref{eq.nsb}) on an arbitrary time interval $[0,T_{\mathrm{fin}}]$. Assume $T_{\mathrm{fin}}$ is independent of $(\varepsilon,\delta)$, and otherwise make the same assumptions for these parameters as in Theorem \ref{th.main2}, including (\ref{eq.loglog_nsb}). We also fix $F_R$ as in (\ref{eq.boltzmann_euler_2}).

Now, consider the hard sphere system of diameter $\varepsilon$ particles with random initial configuration, in the same way as in Theorem \ref{th.main2} (in particular we also have the scaling law (\ref{eq.particle_nsb_1})), with $\alpha=\delta^{-1}$ and $n_0(x,v)$ defined as in (\ref{eq.boltzmann_euler_1}) which has integral $1$. Let $f_1=f_1(t,x,v)$ be the $1$-particle correlation function. Then we have the followings:
\begin{enumerate}
\item Uniformly in $t\in[0,T_{\mathrm{kin}}]$, we have
\begin{equation}\label{eq.boltzmann_euler_5}\bigg\|f_1(t,x,v)-\frac{\rho(t,x)}{(2\pi T(t,x))^{d/2}}e^{-\frac{|v-u(t,x)|^2}{2T(t,x)}}\bigg\|_{L_{x,v}^1}\lesssim\delta;
\end{equation}
as a consequence, we also have the following limit results (with $\kappa=\theta/4$ as in Theorem \ref{th.main2}): 
\begin{equation}\label{eq.particle_euler_6}
\begin{aligned}\lim_{\varepsilon,\delta\to 0}\int_{|v|\leq \varepsilon^{-\kappa}}f_1(t,s,v)\,\mathrm{d}v&=\rho(t,x),\\
\lim_{\varepsilon,\delta\to 0}\int_{|v|\leq \varepsilon^{-\kappa}}f_1(t,s,v)\cdot v\,\mathrm{d}v&=u(t,x),\\
\lim_{\varepsilon,\delta\to 0}\int_{|v|\leq \varepsilon^{-\kappa}}f_1(t,s,v)\cdot\frac{|v|^2-d}{d}\,\mathrm{d}v&=T(t,x).
\end{aligned}
\end{equation} Here in (\ref{eq.particle_euler_6}), the limit is taken in the same way as in Theorem \ref{th.main2}.
\item Let $\psi(x)$ be any test function in $x$. For each fixed $t\in[0,T_{\mathrm{fin}}]$, consider the random variables
\begin{equation}\label{eq.empirical_3}
\begin{aligned}
\rho_{\mathrm{em}}[\psi]&:=\frac{1}{N}\sum_{j=1}^N \mathbbm{1}_{|v_j(t)|\leq \varepsilon^{-\kappa}}\cdot\psi(x_j(t)),\\
u_{\mathrm{em}}[\psi]&:=\frac{1}{N}\sum_{j=1}^N \mathbbm{1}_{|v_j(t)|\leq \varepsilon^{-\kappa}}\cdot\psi(x_j(t))\cdot v_j(t),\\
T_{\mathrm{em}}[\psi]&:=\frac{1}{N}\sum_{j=1}^N \mathbbm{1}_{|v_j(t)|\leq \varepsilon^{-\kappa}}\cdot\psi(x_j(t))\cdot\frac{|v_j(t)|^2-d}{d},
\end{aligned}
\end{equation} associated with the hard sphere system under the random initial configuration assumption. Then, in the limit $\varepsilon,\delta\to 0$ as described in (1) above, we have the convergence
\begin{equation}\label{eq.empirical_4}(\rho_{\mathrm{em}}[\psi],u_{\mathrm{em}}[\psi],T_{\mathrm{em}}[\psi])\xrightarrow[\varepsilon,\delta\to 0]{\mathrm{prob.}}\int_{\Tb^d}\psi(x)\cdot(\rho(t,x),u(t,x),T(t,x))\,\mathrm{d}x
\end{equation}
in probability.
\end{enumerate}
\end{mainthm}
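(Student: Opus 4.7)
The plan is to follow the template of the proof of Theorem \ref{th.main2}, with Proposition \ref{prop.fluid_1} providing the hydrodynamic-limit input in place of Proposition \ref{prop.fluid_2}. First, I would invoke Proposition \ref{prop.fluid_1} to write the Boltzmann solution as $n(t,x,v) = \sum_{j=0}^{6}\delta^j F_j(t,x,v) + h(t,x,v)$ on $[0,T_{\mathrm{fin}}]$, with the remainder controlled in the $\Mf^{-1/2}$-weighted $L^\infty$ norm by (\ref{eq.boltzmann_euler_4}). To feed this into Theorem \ref{th.main} with $\alpha = \delta^{-1}$ and $t_{\mathrm{fin}} = T_{\mathrm{fin}}$, I need to verify the parameter bound (\ref{eq.loglog}) and the pointwise Gaussian bound (\ref{eq.boltzmann_decay_1}). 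The former is immediate from (\ref{eq.loglog_nsb}) since $T_{\mathrm{fin}} = O(1)$ here (unlike the diffusive rescaling of the NSF case). For the latter, the leading term $F_0 = \Mf$ has explicit Gaussian decay $\exp(-|v-u|^2/(2T))$; using that $T(t,x)$ is uniformly bounded above on the compact space-time region, I can pick $\beta$ depending only on $\sup T$ small enough that $e^{2\beta|v|^2}\Mf$ and its $x$-derivative remain uniformly bounded. The higher terms $F_j$ and the remainder $h$ inherit an $\Mf^{1/2}$ weight by construction and thus satisfy the same pointwise Gaussian control. Theorem \ref{th.main} then delivers $\|f_1(t) - n(t)\|_{L^1_{x,v}} \leq \varepsilon^\theta$ uniformly on $[0,T_{\mathrm{fin}}]$.

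Since $n(t) - \Mf(t) = \delta F_1 + \cdots + \delta^6 F_6 + h$ is $O(\delta)$ in $L^1_{x,v}$ by the bounds of Proposition \ref{prop.fluid_1}, the triangle inequality combined with the above $L^1$ closeness of $f_1$ and $n$ immediately yields (\ref{eq.boltzmann_euler_5}). To pass to the pointwise moment limits (\ref{eq.particle_euler_6}), I would test (\ref{eq.boltzmann_euler_5}) against each of the three indicator-weighted functions $\mathbbm{1}_{|v|\leq\varepsilon^{-\kappa}}\cdot\phi(v)$ with $\phi\in\{1,\,v,\,(|v|^2-d)/d\}$, and use the Gaussian decay of $\Mf$ to show that the tail $|v|>\varepsilon^{-\kappa}$ contributes at most $O(\varepsilon^{10})$ to each moment. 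What remains are the explicit Maxwellian moments, which up to the standard normalization encode $(\rho,u,T)$ and yield the three limits in (\ref{eq.particle_euler_6}).

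For part (2), I would copy the second-moment argument from the proof of Theorem \ref{th.main2} essentially verbatim. Setting $\lambda$ equal to $\int\psi(x)\cdot\phi(v)\cdot n(t,x,v)\,\mathrm{d}x\mathrm{d}v$ for each of the three observables $\phi$, the expansion of $\Eb\big|\mathrm{em}[\psi]-\lambda\big|^2$ produces a linear combination of integrals against the true density functions $f_1^*(t)$ and $f_2^*(t)$. The coefficient comparison between (\ref{eq.s_par_cor_1}) and (\ref{eq.s_par_cor}) gives $\|f_s^*(t) - f_s(t)\|_{L^1} \leq \varepsilon^\theta$, after which Theorem \ref{th.main} replaces $f_2(t,z_1,z_2)$ by the tensor product $n(t,z_1)\,n(t,z_2)\cdot\mathbbm{1}_{\Dc_2}$. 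The resulting factorization forces the linear and quadratic pieces to cancel up to an error bounded by a fixed positive power of $\varepsilon$ (which is absorbed by the gap in (\ref{eq.loglog_nsb})), so the variance tends to zero and Chebyshev's inequality gives convergence in probability for each of $\rho_{\mathrm{em}}[\psi]$, $u_{\mathrm{em}}[\psi]$, $T_{\mathrm{em}}[\psi]$.

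The main subtlety, rather than a genuine obstacle, is verifying that each Hilbert term and the remainder have the Gaussian pointwise decay needed to pick a single $\beta$ that works uniformly in $t\in[0,T_{\mathrm{fin}}]$ and $\delta\ll 1$; this reduces to the facts that $\sup_{t,x}T(t,x)$ is a uniform constant (automatic since $(\rho,u,T)$ is a fixed smooth solution) and that the $\Mf^{-1/2}$-weighted bounds of Proposition \ref{prop.fluid_1} are strictly stronger than the weighted $L^\infty$ control demanded by Theorem \ref{th.main}. Otherwise the proof is a straightforward adaptation of the NSF argument, and in fact slightly simpler since no diffusive rescaling is needed.
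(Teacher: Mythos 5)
Your proposal matches the paper's proof, which is precisely the remark that the argument for Theorem \ref{th.main2} carries over verbatim once $\beta$ is chosen small enough to dominate the slowest-decaying local Maxwellian, and that this choice depends only on the fixed range of $T(t,x)$ — hence not on $(\varepsilon,\delta)$. (You write $\sup_{t,x}T$ as the controlling quantity, which is correct; the paper's text says $\inf_{(t,x,v)}T$, which looks like a slip but does not affect the argument since either way the quantity is independent of $\varepsilon,\delta$.)
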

\begin{proof} The proof is the same as Theorem \ref{th.main2} so we will not repeat here. The only thing to note is that due to the presence of the local Maxwellian $\Mf$ instead of the global Maxwellian $e^{-|v|^2/2}$, in applying Theorem \ref{th.main}, the value of $\beta>0$ should depend on the value $\inf_{(t,x,v)}T$. However, by our assumption, this value will not depend on $(\varepsilon,\delta)$ in the limit $\varepsilon,\delta\to 0$, so the same proof in Theorem \ref{th.main2} still carries out here without any change.
\end{proof}
\subsection{Ideas of the proof} Most parts of the proof of Theorem \ref{th.main} is identical to the proof in \cite{DHM24} for the $\Rb^d$ case; in particular, we will rely on the same layered cluster forest and layered interaction diagram expansions (Sections 3--6 in \cite{DHM24}) and the same molecule analysis (Section 7--11 in \cite{DHM24}). See Section \ref{sec.proof_main} for a sketch of the parts of the proof that are identical to \cite{DHM24}. In this paper we will focus on the new ingredients needed to address the torus case. These are due to the new possible sets of collisions that are allowed in the periodic setting but not in the $\Rb^d$ setting. More precisely, we have that:
\begin{enumerate}
\item It is possible for two particles to collide twice (or more times) in a row, which is impossible in $\Rb^d$. This corresponds to the case of \emph{double bonds} (or more genrally, two ov-segments intersecting at two atoms) in terms of molecules, and this absence of double bonds in $\Rb^d$ is needed a few times in the proof in \cite{DHM24}. These parts (and only these parts) need to be modified in the torus case, but the modification is simple, by noticing that
\begin{itemize}
\item If the two particles with state vectors $(x,v)$ and $(x',v')$ ovelap/collide twice on their trajectories, then $v-v'$ must be almost parallel to some nonzero integer vector of length essentially $O(1)$.
\end{itemize}
This is because the two overlaps must happen in two different fundamental domains of $\Tb^d=\Rb^d/\Zb^d$ which are translations of each other by an integer vector $m\neq 0$, so the relative velocity must be almost parallel to $m$, and also $|m|$ is essentially $O(1)$ as it can be controlled by the sizes of $(v,v')$ and time length $t_{\mathrm{fin}}$.

It is then clear, see Proposition \ref{prop.intmini} (\ref{it.intmini_4_extra}), that the above condition restricts $v-v'$ to a set of measure $O(\varepsilon^{d-1})$ which leads to the gain in volume calculations that is enough to close the estimates. For more details, see Section \ref{sec.proof_main}, \textbf{Part 4}.
\item It is possible for a fixed number of partiles to collide arbitrarily many times, compared to the $\Rb^d$ case where the number of collisions is bounded by an absolute constant depending only on the number of particles (see Burago-Ferleger-Kononenko \cite{BFK98}). This is a more substantial difficulty, but fortunately it only affects Section 13 (i.e. the estimate for $f_s^{\mathrm{err}}$) in \cite{DHM24}. In fact, the only place where the result of \cite{BFK98} is used in \cite{DHM24}, is in Proposition 13.2 in Section 13.3 in \cite{DHM24}, which is used to control the number of collisions in some set that involves a bounded number of particles.

In order to compensate for the absence of the Burago-Ferleger-Kononenko (BFK) upper bound, we will need a new method to control the probability of these pathological collision sets. This include new sets of elementary molecules and new integral and volume estimates, as well as new components of the algorithm. We will focus on these modifications in the rest of this subsection.
\end{enumerate}
\subsubsection{Long bonds and new elementary molecules}\label{sec.LB} For the rest of this subsection, we will assume that the upper bound in \cite{BFK98} is violated, i.e. there exist $q$ particles that produce at least $G(q)$ collisions, where $G(q)$ is sufficiently large depending on $q$ (see (\ref{defG})). These collisions form a molecule $\Mb$; in the same way as Sections 13.1--13.2 in \cite{DHM24}, we only need to prove that the integral $\Jc(\Mb)$ associate with $\Mb$ (see (\ref{eq.associated_int})), which essentially represents the normalized probability that all the collisions in $\Mb$ occur for some random configuration, is bounded by $|\Jc(\Mb)|\lesssim \varepsilon^{\theta}$ for some $\theta>0$.

We shall prove the desired estimate of $\Jc(\Mb)$ by performing the cutting operations (Definition \ref{def.cutting}) to cut $\Mb$ into elementary molecules $\Mb_j$ (Definition \ref{def.elementary}), such that $\Jc(\Mb)\lesssim\prod_j\Jc(\Mb_j)$. It is easy to see that $\Jc(\Mb_j)\lesssim 1$ for \emph{all but one} $\Mb_j$, while for the exceptional molecule (i.e. the \{4\}-molecule) we have $\Jc(\Mb_j)\lesssim\varepsilon^{-(d-1)}$. Moreover, some of these estimates can be improved, which is the key to bounding $\Jc(\Mb)$. It is convenient to define the notion of \textbf{excess} for each $\Mb_j$, which is essentially the best power of $\varepsilon$ by which the above trivial estimate can be improved, see Definition \ref{def.excess}.

As such, the goal becomes to find a suitable cutting sequence that cuts $\Mb$ into elementary molecules $\Mb_j$, such that the total excess of these molecules is bounded by $\varepsilon^{d-1+\theta}$ for some $\theta>0$. See Proposition \ref{prop.comb_est_extra_2} for the precise statement. Note that the actual proposition used in the proof of Theorem \ref{th.main} is Proposition \ref{prop.comb_est_extra}, but this prpoosition follows from Proposition \ref{prop.comb_est_extra_2} and the same proof as in Section 13.3 in \cite{DHM24}, which we recall in Section \ref{sec.cutting_final} for the reader's convenience.

\smallskip
The key to the proof of Proposition \ref{prop.comb_est_extra_2} is the following: it is pointed out in \cite{GST14} and \cite{DHM24} that the worst case scenario is the \emph{short-time collisions}, when a number of particles stay close to each other at distance $O(\varepsilon)$, and thus collides many times within a time period $O(\varepsilon)$. However, even though the total number of collisions can be unbounded on torus, the number of collisions \emph{happening in any short time period} (and thus in the short-time collision scenario) does have an absolute upper bound. This is because within a short time period, the particles will stay in some fundamental domain of $\Tb^d$, thus making the short-time dynamics identical with the dynamics on $\Rb^d$, for which the BFK theorem is applicable. In other words, if the number of collisions is too large compared to the number $q$ of particles, then there must exist some adjacent collisions which are \emph{$O(1)$-separated in time}. This $O(1)$ separation is the basic property that substitutes the BFK upper bound, and allows one to avoid the short-time collisions scenario.

Motivated by the above, we define a \emph{long bond} in a molecule $\Mb$ to be a bond $e$ between two atoms $\nf_1$ and $\nf_2$, such that we make the restriction $|t_{\nf_1}-t_{\nf_2}|\geq O(1)$ for the times $t_{\nf_j}$ of the collision $\nf_j$ (for the precise definition see Section \ref{sec.newcase}). We then have the following result, which plays a fundamental role in our proof:
\begin{itemize}
\item[($\clubsuit$)] Assume $d\in\{2,3\}$. Consider any \{33A\}-molecule $\Mb$ \emph{that contains a long bond}. Then this molecule has excess $\leq \varepsilon^{d-1-\theta}$ for any $\theta>0$.
\end{itemize} For the proof of ($\clubsuit$), see Proposition \ref{prop.intmini_2} (3). 

It is known that in the worst case scenario, i.e. short-time collisions, a \{33A\}-molecule generally have excess $\varepsilon^{1-}$, see for example Proposition \ref{prop.intmini_2} (2) with $\lambda\sim 1$. Now in the (harder) case of dimension $d=3$, the long-bond condition allows us to improve this $\varepsilon^{1-}$ to $\varepsilon^{2-}$, which is almost enough for our goal.
\begin{remark}
\label{rem.4d} There are basically two reasons why the proof of Theorem \ref{th.main} in dimensions 4 and higher would require more effort. The first reason is because ($\clubsuit$) is no longer true in higher dimensions, and one would need more non-degeneracy conditions to guarantee the (optimal) $\varepsilon^{d-1-}$ excess; see the proof of Proposition \ref{prop.intmini_2} (3) below. The second reason is due to our choice of the new algorithm, which is explained in Section \ref{sec.new_alg} below.
\end{remark}
Now, suppose we can find one \{33A\}-molecule with a long bond (this requires the new algorithm, see Section \ref{sec.new_alg}). Then $(\clubsuit)$ ensures that we are already arbitrarily close to the goal of total excess $\varepsilon^{d-1+}$, and only need to fill a sub-polynomial gap. As such, we only need to find \emph{one more} good component (i.e. component with nontrivial excess) among $\Mb_j$, in order to finish the proof of Theorem \ref{th.main}.

In fact, the algorithm tells us that, either we obtain \emph{one more \{33A\}-component} other than the one with long bond (which is obviously sufficient), or we get a three-atom molecule that extends our long-bond \{33A\}-molecule, which has the form of either a \emph{\{333A\}-molecule} or a \emph{\{334T\}-molecule}, see Definition \ref{def.elementary} and Figure \ref{fig.14}. These molecules are extensions of the long-bond \{33A\}-molecule, in the sense that they contain one more collision which makes them \emph{more restrictive} than the \{33A\}-molecule alone, leading to a better excess of (say) $\varepsilon^{d-1+(1/4)}$, which is sufficient for Theorem \ref{th.main}.

The proof for the \{333A\}- and \{334T\}-molecules are contained in Proposition \ref{prop.intmini_3}. They are a bit involved but are still maganeable without computer assisted calculations.
\subsubsection{The new algorithm}\label{sec.new_alg} With the discussions in Section \ref{sec.LB}, we now describe our new algorithm that leads to the desired structure of elementary molecules. As discussed above, we would like to find a \{33A\}-molecule with a long bond; however this is not always possible, and there are cases where we need to find \emph{two separated \{33A\}-molecules} (each with excess $\varepsilon^{1-}$) which also provides the same (or better) excess in dimensions $d\in\{2,3\}$. Note that this again does not match the goal in dimensions $d\geq 4$, suggesting that a refinement of the algorithm is needed in these cases.

\smallskip
First, we shall assume our molecule $\Mb$ has a two-layer structure $\Mb=\Mb_U\cup\Mb_D$, see Proposition \ref{prop.new2layer} for the precise description. This two-layer structure has significant similarity and difference with the UD-molecules which is fundamental in the molecular analysis in \cite{DHM24} (for example the $\Mb_U$ and $\Mb_D$ are not required to be forests here), but the key feature here is that \emph{any bond connecting an atom in $\Mb_U$ and an atom in $\Mb_D$ is a long bond}. The reduction of Proposition \ref{prop.comb_est_extra_2} to Proposition \ref{prop.new2layer} is an easy process, which involves the BFK theorem for short time (in order to obtain the long bonds), an induction argument on the number of particles $q$ to guarantee the connectedness of $\Mb_U$ and $\Mb_D$, and a special argument involving \{334T\}-molecules (Lemma \ref{lem.newlem}) that treats the case of a triangle with long bond. This proof is presented in Section \ref{sec.newcase}.

\smallskip
Now, suppose $\Mb=\Mb_U\cup\Mb_D$ as above. We start by choosing a highest atom $\mf_0$ in $\Mb_D$. Note that $\mf_0$ has two parent nodes $\mf_j^+\,(j=1,2)$ in $\Mb_U$, and $\mf_0$ becomes deg 2 if both of them are cut; this can be guaranteed by the assumption that each particle line intersects both $\Mb_U$ and $\Mb_D$, see Proposition \ref{prop.new2layer}. Suppose we choose the cutting sequence such that both $\mf_j^+$ are cut before $\mf_0$, then we would get a long bond \{33A\}-molecule if the $\mf_j^+$ that is cut \emph{after} the other has deg 3 when it is cut. To achieve this, a natural idea is to cut $\mf_1^+$ (or $\mf_2^+$) first, and proceed along a path connecting $\mf_1^+$ and $\mf_2^+$ in $\Mb_U$  until reaching $\mf_2^+$. A natural candicate of this path is the following: note that $\mf_0$ belongs to two particle lines, say $\pb_1$ and $\pb_2$, where $\pb_j$ contains $\mf_j^+$. Then we  consider the \emph{first collision} (which we call $\nf_0$) in $\Mb_U$ that connects the two particles $\pb_1$ and $\pb_2$ (i.e. bring them to the same cluster). In this case, let $P_j$ be the particles (equivalently particle lines) that are connected to $\pb_j$ before $\nf_0$, and let $A_j$ be the set of collisions among particles in $P_j$ before $\nf_0$, see Proposition \ref{prop.2cluster} for details. The natural path going from $\mf_2^+$ to $\mf_1^+$ is given by concatenating the paths from $\mf_2^+$ to $\nf_0$ in $A_2$, and the path from $\nf_0$ to $\mf_1^+$ in $A_1$.

We then have the following dichotomy, as shown in Proposition \ref{prop.2cluster}: either
\begin{enumerate}[{(i)}]
\item one of the sets $A_j$ (say $j=1$) does not contain a recollision within it, or
\item each set $A_j$ contains a recollision within it.
\end{enumerate}
In fact, we will specify this recollision (i.e. cycle) to have a specific form called a \emph{canonical cycle}, see Proposition \ref{prop.newcase_1}.

\smallskip
In case (i), we can apply the plan stated above, where we start by cutting $\mf_2^+$ and follow the path from $\mf_2^+$ to $\nf_0$ in $A_2$ (cutting all atoms in $A_2$ in this process), and then follow the path from $\nf_0$ to $\mf_1^+$ in $A_1$. The fact that $A_1$ does not contain a canonical cycle then implies that $\mf_1^+$ has deg 3 when it is cut, which leads to a \{33A\}-component with a long bond. This already guarentees $\varepsilon^{d-1-\theta}$ excess by $(\clubsuit)$; in order to improve this, we notice that $\Mb$ contains many more recollisions other than the one provided by the \{33A\}-component (say $\Xc$), and a simple argument suffices to show that either there exists another \{33A\}-component other than $\Xc$, or $\Xc$ is contained in some \{333A\}- or \{334T\}-component. In each case the desired excess estimate follows from Propositions \ref{prop.intmini_2} and \ref{prop.intmini_3}. For the detailed proof, see Proposition \ref{prop.newcase_1}.

\smallskip
In case (ii), if we apply the same argument in case (i), then the atom $\mf_1^+$ would also have deg 2 when it is cut; this means that a \{33A\}-component occurs \emph{earlier} in the cutting process, which may not contain a long bond. However, note that each $A_j$ contains a canonical cycle, which in particular implies that each $A_j$ contains a \{33A\}-component in a suitable cutting sequence. Since it is known that each \{33A\}-component (without long bond) provides excess $\varepsilon^{1-}$, and that we only need to obtain excess $\varepsilon^{2+}$ in dimension $d=3$ (the $d=2$ case is much easier), we see that these two \{33A\}-components are already almost sufficient.

It remains to improve the excess a little bit upon $\varepsilon^{2-}$. This is similar to case (i) but slightly more complicated; the idea is to locate a remaining atom that belongs to a particle line \emph{other than the ones in $P_1$ and $P_2$}. This atom is then cut as deg 3, and some subsequent atom will be cut as deg 2 (due to the many recollisions), leading to a third \{33A\}-component which provides the needed gain. This is contained in Proposition \ref{prop.newcase_2}. Finally, if there is no such atom belonging to any ``new" particle line, then either the canonical cycle is not a triangle (in which case an extra gain is possible, see the proof of Proposition \ref{prop.newcase_2}), or the molecule has an essentially explicit form which contains \emph{exactly $3+3=6$ particle lines}. In this last scenario, it is not hard to develop a specific cutting sequence (using the fact that we have exactly 6 particle lines) that guarantees \emph{at least three} \{33A\}-molecules, which is again sufficient. This then finishes the proof of the excess estimate, which completes Theorem \ref{th.main}.
\subsection{Plan for the rest of this paper} In Section \ref{sec.summary0} we summarize the relevant concepts and notations from \cite{DHM24}, especially those related to molecules and cuttings, that are needed in this paper. In Section \ref{sec.integral} we state and prove the integral and volume estimates needed in the torus case. These are similar to those in Section 9.1 in \cite{DHM24} but involve more complicated calculations. We also state Proposition \ref{prop.comb_est_extra}, which is the main estimate proved in this paper, and the new ingredient that will come into the proof of Theorem \ref{th.main}. In Section \ref{sec.error} we prove Proposition \ref{prop.comb_est_extra} using the integral estimates in Section \ref{sec.integral} and the full algorithm, which consists of the same arguments in Section 13.3 in \cite{DHM24} (see Section \ref{sec.cutting_final}) and a new algorithm for the torus case (see Section \ref{sec.newcase}). Finally, in Section \ref{sec.proof_main} we prove Theorem \ref{th.main}. The proof mostly follows the same lines in \cite{DHM24} which is also sketched here; the new ingredients are the modifications in the algorithm due to double bonds (see \textbf{Part 4}), and the estimate for the $f_s^{\mathrm{err}}$ term in \cite{DHM24} which relies on Proposition \ref{prop.comb_est_extra}, see \textbf{Part 6}.

\subsection*{Acknowledgements} The authors are partly supported by a Simons Collaboration Grant on Wave Turbulence. The first author is supported in part by NSF grant DMS-2246908 and a Sloan Fellowship. The second author is supported in part by NSF grants DMS-1936640 and DMS-2350242. The authors would like to thank Isabelle Gallagher, Yan Guo, and Lingbing He for their helpful suggestions.
\section{Summary of concepts from \cite{DHM24}}\label{sec.summary0}
\subsection{Molecules and associated notions}\label{sec.recall_mol} We recall the definition of molecules and some associated notions in \cite{DHM24}. Since the main proofs in the current paper (Sections \ref{sec.integral}--\ref{sec.error}) only requires molecules formed solely by C-atoms, we will only review the relevant definitions involving C-atoms here. For the full definitions involving both C- and O-atoms and empty ends etc., see Definitions 7.1--7.3 in \cite{DHM24}.
\begin{definition}[Molecules]
\label{def.molecule}
We define a \newterm{(layered) molecule} to be a structure $\Mb=(\Mc,\Ec,\Pc)$, see Figure \ref{fig.6}, which satisfies the following requirements.
\begin{enumerate}
    \item \textit{Atoms, edges, bonds and ends.} Assume $\Mc$ is a set of nodes referred to as \newterm{atoms}, and $\Ec$ is a set of \newterm{edges} consisting of \newterm{bonds} and \newterm{ends} to be specified below.
    \begin{enumerate}
        \item Each atom $\nf\in \Mc$ is marked as a \newterm{C-atom}, and is assigned with a unique \newterm{layer} $\ell[\nf]$ which is a positive integer.
        \item\label{it.molecule_edge} Each edge in $\Ec$ is either a directed \newterm{bond} $(\nf\to \nf')$, which connects two distinct atoms $\nf,\nf'\in\Mc$, or a directed \newterm{end}, which is connected to one atom $\nf\in\Mc$. Each end is either \newterm{incoming} $(\to \nf)$ or \newterm{outgoing} $(\nf\to)$, and is also marked as either \newterm{free} or \newterm{fixed}.
    \end{enumerate}
        \item\label{it.particle_line_same} \textit{Serial edges.}  The set $\Pc$ specifies the particle line structure of $\Mb$ (cf. Definition \ref{def.molecule_more} \eqref{it.molecule_more_5}). More precisely, $\Pc$ is a set of maps such that, for each atom $\nf$, $\Pc$ contains a bijective map from the set of 2 incoming edges at $\nf$ to the set of 2 outgoing edges at $\nf$. Given an incoming edge $e_+$ and its image $e_-$ under this map, we say $e_+$ and $e_-$ are \newterm{serial} or \newterm{on the same particle line} at $\nf$.
    \item $(\Mc, \Ec)$ satisfies the following requirements.
    \begin{enumerate}
        \item Each atom has exactly 2 incoming and 2 outgoing edges. 
        \item There does not exist any closed directed path (which starts and ends at the same atom). \textbf{Note that unlike the Euclidean case \cite{DHM24}, double bonds are allowed in $\Mb$.}
        \item For any bond $\nf\to\nf'$, we must have $\ell[\nf]\geq\ell[\nf']$.
    \end{enumerate}
\end{enumerate}
\end{definition}
\begin{figure}[H]
    \includegraphics[scale=0.17]{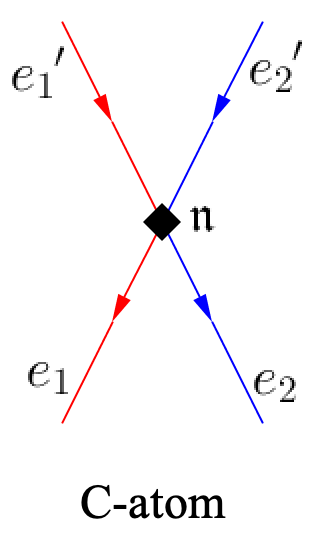}
    \caption{Illustration of a C-atom in the molecule $\Mb$ in Definition \ref{def.molecule}. Here the C-atom is represented by a black diamond shape, the arrow indicates the direction of an edge, and we always require that this direction goes from top to bottom (cf. Definition \ref{def.molecule_more}). Serial edges are indicated by the same color, and each edge can be either a free or fixed end, or a bond connecting to another C-atom (not drawn in the picture).}
    \label{fig.6}
\end{figure}
\begin{definition}[More on molecules]
\label{def.molecule_more}
We define some related concepts for molecules.
\begin{enumerate}
    \item\label{it.full_molecule} \emph{Full molecules.} If a molecule $\Mb$ has no fixed end, we say $\Mb$ is a \newterm{full molecule}.
    \item \emph{Parents, children and descendants.} Let $\Mb$ be a molecule. If two atoms $\nf$ and $\nf'$ are connected by a bond $\nf\to\nf'$, then we say $\nf$ is a \newterm{parent} of $\nf'$, and $\nf'$ is a \newterm{child} of $\nf$. If $\nf'$ is obtained from $\nf$ by iteratively taking parents, then we say $\nf'$ is an \newterm{ancestor} of $\nf$, and $\nf$ is a \newterm{descendant} of $\nf'$ (this includes when $\nf'=\nf$). 
    
    \item \emph{The partial ordering of nodes.} We also define a \newterm{partial ordering} between all atoms, by defining $\nf'\prec\nf$ if and only if $\nf'\neq\nf$ and is a descendant of $\nf$. In particular $\nf'\prec\nf$ if $\nf'$ is a child of $\nf$. Using this partial ordering, we can define the \newterm{lowest} and \newterm{highest} atoms in a set $A$ of atoms, to be the minimal and maximal elements of $A$ under the partial ordering (such element need not be unique).
    
    \item \emph{Top and bottom edges.} We also refer to the incoming and outgoing edges as \newterm{top} and \newterm{bottom} edges.
    
    \item\label{it.molecule_more_5} \emph{Particle lines.} For any $\Mb$, we define a \newterm{particle line} of $\Mb$ to be a sequence of distinct edges $(e_1,\cdots,e_q)$ (together with all atoms that are endpoints of these edges), such that $e_1$ is a top (free or fixed) end, $e_q$ is a bottom (free or fixed) end, and $(e_j,e_{j+1})$ are serial (Definition \ref{def.molecule} \eqref{it.particle_line_same}) at some atom for each $1\leq j<q$. 
    \item\label{it.subsets_Ec} \emph{Sets $\Ec_*$ and $\Ec_{\mathrm{end}}^-$.} Define $\Ec_*$ to be the set of all bonds and free ends. Define also the sets $\Ec_{\mathrm{end}}$ (resp. $\Ec_{\mathrm{end}}^-$) to be the set of all ends (resp. all bottom ends).
    \item\label{it.deg}\emph{Degree and other notions.} Define the \textbf{degree} (abbreviated deg) of an atom $\nf$ to be the number of bonds and free ends at $\nf$ (i.e. not counting fixed ends). We also define $S_\nf$ to be the set of descendants of $\nf$. Finally, for a subset $A\subset\Mb$, define $\rho(A):=|\Bc(A)|-|A|+|\Fc(A)|$, where where $\Bc(A)$ is the set of bonds between atoms in $A$, and $\Fc(A)$ is the set of components of $A$ (where $A$ is viewed as a subgraph of $\Mb$).
\end{enumerate}

It is clear, with the definition of particle lines in (\ref{it.molecule_more_5}), that each edge belongs to a unique particle line, and each particle line contains a unique bottom end and a unique top end, so the set of particle lines is in one-to-one correspondence with the set $\Ec_{\mathrm{end}}^-$ of bottom ends and also with the set of top ends.
\end{definition}
In the next definition, we define the integral $\Jc(\Mb)$ associated with a molecule $\Mb$. Note that for the purpose of this paper, the more relevant notion here is the $\Jc(\Mb)$ in Definition \ref{def.associated_int} (\ref{it.associated_int}), which is the same as the one occurring in Section 9.1 in \cite{DHM24} (see also the $\widetilde{\Ic}(\Mb')$ in Proposition 8.10 in \cite{DHM24}), instead of the $\Ic\Nc(\Mb,H,H')$ in Definition 7.3 in \cite{DHM24}.
\begin{definition}[The integral $\Jc(\Mb)$ for molecules $\Mb$]
\label{def.associated_int}
Consider the molecule $\Mb=(\Mc,\Ec,\Pc)$, we introduce the following definitions.
\begin{enumerate}
    \item\label{it.associated_vars} \emph{Associated variables $\vz_\Ec$ and $\vt_\Mc$.} We associate each node $\nf\in \Mc$ with a time variable $t_\nf$ and each edge $e\in \Ec$ with a position-velocity vector $z_e=(x_e,v_e)$. We refer to them as \newterm{associated variables} and denote the collections of these variables by $\vz_\Ec=(z_e:e\in\Ec)$ and $\vt_\Mc=(t_\nf:\nf\in\Mc)$.
    \item\label{it.associated_dist} \emph{Associated distributions $\boldsymbol{\Delta}_\nf$.} Given a C-atom $\nf\in\Mc$, let $(e_1,e_2)$ and $(e_1',e_2')$ be are two bottom edges and top edges at $\nf$ respectively, such that $e_1$ and $e_1'$ (resp. $e_2$ and $e_2'$) are serial. We define the \newterm{associated distribution} $\Dirac_\nf=\boldsymbol{\Delta}=\boldsymbol{\Delta}(z_{e_1},z_{e_2},z_{e_1'},z_{e_2'},t_\nf)$ by
    \begin{equation}\label{eq.associated_dist_C}
        \begin{aligned}
            \boldsymbol{\Delta}(z_{e_1},z_{e_2},z_{e_1'},z_{e_2'},t_\nf)&:=\boldsymbol{\delta}\big(x_{e_1'}-x_{e_1}+t_\nf(v_{e_1'}-v_{e_1})\big)\cdot \boldsymbol{\delta}\big(x_{e_2'}-x_{e_2}+t_\nf(v_{e_2'}-v_{e_2})\big)\\&\times \boldsymbol{\delta}\big(|x_{e_1}-x_{e_2}+t_\nf(v_{e_1}-v_{e_2})|_\Tb-\varepsilon\big)\cdot \big[(v_{e_1}-v_{e_2})\cdot \omega\big]_-\\&\times\boldsymbol{\delta}\big(v_{e_1'}-v_{e_1}+[(v_{e_1}-v_{e_2})\cdot\omega]\omega\big)\cdot\boldsymbol{\delta}\big(v_{e_2'}-v_{e_2}-[(v_{e_1}-v_{e_2})\cdot\omega]\omega\big),
        \end{aligned}
        \end{equation} 
        where $\omega:=\varepsilon^{-1}(x_{e_1}-x_{e_2}+t_\nf(v_{e_1}-v_{e_2}))$ is a unit vector, and $z_-=-\min(z,0)$.
    \item\label{it.associated_domain} \emph{The associated domain $\mathcal{D}$.} The \textbf{associated domain} $\mathcal{D}$ is defined by
    \begin{equation}\label{eq.associated_domain}
        \Dc:=\big\{\vt_\Mc=(t_\nf)\in (\mathbb{R}^+)^{|\Mc|}: (\ell'-1)\tau<t_\nf<\ell'\tau\mathrm{\ if\ }\ell[\nf]=\ell';\ t_{\nf}<t_{\nf^+}\mathrm{\ if\ }\nf^+\mathrm{\ is\ parent\ of\ }\nf \big\}.
    \end{equation} Here $1\gg\tau>(\log|\log\varepsilon|)^{-1/2}$ is a small parameter (for the exact choice see Section \ref{sec.proof_main}, \textbf{Part 7}).  
    \item\label{it.associated_int} \emph{The integral $\Jc(\Mb)$.} Let $\Mb=(\Mc,\Ec,\Pc)$ be a molecule (which may or may not be full), with relevant notions as in Definitions \ref{def.molecule}--\ref{def.molecule_more}, and $Q$ be a given nonnegative function. We define the integral $\Jc(\Mb)$ by
    \begin{equation}\label{eq.associated_int}
        \Jc(\Mb):=\varepsilon^{-(d-1)(|\Ec_*|-2|\Mc|)}\int_{\Tb^{d|\Ec_*|}\times\mathbb{R}^{d|\Ec_*|}\times \Rb^{|\Mc|}}\prod_{\nf\in\Mc}\boldsymbol{\Delta}(z_{e_1},z_{e_2},z_{e_1'},z_{e_2'},t_\nf)\cdot Q(\vz_\Ec,\vt_\Mc)\mathrm{d}\vz_{\Ec_*}\mathrm{d}\vt_\Mc.
    \end{equation}
    Here in \eqref{eq.associated_int}, $|\Ec_*|$ and $|\Mc|$ are the cardinalities of $\Ec_*$ and $\Mc$ with $\Ec_*$ as in Definition \ref{def.molecule} (note $\Ec_*=\Ec$ if $\Mb$ is a full molecule), and $\vz_{\Ec_*}$ is defined similar to $\vz_\Ec$ in (\ref{it.associated_vars}). We will assume that $Q$ is supported in the set $t_\Mc\in\Dc$, with possibly other restrictions on its support to be specified below.
    \end{enumerate}
\end{definition}
\subsection{Cutting operations}\label{sec.recall_cut} We next recall the notion of cutting in \cite{DHM24}. Like in Section \ref{sec.recall_mol}, we will only consider the case of cuttings involving C-atoms. For the full definition involving cutting both C- and O-atoms, see Definition 8.6 (and related properties in Section 8.2) in \cite{DHM24}.
\begin{definition}\label{def.cutting} Let $\Mb$ be a molecule formed solely by C-atoms and $A\subseteq\Mb$ be a set of atoms. Define the operation of \textbf{cutting $A$ as free}, where for each bond between $\nf\in A$ and $\nf'\in \Mb\backslash A$ is turned into a free end at $\nf$ and a fixed end at $\nf'$. Note that each free end $e_{\mathrm{free}}$ formed after this cutting (at atoms in $A$) is uniquely paired with a fixed end $e_{\mathrm{fix}}$ formed after this cutting (an atoms in $\Mb\backslash A$), and this pair is associated with a unique bond $e$ before the cutting, see Figure \ref{fig.11} for an illustration. We also define \textbf{cutting $A$ as fixed} to be the same as cutting $\Mb\backslash A$ as free. 

Given a molecule $\Mb$, we define a \textbf{cutting sequence} to be a sequence of cutting operations, each applied to the result of the previous cutting in the sequence, starting with $\Mb$. In considering cutting sequences below, we will adopt the following convention. Note that after cutting any $A$ as free or fixed from $\Mb$, the atom set of $\Mb$ remains unchanged, but $\Mb$ is divided into two disjoint molecules, $\Mb^1$ with atom set $A$ and $\Mb^2$ with atom set $\Mb\backslash A$. Now depending on the context, we may tag $\Mb^1$ as \textbf{protected}, in which case we will not touch anything in $\Mb^1$ and will only work in $\Mb^2$ in subsequent cuttings. Throughout this process we will abuse notations and replace $\Mb$ by $\Mb^2$.
\begin{figure}[h!]
\includegraphics[scale=0.18]{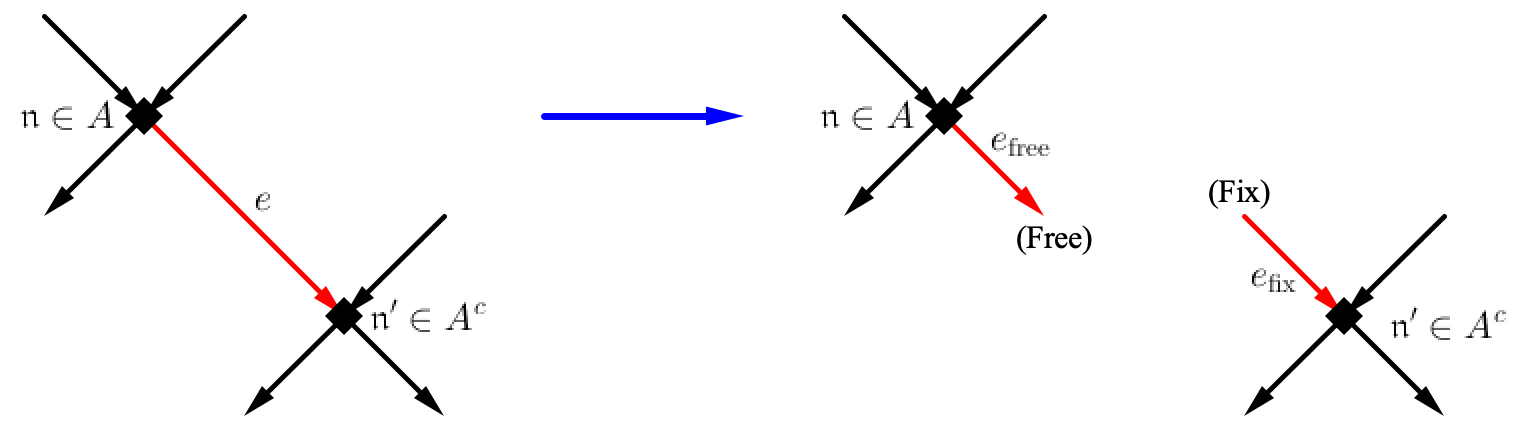}
\caption{An illustration of cutting operation in Definition \ref{def.cutting}, when $\nf\in A$ and $\nf'\in A^c$ are C-atoms. The relevant notations also correspond to those in Definition \ref{def.cutting}.}
\label{fig.11}
\end{figure}
\end{definition}
Next we define a natural linear ordering $\prec_{\mathrm{cut}}$ among all connected components of the resulting molecule after any cutting sequence, see Proposition 8.8 in \cite{DHM24}.
\begin{definition}\label{def.match_ends} Let $\Mb$ be a \emph{full} molecule, and $\Mb'$ be the result of $\Mb$ after any cutting sequence. Then, with $\Mb$, $\Mb'$ and the cutting sequence fixed, we can define an ordering between the components of $\Mb'$:
\begin{enumerate}
\item\label{it.comp_order_1} If the first cutting in the sequence cuts $\Mb$ into $\Mb_1$ with atom set $A$ and $\Mb_2$ with atom set $\Mb\backslash A$, where $A$ is cut as free, then each component of $\Mb'$ with atom set contained in $A$ shall occur in the ordering \emph{before} each component of $\Mb'$ with atom set contained in $\Mb\backslash A$.
\item\label{it.comp_order_2} The ordering between the components with atom set contained in $A$, and the ordering between the components with atom set contained in $\Mb\backslash A$, are then determined inductively, by the cutting sequence after the first cutting.
\end{enumerate}
Denote the above ordering by $\prec_{\mathrm{cut}}$, where $\Xc\prec_{\mathrm{cut}}\Yc$ means $\Xc$ occurs before $\Yc$ in this ordering.
\end{definition}
Finally we introduce the notion of elementary molecules (or components), see Definition 8.9 in \cite{DHM24}. However, here we only consider those formed solely by C-atoms; moreover, in view of the new estimates needed in the current paper, we need to extend the list of elementary molecules in \cite{DHM24} to include two new ones, namely the \{333A\}- and \{334T\}- molecules, see Definition \ref{def.elementary} below.

\begin{definition}\label{def.elementary} We define a molecule $\Mb$ formed by C-atoms to be \textbf{elementary}, if it satisfies one of the followings:
\begin{enumerate}
\item\label{it.elementary_1} This $\Mb$ contains only one atom that has deg 2, 3 or 4; we further require that the two fixed ends are not serial, and are \emph{either both top or both bottom} for C-atom.
\item\label{it.elementary_2} This $\Mb$ contains only two atoms connected by a bond, and their degs are either $(3,3)$ or $(4,4)$.
\item \label{it.elementary_3} This $\Mb$ contains only three atoms, such that two of them have deg 3 and are connected by one bond, and the third atom either has deg 3 and is connected to one of them, or has deg 4 and is connecte to both of them.
\end{enumerate}
For those $\Mb$ in (\ref{it.elementary_1}) and (\ref{it.elementary_2}) we shall denote it by \{2\}-, \{3\}-, \{4\}-, \{33\}- and \{44\}-molecules in the obvious manner; for those $\Mb$ in (\ref{it.elementary_3}) we shall denote it by \{333A\}- or \{334T\}-molecules (where T stands for ``triangle"). Moreover, for a \{33\}-molecule $\Mb$, we denote it by \{33A\} if we can cut one atom as free, such that the other atom has 2 fixed ends that are be both top or both bottom; otherwise denote it by \{33B\} (note that a \{33\} molecule is \{33B\} if and only if there is one top fixed end at the higher atom and one bottom fixed end at the lower atom). For \{333A\}- and \{334T\}-molecules, we also require that the first 2 atoms $\nf_1$ and $\nf_2$ form a \{33A\}-molecule with two fixed ends being both bottom or both top, and that the 2 fixed ends at the third atom $\nf_3$, after cutting $\{\nf_1,\nf_2\}$ as free, must be both top or both bottom. For an illustration see Figure \ref{fig.14}.

Suppose applying a cutting sequence to $\Mb$ generates a number of elementary components, plus the rest of the molecule $\Mb$ whose atoms are not cut by this cutting sequence, then we define $\#_{\{2\}}$ and $\#_{\{33\}}$ etc. to be the number of \{2\}- and \{33\}- etc. components generated in this cutting sequence. We also understand that, when any elementary component is generated in any cutting sequence, this component is automatically tagged as \emph{protected}.
\begin{figure}[h!]
\includegraphics[scale=0.15]{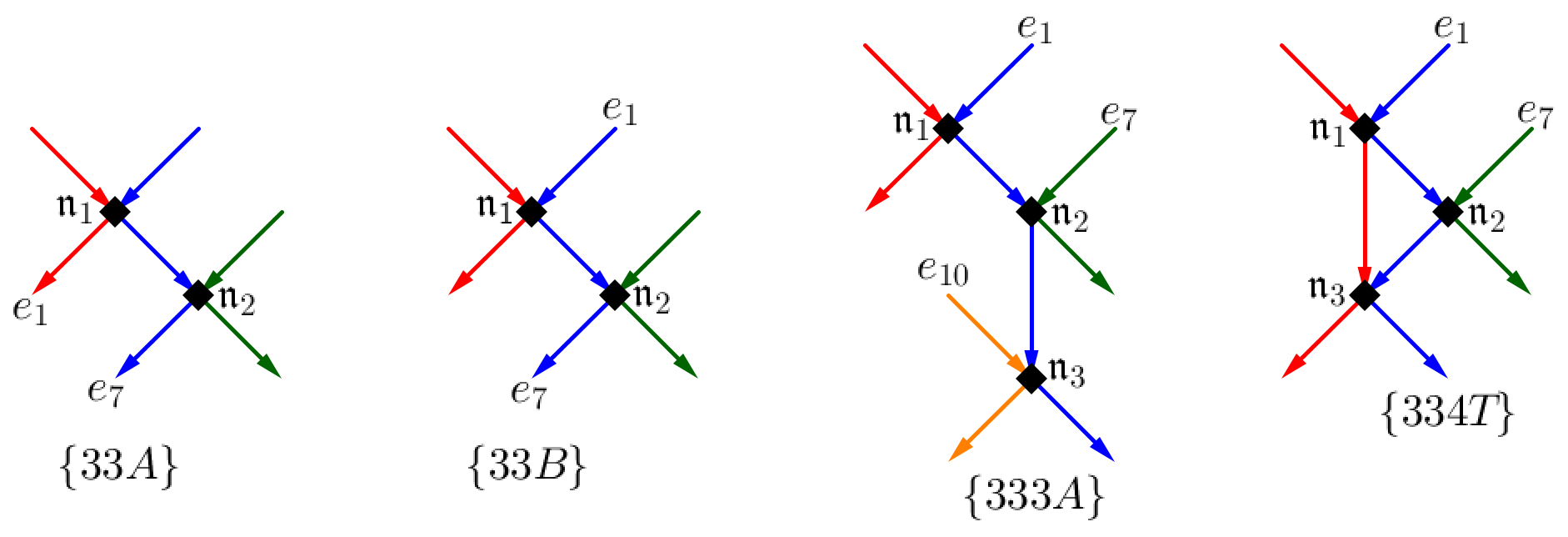}
\caption{An example of \{33A\}, \{33B\}-, \{333A\}- and \{334T\}-molecules (Definition \ref{def.elementary}). Here the ends marked by $e_1$ and $e_7$, and the one marked by $e_{10}$ in the \{333A\} case are fixed, and the other ends are free.}
\label{fig.14}
\end{figure}
\end{definition}
\section{Treating the integral}\label{sec.integral}
\subsection{Integrals for elementary molecules}\label{sec.elem_int} In this subsection we recall the estimates in Section 9.1 in \cite{DHM24} concerning the integral $\Jc(\Mb)$ (Definition \ref{def.associated_int}) for elementary molecules $\Mb$; in addition, we also prove some improvements of these estimates and some new estimates involving the new elementary molecules defined in the current paper. Throughout this section we will assume our molecules only consist of C-atoms; the case of O-atoms will have similar results with simpler proofs, and we will not elaborate on them.

Let $\Mb$ be an elementary molecule as in Definition \ref{def.elementary}, and consider the integral
\begin{equation}\label{eq.intmini}\Jc(\Mb):=\varepsilon^{-(d-1)(|\Ec_*|-2|\Mc|)}\int_{\Tb^{d|\Ec_*|}\times\Rb^{d|\Ec_*|}\times\Rb^{|\Mc|}}\prod_{\nf\in\Mc}\boldsymbol{\Delta}(z_{e_1},z_{e_2},z_{e_1'},z_{e_2'},t_\nf)\cdot Q(\vz_\Ec,\vt_\Mc)\,\mathrm{d}\vz_{\Ec_*}\mathrm{d}\vt_\Mc
\end{equation} as defined in (\ref{eq.associated_int}) in Definition \ref{def.associated_int} (\ref{it.associated_int}). Note that for fixed ends $e$ of $\Mb$, the variable $z_e$ is not integrated in (\ref{eq.intmini}) and acts as a parameter in this integral. The time variable at $\nf_j\,(1\leq j\leq 3)$ in (\ref{eq.intmini}) will be denoted by $t_j$. When $\Mb$ contains only one atom, we will also denote $(e_1',e_2')$ by $(e_3,e_4)$; when $\Mb$ contains two (resp. three) atoms, we may also denote the edges by $e_j$ for $1\leq j\leq 7$ (resp. $1\leq j\leq 10$) depending on the context. The variable $z_{e_j}$ is then abbreviated as $z_j=(x_j,v_j)$.

\begin{remark}\label{rem.parameter} Throughout this paper we will use $C$ to denote a large sonstant depending only on $(\beta,d)$, and $C^*$ be any large quantity such that $C\ll C^*\ll|\log\varepsilon|^{\theta}$, where $0<\theta\ll 1$ is a small constant depending only on $d$. Below we always assume that the $Q$ in (\ref{eq.intmini}) is nonnegative and supported in $|v_e|\leq|\log\varepsilon|^{C^*}$ for each $e\in\Ec$ (thanks to the Maxwellian decay of the Boltzmann density (\ref{eq.boltzmann_decay_1})) and $t_\nf\in[(\ell[\nf]-1)\tau,\ell[\nf]\tau]$ for each $\nf\in\Mc$, where $\ell[\nf]$ is a fixed integer for each $\nf$. We may make other restrictions on the support of $Q$, which will be discussed below depending on different scenarios; such support may also depend on some other external parameters such as $(x^*,v^*,t^*)$, which will be clearly indicated when they occur below.
\end{remark}
\begin{proposition}\label{prop.intmini} Let $\Jc(\Mb)$ be defined as in (\ref{eq.intmini}), where $\Mb$ contains only one atom.
\begin{enumerate}
\item\label{it.intmini_1} If $\Mb$ is a \{2\}-molecule as in Definition \ref{def.elementary}, i.e. the two fixed ends are either both bottom or both top, by symmetry we may assume $(e_1,e_2)$ are fixed and $(e_3,e_4)$ are free. Then we have
\begin{equation}\label{eq.intmini_1}\Jc(\Mb)=\sum_{\substack{t_1:|x_1-x_2+t_1(v_1-v_2)|_\Tb=\varepsilon\\(v_1-v_2)\cdot\omega\leq 0}}Q.
\end{equation} Here in (\ref{eq.intmini_1}), the summation is taken over the (discrete) set of $t_1$ that satisfies the following equation:
\begin{equation}\label{eq.intmini_1+}|x_1-x_2+t_1(v_1-v_2)|_\Tb=\varepsilon,\quad (v_1-v_2)\cdot\omega\leq 0,\end{equation} where $\omega:=\varepsilon^{-1}(x_1-x_2+t_1(v_1-v_2))$ is a unit vector. Moreover $Q$ is the same function in (\ref{eq.intmini}) with input variables $(z_1,z_2,z_3,z_4,t_1)$, where (each choice of) $t_1$ is a function of $(z_1,z_2)$ defined above, and $(z_3,z_4)$ is also some function of $(z_1,z_2)$, which is defined via (\ref{eq.hardsphere}) conjugated by the free transport $(x,v)\leftrightarrow (x-t_1v,v)$.

Note that all the values of $t_1$ occurring in the summation in (\ref{eq.intmini_1}) can be indexed by a variable $m\in\Zb^d$ with $|m|\leq|\log\varepsilon|^{C^*}$, which represents different fundamental domains of $\Tb^d$ within $\Rb^d$. The number of choices of $m$ is $\leq|\log\varepsilon|^{C^*}$, and this allows to decompose
\begin{equation}\label{eq.extradecomp}
\Jc(\Mb)=\sum_{|m|\leq |\log\varepsilon|^{C^*}}\Jc_m(\Mb),\qquad 0\leq \Jc_m(\Mb)\leq Q,
\end{equation} where for each $m$, the variables in $Q$ in (\ref{eq.extradecomp}) is defined as above with $t_1$ being the solution to (\ref{eq.intmini_1}) indexed by $m$.
\item\label{it.intmini_2} If $\Mb$ is a \{3\}-molecule as in Definition \ref{def.elementary}, by symmetry we may assume $e_1$ is fixed and $(e_2,e_3,e_4)$ are free. Then
\begin{equation}\label{eq.intmini_3}
\Jc(\Mb)=\int_{\Rb\times\Sb^{d-1}\times\Rb^d}\big[(v_1-v_2)\cdot\omega\big]_-\cdot Q\,\mathrm{d}t_1\mathrm{d}\omega\mathrm{d}v_2.
\end{equation} Here in (\ref{eq.intmini_3}) the input variables of $Q$ are $(z_1,z_2,z_3,z_4,t_1)$, where (similar to (\ref{eq.hardsphere}))
\begin{equation}\label{eq.intmini_4}
\left\{
\begin{aligned}
x_2&=x_1+t_1(v_1-v_2)-\varepsilon\omega,\\
x_3&=x_1+t_1\big[(v_1-v_2)\cdot\omega\big]\omega,\\
v_3&=v_1-\big[(v_1-v_2)\cdot\omega\big]\omega,\\
x_4&=x_1+t_1\big[v_1-v_2-((v_1-v_2)\cdot\omega)\omega\big]-\varepsilon\omega,\\
v_4&=v_2+[(v_1-v_2)\cdot\omega\big]\omega.
\end{aligned}
\right.
\end{equation}
In the integral (\ref{eq.intmini_4}) the domain of integration can be restricted to $t_1\in [(\ell[\nf]-1)\tau,\ell[\nf]\tau]$ and $|v_2|\leq|\log\varepsilon|^{C^*}$, which has volume $\leq|\log\varepsilon|^{C^*}$. The same logarithmic upper bound also holds for the weight $\big[(v_1-v_2)\cdot\omega\big]_-$.

In addition, assume $Q$ is supported in some set depending on some external parameters $(x^*,t^*)$, and the support satisfies
\begin{equation}\label{eq.intmini_new}\min(|x_i-x_j|_\Tb,|v_i-v_j|)\leq\lambda\quad\mathrm{or}\quad \min(|t_1-t^*|,|v_j-v^*|,|x_j-x^*|_\Tb)\leq\lambda
\end{equation} for some $\varepsilon\lesssim\lambda\lesssim 1$, where $i\neq j$ in the first case in (\ref{eq.intmini_new}) and $j\neq 1$ in the second case. Then in each case, the domain of integration in (\ref{eq.intmini_3}) can be restricted to a set of $(t_1,\omega,v_2)$ that depends on $(x_1,v_1)$ and the external parameters, which has volume $\leq \lambda\cdot|\log\varepsilon|^{C^*}$.
\item\label{it.intmini_3} If $\Mb$ is a \{4\}-molecule as in Definition \ref{def.elementary}, and we still consider the same $\Jc(\Mb)$ in (\ref{eq.intmini}), then
\begin{equation}\label{eq.intmini_6}
\Jc(\Mb)=\varepsilon^{-(d-1)}\int_{\Rb\times\Sb^{d-1}\times(\Rb^d)^3}\big[(v_1-v_2)\cdot\omega\big]_-\cdot Q\,\mathrm{d}t_1\mathrm{d}\omega\mathrm{d}x_1\mathrm{d}v_1\mathrm{d}v_2,
\end{equation} where the inputs $(z_1,z_2,z_3,z_4,t_1)$ of $Q$ are as in (\ref{eq.intmini_4}). In the integral (\ref{eq.intmini_6}) the domain of integration can be restricted to $t_1\in [(\ell[\nf]-1)\tau,\ell[\nf]\tau]$ and $|v_j|\leq|\log\varepsilon|^{C^*}\,(j\in\{1,2\})$, which has volume $\leq|\log\varepsilon|^{C^*}$. The same logarithmic upper bound also holds for the weight $\big[(v_1-v_2)\cdot\omega\big]_-$.

In addition, assume $Q$ is supported in the set (depending on some external parameters $x^*$)
\begin{equation}\label{eq.intmini_6+}
\min(|x_i-x_j|_\Tb,|v_i-v_j|)\leq\lambda\quad\mathrm{or}\quad \min(|t_1-t^*|,|v_j-v^*|,|x_j-x^*|_\Tb)\leq\lambda
\end{equation}
for some $\varepsilon\lesssim\lambda\lesssim 1$ and $i\neq j$, then in this case, the domain of integration in (\ref{eq.intmini_6}) can be restricted to a set of $(t_1,\omega,x_1,v_1,v_2)$ that depends on the external parameters, which has volume $\leq \lambda\cdot|\log\varepsilon|^{C^*}$.
\item\label{it.intmini_4_extra} In (\ref{it.intmini_2}) and (\ref{it.intmini_3}) above, suppose we do not assume (\ref{eq.intmini_new}) or (\ref{eq.intmini_6+}). Instead, we assume that $Q$ is supported in the set where, for the vectors $(x_j,v_j)$ and $(x_{j+1},v_{j+1})$ with some $j\in\{1,3\}$, there exist \emph{at least two} different values $t_1$ such that the equality (\ref{eq.intmini_1+}) holds. Then, we can restrict the domain of integration in (\ref{eq.intmini_3}) (and (\ref{eq.intmini_6})) to a set of $(t_1,\omega,v_2)$ that depends on $(x_1,v_1)$ (or a set of $(t_1,\omega,x_1,v_1,v_2)$) which has volume $\leq\varepsilon^{d-1}\cdot|\log\varepsilon|^{C^*}$.
\item\label{it.intmini_extra_2} In (\ref{it.intmini_4_extra}) above, suppose $\Mb$ is a \{4\}-molecule of one C-atom, and assume that $Q$ is supported in the set where for \emph{both vector pairs $((x_1,v_1),(x_2,v_2))$ and $((x_3,v_3),(x_4,v_4))$}, there exist at least two different values $t_1$ such that the equality (\ref{eq.intmini_1+}) holds. Then we can restrict the domain of integration in (\ref{eq.intmini_6}) to a set of $(t_1,\omega,x_1,v_1,v_2)$) which has volume $\leq\varepsilon^{d}\cdot|\log\varepsilon|^{C^*}$.
\item\label{it.intmini_extra_3} In (\ref{it.intmini_2}) above, suppose we do not assume (\ref{eq.intmini_new}). Instead, we assume that $Q$ is supported in the set (depending on some external parameters $(x^*,v^*)$)\begin{equation}\label{eq.newprop_3_1}\inf_{|t|\leq |\log\varepsilon|^{C^*}}|x_j-x^*-t(v_j-v^*)|_\Tb\leq\mu\end{equation}
for some $\varepsilon\lesssim\mu\lesssim 1$ and $j\neq 1$, and $\max(|x_1-x^*|_\Tb,|v_1-v^*|)\gtrsim\mu'$ for some $\mu\lesssim\mu'\lesssim 1$. Then the domain of integration in (\ref{eq.intmini_3}) can be restricted to a set of $(t_1,\omega,v_2)$ that depends on $(x_1,v_1)$ and the external parameters, which has volume $\leq \mu\cdot(\mu')^{-1}\cdot|\log\varepsilon|^{C^*}$.
\end{enumerate}
\end{proposition}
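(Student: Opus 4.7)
Parts (\ref{it.intmini_1})--(\ref{it.intmini_3}) are unified by a single computational strategy. Starting from the definition \eqref{eq.intmini}, I would unfold the associated distribution $\Dirac_\nf$ from \eqref{eq.associated_dist_C} and integrate out the delta functions in their natural order: the two position-deltas $\dirac(x_{e_j'}-x_{e_j}+t_1(v_{e_j'}-v_{e_j}))$ consume $x_3,x_4$ whenever the top edges are free; the two velocity-deltas consume $v_3,v_4$; only the torus-collision factor $\dirac(|x_1-x_2+t_1(v_1-v_2)|_\Tb-\varepsilon)\cdot[(v_1-v_2)\cdot\omega]_-$ remains. In part (\ref{it.intmini_1}), both $z_1,z_2$ are fixed parameters, so the only remaining variable is $t_1$; the torus delta is a countable sum over lifts $m\in\Zb^d$ of $|x_1-x_2+t_1(v_1-v_2)-m|=\varepsilon$ in $\Rb^d$, the Jacobian $|(v_1-v_2)\cdot\omega|$ cancels against $[(v_1-v_2)\cdot\omega]_-$ on the incoming branch, and the condition $|v_j|,t_1\le|\log\varepsilon|^{C^*}$ forces $|m|\le|\log\varepsilon|^{C^*}$, giving \eqref{eq.extradecomp}. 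In parts (\ref{it.intmini_2}) and (\ref{it.intmini_3}), I would perform the change of variables $x_2\mapsto (r,\omega)\in\Rb_+\times\Sb^{d-1}$ centered at $x_1+t_1(v_1-v_2)$; the Jacobian $r^{d-1}$ combines with $\dirac(r-\varepsilon)$ to produce exactly the factor $\varepsilon^{d-1}$ cancelling the prefactor $\varepsilon^{-(d-1)(|\Ec_*|-2|\Mc|)}$, yielding \eqref{eq.intmini_3}, \eqref{eq.intmini_6}. The a priori volume bounds follow from the ranges $t_1\in[(\ell-1)\tau,\ell\tau]$, $\omega\in\Sb^{d-1}$, $|v_2|\le|\log\varepsilon|^{C^*}$. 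For the restricted bounds under \eqref{eq.intmini_new} or \eqref{eq.intmini_6+}, I would do case analysis on which of the constraints is active and, using the explicit formulas \eqref{eq.intmini_4}, show that each constraint confines one of the variables $t_1,\omega,v_2$ (or $x_1,v_1$) to a set of measure $\le\lambda$, with the others remaining in their logarithmic domain.

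\textbf{Parts (\ref{it.intmini_4_extra})--(\ref{it.intmini_extra_2}) — the torus-specific gain.} The key geometric input is the following: if two straight-line trajectories on $\Tb^d$ starting from $(x_i,v_i)$ and $(x_{i+1},v_{i+1})$ overlap at two distinct times $t^{(1)}\ne t^{(2)}$ with $|t^{(1)}|,|t^{(2)}|\le|\log\varepsilon|^{C^*}$, then the two corresponding lifts $m_1\ne m_2\in\Zb^d$ satisfy
\[
|(m_2-m_1)-(t^{(2)}-t^{(1)})(v_i-v_{i+1})|\le 2\varepsilon.
\]
Since $|v_i|\le|\log\varepsilon|^{C^*}$, one has $|m_2-m_1|\le|\log\varepsilon|^{C^*}$, giving at most $|\log\varepsilon|^{C^*}$ nonzero choices of $m:=m_2-m_1$. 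For each such $m$, the relative velocity $v_i-v_{i+1}$ is confined to a tube of radius $O(\varepsilon/|t^{(2)}-t^{(1)}|)$ around the ray $\Rb\cdot m$, a set of measure $O(\varepsilon^{d-1})$ inside the velocity ball; summing over $m$ gives $\le\varepsilon^{d-1}\cdot|\log\varepsilon|^{C^*}$, which is the bound in part (\ref{it.intmini_4_extra}). For part (\ref{it.intmini_extra_2}), the two independent recollision conditions (on the incoming pair and on the outgoing pair) translate via the elastic-collision map $(v_1,v_2,\omega)\mapsto (v_3,v_4)$ into a joint constraint on $(v_1-v_2)$ and $\omega$; by carefully checking non-degeneracy of the differential of the collision map, the two codimension conditions are transverse in dimensions $d\in\{2,3\}$ and together confine the configuration to a set of measure $\le\varepsilon^{d}\cdot|\log\varepsilon|^{C^*}$.

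\textbf{Part (\ref{it.intmini_extra_3}).} The condition $\inf_{|t|\le|\log\varepsilon|^{C^*}}|x_j-x^*-t(v_j-v^*)|_\Tb\le\mu$ says the relative trajectory of particle $j$ and the external $(x^*,v^*)$ comes within $\mu$ of $0\in\Tb^d$ at some time in a window of length $O(|\log\varepsilon|^{C^*})$. Lifting to $\Rb^d$ produces a union of ``tubes'' in velocity space, one per lift $m\in\Zb^d$. The hypothesis $\max(|x_1-x^*|_\Tb,|v_1-v^*|)\gtrsim\mu'$, together with the explicit expression of $(x_j,v_j)$ as functions of $(x_1,v_1,t_1,\omega,v_2)$ from \eqref{eq.intmini_4}, excludes the trivial lift $m=0$ and forces the encounter to happen at a nonzero fundamental domain where the time of approach has magnitude $\gtrsim\mu'$; consequently each tube has measure $\lesssim\mu/\mu'$ in the appropriate direction in $(t_1,\omega,v_2)$-space, and summation over admissible $m$ produces the factor $|\log\varepsilon|^{C^*}$.

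\textbf{Main obstacle.} The delicate step is part (\ref{it.intmini_extra_2}): in low dimensions, the two recollision constraints (incoming and outgoing) are only transverse if one verifies that the Jacobian of the composition ``incoming trajectories $\to$ $(v_1,v_2,\omega)\to$ outgoing trajectories'' is non-degenerate along the scattering sphere. Naive codimension counting would give $\varepsilon^{2(d-1)}$, which for $d=3$ is stronger than the claimed $\varepsilon^d$ and might suggest a trivial argument, but in fact the coupling through the collision map can make the two constraints partially collinear for special $\omega$; handling this requires tracking the constraint on $\omega$ in spherical coordinates and checking that the singular locus has controlled dimension. The $\varepsilon^d$ bound is exactly what survives after accounting for this partial degeneracy, and this is the bookkeeping I expect to be the most delicate part of the proof.
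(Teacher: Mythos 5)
Your treatment of parts (1)--(4) matches the paper's approach: decompose the torus $\boldsymbol\delta$ into a sum over lattice lifts $m\in\Zb^d$, reduce each summand to the Euclidean case, change variables $x_2\leftrightarrow(r,\omega)$, and for part (4) use that two distinct collision times with $|v_e|\le|\log\varepsilon|^{C^*}$ force $m_1\ne m_2$ and hence $|(v_j-v_{j+1})\times(m_1-m_2)|\lesssim\varepsilon|\log\varepsilon|^{C^*}$, giving a tube of measure $\varepsilon^{d-1}|\log\varepsilon|^{C^*}$ in relative velocity. These parts are correct.

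For part (5) you correctly diagnose the failure of naive codimension counting: the two recollision constraints are coupled through $\omega$ and are \emph{not} transverse (the paper's bound $\varepsilon^d$, not $\varepsilon^{2(d-1)}$, confirms this). But your proposed resolution --- ``tracking the constraint on $\omega$ in spherical coordinates and checking that the singular locus has controlled dimension'' --- is vague where the paper is sharp. The paper does not need any singular-locus stratification: after the first constraint fixes $v_1-v_2$ in a measure-$\varepsilon^{d-1}$ tube, the second constraint reads $|\Rc_\omega u_1-u_2|\lesssim\varepsilon$ for fixed unit vectors $u_1,u_2$; this implies simultaneously $|\omega\cdot(u_1+u_2)|\lesssim\varepsilon$ and $|\eta\cdot(u_1-u_2)|\lesssim\varepsilon$ for all $\eta\perp\omega$, and since $|u_1+u_2|^2+|u_1-u_2|^2=4$, one of the two dual inequalities always confines $\omega$ to a set of measure $\lesssim\varepsilon$. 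This unconditional gain on $\omega$, multiplied by $\varepsilon^{d-1}$ from $v_1-v_2$, gives $\varepsilon^d$ directly; the partial collinearity you worry about (the case $u_1\approx u_2$) is exactly the band case $|u_1+u_2|\geq 1$ and still yields measure $\varepsilon$. Your sketch is pointing in the right direction but does not contain this calculation, which is the crux.

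For part (6) there is a genuine error in the sketch. The hypothesis $\max(|x_1-x^*|_\Tb,|v_1-v^*|)\gtrsim\mu'$ is a condition on the fixed-end data $(x_1,v_1)$, not on the collision partner $(x_j,v_j)$ with $j\ne1$, so it does not ``exclude the trivial lift $m=0$'' nor ``force the encounter to happen at a nonzero fundamental domain.'' Consequently your asserted bound ``each tube has measure $\lesssim\mu/\mu'$'' is not justified and, stated at a single scale, does not hold. The paper's argument introduces $w:=x_1-x^*+t_1(v_1-v^*)$ (the relative position of particle $1$ and the reference at time $t_1$, which appears because $x_j\approx x_1+t_1(v_1-v_j)$ by the collision at $\nf$), rewrites the approach condition as $|w\times u_j-\pb|\lesssim\mu|\log\varepsilon|^{C^*}$, and performs a dyadic decomposition on $|w|\sim\mu''$: for fixed $t_1$ this confines $(\omega,v_2)$ to measure $\lesssim\mu(\mu'')^{-1}$ (via a case-by-case Jacobian calculation in $j\in\{2,3,4\}$), while the constraint $|w|\sim\mu''$ together with the $\mu'$-hypothesis confines $t_1$ to measure $\lesssim\mu''(\mu')^{-1}$; multiplying and summing over dyadic $\mu''$ yields $\mu(\mu')^{-1}|\log\varepsilon|^{C^*}$. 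Without this two-scale balance the claimed rate cannot be extracted, and I do not see how your single-tube picture recovers it.
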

\begin{proof} (1) By \eqref{eq.intmini}, we have
\begin{equation}\label{eq.proof_intmini_1_1}
    \Jc(\Mb):=\int_{\Tb^{2d}\times\Rb^{2d}\times\Rb}\boldsymbol{\Delta}(z_1,z_2,z_3,z_4,t_1)\cdot Q\,\mathrm{d}z_3\mathrm{d}z_4\mathrm{d}t_1.
\end{equation}
By fixing some $\Rb^d$ representative of $x_j\in\Tb^d\,(1\leq j\leq 2)$ we may assume $x_j\in\Rb^d$, then we have
\begin{equation}\label{eq.proof_intmini_1_2}
    \boldsymbol{\delta}\big(|x_1-x_2+t_1(v_1-v_2)|_\Tb-\varepsilon\big) = \sum_{m\in\Zb^d} \boldsymbol{\delta}\big(|x_1-x_2+t_1(v_1-v_2)-m|-\varepsilon\big),
\end{equation}
where $|\cdot|$ is the norm in $\Rb^d$. Inserting into \eqref{eq.associated_dist_C}, we get
\begin{equation}\label{eq.proof_intmini_1_3}
    \boldsymbol{\Delta}(z_1,z_2,z_3,z_4,t_1)=\sum_{m\in\Zb^d} \boldsymbol{\Delta}_{m}(z_1,z_2,z_3,z_4,t_1),
\end{equation} 
\begin{equation}\label{eq.proof_intmini_1_4}
\begin{aligned}
    \boldsymbol{\Delta}_m(z_1,z_2,z_3,z_4,t_1)&:=\boldsymbol{\delta}\big(x_3-x_1+t_1(v_3-v_1)\big)\cdot \boldsymbol{\delta}\big(x_4-x_2+t_1(v_4-v_2)\big)\\&\times \boldsymbol{\delta}\big(|x_1-x_2+t_1(v_1-v_2) - m|-\varepsilon\big)\cdot \big[(v_1-v_2)\cdot \omega\big]_-\\&\times\boldsymbol{\delta}\big(v_3-v_1+[(v_1-v_2)\cdot\omega]\omega\big)\cdot\boldsymbol{\delta}\big(v_4-v_2-[(v_1-v_2)\cdot\omega]\omega\big).
    \end{aligned}
\end{equation} 

Therefore, we have the same decomposition for $\Jc(\Mb)$:
\begin{equation}\label{eq.proof_intmini_1_5}
    \Jc(\Mb)=\sum_{|m|\leq |\log\varepsilon|^{C^*}}\Jc_m(\Mb),
\end{equation}
where $|m|\leq |\log\varepsilon|^{C^*}$ follows from the bound of $x_j$ and $v_j$ in the integral $\Jc(\Mb)$, and $\Jc_m(\Mb)$ is given by 
\begin{equation}\label{eq.proof_intmini_1_6}
    \Jc_m(\Mb):=\int_{\Tb^{2d}\times\Rb^{2d}\times\Rb}\boldsymbol{\Delta}_m(z_1,z_2,z_3,z_4,t_1)\cdot Q\,\mathrm{d}z_3\mathrm{d}z_4\mathrm{d}t_1.
\end{equation}

Now note that $\Jc_m(\Mb)$ is exactly the same as the $\Jc(\Mb)$ in \cite{DHM24}, up to translation by $m$, see equations (7.1) and (9.1) in \cite{DHM24}. By Proposition 9.1 (1) in \cite{DHM24}, we get
\begin{equation}\label{eq.proof_intmini_1_7}
    \Jc_m(\Mb)=\mathbbm{1}_{\mathrm{col}}\left(z_1, z_2\right) \cdot Q=\sum_{t_{1m}}\mathbbm{1}_{|x_1-x_2+t_{1m}(v_1-v_2) - m|=\varepsilon\,\wedge\,(v_1-v_2)\cdot\omega\leq 0}\cdot Q,
\end{equation} which then implies (\ref{eq.intmini_1})--(\ref{eq.extradecomp}) as desired. This proves (1).

(2) Note that $z_1=(x_1,v_1)$ is fixed, and $|x_1-x_2+t_1(v_1-v_2)|_\Tb=\varepsilon$, so by our convention (see Section \ref{sec.hard_sphere}) we can identify the vector $x_1-x_2+t_1(v_1-v_2)$ with its unique $\Rb^d$ representative that has Euclidean length $\varepsilon$. Then we can introduce the variable $\omega\in \Sb^{d-1}$ such that $x_1-x_2+t_1(v_1-v_2)=\varepsilon\omega$, and make the change of variable $x_2\leftrightarrow\omega$, just as in the proof of Proposition 9.1 (2) in \cite{DHM24}. We carefully note that this substitution changes the integral in $x_2\in\Tb^d$ to the integral in $\omega\in\Sb^{d-1}$, \emph{without introducing the fundamental domain decomposition (\ref{eq.extradecomp}).} The rest of the proof then follows the same arguments as in Proposition 9.1 (2) in \cite{DHM24}, which leads to (\ref{eq.intmini_3})--(\ref{eq.intmini_4}).

The proof of the extra volume bound $\leq \lambda\cdot|\log\varepsilon|^{C^*}$ under the assumption (\ref{eq.intmini_new}) also follows from similar arguments as in Proposition 9.1 (2) in \cite{DHM24}; if $Q$ is supported in $|t_1-t^*|\leq\lambda$ or $|v_j-v^*|\leq\lambda$ or $|v_i-v_j|\leq\lambda$, then we already gain this factor $\lambda$ from the $t_1$ or $v_2$ integral in (\ref{eq.intmini_3}); if $Q$ is supported in $|x_j-x^*|_\Tb\leq\lambda$ (or $|x_i-x_j|_\Tb\leq\lambda$ which is similar) then we get $|(x_1-x^*)+t_1(v_1-v_j)|_\Tb\lesssim\lambda$. By making the fundamental domain decomposition \eqref{eq.extradecomp} and dyadic decompositions for $|t_1|$ (or $|v_1-v_j|$), and gaining from both the $t_1$ and $v_2$ integrals in (\ref{eq.intmini_3}), we get the desired volume bound $\leq \lambda\cdot|\log\varepsilon|^{C^*}$.

(3) This follows from the proof of (2) by adding the extra integrations in $x_1$ and $v_1$.

(4) The two conclusions here are related to \eqref{it.intmini_2} and \eqref{it.intmini_3} respectively. We only prove the one related to \eqref{it.intmini_2}, as the other one follows by adding the extra integrations in $x_1$ and $v_1$.

By \eqref{eq.intmini_3}, we get
\begin{equation}\label{eq.proof_intmini_2_1}
    \Jc(\Mb)=\int_{\Rb\times\Sb^{d-1}\times\Rb^d}\big[(v_1-v_2)\cdot\omega\big]_-\cdot Q\,\mathrm{d}t_1\mathrm{d}\omega\mathrm{d}v_2,
\end{equation} and we need to show that $(t_1,\omega,v_2)$ belongs to a set of volume $\leq\varepsilon^{d-1}\cdot|\log\varepsilon|^{C^*}$. The assumption on the support of $Q$ implies that in the set of $(t_1,\omega,v_2)$, there must exist $t_1\neq t_2$ such that for some $j\in\{1,3\}$, we have
\begin{equation}\label{eq.proof_intmini_2_2}
|x_j-x_{j+1}+t_i(v_j-v_{j+1})-m_i|=\varepsilon,\quad \forall i\in\{1,2\}.
\end{equation} Moreover, since $t_1\neq t_2$ we must also have $m_1\neq m_2$ in (\ref{eq.proof_intmini_2_2}), as under the assumption $(v_1-v_2)\cdot\omega\leq 0$ (or $(v_3-v_4)\cdot\omega\geq 0$), there exists at most one solution $t_i$ to (\ref{eq.proof_intmini_2_2}) with fixed $m_i$.

Now, from (\ref{eq.proof_intmini_2_2}) we deduce that
\begin{equation}\label{eq.proof_intmini_2_3}
|(t_1-t_2)(v_j-v_{j+1})-(m_1-m_2)|\leq 2\varepsilon\Rightarrow |(v_j-v_{j+1})\times (m_1-m_2)|\leq \varepsilon|\log\varepsilon|^{C^*}.
\end{equation} Note that $m_1-m_2$ is a nonzero integer vector (which we may fix up to a $|\log\varepsilon|^{C^*}$ loss). If $j=1$ then (\ref{eq.proof_intmini_2_3}) obviously restricts $v_1-v_2$ (and thus restricts $v_2$ with fixed $v_1$) to a set of volume $\leq\varepsilon^{d-1}\cdot|\log\varepsilon|^{C^*}$. If $j=3$, then $v_3-v_4$ is restricted to a set of volume $\leq\varepsilon^{d-1}\cdot|\log\varepsilon|^{C^*}$, and so is $v_1-v_2$ (with fixed $\omega$) as $v_3-v_4$ is the reflection of $v_1-v_2$ with respect to the orthogonal plane of $\omega$, and this reflection preserves volume. This allows us to bound the volume of the set of $v_2$ by $\varepsilon^{d-1}\cdot|\log\varepsilon|^{C^*}$ for each fixed $(t_1,\omega)$, which proves (4).

(5) By the same arguments as in (4), from the assumptions on the support of $Q$ we obtain that
\begin{equation}\label{eq.proof_intmini_3_1}
|(v_1-v_2)\times m'|\leq \varepsilon|\log\varepsilon|^{C^*},\qquad |(v_3-v_4)\times m''|\leq \varepsilon|\log\varepsilon|^{C^*}
\end{equation} for some nonzero integer vectors $m'$ and $m''$. From the inequality for $v_1-v_2$, we already know that $v_1-v_2$ (and hence $v_2$ with fixed $v_1$) belongs to a set of volume $\leq\varepsilon^{d-1}\cdot|\log\varepsilon|^{C^*}$. Moreover, using the inequality for $v_3-v_4$, and the fact that $v_3-v_4=\Rc_\omega(v_1-v_2)$ is the reflection of $v_1-v_2$ with respect to the orthogonal plane of $\omega$, we get that
\begin{equation}\label{eq.proof_intmini_3_2}
   |\Rc_\omega m''\times (v_1-v_2)|\le\varepsilon|\log\varepsilon|^{C^*}.
\end{equation} Note also that $|v_1-v_2|\geq |\log\varepsilon|^{-1}$ by the first inequality in (\ref{eq.proof_intmini_2_3}) (with $j=1$), we conclude that for fixed values of $m''$ and $v_1-v_2$, the condition (\ref{eq.proof_intmini_3_2}) restricts $\omega$ to a subset of $\Sb^{d-1}$ of volume $\leq\varepsilon\cdot|\log\varepsilon|^{C^*}$. In fact, let $u_1$ and $u_2$ be the unit vector in the directions of $m''$ and $v_1-v_2$ respectively, then (\ref{eq.proof_intmini_2_3}) implies that $|\Rc_\omega u_1-u_2|\leq \varepsilon|\log\varepsilon|^{C^*}$, and thus $|\omega\cdot (u_1+u_2)|\leq \varepsilon|\log\varepsilon|^{C^*}$ and $|\eta\cdot (u_1-u_2)|\leq \varepsilon|\log\varepsilon|^{C^*}$ for any vector $\eta\perp\omega$. The desired volume bound then follows from the first inequality if $|u_1+u_2|\geq 1$ and from the second inequality if $|u_1-u_2|\geq 1$.

By putting together the volume bound for $v_1-v_2$ which is independent of $\omega$, and the volume bound for $\omega$ with fixed $v_1-v_2$, we have proved (5).

(6) We know $j\in\{2,3,4\}$ and $Q$ is supported in the set where (\ref{eq.newprop_3_1}) holds. From (\ref{eq.newprop_3_1}), and using also that $|(x_j+t_1v_j)-(x_1+t_1v_1)|_\Tb=O(\varepsilon)$ due to the collision, it follows that, after shifting the fundamental domain by some $m\in\Zb^d$ which we may fix at a loss of $|\log\varepsilon|^{C^*}$, we get (note that $\varepsilon\lesssim\mu$)
\begin{equation}\label{eq.proof_intmini_4_3}
  |(x_j-x^*)\times(v_j-v^*)|\leq \mu|\log\varepsilon|^{C^*}|v_j-v^*|\Rightarrow  |(x_1-x^*+t_1(v_1-v_j))\times(v_j-v^*)|\leq \mu|\log\varepsilon|^{C^*}|v_j-v^*|.
\end{equation} Let $v_j-v_1:=u_j$, then (\ref{eq.proof_intmini_4_3}) implies that
\begin{equation}\label{eq.proof_intmini_4_4}
   |w\times u_j-\boldsymbol{p}|\leq \mu|\log\varepsilon|^{C^*}|v_j-v^*|\leq \mu|\log\varepsilon|^{C^*},
 \end{equation} where $w:=x_1-x^*+t_1(v_1-v^*)$ and $\boldsymbol{p}$ is a constant 2-form depending only on $(x_1,v_1,x^*,v^*)$. By a dyadic decomposition we may assume $|w|\sim\mu''$ for some dyadic $\mu''\in[\mu,\mu']$ (or $|w|\lesssim\mu$ if $\mu''=\mu$, or $|w|\gtrsim\mu'$ if $\mu''=\mu'$). Note that for fixed $w$, the inequality (\ref{eq.proof_intmini_4_4}) restricts $u_j$ to a tube in $\Rb^d$ with size $\leq|\log\varepsilon|^{C^*}$ in one direction and size $\mu(\mu'')^{-1}|\log\varepsilon|^{C^*}$ in all other $(d-1)$ directions; let this tube by $\Xc$. Denote also $v_2-v_1:=u$, then $u_j\in\{u,(u\cdot\omega)\omega,u-(u\cdot\omega)\omega\}$ (corresponding to $j\in\{2,3,4\}$ respectively).

 We first claim that: for fixed $t_1$ (hence fixed $w$), the condition $u_j\in\Xc$ restricts $(u,\omega)$ to a subset of $\Rb^d\times\Sb^{d-1}$ with measure $\leq \mu(\mu'')^{-1}|\log\varepsilon|^{C^*}$ (of course, with $v_1$ fixed, $(v_2,\omega)$ is then restricted to a set of the same measure). In fact, if $j=2$ this is trivial using the volume of $\Xc$. If $j\in\{3,4\}$, for fixed $\omega$ we can represent $u\in\Rb^d$ by the new coordinates $u_\omega:=u\cdot\omega\in\Rb$ and $u^\perp:=u-u_\omega\cdot\omega\in\Pi_\omega^\perp$ (where $\Pi_\omega^\perp$ is the plane orthogonal to $\omega$). Denote also $u^\circ:=u_\omega\cdot\omega$. If $j=3$, then by a simple Jacobian calculation, we see that the restriction $u^\circ\in\Xc$ implies that $(u_\omega,\omega)$ belongs to a set whose measure is bounded by
 \[\int_\Xc|u^\circ|^{-(d-1)}\,\mathrm{d}u^\circ\lesssim\mu(\mu'')^{-1}|\log\varepsilon|^{C^*}\] with the verification of the last inequality being straightforward (the choice of $u^\perp$ only increases the volume by a factor $|\log\varepsilon|^{C^*}$). Finally, if $j=4$ and dimension $d=2$, then the same argument applies with $\omega$ replaced by $\omega'$ which is the $\pi/2$ rotation of $\omega$; if $j=4$ and dimension $d=3$, we may first fix $\omega$ and $u_\omega$, then $u^\perp\in \Pi_\omega^\perp\cap \Xc$, and it is easy to see that the measure of the two dimensional set $\Pi_\omega^\perp\cap \Xc\subset\Pi_\omega$ is bounded by $\mu(\mu'')^{-1}|\log\varepsilon|^{C^*}$. This proves the first claim in either case.

 We next claim that: for fixed $\mu''$, the condition $|w|\sim\mu''$ restricts $t_1$ to a subset of $\Rb$ with measure $\leq \mu''(\mu')^{-1}\log\varepsilon|^{C^*}$. In fact, recall $w=x_1-x^*+t_1(v_1-v^*)$. if $|v_1-v^*|\geq \mu'|\log\varepsilon|^{-1}$, then the desired bound follows by solving a linear inequality in $t_1$; if $|v_1-v^*|\leq \mu'|\log\varepsilon|^{-1}$. then by the assumption $\max(|x_1-x^*|,|v_1-v^*|)\gtrsim \mu'$, we know that $|x_1-x^*|\gtrsim\mu'$, and hence $\mu''\sim|w|\gtrsim\mu'$, in which case the desired bound becomes obvious (the set of $t_1$ is trivially bounded by $|\log\varepsilon|^{C^*}$). This proves the second claim.

 By putting the above two claims together (and summing over $\mu''$ which loses at most a logarithm and can be absorbed by $|\log\varepsilon|^{C^*}$), we then conclude that $(t_1,\omega,v_2)$ belongs to a set of volume
 \[\leq \sum_{\mu'\leq\mu''\leq\mu}\mu(\mu'')^{-1}\cdot|\log\varepsilon|^{C^*}\cdot \mu''(\mu')^{-1}\cdot|\log\varepsilon|^{C^*}=\mu(\mu')^{-1}\cdot|\log\varepsilon|^{C^*},\] which proves (6). 
\end{proof}
\begin{proposition}\label{prop.intmini_2} Consider the same setting as in Proposition \ref{prop.intmini}, but now $\Mb$ is an elementary molecule with two atoms $(\nf_1,\nf_2)$, where $\nf_1$ is a parent of $\nf_2$, which satisfies one of the following assumptions:
\begin{enumerate}[{(a)}]
\item\label{it.intmini_4} Suppose $\Mb$ is a \{33A\}-molecule with atoms $(\nf_1,\nf_2)$. Let $(e_1,e_7)$ be the two fixed ends at $\nf_1$ and $\nf_2$ respectively. Assume that either (i) $\ell[\nf_1]\neq\ell[\nf_2]$ for the two atoms $\nf_1$ and $\nf_2$, or (ii) $Q$ is supported in the set
\begin{equation}\label{eq.intmini_8-}
\max(|x_1-x_7|_\Tb,|v_1-v_7|)\geq \lambda\quad \mathrm{or}\quad |t_1-t_2|\geq\mu
\end{equation} for some $\epsilon\lesssim\lambda,\mu\lesssim 1$, where $(x_j,v_j)=z_{e_j}$ as above.
\item\label{it.intmini_5} Suppose $\Mb$ is a \{33B\}-molecule. Assume that $Q$ is supported in the set
\begin{equation}\label{eq.intmini_9-}
|t_1-t_2|\geq \lambda,\quad |v_i-v_j|\geq \lambda\quad\textrm{for\ any\ edges\ }e_i\neq e_j\textrm{\ at\ the\ same\ atom},
\end{equation}for some $\varepsilon\lesssim\lambda\lesssim 1$, where again $(x_j,v_j)=z_{e_j}$.
\item\label{it.intmini_6} Suppose $\Mb$ is a \{44\}-molecule with atoms $(\nf_1,\nf_2)$ and $(e_1,e_7)$ being two free ends at $\nf_1$ and $\nf_2$ respectively, such that $\Mb$ becomes a \{33A\}-molecule after turning these two free ends into fixed ends. Moreover assume that $Q$ is supported in the set (where $\varepsilon\lesssim\lambda\lesssim1$)
\begin{equation}\label{it.intmini_10-}
\max(|x_1-x_7|_\Tb,|v_1-v_7|)\leq \lambda.
\end{equation}
\end{enumerate}
Then, in any of the above cases, we can decompose (similar to (\ref{eq.extradecomp})) that $\Jc(\Mb)=\sum_m \Jc_m(\Mb)$, such that for each $m$, we have
\begin{equation}\label{eq.intmini_8}
0\leq\Jc_m(\Mb)\leq\kappa\cdot\int_\Omega Q\,\mathrm{d}w\leq\kappa\cdot\int_{\Omega'}Q\,\mathrm{d}w.
\end{equation} Here $w\in\Omega$ and $\Omega\subseteq\Omega'$ (in fact $\Omega=\Omega'$ in cases \ref{it.intmini_5} and \ref{it.intmini_6}) are opens set in some $\Rb^p$, and $\Omega$ depends on $(x_e,v_e)$ for fixed ends $e$ while $\Omega'$ does not, and the input variables of $Q$ are explicit functions of $w$ and $(x_e,v_e)$ for fixed ends $e$. We also have $|\Omega'|\leq |\log\varepsilon|^{C^*}$ (which is true in case \ref{it.intmini_4} even without assuming (i) or (ii)). As for $\Omega$, we have the folowing estimates:
\begin{enumerate}
\item\label{it.intmini_n1} In case \ref{it.intmini_4} assuming (i), we have $|\kappa|\cdot|\Omega|\leq \varepsilon^{1/2}\cdot|\log\varepsilon|^{C^*}$;
\item\label{it.intmini_n2} In case \ref{it.intmini_4} assuming $\max(|x_1-x_7|_\Tb,|v_1-v_7|)\geq \lambda$ in (ii), we have $|\kappa|\cdot|\Omega|\leq \varepsilon\cdot\lambda^{-1}\cdot|\log\varepsilon|^{C^*}$;
\item\label{it.intmini_n3} In case \ref{it.intmini_4} assuming $|t_1-t_2|\geq\mu$ in (ii), we have $|\kappa|\cdot|\Omega|\leq \varepsilon^{d-1}\cdot\mu^{-d}\cdot|\log\varepsilon|^{C^*}$;
\item\label{it.intmini_n4} In case \ref{it.intmini_5}, we have $|\kappa|\cdot|\Omega|\leq \varepsilon^{d-1}\cdot\lambda^{-2d}\cdot|\log\varepsilon|^{C^*}$;
\item\label{it.intmini_n5} In case \ref{it.intmini_6}, we have $|\kappa|\cdot|\Omega|\leq \varepsilon^{-2(d-1)}\cdot \lambda^{2d}\cdot|\log\varepsilon|^{C^*}$.
\end{enumerate}
\end{proposition}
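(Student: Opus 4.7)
The plan is to reduce $\Jc(\Mb)$ to two iterated one-atom integrals already controlled by Proposition~\ref{prop.intmini}, then extract the extra gain from the bond connecting $\nf_1$ to $\nf_2$. First, as in the proof of Proposition~\ref{prop.intmini}(\ref{it.intmini_1}), I would apply a fundamental-domain decomposition to each atom, writing $\Jc(\Mb)=\sum_{m_1,m_2\in\Zb^d}\Jc_{m_1,m_2}(\Mb)$; the velocity and time bounds restrict $|m_j|\leq|\log\varepsilon|^{C^*}$, so the sum has at most $|\log\varepsilon|^{C^*}$ terms and this multiplicity is absorbed into the final factor. For each fixed $m=(m_1,m_2)$, I then apply the change of variables from Proposition~\ref{prop.intmini}(\ref{it.intmini_2}) (or (\ref{it.intmini_3}) in case (\ref{it.intmini_6})) at $\nf_2$ first and then at $\nf_1$, introducing variables $(t_j,\omega_j,v_{j,\star})$ at each atom. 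This produces $\Jc_m(\Mb)=\kappa\int_\Omega Q\,\mathrm{d}w$; the set $\Omega'$ obtained by dropping the bond constraint inherits $|\Omega'|\leq|\log\varepsilon|^{C^*}$ from the single-atom bounds.

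The bond $\nf_1\to\nf_2$ identifies an outgoing edge of $\nf_1$ with an incoming edge of $\nf_2$, and via (\ref{eq.intmini_4}) this becomes a recollision constraint: two particles determined at the $\nf_1$-collision must lie within $\varepsilon$ (after translation by $m_2-m_1$) at time $t_2$. This is a codimension-$(d-1)$ condition on the relative velocity of that pair. For case~(\ref{it.intmini_n2}), the hypothesis $\max(|x_1-x_7|_\Tb,|v_1-v_7|)\geq\lambda$ forces a quantitatively nontrivial recollision: either the relative-velocity direction is confined to a cone of solid angle $\lesssim\varepsilon/\lambda$, or the time interval is restricted to measure $\lesssim\varepsilon/\lambda$, yielding $\varepsilon\lambda^{-1}|\log\varepsilon|^{C^*}$. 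For case~(\ref{it.intmini_n3}), the hypothesis $|t_1-t_2|\geq\mu$ combined with the recollision condition (via Proposition~\ref{prop.intmini}(\ref{it.intmini_4_extra}) applied to the recolliding pair) supplies the $\varepsilon^{d-1}$-gain in the relative velocity, while the requirement that this relative-velocity direction align with the integer vector $m_2-m_1$ produces an additional $\mu^{-d+1}$-gain from the angular restriction on $\omega_1$.

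Case~(\ref{it.intmini_n1}) is reduced to (\ref{it.intmini_n2})--(\ref{it.intmini_n3}) by a dyadic split on $\mu:=|t_1-t_2|$. For $\mu\geq\varepsilon^{1/2}$, case~(\ref{it.intmini_n3}) gives $\varepsilon^{d-1}\mu^{-d}$, which in $d=3$ is $\leq\varepsilon^{1/2}$; in $d=2$ I additionally split $\lambda:=\max(|x_1-x_7|_\Tb,|v_1-v_7|)$ at $\varepsilon^{1/2}$ and apply (\ref{it.intmini_n2}) when $\lambda\geq\varepsilon^{1/2}$. For $\mu\leq\varepsilon^{1/2}$, the assumption $\ell[\nf_1]\neq\ell[\nf_2]$ means $t_1,t_2$ straddle a layer boundary, localizing both times to an interval of length $\varepsilon^{1/2}$ and directly gaining $\varepsilon^{1/2}$ from a time integration. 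For case~(\ref{it.intmini_n4}) (\{33B\}), the non-degeneracy (\ref{eq.intmini_9-}) ensures the Jacobians between incoming and outgoing velocities at each atom are bounded below by powers of $\lambda$; the $\lambda^{-2d}$ Jacobian cost combines with the $\varepsilon^{d-1}$-gain from the recollision. For case~(\ref{it.intmini_n5}) (\{44\}), the free ends $e_1,e_7$ contribute extra $(x,v)$-integrations via (\ref{eq.intmini_6}), and (\ref{it.intmini_10-}) localizes $(x_1,v_1)$ to volume $\lambda^{2d}$ near $(x_7,v_7)$, producing the stated bound $\varepsilon^{-2(d-1)}\lambda^{2d}|\log\varepsilon|^{C^*}$.

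The main obstacle will be case~(\ref{it.intmini_n1}) in dimension $d=2$: the codimension-$(d-1)=1$ recollision is relatively weak, so the $\varepsilon^{d-1}=\varepsilon$ gain in (\ref{it.intmini_n3}) is easily overpowered by $\mu^{-d}=\mu^{-2}$. Closing the estimate requires carefully balancing the dyadic regimes $\mu,\lambda\lessgtr\varepsilon^{1/2}$ to ensure every regime contributes at most $\varepsilon^{1/2}|\log\varepsilon|^{C^*}$, and verifying that when both $\mu$ and $\lambda$ are small the layer-boundary localization alone supplies the needed gain without appealing to a sharper bound from (\ref{it.intmini_n2})--(\ref{it.intmini_n3}).
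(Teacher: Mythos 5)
Your overall plan — fundamental-domain decomposition, iterated single-atom reductions, then extracting the recollision gain — matches the paper's at a high level, and cases (\ref{it.intmini_n2}), (\ref{it.intmini_n4}), and (\ref{it.intmini_n5}) are handled essentially as the paper does them. However, there are two places where the proposal has genuine problems.

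First, for case (\ref{it.intmini_n3}) you appeal to Proposition \ref{prop.intmini}(\ref{it.intmini_4_extra}) ``applied to the recolliding pair'' and to an alignment of the relative velocity with $m_2-m_1$. That proposition concerns a \emph{double overlap}: the same pair of particles satisfying (\ref{eq.intmini_1+}) at two distinct times, which is a torus-only phenomenon forcing $v-v'$ to be nearly parallel to an integer vector. That is not what is happening at $\nf_2$. The constraint at $\nf_2$ is that the outgoing particle $z_j$ from $\nf_1$ collides with the fixed particle $z_7$ at time $t_2$ — a single new collision, which makes sense already on $\Rb^d$ with $m_1=m_2$. The paper obtains $\varepsilon^{d-1}\mu^{-d}$ directly from the Jacobian of the substitution $v_j\mapsto(t_2,\omega_2)$ together with the Dirac mass $\dirac((t_1-t_2)\rho-\varepsilon)$, which produces $\varepsilon^{d-1}/|t_1-t_2|^{d-1}$ from the radial Jacobian and an extra $1/|t_1-t_2|$ from the Dirac; no integer-vector alignment is needed. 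Your exponent accounting ($\varepsilon^{d-1}$ times $\mu^{-d+1}$) also gives $\varepsilon^{d-1}\mu^{-(d-1)}$, not the stated $\varepsilon^{d-1}\mu^{-d}$, so even granting the mechanism the count is off by one power of $\mu$.

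Second, for case (\ref{it.intmini_n1}) your reduction to (\ref{it.intmini_n2})--(\ref{it.intmini_n3}) by a dyadic split in $\mu=|t_1-t_2|$ does not close, and you half-acknowledge this. In $d=3$ the split at $\mu=\varepsilon^{1/2}$ works, but in $d=2$ with $\mu\geq\varepsilon^{1/2}$ the bound $\varepsilon\mu^{-2}$ from (\ref{it.intmini_n3}) is useless, and the side split on $\lambda=\max(|x_1-x_7|_\Tb,|v_1-v_7|)$ leaves the regime $\lambda\leq\varepsilon^{1/2}\leq\mu$ uncovered: here the layer-boundary gain is only $\mu\geq\varepsilon^{1/2}$ (not $\leq\varepsilon^{1/2}$), and neither (\ref{it.intmini_n2}) nor (\ref{it.intmini_n3}) supplies the needed factor. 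The paper instead balances two bounds simultaneously: writing $\nu\sim|t_1-t_2|$ and $\mu''\sim|x_1-x_7+t_1(v_1-v_7)|$, the layer-boundary localization of $t_1$ gives volume $\lesssim\varepsilon\nu(\mu'')^{-1}$ for $(t_1,\omega,v_2)$, while the constraint $|t_1-t_2|\,|v_7-v_j|\lesssim\mu''$ gives the alternative bound $\mu''\nu^{-1}$; since these two quantities multiply to $\varepsilon$, their minimum is at most $\varepsilon^{1/2}$ uniformly in $\nu,\mu''$, and this argument is dimension-free. To fix your proof you should replace the dyadic reduction by this direct balancing.
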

\begin{proof} First note that the statements other than (\ref{it.intmini_n1})--(\ref{it.intmini_n5}) are obvious. For example, the decomposition $\Jc(\Mb)=\sum_m \Jc_m(\Mb)$ can be proved by inserting \eqref{eq.proof_intmini_1_2} into the expression of $\Jc(\Mb)$. The construction of $\Omega'$ such that $|\Omega'|\leq |\log\varepsilon|^{C^*}$ is the same as the proof of Proposition 9.2 in \cite{DHM24}. Below we focus on the proof of (\ref{it.intmini_n1})--(\ref{it.intmini_n5}). Also we may fix one choice of the fundamental domain $m$, but by a suitable shift (similar to the proof of Proposition \ref{prop.intmini}) we can omit this $m$ and work on $\Rb^d$.

(1) To calculate $\Jc(\Mb)$, we first fix the values of $t_1$ and all $z_f$ for edges $f$ at $\nf_1$, and integrate in $t_2$ and all $z_f$ for all the free ends $f$ at $\nf_2$. By assumption, we know that $\nf_2$ becomes a deg 2 atom with two top fixed ends after cutting $\nf_1$ as free. Let the inner integral be $\Jc(\{\nf_2\})$, then we can apply (\ref{eq.intmini_1}) in Proposition \ref{prop.intmini} to get an explicit expression of $\Jc(\{\nf_2\})$.

Let the bond between $\nf_1$ and $\nf_2$ be $e$,  denote the edges at $\nf_1$ by $(e_1,\cdots,e_4)$ as in Proposition \ref{prop.intmini}, then $e=e_j$ for some $j\in\{2,3,4\}$. After plugging in the above formula for the inner integral $\Jc(\{\nf_2\})$, we can reduce $\Jc(\Mb)$ by an integral of form $\Jc(\{\nf_1\})$ as described in (\ref{it.intmini_2}) of Proposition \ref{prop.intmini}, namely
\begin{equation}\label{eq.proof_intmini2_1_1}
    \Jc(\Mb)=\int_{\Rb\times\Sb^{d-1}\times\Rb^d}\big[(v_1-v_2)\cdot\omega\big]_-\cdot \mathbbm{1}_{\mathrm{col}}(z_j,z_7)\cdot Q\,\mathrm{d}t_1\mathrm{d}\omega\mathrm{d}v_2,
\end{equation} 
where the input variables of $Q$ are explicit functions of $(t_1,\omega,v_2)$ and $(z_1,z_7)$ using Proposition \ref{prop.intmini} and the fact that $z_j$ satisfies (\ref{eq.intmini_4}).

Note that the integral (\ref{eq.proof_intmini2_1_1}) has the same form as (\ref{eq.intmini_3}) in Proposition \ref{prop.intmini} (\ref{it.intmini_2}) and (\ref{it.intmini_extra_3}), and the indicator function $\mathbbm{1}_{\mathrm{col}}(z_j,z_7)$ restricts $(t_1,\omega,v_2)$ to a set which satisfies (\ref{eq.newprop_3_1}) in Proposition \ref{prop.intmini} \ref{it.intmini_extra_3}). with $(x^*,v^*)$ replaced by $(x_7,v_7)$. Then, by the same argument as in the proof of Proposition \ref{prop.intmini} \ref{it.intmini_extra_3}), we can define $w:=x_1-x_7+t_1(v_1-v_7)$ and assume $|w|\sim\mu''\in[\varepsilon,1]$, such that for fixed $t_1$ (hence fixed $w$) the volume of the set of $(\omega,v_2)$ is bounded by $\varepsilon(\mu'')^{-1}|\log\varepsilon|^{C^*}$.

Suppose $|t_1-t_2|\sim\nu$, by assmption (i) in case (a), we know that $|t_1-t^*|\leq |t_1-t_2|\lesssim\nu$ for some fixed value $t^*$ (which is either $(\ell[\nf_1]-1)\tau$ or $\ell[\nf_1]\tau$), so $t_1$ belongs to an interval of length $\lesssim\nu$, which leads to the first upper bound for the volume of the set of $(t_1,\omega,v_2)$, namely $\varepsilon\nu(\mu'')^{-1}|\log\varepsilon|^{C^*}$.

On the other hand, by assumption we have
\begin{equation}\label{eq.intmini_lin}|x_1-x_j+t_1(v_1-v_j)|\leq\varepsilon,\quad |x_7-x_j+t_2(v_7-v_j)|\leq\varepsilon,\quad |x_1-x_7+t_1(v_1-v_7)|=|w|\sim\mu''.\end{equation}By taking a linear combination, this implies that $|t_1-t_2|\cdot|v_7-v_j|\lesssim\mu''$, which means that $v_j$ (and hence $v_j-v_1$) belongs to a fixed ball of radius $\lesssim \mu''\nu^{-1}$. By the same proof as in Proposition \ref{prop.intmini} \ref{it.intmini_extra_3}), we know that this restricts $(\omega,v_2)$ to a set with volume $\leq \mu''\nu^{-1}\cdot |\log\varepsilon|^{C^*}$, which is the second upper bound on the volume of the set of $(t_1,\omega,v_2)$.

Summing up, we then know that $(t_1,\omega,v_2)$ is restricted to a set whose volume is bounded by
\[\min(\varepsilon\nu(\mu'')^{-1}|\log\varepsilon|^{C^*},\mu''\nu^{-1}\cdot |\log\varepsilon|^{C^*})\leq\varepsilon^{1/2}|\log\varepsilon|^{C^*},\] which proves (1).

(2) In this case, all the discussions leading to (\ref{eq.proof_intmini2_1_1}) are the same as in (1), and the inequality (\ref{eq.newprop_3_1}) is also the same as in (1), with $(x^*,v^*)$ replaced by $(x_7,v_7)$. Moreover, we also have $\max(|x_1-x_7|_\Tb,|v_1-v_7|)\gtrsim\lambda$, so by directly applying Proposition \ref{prop.intmini} (\ref{it.intmini_extra_3}), we get that $(t_1,\omega,v_2)$ is restricted to a set whose volume is bounded by $\varepsilon\lambda^{-1}|\log\varepsilon|^{C^*}$. This proves (2).

(3) We adopt the same notations for $e_j$ etc. as in (1), which again leads to (\ref{eq.proof_intmini2_1_1}) as in (1) and (2). Now using the condition $|t_1-t_2|\geq\mu$, we will perform a different change of coordinates. Define $u:=v_2-v_1$, $y=v_3-v_1=(u\cdot\omega)\omega$ and $w=v_4-v_1=u-(u\cdot\omega)\omega$, then from (\ref{eq.proof_intmini2_1_1}) we have
\begin{equation}\label{eq.33Aproofint0}
\Jc(\Mb)\leq\int_{\Rb\times \Sb^{d-1}\times \Rb^d} |y|\cdot \mathbbm 1_{\mathrm{col}}(z_j, z_7)\cdot  Q\,\mathrm{d}t_1\mathrm{d}\omega\mathrm{d}u,
\end{equation} where the values of $(z_j, z_7)$ are determined by the fixed ends and the values of $(t_1, \omega, u)$, and $\mathbbm 1_{\mathrm{col}}(z_j, z_7)$ is the indicator function that the two particles with state $z_j$ and $z_7$ collide. To prove (3), it suffices to show that 
\begin{equation}\label{eq.33Aproofint0+}
    \Jc:=\int |y|\cdot\mathbbm{1}_{\mathrm{col}}(z_j,z_7)\,\mathrm{d}t_1\mathrm{d}\omega\mathrm{d}u \le \varepsilon^{d-1}\cdot\mu^{-d}\cdot|\log\varepsilon|^{C^*},
\end{equation} where all the conditions on the support of $Q$ are imposed in the integral in (\ref{eq.33Aproofint0+}) without explicit mentioning (same below). Here, if necessary, we may perform a dyadic decomposition on the size of $|y|$ to replace it by a constant in order to match the form (\ref{eq.intmini_8}); this leads to at most logarithmic loss which can be absorbed into $|\log\varepsilon|^{C^*}$.

Note that the function $\mathbbm 1_{\mathrm{col}}(z_j, z_7)$ depends on $v_j-v_1$, and $v_j-v_1\in\{u,y,w\}$ depending on the cases of $j\in\{2,3,4\}$. By using polar coordinates in $y$ and a simple Jacobian calculation, we have
\begin{equation}\label{eq.33Aproofint1}|y|\,\mathrm{d}\omega\mathrm{d}u=|y|^{-(d-2)} \mathrm{d}_{\Pi_y^\perp}(w) \mathrm{d}y=|y|^{-(d-3)}\dirac(y\cdot w) \mathrm{d}w \mathrm{d}y,\end{equation} where $\Pi_y^\perp$ is the plane orthogonal to $y$ and $\mathrm{d}_{\Pi_y^\perp}$ is the Hausdorff measure on that plane. Below, if $j=2$ (so $v_j-v_1=u$), we shall keep the $|y|\,\mathrm{d}\omega\mathrm{d}u$ in (\ref{eq.33Aproofint0}); if $j\in\{3,4\}$ (so $v_j-v_1\in\{y,w\}$), we shall substitute $|y|\,\mathrm{d}\omega\mathrm{d}u$ by (\ref{eq.33Aproofint1}). One easy case is when $|v_j-v_1|\lesssim\mu^{-1}\varepsilon$; this implies either $|y|\lesssim\mu^{-1}\varepsilon$ or $|w|\lesssim\mu^{-1}\varepsilon$. In either case, note that dimension $d\in\{2,3\}$, we can prove (\ref{eq.33Aproofint0+}) by direct integration in $y$ and $w$, by using (\ref{eq.33Aproofint1}) and exploiting the symmetry between $y$ and $w$ if necessary.

Next we will assume $|v_j-v_1|\gg \mu^{-1}\varepsilon$ and analyze the factor $\mathbbm 1_{\mathrm{col}}(z_j, z_7)$. In the support of this factor, there exists unique $(t_2, \omega_2)\in \Rb\times \Sb^{d-1}$ such that
\begin{equation}\label{eq.33Aproofint3}
x_j+t_2v_j-(x_7+t_2v_7)=x_1-x_7+t_1(v_1-v_7)+\sigma \varepsilon \omega-(t_1-t_2) (v_j-v_7)=\varepsilon \omega_2,
\end{equation}
where $\sigma\in\{0,1\}$ depending on whether $j=3$ or $j\in\{2,4\}$, and $\omega=y/|y|$. We would like to substitute the variable $v_j-v_1$ by $(t_2,\omega_2)$; indeed, using equations (7.1) and (7.10) in \cite{DHM24}, we have
\begin{equation}\label{eq.33Aproofint4}
\mathbbm 1_{\mathrm{col}}(z_j, z_7)=\int_{\Rb} [(v_j-v_7) \cdot \omega_2]_+ \dirac(|x_1-x_7+t_1(v_1-v_7)+\sigma \varepsilon\omega-(t_1-t_2) (v_j-v_7)|-\varepsilon)\, \mathrm{d}t_2,
\end{equation} where $\omega_2$ is as in (\ref{eq.33Aproofint3}). Now, using polar coordinates in the vector \[v_j-v_1-(t_1-t_2)^{-1}[x_1-x_7+t_2(v_1-v_7)+\sigma \varepsilon\omega]:=-\rho \omega_2,\] we can rewrite the $\dirac$ function in (\ref{eq.33Aproofint4}) as the two Dirac function $\dirac((t_1-t_2)\rho-\varepsilon)$ and $\dirac(|\omega_2|-1)$ (which leads to integration in $\omega_2\in\Sb^{d-1}$). In this way, we get the following upper bounds for (\ref{eq.33Aproofint0+}):
\begin{itemize}
\item If $j=2$, then we directly get \begin{equation}\label{333proofint5}
\Jc\lesssim \varepsilon^{d-1}\int\frac{|(v_2-v_7)\cdot \omega_2|\cdot|y|}{|t_1-t_2|^{d}} \,\mathrm{d}t_1\mathrm{d}t_2 \mathrm{d}\omega\mathrm{d}\omega_2.
\end{equation}
\item If $j=3$, then $\sigma=0$ in (\ref{eq.33Aproofint3}), in particular subsequent expressions do not depend on $\omega$; using the second expression in (\ref{eq.33Aproofint1}), we get
\begin{equation}\label{333proofint5+}
\Jc\lesssim \varepsilon^{d-1}\int\frac{|(v_3-v_7)\cdot \omega_2|}{|y|\cdot|t_1-t_2|^{d}} \,\mathrm{d}t_1\mathrm{d}t_2 \mathrm{d}\omega_2\mathrm{d}_{\Pi_y^\perp}(w),
\end{equation} where $y=v_3-v_1$ is determined by $(t_2,\omega_2)$ as above.
\item If $j=4$, then $\sigma=1$ in (\ref{eq.33Aproofint3}), which complicates things a bit. In this case we have
\begin{equation}\label{eq.333proof_ex}w=w_*+\frac{\varepsilon\omega}{t_1-t_2},\quad w_*:=\frac{x_1-x_7+t_2(v_1-v_7)-\varepsilon\omega_2}{t_1-t_2},\end{equation} and note that $|w|\sim|w_*|\gg \mu^{-1}\varepsilon$ by our assumption. Now we use the second expression in (\ref{eq.33Aproofint1}), and write
\[\dirac(y\cdot w)\,\mathrm{d}y=|w_*|^{-1}\dirac\bigg(\frac{w_*}{|w_*|}\cdot y+\lambda|y|\bigg);\quad \lambda:=\frac{\varepsilon}{(t_1-t_2)\cdot|w_*|}\,(|\lambda|\ll 1).\] Note that the $\dirac$ function is comparable to the Hausdorff measure on the cone $\frac{w_*}{|w_*|}\cdot y+\lambda|y|=0$. We denote this cone (or two rays if $d=2$) by $\Gamma_{w_*}^\perp$ and its Huausdorff measure by $\mathrm{d}_{\Gamma_{w_*}^\perp}(y)$, which is bounded on bounded sets. This leads to
\begin{equation}\label{333proofint5++}
\Jc\lesssim \varepsilon^{d-1}\int\frac{|(v_4-v_7)\cdot \omega_2|}{|w_*|\cdot|t_1-t_2|^{d}} \,\mathrm{d}t_1\mathrm{d}t_2 \mathrm{d}\omega_2\mathrm{d}_{\Gamma_{w_*}^\perp}(y).
\end{equation}
\item Note also that (\ref{333proofint5+}) and (\ref{333proofint5++}) can be written in the unified manner
\begin{equation}\label{333proofint6}
\Jc\lesssim \varepsilon^{d-1}\int\frac{|(v_j-v_7)\cdot \omega_2|}{|z|\cdot|t_1-t_2|^{d}} \,\mathrm{d}t_1\mathrm{d}t_2 \mathrm{d}\omega_2\mathrm{d}_{\widetilde{\Pi}_{z}^\perp}(z'),
\end{equation} where either $(j,z,z')=(3,y,w)$ and $\widetilde{\Pi}_y^\perp=\Pi_z^\perp$, or $(j,z,z')=(4,w_*,y)$ and $\widetilde{\Pi}_{z}^\perp=\Gamma_{w_*}^\perp$.
\end{itemize}

Now we are ready to prove (\ref{eq.33Aproofint0+}). In fact, we may fix $t_1$ and $\omega_2$. The integral in $\omega$ in (\ref{333proofint5}) and the integral in $z'$ in (\ref{333proofint6}) is trivially bounded by $|\log\varepsilon|^{C^*}$. As for the integral in $t_2$, since $|t_1-t_2|\geq\mu$, we only need to worry about the denominator $|z|$ in (\ref{333proofint6}). However the numerator $|v_j-v_7|\lesssim|z|+|v_1-v_7|$, and the $|z|$ term cancels the denominator; as for the $|v_1-v_7|$ factor, note that $w_*$ is given by (\ref{eq.333proof_ex}) (and the same for $y$ when $j=3$), so the $|v_1-v_7|$ cancels the coefficient before $t_2$ and yields
\[\int_{|z|\gtrsim\varepsilon}|z|^{-1}\,\mathrm{d}t_2\leq |\log\varepsilon|\] (for example, we may perform a dyadic decomposition in $|z|$ and use that the measure for the set of $t_2$ satisfying $|z|\sim\nu$ is bounded by $\min(1,\nu|v_1-v_7|^{-1})$). This proves (\ref{eq.33Aproofint0+}).

(4) This is the same as Proposition 9.2 (2) in \cite{DHM24}. The proof is tedious and not much related to the rest of this paper, so we omit it here and refer the reader to \cite{DHM24}.

(5) Note that, if we fix the variables $(x_1,v_1,x_7,v_7)$, then $\Jc(\Mb)$ essentially reduces to the same integral expression but for a \{33A\}-molecule. Therefore, by adding the extra integration over $(x_1,v_1,x_7,v_7)$ in \eqref{eq.proof_intmini2_1_1}, we get
\begin{equation}
    \Jc(\Mb)=\varepsilon^{-2(d-1)}\int \mathrm{d}x_1\mathrm{d}x_7\mathrm{d}v_1\mathrm{d}v_7\int_{\Rb\times\Sb^{d-1}\times\Rb^d}\big[(v_1-v_2)\cdot\omega\big]_-\cdot \mathbbm{1}_{\mathrm{col}}(z_j,z_7)\cdot Q\,\mathrm{d}t_1\mathrm{d}\omega\mathrm{d}v_2.
\end{equation} 
According to the support assmption of $Q$ in \eqref{it.intmini_10-}, we know that 
\begin{equation*}
    \Jc(\Mb)=\varepsilon^{-2(d-1)}\int \mathbbm{1}_{\max(|x_1-x_7|_\Tb,|v_1-v_7|)\leq \lambda} \mathrm{d}x_1\mathrm{d}x_7\mathrm{d}v_1\mathrm{d}v_7\int_{\Rb\times\Sb^{d-1}\times\Rb^d}\big[(v_1-v_2)\cdot\omega\big]_-\cdot \mathbbm{1}_{\mathrm{col}}(z_j,z_7)\cdot Q\,\mathrm{d}t_1\mathrm{d}\omega\mathrm{d}v_2.
\end{equation*} This then proves (5), by noticing that $\kappa=\varepsilon^{-2(d-1)}$ and $\mathrm{vol}\, (\{\max(|x_1-x_7|_\Tb,|v_1-v_7|)\leq \lambda\})\le \lambda^{2d}$.
\end{proof}
\begin{proposition}\label{prop.intmini_3} Consider the same setting as in Proposition \ref{prop.intmini}, but now $\Mb$ is an elementary molecule with three atoms $(\nf_1,\nf_2,\nf_3)$ as in Definition \ref{def.elementary}, where $\nf_1$ is a parent of $\nf_2$, which satisfies one of the following assumptions:
\begin{enumerate}
\item\label{it.intmini_3_1} Suppose $\Mb$ is a \{333A\}-molecule with atoms $(\nf_1,\nf_2,\nf_3)$. Let $t_j=t_{\nf_j}$ and $(e_1,e_7,e_{10})$ be the three fixed ends at $(\nf_1,\nf_2,\nf_3)$ respectively. Assume $Q$ is supported in the set that
\begin{equation}\label{eq.intmini_3_1_1}|t_1-t_2|\gtrsim\mu,\quad \min_{i\neq j\in\{1,7,10\}}(|x_i-x_j|_\Tb,|v_i-v_j|)\gtrsim\mu'
\end{equation} for some $\varepsilon\lesssim\mu,\mu'\lesssim 1$, and that
\begin{equation}\label{eq.intmini_3_1_2}\inf_{|t|\leq |\log\varepsilon|^{C^*}}|x_1-x_7+t(v_1-v_7)|_\Tb\geq\mu'\end{equation} for the same $\mu'$ as above, where $(x_j,v_j)=z_{e_j}$.
\item\label{it.intmini_3_2} Suppose $\Mb$ is a \{334T\}-molecule with atoms $(\nf_1,\nf_2,\nf_3)$. Let $t_j=t_{\nf_j}$ and $(e_1,e_7)$ be the two fixed ends at $\nf_1$ and $\nf_2$ respectively. Assume $Q$ is supported in the set that
\begin{equation}\label{eq.intmini_3_2_1}|t_1-t_2|\gtrsim\mu,\quad \min(|x_1-x_7|_\Tb,|v_1-v_7|)\gtrsim\mu'
\end{equation} for some $\varepsilon\lesssim\mu,\mu'\lesssim 1$, and that \begin{equation}\label{eq.intmini_3_2_2}|x_1-x_7+t_2(v_1-v_7)|_\Tb\geq\mu'\end{equation} for the same $\mu'$ as above, where $(x_j,v_j)=z_{e_j}$.
\end{enumerate}
Then in each case, we can decompose (similar to (\ref{eq.extradecomp})) that $\Jc(\Mb)=\sum_m \Jc_m(\Mb)$, such that for each $m$, we have
\begin{equation}\label{eq.333_2}
0\leq\Jc_m(\Mb)\leq\kappa\cdot\int_\Omega Q\,\mathrm{d}w\leq\kappa\cdot\int_{\Omega'}Q\,\mathrm{d}w.
\end{equation} Here the $\kappa$ and $(\Omega,\Omega')$ are as in Proposition \ref{prop.intmini_2} (so $\Omega$ depends on $(x_e,v_e)$ for fixed ends $e$ while $\Omega'$ does not, etc.), except that the estimate for $\Omega$ should be replaced by $|\kappa|\cdot|\Omega|\leq \varepsilon^{d-1/2}\cdot (\mu\cdot\mu')^{-2d}\cdot|\log\varepsilon|^{C^*}$.
\end{proposition}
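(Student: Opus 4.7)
The plan is to reduce this three-atom estimate to the two-atom \{33A\} bound in Proposition \ref{prop.intmini_2} by integrating out the third atom $\nf_3$ first, and then extract an additional $\varepsilon^{1/2}$ gain using the non-degeneracy assumptions (\ref{eq.intmini_3_1_2}) or (\ref{eq.intmini_3_2_2}). The overall target $\varepsilon^{d-1/2}$ decomposes naturally as the \{33A\}-excess $\varepsilon^{d-1}\mu^{-d}$ times an additional $\varepsilon^{1/2}(\mu\mu')^{-d}$ coming from the $\nf_3$-constraint, with the logarithmic factors absorbed into $|\log\varepsilon|^{C^*}$.

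First, I would cut $\{\nf_1,\nf_2\}$ as free (protecting them), so that in the \{333A\} case the bond $e$ connecting $\nf_3$ to (say) $\nf_2$ becomes a fixed end at $\nf_3$ alongside $e_{10}$, while in the \{334T\} case both bonds from $\nf_3$ become fixed ends at $\nf_3$. By the definition of \{333A\}/\{334T\}, these two fixed ends at $\nf_3$ are both top or both bottom, so $\nf_3$ is a \{2\}-molecule of the type treated in Proposition \ref{prop.intmini} (\ref{it.intmini_1}). Integrating the variables that appear only in $\Dirac_{\nf_3}$ (i.e.\ $t_3$ and the remaining free-end variables at $\nf_3$) via Proposition \ref{prop.intmini} (\ref{it.intmini_1}) yields a sum of at most $|\log\varepsilon|^{C^*}$ terms indexed by the fundamental-domain shift $m$ and discrete collision times $t_3$, each multiplying the original $Q$ by an indicator that the associated collision constraint holds, schematically
\begin{equation*}
|x_e-x_{10}+t_3(v_e-v_{10})-m|=\varepsilon,\qquad (v_e-v_{10})\cdot\omega_3\leq 0
\end{equation*}
for the \{333A\} case, and the analogous constraint between $z_{e^a}$ and $z_{e^b}$ for \{334T\}.

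The remaining integral is a \{33A\}-integral over $\{\nf_1,\nf_2\}$ (with fixed ends $e_1,e_7$) whose integrand $Q$ is further restricted to the support of the $\nf_3$-constraint above. Since $|t_1-t_2|\geq\mu$ by assumption, Proposition \ref{prop.intmini_2} (\ref{it.intmini_n3}) applied to this restricted $Q$ already gives $|\kappa|\cdot|\Omega|\leq\varepsilon^{d-1}\mu^{-d}|\log\varepsilon|^{C^*}$. The main task is therefore to show that the extra restriction cuts $|\Omega|$ by an additional factor of $\varepsilon^{1/2}(\mu\mu')^{-d}|\log\varepsilon|^{C^*}$. For this, I track how the bond variable $z_e$ is determined by the \{33A\}-variables $(t_1,\omega,v_2)$ through (\ref{eq.intmini_4}) (and an analogous formula at $\nf_2$). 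The $\nf_3$-constraint says that the relative trajectory $t\mapsto x_e-x_{10}+t(v_e-v_{10})$ (lifted to $\Rb^d$) comes within $\varepsilon$ of some lattice point $m$ at time $t_3$. Using that $z_e$ is essentially a reflection/translation of $(z_1,z_7)$ under the \{33A\}-dynamics, the assumption (\ref{eq.intmini_3_1_2}) transfers into a lower bound of order $\mu'$ for the relative trajectory of $(z_e,z_{10})$ at all times $|t-t^*|\geq\nu$ (with $t^*$ the potential crossing time and $\nu$ a scale to be chosen). This forces $t_3$ into an interval of length $O(\varepsilon(\mu')^{-1})$, i.e.\ $z_e$ lies in a tube.

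The main obstacle is executing this last step cleanly to produce exactly $\varepsilon^{1/2}$. The strategy is a two-scale tradeoff analogous to the one used in Proposition \ref{prop.intmini_2} (\ref{it.intmini_n1}): on one hand, when the collision time $t_3$ is localized to a window of size $\nu$, a volume count in $(t_1,\omega,v_2)$ (together with the transverse Jacobian of the map $(t_1,\omega,v_2)\mapsto z_e$, whose lower bound is controlled by $\mu'$) restricts the set $\Omega$ to measure $\lesssim\nu\cdot(\mu\mu')^{-O(1)}|\log\varepsilon|^{C^*}$ beyond the \{33A\} bound; on the other hand, when the trajectory is forced far from any lattice point (distance $\gtrsim\mu'$) except in the small window, the collision can only occur if the relevant velocity difference is of order $\varepsilon/\nu$, which restricts the set $\Omega$ to measure $\lesssim (\varepsilon/\nu)\cdot(\mu\mu')^{-O(1)}|\log\varepsilon|^{C^*}$. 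Optimizing $\nu\sim\varepsilon^{1/2}$ yields the claimed additional factor $\varepsilon^{1/2}(\mu\mu')^{-O(1)}$, which combined with Step 2 produces the desired bound $|\kappa|\cdot|\Omega|\leq\varepsilon^{d-1/2}(\mu\mu')^{-2d}|\log\varepsilon|^{C^*}$. The \{334T\} case runs on exactly the same lines, with (\ref{eq.intmini_3_2_2}) replacing (\ref{eq.intmini_3_1_2}); the only additional bookkeeping is that both bond variables $z_{e^a},z_{e^b}$ are \{33A\}-determined, and the non-degeneracy (\ref{eq.intmini_3_2_2}) provides the required separation between their trajectories at time $t_2$.
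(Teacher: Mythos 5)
Your opening reduction---cut $\{\nf_1,\nf_2\}$ as protected, treat $\nf_3$ as a \{2\}-molecule via Proposition \ref{prop.intmini}~(\ref{it.intmini_1}), and view the result as a \{33A\}-integral over $\{\nf_1,\nf_2\}$ restricted by an extra collision indicator---is structurally sound and is close in spirit to the paper's starting point, which derives the weighted integral (\ref{334proofint1}) with an extra factor $\mathbbm{1}_{\mathrm{col}}(z_*,z_*)$. However, the crucial second half of your argument, the source of the extra $\varepsilon^{1/2}$, has a genuine gap.

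First, you assert that (\ref{eq.intmini_3_1_2}), which is a lower bound on the relative trajectory $t\mapsto x_1-x_7+t(v_1-v_7)$ of the two fixed ends $e_1,e_7$, ``transfers'' into an $O(\mu')$ separation for the relative trajectory of $(z_e,z_{10})$. There is no direct implication here: $z_e$ is a reflected combination of $(z_1,z_7,\omega,v_2)$, while $z_{10}$ is an independent external parameter, so a lower bound between $e_1$ and $e_7$ says nothing a priori about the pair $(e,e_{10})$. In the paper's proof, (\ref{eq.intmini_3_1_2}) is used very differently: it guarantees $|v_j-v_1|\gtrsim\mu'$ via (\ref{eq.33Aproofint3}), and the separations between the pair $(e_7,e_{10})$ (resp.\ the constructed pairs in cases (b),(c)) come instead from (\ref{eq.intmini_3_1_1}) (resp.\ a dyadic decomposition and the derived bound $|(X/|X|)\times(v_1-v_7)|\gtrsim(\mu')^2$).

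Second, the ``two-scale tradeoff'' in $\nu$ that you propose to produce the $\varepsilon^{1/2}$ is not a clean dichotomy: both branches depend on the same constraint (the third collision occurs within some time window), and the claimed bounds $\nu$ vs.\ $\varepsilon/\nu$ do not arise from mutually exclusive cases that you could optimize over. The paper's $\varepsilon^{1/2}$ gain has a definite geometric origin that your sketch does not identify: after appropriate changes of variable (introducing $(t_2,\omega_2)$ via the collision at $\nf_2$), the third collision constraint restricts the unit-sphere variable $\omega_2$ to lie in the intersection of a tube of thickness $\min(1,\varepsilon\sigma^{-1})$ with a sphere of radius $\rho\gtrsim\mu'$; this intersection is an arc of diameter $O(\min(1,\varepsilon^{1/2}\sigma^{-1/2}\rho^{1/2}))$. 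The square root is a tube-meets-sphere phenomenon, not an optimization over a free parameter. Additionally, the paper has to carefully split into three cases ((a) $\nf_3$ adjacent to $\nf_2$, (b) the triangle, (c) $\nf_3$ adjacent to $\nf_1$) with distinct orders of integration chosen case by case, and your proposal does not address this necessary case analysis at all. Without the sphere-tube geometry and the case-dependent integration order, the argument as written cannot close.
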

\begin{proof} For \{333A\}-molecules, at the expense of a factor of at most $|\log \varepsilon|^{C^*}$, we can bound each single $\Jc_m(\Mb)$ individually, which allows us to lift to the Euclidean covering and assume that the spatial domain is $\Rb^d$. For \{334T\}-molecules, we can still do this for the two atoms $\{\nf_1,\nf_2\}$ which forms the \{33A\}-molecule. We adopt the same convention regarding $e_j$ etc. as in Proposition \ref{prop.intmini_2}; in particular we assume $(x_j,v_j)$ has collision with $(x_7,v_7)$ at atom $\nf_2$ with $j\in\{3,4\}$. Moreover, we can distinguish three cases:
\begin{enumerate}[{(a)}]
\item When $\Mb$ is \{333A\}-molecule with $\nf_3$ adjacent with $\nf_2$; in this case we define $e_k$ (with $k\in\{8,9\}$ and and associated $z_{e_k}=(x_k,v_k)$) to be the bond between $\nf_2$ and $\nf_3$, where $e_8$ is serial with $e_j$ and $e_9$ is not.
\item When $\Mb$ is \{334T\}-molecule; in this case we define $e_k$ (with $k\in\{8,9\}$) as in (a), and define $e_\ell$ (with $\ell\in\{3,4\}$) to be the bond between $\nf_1$ and $\nf_3$. Note that $\{j,\ell\}=\{3,4\}$.
\item When $\Mb$ is \{333A\}-molecule with $\nf_3$ adjacent with $\nf_1$; in this case we define $e_\ell$ (with $\ell\in\{3,4\}$) as in (b).
\end{enumerate}

Arguing as in the proof of Proposition \ref{prop.intmini_2} (3), we only need to prove that
\begin{equation}\label{eq.334proofint0}
    \Jc:=\int |y|\cdot\mathbbm{1}_{\mathrm{col}}(z_j,z_7)\cdot\mathbbm{1}_{\mathrm{col}}(z_*,z_*)\,\mathrm{d}t_1\mathrm{d}\omega\mathrm{d}u \le \varepsilon^{d-1/2}\cdot(\mu\cdot\mu')^{-2d}\cdot|\log\varepsilon|^{C^*},
\end{equation} where $(u,y,w)=(v_2-v_1,v_3-v_1,v_4-v_1)$. The second indicator function equals $\mathbbm{1}_{\mathrm{col}}(z_k,z_{10})$ in case (a), and $\mathbbm{1}_{\mathrm{col}}(z_k,z_\ell)$ in case (b), and $\mathbbm{1}_{\mathrm{col}}(z_\ell,z_{10})$ in case (c). We may assume $\mu\cdot\mu'\gg\varepsilon^{1/2}$, otherwise (\ref{eq.334proofint0}) is trivial. Note that we have $|x_1-x_7+t_2(v_1-v_7)|\geq\mu'$ by either (\ref{eq.intmini_3_1_2}) or (\ref{eq.intmini_3_2_2}); using also (\ref{eq.33Aproofint3}), this implies that $|v_j-v_1|\gtrsim\mu'\gg\mu^{-1}\varepsilon$. Now, following the same arguments as in the proof of Proposition \ref{prop.intmini_2} (3) (i.e. analyzing the \{33A\}-molecule formed by $\nf_1$ and $\nf_2$), we get the following expression (corresponding to (\ref{333proofint6})):
\begin{equation}\label{334proofint1}
\Jc\lesssim \varepsilon^{d-1}\int\frac{|(v_j-v_7)\cdot \omega_2|}{|z|\cdot|t_1-t_2|^{d}} \cdot\mathbbm{1}_{\mathrm{col}}(z_*,z_*)\,\mathrm{d}t_1\mathrm{d}t_2 \mathrm{d}\omega_2\mathrm{d}_{\widetilde{\Pi}_{z}^\perp}(z'),
\end{equation} where either $(j,z,z')=(3,y,w)$ and $\widetilde{\Pi}_y^\perp=\Pi_z^\perp$, or $(j,z,z')=(4,w_*,y)$ and $\widetilde{\Pi}_{z}^\perp=\Gamma_{w_*}^\perp$, with the relvant notations same as in the proof of Proposition \ref{prop.intmini_2} (3). Now we consider the three different cases.

\begin{itemize}

\item \emph{Case (a) (\{333A\}-molecule with $\nf_3$ connected to $\nf_2$)}. In this case we have that $z_{10}=(x_{10},v_{10})$ is fixed. We will fix $t_1$ in (\ref{334proofint1}) (i.e. integrate it after all the other variables) and control the measure of the set of $(t_2,\omega_2)$ (note that $|z|^{-1}\lesssim(\mu')^{-1}$, also the $z'$ integral is uniformly bounded). The factor $1_{\mathrm{col}}(z_k, z_{10})$ implies that there exists $t_3\in\Rb$ such that
\begin{equation}\label{coll3conc}
|x_k+t_3 v_k -x_{10}-t_3 v_{10}|=\varepsilon,
\end{equation}
which combined with the fact that $x_k+t_2v_k=x_7+t_2v_7+ O(\varepsilon)$ (due to the collision $\nf_2$)
gives that 
\begin{equation}\label{v3v5cross}
|(x_7+t_2 v_7-x_{10}-t_2v_{10})+(t_2-t_3)(v_{10}-v_k)|\leq 2\varepsilon. 
\end{equation}
Denote $X:=x_7+t_2 v_7-x_{10}-t_2v_{10}$, then we have $|X\times (v_{10}-v_k)|=O(\varepsilon)$. Note also that \[v_k-v_7\in\{v_j-v_7-((v_j-v_7)\cdot\omega_2)\omega_2,((v_j-v_7)\cdot\omega_2)\omega_2\}\] corresponding to $k\in\{8,9\}$, and that $v_j-v_7=z+v_1-v_7+O(\varepsilon)$. This implies that $v_{10}-v_k=v_{10}-v_7-z''$ up to $O(\varepsilon)$, where
\begin{equation}\label{eq.whatever}z''\in\{z+v_1-v_7-((z+v_1-v_7)\cdot\omega_2)\omega_2,((z+v_1-v_7)\cdot\omega_2)\omega_2\}.\end{equation} Note that by (\ref{eq.33Aproofint3}) and (\ref{eq.intmini_3_1_2}) we have \[|(t_1-t_2)(z+v_1-v_7)|\geq |x_1-v_7+t_1(v_1-v_7)+O(\varepsilon)|\gtrsim\mu'.\]

Let $|X|\sim\sigma$ for some dyadic $\sigma$, then $t_2$ belongs to an interval of length $\min(1,\sigma(\mu')^{-1})|\log\varepsilon|$ since $|v_7-v_{10}|\gtrsim\mu'$. Moreover, with $t_2$ fixed, by the inequality $|X\times (v_{10}-v_k)|=O(\varepsilon)$, we know the $z''$ given by (\ref{eq.whatever}) belongs to a tube $\Xc$ of size $|\log\varepsilon|^{C^*}$ in one dimension and $\min(1,\varepsilon\sigma^{-1})$ in all other $(d-1)$ dimensions. With $t_1$ and $t_2$ fixed, the value of $z$ is also fixed (up to perturbation $O(\varepsilon)$) as in (\ref{eq.333proof_ex}). Note that the $z''$ defined by (\ref{eq.whatever}) belongs to a sphere of radius $\rho=|z+v_1-v_7|/2\gtrsim \mu'|\log\varepsilon|^{-1}$ which is parametrized by $\omega_2$. The intersection of the tube $\Xc$ with this sphere equals a subset of the sphere with diameter $O(\min(1,\varepsilon^{1/2}\sigma^{-1/2}\rho^{1/2}))$, which implies that $\omega_2$ belongs to a set of measure $\lesssim\min(1,\varepsilon^{1/2}\sigma^{-1/2}(\mu')^{-1/2})|\log\varepsilon|$. Putting together the above estimates for $t_2$ and $\omega_2$, we have proved (\ref{eq.334proofint0}). Note the order of integration here is: $z'$, then $\omega_2$, then $t_2$, then $t_1$.
\item \emph{Case (b) (\{334T\}-molecule)}. This case is similar to (a), except that $(x_k,v_k)$ collides with $(x_\ell,v_\ell)$ instead of $(x_{10},v_{10})$. We shall fix $t_2$ and control the measure of the set of $(t_1,\omega_2,z')$. Let $|z+v_1-v_7|=|v_j-v_7|\sim\nu$ for some dyadic $\nu$, then $|x_1-x_7+t_1(v_1-v_7)|_\Tb\lesssim\nu|\log\varepsilon|$, so $t_1$ belongs to an interval of length $\min(1,\nu(\mu')^{-1})|\log\varepsilon|$. Now we fix $t_1$. This also fixes $\widetilde{z}=(x_1-x_7+t_2(v_1-v_7))/(t_1-t_2)$ and we have $z=\widetilde{z}+O(\varepsilon\mu^{-1})$. Note that both $z$ and the set $\widetilde{\Pi}_z^\perp\ni z'$ depend on $\omega_2$, but with fixed $\omega_2$, the $z'\in \widetilde{\Pi}_z^\perp$ can be mapped to some $\widetilde{z}'=z'+O(\varepsilon\mu^{-1})\in\Pi_{\widetilde{z}}^\perp$ at a uniformly bounded Jacobian, and the variable $\widetilde{z}'$ belongs to a set $\Pi_{\widetilde{z}}^\perp$ which is independent of $\omega_2$, so we are allowed to integrate in $\omega_2$ with fixed $\widetilde{z}'$. Then $v_\ell=v_1+\widetilde{z}'+O(\varepsilon\mu^{-1})$ and $x_\ell$ are both fixed up to error $O(\varepsilon)$.

Now we can repeat the discussion in (a) with $(x_\ell,v_\ell)$ in place of $(x_{10},v_{10})$. Note that now $|z+v_1-v_7|=|v_j-v_7|\sim\nu$ in comparison with (a); moreover, the $X$ in (a) should be replaced by $\widetilde{X}=x_7=t_2v_7-x_\ell-t_2v_\ell$ which equals $(t_1-t_2)\widetilde{z}'+X'+O(\varepsilon\mu^{-1})$ where $X'$ depends only on $t_1$ and $t_2$, so if $|X'|\sim\sigma$ as in (a) then $\widetilde{z}'$ belongs to a set of measure $\lesssim\min(1,\sigma\mu^{-1})$ with $t_1$ and $t_2$ fixed. Finally, with $(t_1,t_2,\widetilde{z}')$ fixed, the same proof in (a) yields that $\omega_2$ belongs to a set of measure $\lesssim \min(1,\varepsilon^{1/2}\sigma^{-1/2}\nu^{-1/2})|\log\varepsilon|$. Putting together the above estimates for $t_1$ and $\widetilde{z}'$ and $\omega_2$, we have proved (\ref{eq.334proofint0}). The order of integration here: $\omega_2$, then $\widetilde{z}'$ (mapped from $z'$), then $t_1$, then $t_2$.
\item \emph{Case (c) (\{333A\}-molecule with $\nf_3$ connected to $\nf_1$)}. In this case, we have that $(x_\ell,v_\ell)$ collides with $(x_{10},v_{10})$. We shall fix $\omega_2$ and control the measure of the set of $(t_1,t_2,z')$. By the same calculation as above, we get $|X\times(v_\ell-v_{10})|\lesssim\varepsilon$, where now $X:=x_1-x_{10}+t_1(v_1-v_{10})$. Let $|X|\sim\sigma$, then $t_1$ belongs to an interval of length $\min(1,\sigma(\mu')^{-1})|\log\varepsilon|$. Now we fix $t_1$ so $X$ is also fixed. Then $v_\ell$ (and hence $v_\ell-v_1=z')$ belongs to a fixed tube $\Xc$ (similar to (a) and (b) above) in the direction of $X$, which has thickness $\varepsilon\sigma^{-1}$ in the orthogonal direction of $X$. On the other hand, with $\omega_2$ fixed, we know that $z'$ belongs to $\widetilde{\Pi}_z^\perp$ which is close to $\Pi_{\widetilde{z}}^\perp$ (and is integrated according to the corresponding Hausdorff measure) where $\widetilde{z}=(x_1-x_7+t_2(v_1-v_7))/(t_1-t_2)$. Let $|(X/|X|)\times (\widetilde{z}/|\widetilde{z}|)|\sim\nu$ for some dyadic $\nu$, then with $z_2$ fixed, this forces $z'$ to belong to a set in $\widetilde{\Pi}_z^\perp$ of measure $\leq |\log\varepsilon|^{C^*}\min(1,\varepsilon\sigma^{-1}\nu^{-1})$. Finally, if $|(X/|X|)\times (\widetilde{z}/|\widetilde{z}|)|\sim\nu$, then we have
\[\big|(X/|X|)\times (x_1-x_7)+t_2(X/|X|)\times (v_1-v_7)\big|\lesssim\nu|\log\varepsilon|^2.\] It is then easy to see that $|(X/|X|)\times (v_1-v_7)|\gtrsim(\mu')^2$ (if not, then $|(X/|X|)\times (v_1-v_7)|\ll(\mu')^2$ also, hence $|(x_1-x_7)|\times(v_1-v_7)|\ll(\mu')^2$, contradicting (\ref{eq.intmini_3_1_2})), and thus with $t_1$ fixed, $t_2$ must belong to an interval of length $\min(1,\nu(\mu')^{-2})|\log\varepsilon|^{C^*}$. Putting together all the above estimates for $t_1$, $t_2$ and $z'$, we have proved (\ref{eq.334proofint0}). The order of integration here: $z'$, then $t_2$, then $t_1$, then $\omega_2$.\qedhere
\end{itemize}
\end{proof}

\subsection{Definition of excess}\label{sec.summary} Suppose $\Mb$ is reduced to $\Mb'$ that contains only elementary components, after certain cutting sequence. By the same discussion in Section 9.3 in \cite{DHM24}, we may put certain restrictions on the support of the function $Q$ occurring in (\ref{eq.associated_int}) (and also (\ref{eq.intmini})), by inserting certain indicator functions that form a partition of unity. If such a indicator function is fixed, we may then say that the integral (\ref{eq.associated_int}) (or (\ref{eq.intmini})) is \textbf{restricted} to a certain set, which is specified by indicator function.

Now we recall the definition of \textbf{good}, \textbf{normal} and \textbf{bad} components of $\Mb'$, see Definition 9.5 in \cite{DHM24}. In the current paper, we will need an extension of these definitions, as well as a more quantitative description of the gain for each good component, in the form of \textbf{excess} in Definition \ref{def.excess} below. Note that these definitions depend on both the structure of $\Mb'$, and also the restrictions (i.e. support conditions of $Q$), hence they depend on the specific choice of the indicator functions described above.
\begin{definition}
\label{def.good_normal} The following components are normal:
\begin{enumerate}
\item Any \{2\}-component.
\item Any \{3\}-component, except those described in (\ref{it.good_1}) below.
\item Any \{33A\} component, except those described in (\ref{it.good_2}) below.
\end{enumerate}

The following components are bad:
\begin{enumerate}[resume]
\item Any \{4\}-component, except those described in (\ref{it.good_3}) below. 
\end{enumerate}

The following components are good:
\begin{enumerate}[resume]
\item\label{it.good_1} Any \{3\}-component $\Xc=\{\nf\}$, which occurs in some restriction in the following sense. The integral (\ref{eq.intmini}) is restricted to the set where one of the following two assumptions hold:
\begin{equation}\label{eq.good_normal_1}\min(|v_{e_i}-v_{e_j}|,|x_{e_i}-x_{e_j}|_\Tb)\leq\varepsilon^{1/(8d)}\quad\mathrm{or}\quad \min(|t_\nf-t_{\nf'}|,|x_e-x_{e'}|_\Tb,|v_e-v_{e'}|)\leq\varepsilon^{1/(8d)};
\end{equation}
\begin{multline}\label{eq.good_normal_1+}\qquad\qquad\mathrm{for\ the\ vectors\ }z_{e_i}=(x_{e_i},v_{e_i})\mathrm{\ and\ }z_{e_j}=(x_{e_j},v_{e_j}),\mathrm{\ there\ exists\ at\ least}\\ \mathrm{two\ different\ values\ of\ }
t_1\mathrm{\ such\ that\ the\ equality\ }(\ref{eq.intmini_1+})\mathrm{\ holds}.
\end{multline}
Here in (\ref{eq.good_normal_1}) we assume that (i) $(e_i,e_j)$ are two different ends at $\nf$; (ii) $e$ is a free end at $\nf$, while $\nf'$ is an atom in a component $\Yc$ with $\Yc\prec_{\mathrm{cut}}\Xc$ in the sense of Definition \ref{def.match_ends}, and $e'$ is a free end or bond at $\nf'$. In (\ref{eq.good_normal_1+}) we assume that $(e_i,e_j)$ are two different ends at $\nf$ which are both bottom or both top. Note that if $e_i$ (or $e_j$) is the fixed end, then $z_{e_i}$ equals $z_{e_i'}$ where $e_i'$ is the free end paired with $e_i$ in the sense of Definition \ref{def.cutting}, so (\ref{eq.good_normal_1}) can still be expressed as a condition of $\vz_{\Ec_*}$; same with (\ref{it.good_2})--(\ref{it.good_3}) below.
\item\label{it.good_2} Any \{33A\}-component $\Xc=\{\nf_1,\nf_2\}$, and we assume that either (i) in Proposition \ref{prop.intmini_2} \ref{it.intmini_4} is satisfied, or the integral (\ref{eq.intmini}) is restricted to the set where
\begin{equation}\label{eq.good_normal_2}\max(|x_{e_1}-x_{e_7}|_\Tb,|v_{e_1}-v_{e_7}|)\geq\varepsilon^{1-1/(8d)}\quad\mathrm{or}\quad|t_{\nf_1}-t_{\nf_2}|\geq\varepsilon^{1/(8d)},
\end{equation} where $(e_1,e_7)$ are the two fixed ends at $\nf_1$ and $\nf_2$ respectively.
\item\label{it.good_33B} Any \{33B\}-component $\Xc=\{\nf_1,\nf_2\}$, and the integral (\ref{eq.intmini}) is restricted to the set where
\begin{equation}\label{eq.good_normal_4}|v_{e_i}-v_{e_j}|\geq \varepsilon^{1/(8d)},\quad |t_{\nf_1}-t_{\nf_2}|\geq \varepsilon^{1/(8d)},
\end{equation} for any two distinct edges $(e_i,e_j)$ at the same atom.
\item\label{it.good_44} Any \{44\}-component $\Xc=\{\nf_1,\nf_2\}$, and the integral (\ref{eq.intmini}) is restricted to the set where
\begin{equation}\label{eq.good_normal_3}\max(|x_{e_1}-x_{e_7}|_\Tb,|v_{e_1}-v_{e_7}|)\leq \varepsilon^{1-1/(8d)},
\end{equation} where $(e_1,e_7)$ are two free ends at $\nf_1$ and $\nf_2$ respectively, such that $\Mb$ becomes a \{33A\}-molecule after turning these two free ends into fixed ends.
\item\label{it.good_3} Any \{4\}-component $\Xc=\{\nf\}$, and the integral (\ref{eq.intmini}) is restricted to the set where one of the following two assumptions hold:
\begin{equation}\label{eq.good_normal_4+}\min(|v_{e_i}-v_{e_j}|,|x_{e_i}-x_{e_j}|_\Tb)\leq \varepsilon^{1/(8d)}\quad\textrm{or}\quad \min(|t_\nf-t_{\nf'}|,|x_{e}-x_{e'}|_\Tb,|v_{e}-v_{e'}|)\leq \varepsilon^{1/(8d)};
\end{equation} 
\begin{multline}\label{eq.good_normal_4++}\qquad\qquad\mathrm{for\ the\ vectors\ }z_{e_i}=(x_{e_i},v_{e_i})\mathrm{\ and\ }z_{e_j}=(x_{e_j},v_{e_j}),\mathrm{\ there\ exists\ at\ least}\\ \mathrm{two\ different\ values\ of\ }
t_1\mathrm{\ such\ that\ the\ equality\ }(\ref{eq.intmini_1+})\mathrm{\ holds}.
\end{multline}
Here in (\ref{eq.good_normal_4+}) and (\ref{eq.good_normal_4++}), we make the same assumptions on $(e,e')$ and $(e_i,e_j)$ as in (\ref{eq.good_normal_1}) and (\ref{eq.good_normal_1+}) in (\ref{it.good_1}).
\end{enumerate}
\end{definition}
\begin{definition}\label{def.excess} Let $\Mb$ be any elementary molecule as in Definition \ref{def.elementary}; note that here we are allowed to add various restrictions to the support of $Q$ as in Propositions \ref{prop.intmini}--\ref{prop.intmini_3}. Let $\Jc(\Mb)$ be as in (\ref{eq.intmini}). For dyadic number $\sigma\lesssim 1$, we say $\Mb$ has \textbf{excess} $\sigma$, if $\Jc(\Mb)$ (or each component $\Jc_m(\Mb)$ of $\Jc(\Mb)$ as in Proposition \ref{prop.intmini_2}) can be bounded by an integral of form (\ref{eq.intmini_8}), i.e. 
\begin{equation}\label{eq.excess_1}0\leq\Jc_m(\Mb)\leq\kappa\cdot\int_\Omega Q\,\mathrm{d}w\leq\kappa\cdot\int_{\Omega'}Q\,\mathrm{d}w,\end{equation} Here the $\kappa$ and $(\Omega,\Omega')$ are as in Proposition \ref{prop.intmini_2} (so $\Omega$ depends on $(x_e,v_e)$ for fixed ends $e$ and possibly on the external parameters while $\Omega'$ does not, etc.), except that the estimate for $\Omega$ should be replaced by \begin{equation}\label{eq.excess_2}|\kappa|\cdot|\Omega|\leq \sigma\cdot|\log\varepsilon|^{C^*}.\end{equation} If $\Mb$ is a good \{4\}-molecule, then the inequality (\ref{eq.excess_2}) should be replaced by (cf. (\ref{eq.intmini_6})) \begin{equation}\label{eq.excess_3}|\kappa|\cdot|\Omega|\leq \varepsilon^{-(d-1)}\cdot\sigma\cdot|\log\varepsilon|^{C^*}.\end{equation}
\end{definition}
\begin{proposition}
\label{prop.newprop_3} The following results are true:
\begin{enumerate}
\item\label{it.newprop_0} If $\Mb$ is a good component in the sense of Definition \ref{def.good_normal}, then it has excess $\varepsilon^{1/(8d)}$.
\item\label{it.newprop_1} If $\Mb$ is a \{3\}- or \{4\}-component as in Proposition \ref{prop.intmini} (\ref{it.intmini_2}) and (\ref{it.intmini_3}) that satisfies either (\ref{eq.intmini_new}) or (\ref{eq.intmini_6+}) with some $\varepsilon\lesssim\lambda\lesssim 1$, then $\Mb$ has excess $\lambda$. If $\Mb$ is a \{3\}- or \{4\}-component as in in Proposition \ref{prop.intmini} (\ref{it.intmini_4_extra}), then $\Mb$ has excess $\varepsilon^{d-1}$. If $\Mb$ is a \{4\}-component as in Proposition \ref{prop.intmini} (\ref{it.intmini_extra_2}), then $\Mb$ has excess $\varepsilon^{d}$. If $\Mb$ is a \{3\}-component as in Proposition \ref{prop.intmini} (\ref{it.intmini_extra_3}) that satisfies (\ref{eq.newprop_3_1}) and $\max(|x_1-x^*|_\Tb,|v_1-v^*|)\gtrsim\mu'$ with some $\varepsilon\lesssim\mu\lesssim\mu'\lesssim 1$, then $\Mb$ has excess $\mu\cdot(\mu')^{-1}$.
\item\label{it.newprop_2} If $\Mb$ is a \{33A\}-component as in Proposition \ref{prop.intmini_2} \ref{it.intmini_4} that satisfies $\max(|x_1-x_7|_\Tb,|v_1-v_7|)\geq \lambda$ in (\ref{eq.intmini_8-}) with some $\varepsilon\lesssim\lambda\lesssim 1$, then it has excess $\varepsilon\cdot\lambda^{-1}$. If it satisfies $|t_1-t_2|\geq\mu$ in (\ref{eq.intmini_8-}) with some $\varepsilon\lesssim\mu\lesssim 1$, then it has excess $\varepsilon^{d-1}\cdot\mu^{-d}$.
\item\label{it.newprop_3} If $\Mb$ is a \{333A\} or \{334T\}-component as in Proposition \ref{prop.intmini_3} that satisfies either (\ref{eq.intmini_3_1_1})--(\ref{eq.intmini_3_1_2}) or (\ref{eq.intmini_3_2_1})--(\ref{eq.intmini_3_2_2}) with some $\varepsilon\lesssim\mu,\mu'\lesssim 1$, then it has excess $\varepsilon^{d-1/2}\cdot (\mu\cdot\mu')^{-2d}$.
\end{enumerate}
\end{proposition}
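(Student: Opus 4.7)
The proof is essentially a direct bookkeeping exercise that matches each listed case in the definitions with the corresponding quantitative integral estimate proved in Propositions \ref{prop.intmini}--\ref{prop.intmini_3}. I would structure it in four short parts, one per item of the proposition.

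For part (\ref{it.newprop_1}) my plan is to read off the bounds directly from Proposition \ref{prop.intmini}. In each subcase the integral $\Jc(\Mb)$ is already presented in the form $\kappa\cdot\int_\Omega Q\,\mathrm{d}w$ with $\kappa=1$ for a \{3\}-molecule and $\kappa=\varepsilon^{-(d-1)}$ for a \{4\}-molecule (see (\ref{eq.intmini_3}) and (\ref{eq.intmini_6})). Thus to establish excess $\sigma$ in the sense of Definition \ref{def.excess}, it suffices to check that the measure of the restricted domain $\Omega$ of $(t_1,\omega,v_2)$ (or $(t_1,\omega,x_1,v_1,v_2)$) obeys $|\Omega|\le \sigma|\log\varepsilon|^{C^*}$. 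The volume estimates in Proposition \ref{prop.intmini}(\ref{it.intmini_2})--(\ref{it.intmini_extra_3}) are precisely of this form with $\sigma$ equal respectively to $\lambda$, $\varepsilon^{d-1}$, $\varepsilon^d$, and $\mu(\mu')^{-1}$, and the weight $[(v_1-v_2)\cdot\omega]_-$ only contributes a $|\log\varepsilon|^{C^*}$ factor on the support of $Q$, which can be absorbed.

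For parts (\ref{it.newprop_2}) and (\ref{it.newprop_3}) the situation is even more transparent: Propositions \ref{prop.intmini_2} and \ref{prop.intmini_3} state their conclusions already in exactly the format (\ref{eq.excess_1})--(\ref{eq.excess_2}). Concretely, for a \{33A\}-component, the bound $|\kappa|\cdot|\Omega|\le \varepsilon\cdot\lambda^{-1}\cdot|\log\varepsilon|^{C^*}$ of Proposition \ref{prop.intmini_2}(\ref{it.intmini_n2}) yields excess $\varepsilon\lambda^{-1}$, and the bound in (\ref{it.intmini_n3}) yields excess $\varepsilon^{d-1}\mu^{-d}$. The \{333A\} and \{334T\} cases are read off identically from Proposition \ref{prop.intmini_3}, giving excess $\varepsilon^{d-1/2}(\mu\mu')^{-2d}$.

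For part (\ref{it.newprop_0}) the plan is to go through each subcase of Definition \ref{def.good_normal} and invoke the appropriate bound already proved in parts (\ref{it.newprop_1})--(\ref{it.newprop_3}) or directly in Propositions \ref{prop.intmini}--\ref{prop.intmini_2}. Specifically: a good \{3\}-component satisfying (\ref{eq.good_normal_1}) has excess $\varepsilon^{1/(8d)}$ by Proposition \ref{prop.intmini}(\ref{it.intmini_2}) with $\lambda=\varepsilon^{1/(8d)}$, while one satisfying (\ref{eq.good_normal_1+}) has excess $\varepsilon^{d-1}\le\varepsilon^{1/(8d)}$ by Proposition \ref{prop.intmini}(\ref{it.intmini_4_extra}); the analogous statements for good \{4\}-components follow from Proposition \ref{prop.intmini}(\ref{it.intmini_3}), (\ref{it.intmini_4_extra}). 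A good \{33A\}-component either satisfies assumption (i) of Proposition \ref{prop.intmini_2}(\ref{it.intmini_4}) and so has excess $\varepsilon^{1/2}\le\varepsilon^{1/(8d)}$ by (\ref{it.intmini_n1}), or it satisfies (\ref{eq.good_normal_2}) and thus has excess $\max(\varepsilon^{1/(8d)},\varepsilon^{d-1-d/(8d)})\le\varepsilon^{1/(8d)}$ by (\ref{it.intmini_n2})--(\ref{it.intmini_n3}). A good \{33B\}-component under (\ref{eq.good_normal_4}) has excess $\varepsilon^{d-1-2d/(8d)}\le\varepsilon^{1/(8d)}$ from Proposition \ref{prop.intmini_2}(\ref{it.intmini_n4}) with $\lambda=\varepsilon^{1/(8d)}$; and a good \{44\}-component under (\ref{eq.good_normal_3}) has its $|\kappa|\cdot|\Omega|$ bounded by $\varepsilon^{-2(d-1)}\cdot\varepsilon^{2d(1-1/(8d))}=\varepsilon^{2-2/(8d)}\le\varepsilon^{-(d-1)}\cdot\varepsilon^{1/(8d)}$ by (\ref{it.intmini_n5}), which matches the \{4\}-component excess convention in (\ref{eq.excess_3}).

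The entire argument is mechanical once Propositions \ref{prop.intmini}--\ref{prop.intmini_3} are in hand, so I do not expect any genuine obstacle; the only non-cosmetic point is the bookkeeping for the \{44\}-case, where one must remember that the normalization (\ref{eq.excess_3}) for \{4\}-components (which absorbs the $\varepsilon^{-(d-1)}$ prefactor in (\ref{eq.intmini_6})) must be used in place of (\ref{eq.excess_2}), and consistently verify that the exponents $d-1-2d/(8d)$ etc.\ all remain $\ge 1/(8d)$ for $d\in\{2,3\}$, which is elementary arithmetic.
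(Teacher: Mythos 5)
Your proposal follows essentially the same route as the paper, which itself gives only a one-line citation to Propositions~\ref{prop.intmini}--\ref{prop.intmini_3}. The expansion you provide correctly matches each subcase of Definition~\ref{def.good_normal} and Proposition~\ref{prop.newprop_3}(2)--(4) to the corresponding quantitative bound, so the argument is sound in structure. Two small slips in the \{44\}-case are worth flagging, though they do not affect the conclusion. First, the exponent arithmetic: with $\lambda=\varepsilon^{1-1/(8d)}$ one has $\varepsilon^{-2(d-1)}\cdot\lambda^{2d}=\varepsilon^{-2(d-1)+2d-2d/(8d)}=\varepsilon^{2-1/4}$ (since $2d/(8d)=1/4$ is dimension-independent), not $\varepsilon^{2-2/(8d)}$ as you wrote. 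Second, the normalization convention (\ref{eq.excess_3}) with the extra $\varepsilon^{-(d-1)}$ allowance is stated in Definition~\ref{def.excess} only for single-atom \{4\}-molecules; a \{44\}-molecule (which is full but two-atomed) should be measured against (\ref{eq.excess_2}). This means you must verify the stronger inequality $|\kappa|\cdot|\Omega|\leq\varepsilon^{1/(8d)}\cdot|\log\varepsilon|^{C^*}$, i.e.\ $\varepsilon^{2-1/4}\leq\varepsilon^{1/(8d)}$, which does hold for $d\in\{2,3\}$ since $7/4\geq 1/(8d)$, so the conclusion survives -- but the justification you gave invoked the wrong normalization. With these two corrections the bookkeeping is complete and matches the paper exactly.
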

\begin{proof} By Definition \ref{def.excess}, we see that (1) follows from Definition \ref{def.good_normal} using Propositions \ref{prop.intmini}--\ref{prop.intmini_2}, (2) follows from Proposition \ref{prop.intmini} (\ref{it.intmini_2})--(\ref{it.intmini_extra_3}), (3) follows from Proposition \ref{prop.intmini_2} (2) and (3), and (4) follows from Proposition \ref{prop.intmini_3}.
\end{proof}
\subsection{The main combinatorial proposition} Now we can finally state the main technical (combinatorial) ingredient of the current paper, namely Proposition \ref{prop.comb_est_extra} below. This proposition plays the role of Proposition 13.1 in \cite{DHM24}, and is in fact a refinement of the latter with the extra proofs needed in the torus case. It is also a major component of the proof of Theorem \ref{th.main} in Section \ref{sec.proof_main}, see \textbf{Part 6} in Section \ref{sec.proof_main}.
\begin{proposition}\label{prop.comb_est_extra} Let $\Gamma$ be a large absolute constant depending only on the dimension $d$. Let $\Mb$ be a full molecule of C-atoms with all atoms in the same layer $\ell$ (see Definition \ref{def.molecule} for layers), such that all its bonds form a tree of $\leq|\log\varepsilon|^{C^*}$ atoms plus exactly $\gamma$ bonds where $\Gamma<\gamma<2\Gamma$. 

Then there exists a cutting sequence that cuts $\Mb$ into elementary components (Definition \ref{def.elementary}), such that (after decomposing 1 into at most $C^{|\Mc|}|\log\varepsilon|^{C^*}$ indicator functions, where $\Mc$ is the set of atoms of $\Mb$) one of the followings happen:
\begin{enumerate}
\item There is at least one good \{44\}-component, and also $\#_{\{33B\}}=\#_{\{4\}}=0$.
\item All \{33B\}- and \{44\}- components are good, and 
\begin{equation}
\label{eq.extra_case_1}\frac{1}{10d}\cdot(\#_{\mathrm{good}})-d\cdot (\#_{\{4\}})\geq 100d^2.
\end{equation}
\item There is exactly one \{4\}-component, and all the other components are \{2\}-, \{3\}-, \{33A\}-, \{333A\}- and \{334T\}-components. Moreover there exists at most 10 components, each having excess $\sigma_j$ as in Definition \ref{def.excess}, such that $\prod_j\sigma_j\leq\varepsilon^{d-1+1/(15d)}$.
\end{enumerate}
\end{proposition}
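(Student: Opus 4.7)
The plan is to deduce Proposition \ref{prop.comb_est_extra} from the more intrinsic statement Proposition \ref{prop.comb_est_extra_2} via the three-case framework of Section 13.3 in \cite{DHM24}, and then to establish Proposition \ref{prop.comb_est_extra_2} by constructing a cutting sequence via the new algorithm sketched in Sections \ref{sec.LB}--\ref{sec.new_alg}. First I would run the greedy algorithm from \cite{DHM24} to cut the tree part of $\Mb$ into elementary components, tracking which are good. If this yields a good \{44\}-component with no \{33B\}- or \{4\}-components, we land in outcome (1); if all \{33B\}- and \{44\}-components satisfy the goodness conditions of Definition \ref{def.good_normal} and the counting inequality \eqref{eq.extra_case_1} holds, we are in outcome (2). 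The remaining scenarios --- in which the algorithm terminates with exactly one \{4\}-component together with \{2\}-, \{3\}-, and \{33A\}-components, all of \emph{short-time} type --- are exactly those addressed by Proposition \ref{prop.comb_est_extra_2}, and must be routed to outcome (3).

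To prove Proposition \ref{prop.comb_est_extra_2}, the first step is to extract a two-layer decomposition $\Mb=\Mb_U\cup\Mb_D$ satisfying the hypotheses of Proposition \ref{prop.new2layer}, in which every bond crossing between $\Mb_U$ and $\Mb_D$ is a \emph{long bond} (i.e.\ the times of its endpoints are $O(1)$-separated). Since $\gamma$ is strictly larger than the Burago--Ferleger--Kononenko upper bound $G(q)$ of \cite{BFK98} on the number of collisions of $q$ particles over a short time interval, pigeonholing forces the existence of adjacent collisions separated in time by $\Omega(1)$; iterating by subdividing the layer interval $[(\ell-1)\tau,\ell\tau]$ into short sub-intervals and applying BFK inside each, one builds the layered decomposition. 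An induction on the number of particle lines $q$ --- with the triangle-with-long-bond case handled separately via Lemma \ref{lem.newlem} and the \{334T\}-estimate of Proposition \ref{prop.intmini_3} --- ensures that $\Mb_U$ and $\Mb_D$ can both be taken connected with every particle line meeting each half.

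Given the two-layer structure, I would apply the new algorithm of Section \ref{sec.new_alg}: pick a highest atom $\mf_0\in\Mb_D$ with parents $\mf_1^+,\mf_2^+\in\Mb_U$, and analyze the first joining collision $\nf_0\in\Mb_U$ of the corresponding particle lines $\pb_1,\pb_2$ of $\mf_0$, together with the collision sets $A_1,A_2$ of the clusters of $\pb_1,\pb_2$ prior to $\nf_0$. The dichotomy of Proposition \ref{prop.2cluster} is decisive. In case (i), where (say) $A_1$ contains no canonical cycle, the cutting sequence ``cut $\mf_2^+$ first, then all of $A_2$ down to $\nf_0$, then $A_1$ back up to $\mf_1^+$'' forces $\mf_1^+$ to be cut at degree $3$; the resulting \{33A\}-component then contains the long bond $\mf_1^+\to\mf_0$, so by property $(\clubsuit)$ (Proposition \ref{prop.intmini_2} \ref{it.intmini_n3}) it has excess $\leq\varepsilon^{d-1-\theta}$. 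A single further good component --- either a second \{33A\} found among the remaining recollisions of $\Mb$, or a \{333A\}-/\{334T\}-extension of the long-bond \{33A\} --- then suffices to close the remaining sub-polynomial gap down to $\varepsilon^{d-1+1/(15d)}$ via Propositions \ref{prop.intmini_2}--\ref{prop.intmini_3}. In case (ii), both $A_j$ contain canonical cycles, yielding two ordinary \{33A\}-components with combined excess $\varepsilon^{2-}$; a third \{33A\}-component is obtained from an atom lying on a particle line outside $P_1\cup P_2$, or, failing that, from a direct construction exploiting the fact that $\Mb$ must then contain exactly six particle lines.

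The main obstacle I anticipate is the terminal sub-branch of case (ii), where no particle line outside $P_1\cup P_2$ is available. Here one must both pin down the essentially rigid combinatorial structure of the six-particle-line configuration and tailor an explicit cutting sequence extracting three disjoint \{33A\}-components whose excesses multiply to $\leq\varepsilon^{d-1+1/(15d)}$; generic greedy choices will destroy at least one of the needed recollisions, and the correct sequence must be sensitive to the directions of the long bonds and the topology of the canonical cycles inside each $A_j$. Unlike the Euclidean analysis of \cite{DHM24}, no a priori BFK bound is available to tame the combinatorics at this last stage, which is precisely why the argument has to exploit the specific long-bond geometry and the new elementary-molecule estimates of Propositions \ref{prop.intmini_2}--\ref{prop.intmini_3} rather than soft graph-theoretic bounds.
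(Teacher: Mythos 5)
Your proposal follows the same architecture as the paper: reduce Proposition \ref{prop.comb_est_extra} to Proposition \ref{prop.comb_est_extra_2}, build the two-layer decomposition of Proposition \ref{prop.new2layer} via BFK pigeonholing, locate a long-bond \{33A\}-component through the canonical-cycle dichotomy of Propositions \ref{prop.newcase_1}--\ref{prop.newcase_2}, and handle the rigid six-particle-line terminal sub-branch directly.

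Two details are misstated, and while neither derails the plan (filling in details would force the corrections), you should fix them before executing. First, the quantity pigeonholed against the BFK threshold $G(q)$ is \emph{not} the cycle count $\gamma$: the hypothesis gives only $\Gamma<\gamma<2\Gamma$ with $\Gamma$ an absolute constant that has nothing to do with $q$. What Proposition \ref{prop.comb_est_extra_2} actually requires is that the sub-molecule under consideration have at most $q$ particle lines but at least $G(q)$ \emph{atoms}; it is this atom (i.e.\ collision) count, not the bond count, that exceeds the short-time BFK bound and forces some adjacent collision pair to be time-separated by $\geq\varepsilon^*$. Second, the reduction from Proposition \ref{prop.comb_est_extra} to Proposition \ref{prop.comb_est_extra_2} is not a single greedy pass whose terminal configuration is inspected against the three outcomes. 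The paper first disposes of simple cases --- a strongly degenerate primitive pair yields outcome (1), a double bond yields outcome (3) --- then dichotomizes on whether there exists a connected transversal set $A$ with $|X^\pm(A)|\geq G(\#_{\mathrm{conn}}^\pm(A))$. If such an $A$ exists, Proposition \ref{prop.comb_est_extra_2} is applied to the transversal sub-molecule $X^\pm(A)$ alone (this is where the $\geq G(q)$ atom count comes from) and one lands in outcome (3); if not, the \textbf{TRANSUP}/\textbf{MAINTRUP} machinery of Section \ref{sec.cutting_final} manufactures enough good \{33A\}-components to satisfy \eqref{eq.extra_case_1} and one lands in outcome (2).
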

\section{Proof of Proposition \ref{prop.comb_est_extra}}\label{sec.error} From now on we focus on the proof of Proposition \ref{prop.comb_est_extra}. We start by recalling the basic algorithm \textbf{UP} in \cite{DHM24} and its properties (Definition 10.1 and Proposition 10.2 in \cite{DHM24}).
\begin{definition}[The algorithm \textbf{UP}]\label{def.alg_up} Define the following cutting algorithm \textbf{UP}. It takes as input any molecule $\Mb$ of C-atoms which has no top fixed end. For any such $\Mb$, define the cutting sequence as follows:
\begin{enumerate}
\item\label{it.alg_up_1} If $\Mb$ contains any deg $2$ atom $\nf$, then cut it as free, and repeat until there is no deg $2$ atom left.
\item\label{it.alg_up_2} Choose a \emph{lowest} atom $\nf$ in the set of all deg 3 atoms in $\Mb$ (or a lowest atom in $\Mb$, if $\Mb$ only contains deg 4 atoms). Let $S_\nf$ be the set of descendants of $\nf$ (Definition \ref{def.molecule_more}).
\item \label{it.alg_up_3} Starting from $\nf$, choose a \emph{highest} atom $\mf$ in $S_\nf$ that has not been cut. If $\mf$ has deg 3, and either a parent $\mf^+$ or a child $\mf^-$ of $\mf$ (Definition \ref{def.molecule_more}) also has deg 3, then cut $\{\mf,\mf^+\}$ or $\{\mf,\mf^-\}$ as free; otherwise cut $\mf$ as free. Repeat until all atoms in $S_\nf$ have been cut. Then go to (\ref{it.alg_up_1}).
\end{enumerate}

We also define the dual algorithm \textbf{DOWN}, by reversing the notions of parent/child, lowest/highest, top/bottom etc. It applies to any molecule of C-atoms that has no bottom fixed end.
\end{definition}
\begin{proposition}
\label{prop.alg_up} Let $\Mb$ be any connected molecule as in Definition \ref{def.alg_up}. Then after applying algorithm \textbf{UP} to $\Mb$ (and same for \textbf{DOWN}), it becomes $\Mb'$ which contains only elementary components. Moreover, among these elementary components:
\begin{enumerate}
\item We have $\#_{\{33B\}}=\#_{\{44\}}=0$ and $\#_{\{4\}}\leq 1$, and $\#_{\{4\}}=1$ if and only if $\Mb$ is full. If $\Mb$ has no deg $2$ atom, and either contains a cycle or contains at least two deg $3$ atoms, then either $\#_{\{33A\}}\geq 1$ or $\Mb'$ contains a good component (in the sense of Definition \ref{def.good_normal}). If $\Mb$ has no deg 2 atom and at most one deg 3 atom, and is also a tree, then $\Mb'$ contains (at most one) \{4\}-component and others are all \{3\}-components.
\item If $\Mb$ has no deg 2 atom and at most one deg 3 atom, and contains a cycle, then we can decompose 1 into $\leq C$ indicator functions, such that for each indicator function, $\Mb'$ must contain a good component (possibly after cutting the \{33A\}-component into a \{3\}- and a \{2\}-component).
\end{enumerate}
\end{proposition}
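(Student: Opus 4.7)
The plan is to analyze the algorithm \textbf{UP} iteration by iteration, tracking how atoms are cut and which elementary components appear. First, I would verify termination and the elementary-components property: each execution of step (\ref{it.alg_up_1}) or step (\ref{it.alg_up_3}) of Definition \ref{def.alg_up} removes either a single atom of deg $2$, $3$, or $4$, or a pair of adjacent deg $3$ atoms, so the process strictly decreases the atom count and, by inspection against Definition \ref{def.elementary}, every cut component is of type \{2\}, \{3\}, \{4\}, \{33\}, or \{44\}.

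Next, I would show $\#_{\{33B\}}=\#_{\{44\}}=0$ and $\#_{\{4\}}\leq 1$. The key point for the absence of \{33B\} is that when step (\ref{it.alg_up_3}) cuts a pair $\{\mf,\mf^+\}$ (resp. $\{\mf,\mf^-\}$) of adjacent deg $3$ atoms, $\mf$ is a \emph{highest} uncut atom in $S_\nf$, so every bond leaving $\{\mf,\mf^+\}$ upward has already been turned into a fixed end at $\mf^+$, forcing the two fixed ends of the pair to be both-top or both-bottom; a direct case-check on the four possible positions of the remaining edges of $\mf$ and $\mf^+$ confirms that the result is always \{33A\}. Absence of \{44\} is immediate since the algorithm only pairs atoms when both are deg $3$. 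For \{4\}, a single deg $4$ atom is cut only when step (\ref{it.alg_up_2}) has exhausted all deg $3$ atoms and reached a deg $4$ atom with no lower atoms, which happens at most once per connected component, and exactly once iff $\Mb$ started with no fixed ends (since any fixed end would reduce some atom's deg below $4$ along the way).

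The main obstacle is the dichotomy ``$\#_{\{33A\}}\geq 1$ or a good component exists'' when $\Mb$ has no deg $2$ atom and either a cycle or at least two deg $3$ atoms. I would split into cases based on how deg $3$ atoms interact with the cutting order. If two deg $3$ atoms lie adjacent in $S_\nf$, step (\ref{it.alg_up_3}) directly produces a \{33A\}. Otherwise, tracing the highest-first sweep through $S_\nf$, the deg $3$ atom produces a \{3\}-component whose free ends must coincide in position with the free ends of a previously cut atom (because the cycle or the other deg $3$ atom forces a closed path, hence a velocity/position matching or the double-solution condition from the periodic recollision); this matches either \eqref{eq.good_normal_1} or \eqref{eq.good_normal_1+}, making the component good in the sense of Definition \ref{def.good_normal}. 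The tree case with at most one deg $3$ atom is the degenerate sub-case where every atom is cut individually as a \{3\} or \{4\}, by direct counting.

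For part (2), when $\Mb$ is a connected molecule with a cycle but at most one deg $3$ atom, closing the cycle forces a relation among free-end variables of some \{3\}- or \{33A\}-component produced at the end of algorithm \textbf{UP}. I would decompose $1$ into $O(1)$ indicator functions indexed by which pair of coincident ends realizes the cycle closure and, within each branch, by the $O(1)$ possible values of the integer shift $m\in\Zb^d$ controlling the recollision (as in Proposition \ref{prop.intmini} (\ref{it.intmini_1})). In each branch, either the closure produces the direct coincidence \eqref{eq.good_normal_1}, giving a good \{3\}-component (possibly obtained after slicing a \{33A\} into a \{3\}+\{2\} as allowed by the statement), or it produces the double-solution condition \eqref{eq.good_normal_1+} on the last atom cut along the cycle. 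Either way, a good component is generated in every branch, and the total number of indicator functions is bounded by an absolute constant depending only on the cycle length, which is uniformly controlled.
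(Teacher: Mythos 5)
Your outline correctly captures the routine parts of the argument (termination, the pairing producing only \{33\}-components, the fact that both fixed ends of a \{33\}-pair land on the same side, $\#_{\{4\}}\leq 1$, the counting in the tree case), but the central claim of the proposition — that when $\Mb$ has no deg 2 atom and has a cycle or two deg 3 atoms, either a \{33A\}-component or a good component must appear — is not actually proved; it is asserted with reasoning that does not hold up.

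The paper's strategy is a contradiction argument: assume no \{33A\}- and no good component arise, then show $\Mb'$ also has no \{2\}-component, and then invoke the Euclidean-case argument of \cite{DHM24}. The crucial new ingredient in the torus setting is isolating how a \{2\}-component \emph{could} arise: the first atom $\mf$ to become deg 2 must have had a parent or child $\pf$ whose cutting simultaneously removed two bonds at $\mf$ — i.e.\ $\pf$ and $\mf$ are joined by a double bond (impossible in $\Rb^d$). A double bond automatically places the integral in the support condition \eqref{eq.good_normal_1+}/\eqref{eq.good_normal_4++} — two bonds between the same two particle lines force two distinct solutions $t_1$ of \eqref{eq.intmini_1+} — so the component $\{\pf\}$ is good \emph{without any decomposition into indicator functions}. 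Your proposal gestures at ``the double-solution condition from the periodic recollision'' but never connects it to the double-bond structure of the molecule, nor explains why the good condition holds automatically rather than only on part of a decomposed integration domain. Your phrasing ``the deg 3 atom produces a \{3\}-component whose free ends must coincide in position with the free ends of a previously cut atom'' is not a statement that follows from the algorithm; nothing forces a coincidence of the sort in \eqref{eq.good_normal_1}, and good/normal/bad is a trichotomy on the same component under different cutoffs, so ``good'' does not happen for free.

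There is also a concrete error in your treatment of part (2). You propose decomposing 1 into ``$O(1)$ indicator functions'' indexed in part by ``the $O(1)$ possible values of the integer shift $m\in\Zb^d$'', citing Proposition \ref{prop.intmini}(\ref{it.intmini_1}). But that proposition gives $|\log\varepsilon|^{C^*}$ possible values of $m$, not $O(1)$. The statement of part (2) requires $\leq C$ indicator functions with $C$ depending only on $(\beta,d)$, so the $m$-decomposition cannot be what realizes it. In the paper, the $m$-decomposition lives inside the integral estimates; the $\leq C$ indicator-function decomposition for part (2) comes from a bounded case analysis inherited from \cite{DHM24} and the double-bond adjustment above, not from the fundamental-domain splitting.
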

\begin{proof} (1) The proof is basically the same as Proposition 10.2 in \cite{DHM24}. The only adjustment needed is due to the possibility of $\Mb$ having double bonds, which only affects the part of the proof involving \{33A\}-components.

More precisely, assuming $\Mb$ has no deg $2$ atom, and either contains a cycle or contains at least two deg $3$ atoms, we need to prove that $\Mb'$ has one \{33A\}- or good component. Suppose not, then we will show that $\Mb'$ also cannot contain any \{2\}-component; this would then lead to a contradiction by repeating the same proof of Proposition 10.2 in \cite{DHM24}.

Now assume $\Mb'$ has a \{2\}-component, let the first atom $\mf$ that has deg 2 when we cut it, and consider the cutting operation that turns $\mf$ into deg 2, which must be cutting some atom $\pf$ as free. This cutting creates a fixed end at $\mf$, so by Definition \ref{def.cutting}, this $\pf$ must be a parent or child of $\mf$. At the time of cutting $\pf$, if $\mf$ has deg 3, then $\pf$ must also have deg 3, which violates Definition \ref{def.alg_up} (\ref{it.alg_up_3}). Therefore the operation of cutting $\pf$ must turn the deg 4 atom $\mf$ into deg 2. This means that
\begin{itemize}[$(\bigstar)$]
\item $\pf$ and $\mf$ are \emph{connected by a double bond}.
\end{itemize}
However, with $(\bigstar)$, it is easy to see that for the two bonds $(e_i,e_j)$ connecting $\pf$ and $\mf$ (which are both bottom or both top), and the corresponding vectors $z_{e_i}=(x_{e_i},v_{e_1})$ and $z_{e_j}=(x_{e_j},v_{e_j})$, there must exist two different values $t_1$ that satisfy the equality (\ref{eq.intmini_1+}). Since $\{\pf\}$ is a \{3\}- or \{4\}-component in $\Mb'$, we know that it must be a good component due to Definition \ref{def.good_normal} (\ref{it.good_1}) or (\ref{it.good_3}) (i.e. (\ref{eq.good_normal_1+}) or (\ref{eq.good_normal_4++})), which leads to a contradiction.

(2) The proof is basically the same as Proposition 10.2 in \cite{DHM24}, with the same adjustments made as in (1) above in the case of double bonds. We omit the details.
\end{proof}
Next, we recall the notions of strong and weak degeneracies (Propositions 10.5 and 10.16 in \cite{DHM24}), and properness and primitivity (Definition 8.2 and Proposition 10.4 in \cite{DHM24}).
\begin{definition}
\label{def.degen} We define the following notions.
\begin{enumerate}
\item \emph{Strong degeneracy.} Define a pair of atoms $\{\nf,\nf'\}$ to be \textbf{strongly degenerate} if they are adjacent, and there exist edges $(e,e')$ at $\nf$ and $\nf'$ respectively, such that cutting $\{\nf,\nf'\}$ as free in $\Mb$ and turning $(e,e')$ into fixed ends results in a \{33A\}-component, and we have the restriction (by indicator functions) that $|x_{e}-x_{e'}|\leq\varepsilon^{1-1/(8d)}$ and $|v_{e}-v_{e'}|\leq\varepsilon^{1-1/(8d)}$.
\item\emph{Weak degeneracy}. Define the atom pair $\{\nf,\nf'\}$ to be \textbf{weakly degenerate} if they are adjacent, and we have the restriction (by indicator functions) that $|t_{\nf}-t_{\nf'}|\leq \varepsilon^{1/(8d)}$. Define also one atom $\nf$ to be weakly degenerate if we have the restriction $|v_e-v_{e'}|\leq\varepsilon^{1/(8d)}$ for two distinct edges $(e,e')$ at $\nf$.
\item \emph{Primitivity.} Define an atom pair $\{\pf,\pf'\}$ in $\Mb$ to be \textbf{primitive} if $\pf$ is a parent of $\pf'$ (but they are not connectedby a double bond), and the other parent of $\pf'$ is not a descendant of $\pf$.
\item \emph{Properness.} Define a molecule $\Mb$ to be \textbf{proper}, if it is a forest, contains only deg 3 and 4 atoms, with the distance (i.e. length of shortest undirected path) between any two deg 3 atoms being at least 3.
\end{enumerate}
\end{definition}
\subsection{The cutting algorithm: general case}\label{sec.cutting_final}
A major part of the proof of Proposition \ref{prop.comb_est_extra} is the same as in Section 13.3 in \cite{DHM24}, but the torus case involves one new difficulty, namely that the number of collisions between a fixed number of particles now has no absolute upper bound, unlike the Euclidean case due to \cite{BFK98}. This difficulty then requires one new ingredient, which is treated in Proposition \ref{prop.newcase} below.
\begin{proposition}\label{prop.comb_est_extra_2} Define the function 
\begin{equation}\label{defG}G(q)=(2q+1)^{10}\cdot\binom{q}{2}\cdot (32q^{3/2})^{q^2}\end{equation} for positive integers $q$. Suppose $\Mb$ satisfies the following properties:
\begin{enumerate}
\item $\Mb$ is a full molecule, has at most $q$ bottom free ends and at least $G(q)$ atoms;
\item $\Mb$ does not contain double bonds;
\item $\Mb$ does not contain any strongly degenerate primitive pair as defined in Definition \ref{def.degen};
\end{enumerate}
Then, for this $\Mb$, there exists a cutting sequence such that Proposition \ref{prop.comb_est_extra} (3) holds true.
\label{prop.newcase}
\end{proposition}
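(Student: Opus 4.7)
The strategy (outlined in Section~\ref{sec.new_alg}) is to exploit the fact that $\Mb$ has so many atoms that a pigeonhole forces a long bond, then to turn this long bond into a \{33A\}-component with near-optimal excess $\varepsilon^{d-1-\theta}$ via $(\clubsuit)$, and finally to close the sub-polynomial gap by producing one additional gain. Concretely, since $\Mb$ has $\leq q$ particle lines but $\geq G(q)$ atoms, a pigeonhole in time on a fine partition of $[(\ell-1)\tau,\ell\tau]$ produces a sub-interval of length $\gtrsim q^{-10}$ that contains no collision. On each side of this gap the dynamics is locally Euclidean on the corresponding time-scale, so the Burago--Ferleger--Kononenko theorem \cite{BFK98} bounds the number of collisions on each side in terms of $q$. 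Iterating this, together with an induction on $q$ to ensure connectedness of the two sides, and Lemma~\ref{lem.newlem} to dispose of the exceptional triangle-with-long-bond, reduces Proposition~\ref{prop.newcase} to Proposition~\ref{prop.new2layer}: we may assume $\Mb=\Mb_U\cup\Mb_D$ with both layers connected, every cross bond long, and each of the $q$ particle lines meeting both layers.

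With this setup I would select a highest atom $\mf_0$ of $\Mb_D$ whose two parents $\mf_1^+\in\pb_1$ and $\mf_2^+\in\pb_2$ lie in $\Mb_U$, let $\nf_0$ be the first atom in $\Mb_U$ (read bottom-up) that merges the clusters of $\pb_1$ and $\pb_2$, and let $A_j$ be the set of collisions in $\Mb_U$ among the particle lines already connected to $\pb_j$ before $\nf_0$. The dichotomy is: (i) some $A_j$ contains no canonical cycle, or (ii) each $A_j$ does. In Case~(i), treated by Proposition~\ref{prop.newcase_1}, I cut $\mf_2^+$ as free, sweep along the path in $A_2$ from $\mf_2^+$ to $\nf_0$, then continue along the path in $A_1$ from $\nf_0$ to $\mf_1^+$; because $A_1$ has no canonical cycle, $\mf_1^+$ is at deg $3$ when it is cut, so $\{\mf_1^+,\mf_0\}$ is extracted as a \{33A\}-component $\Xc$ carrying a long bond, which by $(\clubsuit)$, i.e.\ Proposition~\ref{prop.intmini_2}~(\ref{it.intmini_n3}) with $\mu\sim 1$, has excess $\leq\varepsilon^{d-1-\theta}$. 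The residual sub-polynomial gap is closed by a pigeonhole over the remaining recollisions of $\Mb$: either a \emph{second} \{33A\}-component disjoint from $\Xc$ appears (gain $\varepsilon^{1/(8d)}$ by Proposition~\ref{prop.intmini_2}), or a third atom extends $\Xc$ to a \{333A\}- or \{334T\}-component, which by Proposition~\ref{prop.intmini_3} has excess $\leq\varepsilon^{d-1/2}(\mu\mu')^{-2d}\leq\varepsilon^{d-1+1/4}$; here $\mu'\gtrsim 1$ is ensured by the no-strongly-degenerate-primitive-pair assumption.

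Case~(ii), treated by Proposition~\ref{prop.newcase_2}, is harder because the same walk now produces a \{33A\}-component without a long bond, of excess only $\varepsilon^{1-1/(8d)}$. Instead, each $A_j$ contains a canonical cycle, so running \textbf{UP}/\textbf{DOWN} (Proposition~\ref{prop.alg_up}) inside each cycle produces a \{33A\}-component, and the two cycles give a combined excess $\leq\varepsilon^{2-O(1/d)}$, which is already sufficient in $d=2$. In $d=3$ I hunt for a remaining atom on a particle line \emph{outside} $P_1\cup P_2$; cutting it as deg $3$, the abundance of recollisions forces a third \{33A\}-component. If no such outside atom exists, $\Mb$ must have exactly $3+3=6$ particle lines with an essentially explicit shape, for which a direct case-by-case construction produces at least three \{33A\}-components; and if the canonical cycle is not a triangle, an extra recollision inside it provides the missing gain via Proposition~\ref{prop.intmini}~(\ref{it.intmini_4_extra}).

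The main obstacle is Case~(ii) in $d=3$, where the naive excess budget $\varepsilon^{2-}$ from two short-time \{33A\}-components falls just short of the target $\varepsilon^{d-1+1/(15d)}=\varepsilon^{2+1/45}$. Extracting the extra polynomial factor requires carefully choreographing the cutting sequence so that every use of a canonical cycle, a long bond, or an outside particle line is accounted for, while the no-double-bond and no-strongly-degenerate-primitive-pair hypotheses guarantee that no \{33B\}- or uncontrolled \{44\}-components are generated. The bookkeeping needed to handle the residual explicit $6$-particle configurations, together with verifying that at most $10$ elementary components carry the excess factors, is the portion most prone to error.
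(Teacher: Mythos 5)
Your high-level structure matches the paper's: reduce to the two-layer situation of Proposition~\ref{prop.new2layer} via the BFK bound and an induction on $q$ (using Lemma~\ref{lem.newlem} for the exceptional triangle), then run the cutting algorithm dichotomized on whether the sets $A_j$ contain a canonical cycle (Propositions~\ref{prop.newcase_1} and~\ref{prop.newcase_2}). Your description of the cutting dichotomy, the role of $(\clubsuit)$, the \{333A\}/\{334T\} extensions, and the special $6$-particle-line scenario all track the paper's argument faithfully, including the observation that two \{33A\}-components from the two canonical cycles leave a sub-polynomial deficit when $d=3$ that must be filled by a third gain.

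However, the mechanism you propose for reaching the two-layer structure has a concrete gap. You claim a pigeonhole on a fine partition of $[(\ell-1)\tau,\ell\tau]$ produces a collision-free sub-interval of length $\gtrsim q^{-10}$. This cannot be right: with $\geq G(q)$ atoms in an interval of length $\tau\sim(\log|\log\varepsilon|)^{-1/2}$, the largest collision-free gap a pigeonhole can guarantee has length $\lesssim\tau/G(q)$, which is far smaller than $q^{-10}$. Moreover, you then say BFK bounds the number of collisions ``on each side'' of the gap because the dynamics there is locally Euclidean; but the side intervals still have length comparable to $\tau$, over which a torus trajectory wraps around many times, so BFK does not apply there. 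The paper's mechanism runs in the opposite direction: sublayers $\Mb_j$ are built greedily in time order, each starting with $G_0(q)$ atoms and then extending until a successor gap $\geq\varepsilon^*$ appears; the role of BFK is precisely to cap $|\Mb_j|\leq 2G_0(q)$, since $G_0(q)$ collisions all within a time window of length $G_0(q)\,\varepsilon^*\ll 1$ would keep all trajectories inside a single fundamental domain, contradicting the Euclidean BFK bound. The resulting time gaps are only $\geq\varepsilon^*$ (which suffices to make cross-sublayer bonds ``long''), not $\gtrsim q^{-10}$; and the pigeonhole is used only to show there are $\geq(2q+1)^5/2\gg 1$ sublayers, so that the induction/two-layer dichotomy has room to run. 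A smaller remark: in the \{333A\}/\{334T\} excess estimate, you assert $\mu'\gtrsim 1$; the paper works with $\mu'=\varepsilon^{1/(8d)}$, which is what the no-strongly-degenerate-primitive-pair hypothesis actually delivers, and this is still small enough to land within the budget $\varepsilon^{d-1+1/(15d)}$.
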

The proof of Proposition \ref{prop.newcase} is contained in Section \ref{sec.newcase} below. In the rest of this subsection we shall prove Proposition \ref{prop.comb_est_extra} under the assumption of Proposition \ref{prop.newcase}. The proof is essentially the same as in Section 13.3 in \cite{DHM24}; for the reader's convenience we still include it here.
\begin{definition}
\label{def.trans} Let $\Mb$ be a full molecule of C-atoms, and $A\subseteq \Mb$ is an atom set. We say $A$ is transversal, if we can decompose $\Mb\backslash A$ into two disjoint subsets $A^+$ and $A^-$, such that no atom in $A^-$ is parent of any atom in $A\cup A^+$, and no atom in $A$ is parent of any atom in $A^+$.

For any transversal set $A$, define the set $X_0(A)$  such that, an atom $\nf\in X_0(A)$ if and only if $\nf\not\in A$ and $\nf$ has two bonds connected to two atoms in  $A$. Then define the set $X(A)$ inductively as follows: an atom $\nf\in X(A)$ if and only if $\nf\not\in A$ and $\nf$ has two bonds connected to two atoms in $X(A)\cup A$.
\end{definition}
\begin{proposition}
\label{prop.trans}
Let $A$ be a connected transversal subset and $\Mb$ is a connected molecule.
\begin{enumerate}
\item We can always choose $(A^+,A^-)$ in Definition \ref{def.trans} such that each component of $A^+$ has at least one bond connected to $A$.
\item Let $X_0^\pm (A)=X_0(A)\cap A^\pm$ and $X^\pm (A)=X(A)\cap A^\pm$, then $\nf\in X_0^\pm(A)$ if and only if $\nf\not\in A$ and $\nf$ has two \emph{children} (or two \emph{parents}) in $A$. Similarly, the sets $X^\pm(A)$ can be defined in the same way as $X(A)$, but with the sentence ``$\nf$ has two bonds connected to two atoms in $X(A)\cup A$" replaced by ``$\nf$ has two \emph{children} (or \emph{two parents}) in $X^\pm (A)\cup A$''.
\item The set $X(A)\cup A$ is also connected and transversal. Moreover any atom in $\Mb\backslash (X(A)\cup A)$ has at most one bond with atoms in $X(A)\cup A$, so $X_0(X(A)\cup A)=\varnothing$.
\item Recall $\rho(A)$ defined in Definition \ref{def.molecule_more}. Then there exists a connected transversal set $B\supseteq A$, such that $\rho(B)=\rho(A)$, and either $B=\Mb$ or $X_0(B)\neq\varnothing$.
\end{enumerate}
\end{proposition}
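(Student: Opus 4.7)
The plan is to prove the four parts in order, with part (4) being the main obstacle. For part (1), I use an iterative relabeling: whenever some connected component $C$ of $A^+$ has no bond to $A$, move $C$ into $A^-$. Transversality is preserved because the new $A^-$ atoms (those in $C$) have no bonds to $A$ by assumption and no bonds to $A^+\setminus C$ (since $C$ is a connected component of $A^+$), so they cannot be parents of anything in $A\cup(A^+\setminus C)$; the other transversality condition is immediate because $A^+$ only shrinks. Iterating yields the claim. Part (2) is direct case analysis: the two bonds between $\nf\in X_0(A)\cap A^+$ and $A$ cannot have $A$-atoms as parents (since $A$ is not parent of $A^+$), so $\nf$ is the parent in both, giving two children in $A$; symmetrically for $A^-$. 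The characterization of $X^\pm(A)$ follows by induction on the inductive definition of $X(A)$, applying the same transversality constraint at each step.

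For part (3), take $B=X(A)\cup A$, $B^+=A^+\setminus X^+(A)$, $B^-=A^-\setminus X^-(A)$. Connectedness of $B$ is built into the inductive construction of $X(A)$. If some $\nf\in B^-$ were parent of $\mf\in B\cup B^+$, original transversality forces $\mf\in X(A)$; part (2) then places both parents of $\mf$ in $X^-(A)\cup A$, contradicting $\nf\in B^-$. If some $\nf\in B$ were parent of $\mf\in B^+$, then $\nf\in A$ is excluded by original transversality, $\nf\in X^+(A)$ is excluded because part (2) places both children of $\nf$ in $X^+(A)\cup A$, which is disjoint from $B^+$, and $\nf\in X^-(A)\subseteq A^-$ is excluded because $A^-$ is not parent of $A^+\supseteq B^+$. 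Finally $X_0(B)=\varnothing$: any outside atom with two bonds to $B$ would itself satisfy the inductive criterion for $X(A)\subseteq B$.

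For part (4), I construct $B$ by greedy one-bond extension. Maintain a connected transversal $B\supseteq A$ with $\rho(B)=\rho(A)$, initially $B=A$ equipped with the decomposition from part (1). While $B\neq\Mb$ and $X_0(B)=\varnothing$, extract a ``dangling'' connected subtree $S\subseteq\Mb\setminus B$ attached to $B$ by exactly one bond, and add $S$ to $B$. Adding such an $S$ contributes $|S|$ atoms and exactly $|S|$ bonds (namely $|S|-1$ internal plus one external), so $\rho$ is unchanged. Existence of $S$: since $X_0(B)=\varnothing$, every atom of $\Mb\setminus B$ has at most one bond to $B$; picking a connected component $C$ of $\Mb\setminus B$ adjacent to $B$ and pruning from one attachment atom outward (blocking at any other attachment atom or cycle-closing edge) isolates the desired subtree.

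The hard part will be showing that the subtree $S$ can always be added while preserving transversality. Using part (1) applied to $B$, each component of $B^+$ has a bond to $B$; I classify the atoms of $S$ into $B^+$- and $B^-$-parts and take the new transversal decomposition to be $(B^+\setminus S,B^-\setminus S)$. The verifications---no atom in $B\cup S$ is parent of any atom in $B^+\setminus S$, and the symmetric condition---are completed by a leaves-inward induction on $S$, exploiting the DAG acyclicity of $\Mb$ together with the one-bond condition, which prevents any cross bond from a newly added atom into the remaining $B^+\setminus S$. The algorithm terminates since $|\Mb\setminus B|$ strictly decreases at each step, producing the required $B$.
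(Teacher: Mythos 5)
Parts (1)--(3) of your proposal match the paper's argument almost step for step: iterative relabeling of components of $A^+$ with no bond to $A$; transversality forcing both bonds of an atom in $X_0^\pm(A)$ to go downward (resp.\ upward) into $A$, then induction for $X^\pm(A)$; and the decomposition $B^\pm = A^\pm\setminus X^\pm(A)$ for part (3).

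Part (4), however, has a genuine gap. You add a ``dangling'' subtree $S\subseteq \Mb\setminus B$ attached to $B$ by one bond and propose the new decomposition $(B^+\setminus S,\, B^-\setminus S)$. This does not preserve transversality in general. The one-bond condition only constrains bonds between $S$ and $B$, not bonds between $S$ and $\Mb\setminus (B\cup S)$; and since $S$ is typically obtained by pruning a larger component (you must stop at any second attachment atom), such bonds do exist. Concretely, an atom $\nf\in S\cap B^+$ may be a parent of $\mf\in B^+\setminus S$; after adding $S$ to $B$, this is an atom of the new transversal set being parent of an atom of the new $B^+$, violating transversality. The claim that DAG acyclicity plus ``leaves-inward induction'' prevents this is not substantiated, and in fact the natural counterexamples are immediate. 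The missing idea is a \emph{reclassification} step: when a new atom $\nf$ is absorbed, the subset $C\subseteq A^+$ of descendants of $\nf$ must be moved into $A^-$ (i.e.\ $A_1^+ = A^+\setminus(C\cup\{\nf\})$, $A_1^- = A^-\cup C$), and one must choose $\nf$ to be the \emph{lowest} parent of $A$ so that moving $C$ does not itself break transversality against $A$. The paper does this one atom at a time precisely because the choice of ``lowest'' atom and the corresponding migration of $C$ is what makes the verification go through; your multi-atom $S$ and the naive set-difference decomposition skip this and the proof does not close.
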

\begin{proof} (1) Suppose $A^+$ has a component $U$ that is not connected to $A$ by a bond, then we replace $A^+$ by $A^+\backslash U$ and $A^-$ by $A^-\cup U$, which still satisfies the requirements for $(A^+,A^-)$ in Definition \ref{def.trans}, but the number of components of $A^+$ decreases by $1$. Repeat until component of $A^+$ is connected to $A$ by a bond (or $A^+$ becomes empty).

(2) If an atom $\nf\in X_0(A)$ belongs to $A^\pm$, then $\nf$ must have two children (or two parents) in $A$. Now suppose $\nf\in X(A)$ has two adjacent atoms $\nf_1$ and $\nf_2$ in $X_0(A)\cup A$. If $\nf_1$ belongs to $A^\pm$, then $\nf$ must be a parent (or child) of $\nf_1$, and the same holds for $\nf_2$. This means that either both $\nf_1,\nf_2\in A^+\cup A$ and both are children of $\nf$ (so $\nf\in A^+$), or both $\nf_1,\nf_2\in A^-\cup A$ and both are parents of $\nf$. Repeating this discussion, we get the desired splitting $X(A)=X^+(A)\cup X^-(A)$.

(3) Connectedness is obvious by definition. To prove transversality, we simply decompose $\Mb\backslash (A\cup X(A))=B^+\cup B^-$, where $B=A^+\backslash X^+(A)$ and $B^-=A^-\backslash X^-(A)$. This then satisfies the requirements (using the fact that any child (or parent) of any atom $\nf\in X^\pm(A)$ must belong to $X^\pm(A)\cup A$). The second statement follows from the definition of $X(A)$ in Definition \ref{def.trans}.

(4) Suppose $A$ is connected and transversal; we may assume $A\neq\Mb$ and $X_0(A)=\varnothing$ (otherwise choose $B=A$). Since $\Mb$ is connected, there must exist an atom $\nf\in \Mb\backslash A$ that is either a parent or child of an atom in $A$. We may assume it is a parent, and then choose a lowest atom $\nf\in \Mb\backslash A$ among these parents. Since $X_0(A)=\varnothing$, we know that $\nf$ has only one bond with atoms in $A$; let $A_1=A\cup\{\nf\}$, then $A_1$ is connected and $\rho(A_1)=\rho(A)$.

Now we claim that $A_1$ is transversal. To see this, define $C$ to be the set of atoms $\mf\in A^+\backslash\{\nf\}$ that are descendants of $\nf$, and decompose $\Mb\backslash A_1=A_1^+\cup A_1^-$, where $A_1^+=A^+\backslash (C\cup\{\nf\})$ and $A_1^-=A^-\cup C$. Clearly no atom in $A_1$ can be parent of any atom in $A_1^+$, and no atom in $A^-$ can be parent of any atom in $A_1\cup A_1^+$. If an atom $\mf\in C$ is parent of some atom $\pf\in A_1^+$, then $\pf\in A^+\backslash C$, but $\mf\in C$ and $\pf$ is child of $\mf$, so we also have $\pf\in C$, contradiction. Finally if $\mf\in C$ is parent of $\pf\in A_1=A\cup \{\nf\}$, clearly $\pf\in A$, but then $\mf$ is also parent of an atom in $A$, contradicting the lowest assumption of $\nf$.

Now we know that $A_1$ is also connected transversal; replacing $A$ by $A_1$ and repeating the above discussion, we eventually will reach some $B$ such that either $B=\Mb$ or $X_0(B)\neq\varnothing$, as desired.
\end{proof}
\begin{definition}[Algorithm \textbf{TRANSUP}]
\label{def.alg_transup}
Suppose $\Mb$ is connected full molecule and $A$ is a connected transversal subset. We may choose $(A^+,A^-)$ as in Proposition \ref{prop.trans} (1). Define the following algorithm:
\begin{enumerate}
\item\label{it.alg_transup_1} If $A$ contains any deg $2$ atom $\nf$, then cut it as free, and repeat until there is no deg $2$ atom left.
\item\label{it.alg_transup_2} Choose a \emph{lowest} atom $\nf$ in the set of all deg 3 atoms in $A$ (or a lowest atom in $A$, if $A$ only contains deg 4 atoms). Let $S_\nf$ be the set of descendants of $\nf$ in $A$.
\item \label{it.alg_transup_3} Starting from $\nf$, choose a \emph{highest} atom $\mf$ in $S_\nf$ that has not been cut. If $\mf$ has deg 3, and is adjacent to an atom $\pf\in X_0^+(A)$ that also has deg 3, then cut $\{\mf,\pf\}$ as free; otherwise cut $\mf$ as free. Repeat until all atoms in $S_\nf$ have been cut. Then go to (\ref{it.alg_up_1}).
\item \label{it.alg_transup_4} Repeat (\ref{it.alg_transup_1})--(\ref{it.alg_transup_3}) until all atoms in $A$ have been cut. Then cut the remaining part of $A^+$ as free and cut it into elementary components using \textbf{UP}, then cut $A^-$ into elementary components using \textbf{DOWN}.
\end{enumerate}
We also define the dual algorithm \textbf{TRANSDN} by reversing the notions of parent/child etc. Note that we replace $A^+$ and $X_0^+(A)$ by $A^-$ and $X_0^-(A)$, and also replace $A^+$ by $A^-$ in Proposition \ref{prop.trans} (1).
\end{definition}
\begin{proposition}
\label{prop.alg_transup} Suppose $\Mb$ has no double bond. For the algorithm \textbf{TRANSUP} (and same for \textbf{TRANSDN}), we have $\#_{\{33B\}}=\#_{\{44\}}=0$ and $\#_{\{4\}}=1$, and $\#_{\{33A\}}\geq |X_0^+(A)|/2-\rho(A)-1$.
\end{proposition}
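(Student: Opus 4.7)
The plan is to verify the four conclusions of the proposition in sequence.

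The first three identities $\#_{\{33B\}}=0$, $\#_{\{44\}}=0$, and $\#_{\{4\}}=1$ follow by direct adaptation of Proposition~\ref{prop.alg_up}~(1). In step~3 of TRANSUP the only multi-atom components formed are pairings $\{\mf,\pf\}$ with $\mf\in A$ and $\pf\in X_0^+(A)$, both of degree $3$ at cutting time; by Proposition~\ref{prop.trans}~(2) the atom $\pf$ lies strictly above $\mf$, so both fixed ends reside at $\pf$ and are top ends, making the resulting component of type {33A} and never {33B} or {44}. Step~4 then applies \textbf{UP} to $A^+$ and \textbf{DOWN} to $A^-$; by Proposition~\ref{prop.alg_up}~(1) neither of these phases produces a {33B}- or {44}-component (the no-double-bond hypothesis on $\Mb$ is used here exactly as in the cited proof). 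Finally $\#_{\{4\}}=1$ follows from the fullness and connectedness of $\Mb$ together with the transversality of $A$: exactly one of the two pieces $A^{\pm}$ remaining after step~3 inherits the full-molecule property required for \textbf{UP} (respectively \textbf{DOWN}) to produce a {4}-component, while the other picks up a top or bottom fixed end coming from bonds already cut in step~3.

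The heart of the proof is the lower bound $\#_{\{33A\}}\geq |X_0^+(A)|/2-\rho(A)-1$. Set $P:=X_0^+(A)$. By Proposition~\ref{prop.trans}~(2) each $\pf\in P$ has exactly two children $\mf_1(\pf),\mf_2(\pf)\in A$, and these two bonds are the only bonds of $\pf$ touching $A$; write $\mf_*(\pf)$ for whichever of the two is cut later during step~3. At the moment $\mf_*(\pf)$ is being processed, the degree of $\pf$ is exactly $3$: one bond to $A$ has already been turned into a fixed end at $\pf$ while the other is still intact. Hence whenever $\mf_*(\pf)$ itself has degree $3$ at cutting time, the pairing rule in step~3 generates some {33A}-component containing $\mf_*(\pf)$ and some $\pf'\in P$ adjacent to $\mf_*(\pf)$ (possibly, but not necessarily, $\pf$ itself). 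Since each $\mf\in A$ has at most two parents in $\Mb$ and can therefore serve as $\mf_*(\pf)$ for at most two distinct elements of $P$, a short double counting yields
\begin{equation*}
\#_{\{33A\}}\ \geq\ \tfrac{1}{2}\bigl(|P|-|U_1|\bigr),\qquad U_1:=\bigl\{\pf\in P:\mf_*(\pf)\ \text{is cut as a degree-}4\ \text{atom}\bigr\}.
\end{equation*}

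The main obstacle is then the residual estimate $|U_1|\leq 2\rho(A)+2$. An atom $\mf\in A$ is cut as degree $4$ only if none of its four neighbors in $\Mb$ has been touched beforehand, which forces $\mf$ to be the very first atom processed in a fresh $\nf$-phase at the top of the currently uncut portion of $A$ (the parents of $\mf$ lying in $A$ cannot have been processed in an earlier phase, and its children cannot be above $\mf$ in $S_\nf$). Such a fresh start can occur either (i) because all atoms of $A$ lying above $\mf$ have been exhausted in previous phases, of which there is at most one instance per connected component of $A$, so at most $|\Fc(A)|=1$ by connectedness; or (ii) because an up-going cycle in $A$ forces the processing to wrap around and pick up a previously untouched atom, of which there are at most $\rho(A)$ instances since each such event consumes one unit of cyclomatic excess of $A$. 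Multiplying the resulting bound of at most $\rho(A)+1$ fresh starts by the at-most-$2$ parents-in-$P$ factor yields $|U_1|\leq 2\rho(A)+2$, and combining with the preceding inequality proves the desired lower bound on $\#_{\{33A\}}$.
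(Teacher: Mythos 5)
Your outline of the mechanism (each $\pf\in X_0^+(A)$ has a later-cut child $\mf_*(\pf)$, the parent $\pf$ is forced to have degree $3$ at that moment, and a degree-$3$ $\mf_*(\pf)$ gets absorbed into a $\{33A\}$-pair) is the correct skeleton and matches the paper. However there is a genuine gap in the way you pass from this to the numerical bound.

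Your dichotomy only distinguishes $\mf_*(\pf)$ being cut at degree $3$ (producing a $\{33A\}$-component) versus degree $4$ (giving $U_1$), and you conclude $\#_{\{33A\}}\geq\tfrac12(|P|-|U_1|)$. But $\mf_*(\pf)$ can also be cut at degree $2$, in step~(1) of \textbf{TRANSUP}, in which case it lands in a $\{2\}$-component, not a $\{33A\}$-component. Your inequality should read $\#_{\{33A\}}\geq\tfrac12(|P|-|U_1|-|U_2|)$ with $U_2$ the set of $\pf$ for which $\mf_*(\pf)$ is cut at degree $2$, and you give no control on $|U_2|$. This is exactly the point the paper's proof is built around: one shows that the number $q$ of atoms of $A$ absorbed into $\{2\}$-components satisfies $q\le\rho(A)$ via the invariance of $|\Ec_*|-3|\Mc|$ under cutting operations on $A$ (the quantity starts at $-\rho(A)+1$ and ends at $-q+1$). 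Combined with the fact that only one atom of $A$ is cut at degree $4$ (the sole $\{4\}$-component lives in $A$), and that each of the at most $\rho(A)+1$ exceptional atoms serves as $\mf_*(\pf)$ for at most two $\pf$'s, one obtains the claimed bound. Without the $|\Ec_*|-3|\Mc|$ counting (or some replacement) there is no control on the degree-$2$ case, and the argument does not close.

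Two secondary points. First, your reasoning for $\#_{\{4\}}=1$ is backwards: the unique $\{4\}$-component is produced inside $A$ (the very first atom cut in $A$, when all of $A$ is still degree $4$); neither $A^+$ nor $A^-$ contains a full component when it is cut in step~(4), because each component of $A^+$ has a bond into $A$ (Proposition~\ref{prop.trans}~(1)) and each component of $A^-$ is reached via the connectedness of $\Mb$. This tighter picture also shows $|U_1|\le 2$ directly, making your ``fresh start'' estimate unnecessary (and as written, the claim that up-going cycles bound fresh starts by $\rho(A)$ is not actually established). Second, the claim that in a $\{33\}$-pair $\{\mf,\pf\}$ from step~(3) ``both fixed ends reside at $\pf$'' is false: each of $\mf$ and $\pf$ carries exactly one fixed end (both are degree $3$). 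The pair is $\{33A\}$ because the fixed end at the higher atom $\pf$ is a \emph{bottom} end (created by the cut of the earlier child in $A$), which rules out the $\{33B\}$ pattern of Definition~\ref{def.elementary}.
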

\begin{proof} It is clear, in the same way as in the proof of Proposition \ref{prop.alg_up}, that during the process, there is no top fixed end in $(A\backslash S_\nf)\cup\{\nf\}$ or in $A^+$, and there is no bottom fixed end in $S_\nf\backslash\{\nf\}$ or in $A^-$, so $\#_{\{33B\}}=\#_{\{44\}}=0$. Moreover $\#_{\{4\}}=1$ because no component of the remaining part of $A^+$ after Definition \ref{def.alg_transup} (\ref{it.alg_transup_3}) can be full when it is first cut in Definition \ref{def.alg_transup} (\ref{it.alg_transup_4}) (thanks to Proposition \ref{prop.trans} (1)), and no component of $A^-$ can be full when it is first cut in Definition \ref{def.alg_transup} (\ref{it.alg_transup_4}) (thanks to $\Mb$ being connected).

To prove the lower bound for $\#_{\{33A\}}$, note that only one atom in $A$ belongs to a \{4\}-component (as $A$ is connected). Let the number of atoms in $A$ that belongs to a \{2\}-component be $q$, then by using the invariance of $|\Ec_*|-3|\Mc|$ during cutting operations on $A$ (where $\Ec_*$ is the set of all free ends and bonds at atoms in $A$, including in all the elementary components), we deduce that $q\leq\rho(A)$. To see this, just note that  initially $|\Ec_*|-3|\Mc|$ is $-\rho(A)+1$ and is finally $-q+1$. Then, for each $\nf\in X_0^+(A)$, which is connected to two atoms $\nf_1,\nf_2\in A$ by two bonds, assume say $\nf_2$ is cut after $\nf_1$. The total number of such $\nf_2$ is at least $|X_0^+(A)|/2$ as each $\nf_2$ can be obtained from at most two $\nf$. For each such $\nf_2$, the corresponding $\nf$ must have deg 3 when it is cut, so by Definition \ref{def.alg_transup} (\ref{it.alg_transup_3}), it must belong to either \{4\}-, or \{2\}-, or \{33A\}-component. Using the upper bound for the number of \{4\}- and \{2\}-components, this completes the proof.
\end{proof}
\begin{definition}[The function \textbf{SELECT2}]
\label{def.func_select2} Let $A$ be a connected molecule with only C-atoms, no bottom fixed end, and no deg 2 atoms. Let $Z$ be the set of deg 3 atoms in $A$, and let $Y$ be a subset of atoms in $A$ such that $A$ becomes a forest after removing the atoms in $Y$. We define the function $\textbf{SELECT2}=\textbf{SELECT2}(A,Z,Y)$ as follows.
\begin{enumerate}
\item\label{it.func_select2_1} Consider all the components of $Z\cup Y$ in $A$, which is a finite collection of disjoint subsets of $A$.
\item\label{it.func_select2_2} If any two of the subsets in (\ref{it.func_select2_1}), say $U$ and $V$, have the shortest distance (within $A$) which is at most 4, then we choose one shortest path between an atom in $U$ and an atom in $V$, and let the atoms on this path be $\nf_j$. Then replace the two sets $U$ and $V$ by a single set which is $U\cup V\cup \{\nf_j\}$.
\item\label{it.func_select2_4} Repeat (\ref{it.func_select2_2}) until this can no longer be done. Next, if a single subset $U$ contains two atoms (which may be the same) that are connected by a path of length at most 4 with none of the intermediate atoms belonging to $U$ or any other subset, then add the atoms on this path to $U$. If this causes the shortest distance between two subsets to be $\leq 4$, then proceed to (\ref{it.func_select2_2}) and repeat it as above.
\item\label{it.func_select2_5} When no scenario in (\ref{it.func_select2_2}) or (\ref{it.func_select2_4}) occurs, we output $S:=\textbf{SELECT2}(A,Z,Y)$ as the union of all the current sets.
\end{enumerate}
\end{definition}
\begin{proposition}\label{prop.func_select2} The molecule $A$ becomes a proper forest after cutting $S$ as free, and we also have $|S|\leq 10(|Y|+|Z|+\rho(A))$.
\end{proposition}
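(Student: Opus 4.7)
The plan is to handle the two assertions in turn. For the structural claim that $A\setminus S$ is a proper forest, the fact that it is a forest is free: $Y\subseteq S$ and $A\setminus Y$ is a forest by hypothesis. The content therefore lies in (i) verifying that every atom in $A\setminus S$ has degree $3$ or $4$ after cutting $S$ as free, and (ii) verifying that any two deg $3$ atoms of $A\setminus S$ are at distance $\geq 3$. The key observation for (i) will be a saturation argument: if $\nf\in A\setminus S$ had two neighbors $\mf_1,\mf_2$ in $S$, then either $\mf_1,\mf_2$ lie in different final subsets $U,V$ of the algorithm (in which case $\mathrm{dist}(U,V)\leq 2$ would force step (\ref{it.func_select2_2}) to merge $U$ and $V$ through $\nf$), or they lie in the same subset $U$ (in which case the length-$2$ path $\mf_1\to\nf\to\mf_2$ has no intermediate atoms in any subset, so step (\ref{it.func_select2_4}) would absorb $\nf$ into $U$). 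Either way $\nf\in S$, a contradiction. Since atoms of $A\setminus S$ are not in $Z$, they were deg $4$ in $A$, so after losing at most one bond to a fixed end they retain deg $3$ or $4$.

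The argument for (ii) is essentially the same: if $\nf_1,\nf_2 \in A\setminus S$ both became deg $3$, each has a unique neighbor $\mf_i\in S$, and if $\mathrm{dist}_{A\setminus S}(\nf_1,\nf_2)\leq 2$ then concatenating gives a path in $A$ from $\mf_1$ to $\mf_2$ of length at most $4$ whose intermediate atoms (namely $\nf_1$, $\nf_2$, and possibly one more atom of $A\setminus S$) avoid $S$. Whether $\mf_1,\mf_2$ lie in the same or different final subsets, step (\ref{it.func_select2_4}) or step (\ref{it.func_select2_2}) would have acted on this configuration and pulled $\nf_1,\nf_2$ into $S$, contradicting the choice of $\nf_1,\nf_2$.

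For the cardinality bound I will run two potentials through the algorithm: the number of subsets $c_t$, and the cyclomatic complexity $\rho_t$ of the subgraph of $A$ induced by the current union $S_t$. Initially $|S_0|\leq |Z\cup Y|\leq |Y|+|Z|$ and $c_0\leq |Y|+|Z|$. Each invocation of step (\ref{it.func_select2_2}) decreases $c_t$ by one and adds at most $3$ intermediate atoms of a shortest path of length $\leq 4$, so it occurs at most $|Y|+|Z|$ times. Each invocation of step (\ref{it.func_select2_4}) adds at most $3$ atoms and strictly increases $\rho_t$: the added path of length $L\geq 1$ has its intermediate atoms outside $S_t$ while its endpoints are already connected within a single subset, so the induced subgraph of $S_{t+1}$ acquires at least one independent cycle. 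Since $\rho_t\leq \rho(A)$ throughout, step (\ref{it.func_select2_4}) fires at most $\rho(A)$ times. Summing gives $|S|\leq (|Y|+|Z|)+3(|Y|+|Z|)+3\rho(A)\leq 10(|Y|+|Z|+\rho(A))$.

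The main conceptual step is the saturation argument yielding (i) and (ii); the counting is routine once one checks that step (\ref{it.func_select2_4}) genuinely increases $\rho_t$, which is a short comparison of $\Delta b - \Delta n$ on the added path. I expect no serious obstacles, and the only care needed is in treating degenerate subcases (paths of length $1$, or paths with coinciding endpoints in step (\ref{it.func_select2_4})), which either do nothing or fall within the same bookkeeping.
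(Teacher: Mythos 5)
Your proof is correct and follows essentially the same route as the paper's: the properness claim is established by the same contradiction argument from the termination conditions of steps (\ref{it.func_select2_2}) and (\ref{it.func_select2_4}), and the cardinality bound is obtained by tracking the same two monotone quantities (number of subsets for step (\ref{it.func_select2_2}) and the cyclomatic number $\rho$ for step (\ref{it.func_select2_4}), using $\rho(S)\leq\rho(A)$). Your write-up is in fact a bit more explicit than the paper's at two points — you separately verify that atoms of $A\setminus S$ retain degree $\geq 3$ (the paper treats only the two-deg-3-atoms-at-distance-$\leq 2$ failure mode of properness), and you spell out why step (\ref{it.func_select2_4}) strictly increases $\rho$; the only minor imprecision is the phrase ``merge $U$ and $V$ through $\nf$'' in case (i), since the shortest path chosen by step (\ref{it.func_select2_2}) need not pass through $\nf$, but the intended contradiction (that termination forbids $\mathrm{dist}(U,V)\leq 4$) is what you actually use, so the logic is sound.
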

\begin{proof} First $A$ becomes a forest after cutting $S$ as free, because $Y\subseteq S$. Assume $A$ is not proper (Definition \ref{def.degen}) after cutting $S$ as free, say there exist two deg 3 atoms $\nf$ and $\nf'$ of distance at most $2$. As $Z\subset S$, we know $\nf$ and $\nf'$ must both have deg 4 in $A$, thus each of them must be adjacent to an atom in $S$, say $\mf$ and $\mf'$, so the distance betwewn $\mf$ and $\mf'$ is at most $4$. Now consider the sets in Definition \ref{def.func_select2} (\ref{it.func_select2_5}) that form $S$. If $\mf$ and $\mf'$ belong to two different sets among them, then we get a contradiction with the absence of the scenario in Definition \ref{def.func_select2} (\ref{it.func_select2_2}); if $\mf$ and $\mf'$ belong to the same set among them, then we get a contradiction with the absence of the scenario in Definition \ref{def.func_select2} (\ref{it.func_select2_4}).

In summary, we know $A$ is a proper forest after cutting $S$ as free. As for the upper bound for $|S|$, simply note that each single step in Definition \ref{def.func_select2} (\ref{it.func_select2_2})--(\ref{it.func_select2_4}) adds at most $3$ atoms to $S$. Moreover each single step in Definition \ref{def.func_select2} (\ref{it.func_select2_2}) decreases the number of subsets by 1 and does not decrease $\rho(S)$, while each single step in Definition \ref{def.func_select2} (\ref{it.func_select2_4}) does not change the number of subsets, but \emph{adds one more cycle to $S$}. In particular adding this cycle increases the value of $\rho(S)$ by at least 1, while we always have $\rho(S)\leq \rho(A)$ because $S\subseteq A$, so the number of steps in Definition \ref{def.func_select2} (\ref{it.func_select2_4}) is bounded by a constant multiple of $\rho(A)$, hence the result.
\end{proof}
\begin{definition}[Algorithm \textbf{MAINTRUP}] 
\label{def.alg_maintrup}
Suppose $\Mb$ is a connected full molecule and $A$ is a connected transversal subset. We may choose $(A^+,A^-)$ as in Proposition \ref{prop.trans} (1). Let the number of bonds connecting an atom in $X^+(A)$ to an atom in $A$ be $\#_{\mathrm{conn}}^+$. It is easy to prove that there exists a set $Y_0\subseteq A$ of at most $\rho(A)$ atoms, such that $A$ becomes a forest after cutting atoms in $Y_0$ as free. Moreover, by decomposing 1 into at most $C^{|A|+\rho(A)}$ indicator functions, we can identify a set of weakly degenerate atoms and atom pairs in $A$ in the sense of Definition \ref{def.degen}; let $Y_1\subseteq A$ be the atoms involved in these weak degeneracies. We define the following algorithm, which contains two \emph{Options} that we can choose at the beginning.

In \emph{Option 1} we do the followings:
\begin{enumerate}
\item Cut $A$ as free, then cut it into elementary components using \textbf{UP}.
\item Then cut $A^+$ as free and cut it into elementary components using \textbf{UP}, then cut $A^-$ into elementary components using \textbf{DOWN}.
\end{enumerate}

In \emph{Option 2} we do the followings:
\begin{enumerate}
\item\label{it.alg_maintrup_1}  Cut all atoms in $X_0^+(A)$ as free. If any atom in $A$ becomes deg 2, also cut it as free until $A$ has no deg 2 atom.
\item\label{it.alg_maintrup_2} If $A$ remains connected after the above step, let $Z$ be the set of deg 3 atoms in $A$, and $Y$ be those atoms in $Y_0\cup Y_1$ that have not been cut. Define $S=\textbf{SELECT2}(A,Z,Y)$ as in Definition \ref{def.func_select2}, then cut $S$ as free and cut it into elementary components using \textbf{DOWN}. If $A$ is not connected, apply this step to each connected component of $A$.
\item\label{it.alg_maintrup_3} If not all atoms in $X^+(A)$ have been cut, then choose a lowest atom $\nf$ in $X^+(A)$ that has not been cut. If $\nf$ is adjacent to an atom $\pf\in A$ that has deg 3, then cut $\{\nf,\pf\}$ as free; otherwise cut $\nf$ as free.
\item\label{it.alg_maintrup_3+} If any two deg 3 atoms $(\rf,\rf')$ in $A$ becomes adjacent, then cut $\{\rf,\rf'\}$ as free. Repeat until no such instances exist.
\item\label{it.alg_maintrup_3++} If the distance of any two deg 3 atoms $(\rf,\rf')$ in $A$ becomes 2, say $\rf$ and $\rf'$ are both adjacent to some $\rf''\in A$, then cut $\{\rf,\rf',\rf''\}$ as free and then cut $\rf$ as free from $\{\rf,\rf',\rf''\}$. Go to (\ref{it.alg_maintrup_3+}). Repeat until $A$ becomes proper again.
\item Repeat (\ref{it.alg_maintrup_3}) until all atoms in $X^+(A)$ have been cut. Then choose any deg 3 atom in $A$ and cut it as free. If the distance between any two deg 3 atoms in $A$ becomes at most 2, go to (\ref{it.alg_maintrup_3+}) and repeat (\ref{it.alg_maintrup_3+})--(\ref{it.alg_maintrup_3++}) until $A$ becomes proper again. Then choose the next deg 3 atom in $A$ and cut it as free, and repeat until all atoms in $A$ have been cut.
\item\label{it.alg_maintrup_5} Finally, cut (the remaining parts of) $A^+$ as free and cut it into elementary components using \textbf{UP}, and then cut $A^-$ into elementary components using \textbf{DOWN}.
\end{enumerate}
We define the dual algorithm \textbf{MAINTRDN} in the same way (so $\#_{\mathrm{conn}}^+$ is replaced by $\#_{\mathrm{conn}}^-$ etc.).
\end{definition}
\begin{proposition}
\label{prop.alg_maintrup} Suppose $\Mb$ has no double bond. In \emph{Option 1} of \textbf{MAINTRUP} in Definition \ref{def.alg_maintrup} (same for \textbf{MAINTRDN}), we have
\begin{equation}\label{eq.alg_maintrup_1}
\#_{\{33B\}}=\#_{\{44\}}=0,\quad \#_{\{4\}}=1;\qquad \#_{\mathrm{good}}\geq\frac{1}{10}\cdot |Y_1|-\rho(A)-1.
\end{equation}
 In \emph{Option 2} of of \textbf{MAINTRUP} in Definition \ref{def.alg_maintrup}, we have $\#_{\{44\}}=0$ and all \{33B\}-components are good, and moreover
\begin{equation}\label{eq.alg_maintrup_2}
\begin{aligned}
\#_{\{33A\}}+\#_{\{33B\}}&\geq\frac{1}{10}\cdot\big(\#_{\mathrm{conn}}^+-10^5(|Y_1|+\rho(A)+|X_0^+(A)|)\big),\\
\#_{\{4\}}&\leq |Y_1|+\rho(A)+|X_0^+(A)|.
\end{aligned}
\end{equation}
\end{proposition}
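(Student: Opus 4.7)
The plan is to reduce the claims about \textbf{MAINTRUP} to the corresponding claims about \textbf{UP} (Proposition \ref{prop.alg_up}), via a case analysis that tracks how each cutting operation interacts with the transversal structure $(A,A^+,A^-)$. The key observation is that, since $\Mb$ has no double bonds, every sub-molecule to which we apply \textbf{UP} or \textbf{DOWN} during the algorithm produces only $\{2\}$-, $\{3\}$-, $\{4\}$- and $\{33A\}$-elementary components, never $\{33B\}$- or $\{44\}$-components, by Proposition \ref{prop.alg_up}(1). This immediately gives $\#_{\{33B\}}=\#_{\{44\}}=0$ in Option 1.

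For Option 1 I would next establish $\#_{\{4\}}=1$: after $A$ is cut as free and $A^+$ is subsequently cut as free, the only remaining sub-molecule without any external free end is $A^-$ (the other two pieces acquire free ends from the cut bonds, similar to the argument in Proposition \ref{prop.alg_transup}), so \textbf{DOWN} applied to $A^-$ produces exactly one $\{4\}$-component while $A$ and $A^+$ produce none. For the bound $\#_{\mathrm{good}}\geq |Y_1|/10-\rho(A)-1$, the idea is that each weakly degenerate atom or pair in $Y_1\subseteq A$ forces \textbf{UP} to create a good component (in the sense of Definition \ref{def.good_normal}) unless that atom has already been absorbed into a previously produced elementary component; the factor $1/10$ accommodates the possibility that up to ten nearby weakly degenerate elements may cluster into a single component, and the $\rho(A)+1$ term corrects for atoms consumed when breaking the cycles of $A$.

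For Option 2 the argument follows the steps of Definition \ref{def.alg_maintrup} in order. The equality $\#_{\{44\}}=0$ and the goodness of each $\{33B\}$-component both rest on step (\ref{it.alg_maintrup_2}): by applying $\textbf{SELECT2}(A,Z,Y)$ with $Y\supseteq Y_1$, every weakly degenerate atom or pair inside $A$ is cut before any $\{33B\}$- or $\{44\}$-component can be created, so any $\{33B\}$ arising afterwards automatically satisfies Definition \ref{def.good_normal}(\ref{it.good_33B}). To prove the lower bound on $\#_{\{33A\}}+\#_{\{33B\}}$ I would charge each such component to one of the $\#_{\mathrm{conn}}^+$ bonds between $X^+(A)$ and $A$: when an atom $\nf\in X^+(A)$ is cut in step (\ref{it.alg_maintrup_3}) together with a deg $3$ atom $\pf\in A$, the pair $\{\nf,\pf\}$ yields a $\{33A\}$- or $\{33B\}$-component, and the correction terms $|Y_1|$, $\rho(A)$, $|X_0^+(A)|$ absorb the atoms spent on \textbf{SELECT2}, on cycle-breaking, and on early cuts of $X_0^+(A)$. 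The upper bound $\#_{\{4\}}\leq |Y_1|+\rho(A)+|X_0^+(A)|$ will follow from tracking the invariant $|\Ec_*|-3|\Mc|$ exactly as in the proof of Proposition \ref{prop.alg_transup}, where each additional $\{4\}$-component corresponds to a connected sub-piece that has been separated by one of these special cuts.

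The main obstacle will be the lower bound $\#_{\{33A\}}+\#_{\{33B\}}\geq \frac{1}{10}\bigl(\#_{\mathrm{conn}}^+-10^5(|Y_1|+\rho(A)+|X_0^+(A)|)\bigr)$ in Option 2. The difficulty is to inject the set of $X^+(A)$-to-$A$ bonds (up to the listed losses) into the set of $\{33A\}$- and $\{33B\}$-components, while correctly accounting for the interference between steps (\ref{it.alg_maintrup_3+}) and (\ref{it.alg_maintrup_3++}), which repeatedly modify the adjacency and degree structure of the sub-molecule still living in $A$. I expect this to be handled exactly as in the analogous estimate in Section 13.3 of \cite{DHM24}, with the added simplification that the no-double-bond hypothesis on $\Mb$ keeps the good-component criteria in Definition \ref{def.good_normal} identical to those used in the Euclidean analysis.
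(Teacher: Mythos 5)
Your argument for $\#_{\{4\}}=1$ in Option 1 is backwards. When $A$ is cut as free from $\Mb$, the boundary bonds become \emph{free} ends on the $A$ side and \emph{fixed} ends on the $\Mb\setminus A$ side (Definition \ref{def.cutting}); since $\Mb$ is full, this makes $A$ a \emph{full} molecule, while $A^+$ and $A^-$ each acquire fixed ends (every component of $A^+$ is connected to $A$ by Proposition \ref{prop.trans}(1), and every component of $A^-$ is connected to $A\cup A^+$ since $\Mb$ is connected). Hence the unique $\{4\}$-component is produced when \textbf{UP} runs on $A$, not when \textbf{DOWN} runs on $A^-$; the other two pieces yield none because they are not full. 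This is the same argument as in Proposition \ref{prop.alg_transup}, but your sign on it is reversed.

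For Option 2 you misattribute where the $\{33A\}$- and $\{33B\}$-components come from. Every $\{33\}$-pair $\{\nf,\pf\}$ cut in step (\ref{it.alg_maintrup_3}) with $\nf\in X^+(A)$ is automatically a $\{33A\}$-component (the paper checks this directly from the degree structure after $X_0^+(A)$ has been removed), so the $\{33B\}$-components live \emph{entirely inside} $A$ and arise in the clean-up steps (\ref{it.alg_maintrup_3+})--(\ref{it.alg_maintrup_3++}); they are good because \textbf{SELECT2} in step (\ref{it.alg_maintrup_2}) already removed all weak degeneracies from $A$. Your ``charging'' description lumps these together. More importantly, the core lower bound is not handled ``as in Section 13.3'': after accounting for the at most $10^4(\rho(A)+|Y_1|+|X_0^+(A)|)$ boundary bonds lost to $X_0^+(A)$, $Z_0$ (the atoms cut in step (\ref{it.alg_maintrup_1})) and $S$ (the \textbf{SELECT2} output), the paper invokes the combinatorial argument of Proposition 11.11 of \cite{DHM24}, with the bonds from $X^+(A)$ to $A$ playing the role of UD connections and using that $A$ has been made a proper forest to guarantee a fraction of these bonds produce $\{33\}$-components. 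Finally, the bound $\#_{\{4\}}\leq |Y_1|+\rho(A)+|X_0^+(A)|$ is proved by directly counting the $\{4\}$-components created in steps (\ref{it.alg_maintrup_1}) (at most $|X_0^+(A)|$) and (\ref{it.alg_maintrup_2}) (at most $|Y_1|+\rho(A)$), not by the $|\Ec_*|-3|\Mc|$ invariant as you suggest; no later step can produce a $\{4\}$-component because, as the paper shows, no full component is ever cut in step (\ref{it.alg_maintrup_5}).
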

\begin{proof} The case of \emph{Option 1} is easy; $\#_{\{33B\}}=\#_{\{44\}}=0$ is obvious by definition, and $\#_{\{4\}}=1$ because $A$ is connected and $A^+$ satisfies the assumption in Proposition \ref{prop.trans} (1), in the same way as in the proof of Proposition \ref{prop.alg_transup}. Moreover, each atom in $A$ belongs to a \{4\}- or \{3\}- or \{2\}- or \{33A\}-component, and the total number of \{2\}- and \{33A\}-components is equal to $\rho(A)$. For any atom $\nf$ that belongs to a  \{3\}-component, if it is a weakly degenerate atom, or if it is part of a weakly degenerate pair and is cut after the other atom $\nf'$ of the pair, then $\{\nf\}$ must be a good component by Definition \ref{def.good_normal} (\ref{it.good_1}). This gives at least $|Y_1|/10-\rho(A)-1$ good components, as desired.

In the case of \emph{Option 2}, note that in the whole process, there is no top fixed end in $A^+$ and no bottom fixed end in $A^-$; this is because for the lowest atom $\nf$ in $X^+(A)$ chosen in Definition \ref{def.alg_maintrup} (\ref{it.alg_maintrup_3}), any child of $\nf$ must either belong to $A$ or belong to $X^+(A)$ (and thus will have already been cut). Moreover, any $\nf$ chosen in this step either has deg 2 and no bond connecting to $A$ or has deg 3 and exactly one bond connecting to $A$ (because all atoms in $X_0^+(A)$ have been cut in Definition \ref{def.alg_maintrup} (\ref{it.alg_maintrup_1})). Also any \{33\}-component $\{\nf,\pf\}$ cut in this way must be \{33A\}-component, and no full component can be cut (hence no \{4\}-component created) in Definition \ref{def.alg_maintrup} (\ref{it.alg_maintrup_5}). Therefore, all \{33B\}-component must have both atoms in $A$, and the only \{4\}-components created are those created in Definition \ref{def.alg_maintrup} (\ref{it.alg_maintrup_1}) (contributing at most $|X_0^+(A)|$ many \{4\}-components) and Definition \ref{def.alg_maintrup} (\ref{it.alg_maintrup_2}) (contributing at most $|Y_1|+\rho(A)$ many \{4\}-components), hence the upper bound $\#_{\{4\}}\leq|Y_1|+\rho(A)+|X_0^+(A)|$.

Now we prove the lower bound on $\#_{\{33\}}$. Note that after Definition \ref{def.alg_maintrup} (\ref{it.alg_maintrup_1}), there is no bottom fixed end in $A$ (and hence none in $S$), so we can cut $S$ into elementary components using \textbf{DOWN} as in Definition \ref{def.alg_maintrup} (\ref{it.alg_maintrup_2}). By definition of $Y_0$ and $Y_1$, and by Proposition \ref{prop.func_select2}, we know that after Definition \ref{def.alg_maintrup} (\ref{it.alg_maintrup_2}) is finished, $A$ will become a forest which is proper, and contains no weakly degenerate atoms or atom pairs. Next we prove an upper bound on $|S|$; let $Z_0$ be the set of atoms in $A$ cut in Definition \ref{def.alg_maintrup} (\ref{it.alg_maintrup_1}), then $|Z|\leq 4(|Z_0|+|X_0^+(A)|)$ and $|S|\leq 10(4|Z_0|+4|X_0^+(A)|+|Y_1|+2\rho(A))$ by Proposition \ref{prop.func_select2}. By Definition \ref{def.alg_maintrup} (\ref{it.alg_maintrup_1}), we know that each parent of each atom in $Z_0$ must be in either  $Z_0$ or $X_0^+(A)$. This then leads to $\rho(Z_0)\geq |Z_0|-2|X_0^+(A)|$, but also $\rho(Z_0)\leq\rho(A)$, so $|Z_0|\leq \rho(A)+2|X_0^+(A)|$, thus
\[|S|\leq 2\cdot 10^2(\rho(A)+|Y_1|+|X_0^+(A)|).\]

Now, after cutting $X_0^+(A)\cup Z_0\cup S$ as free, the number of bonds connecting an atom in $A$ to an atom in $X^+(A)$ is still at least \[(\#_{\mathrm{conn}}^+)'=\#_{\mathrm{conn}}^+-2(|X_0^+(A)|+|Z_0|+|S|)\geq\#_{\mathrm{conn}}^+-10^4(\rho(A)+|Y_1|+|X_0^+(A)|).\] At this point we can apply the same arguments as in the proof of Proposition 11.11 in \cite{DHM24} to show that
\begin{equation}\label{eq.alg_maintrup_3}\#_{\{33A\}}+\#_{\{33B\}}\geq\frac{1}{10}\cdot \big(\#_{\mathrm{conn}}^+-10^5(\rho(A)+|Y_1|+|X_0^+(A)|)\big).\end{equation} Here the role of UD connections is played by the $(\#_{\mathrm{conn}}^+)'$ bonds connecting an atom in $A$ to an atom in $X^+(A)$. Note that after cutting $S$ as free, the number of components of $A$ is at most
\[\#_{\mathrm{comp}(A)}\leq 1+4(|X_0^+(A)|+|S|+|Z_0|)\leq 10^5(\rho(A)+|Y_1|+|X_0^+(A)|),\] so among these $(\#_{\mathrm{conn}}^+)'$ bonds (we denote this set by $Q$), there exists a subset $Q'\subseteq Q$ of at least $(\#_{\mathrm{conn}}^+)'-\#_{\mathrm{comp}(A)}$ bonds, such that each bond in $Q'$ is connected to some other bond in $Q$ via $A$. The same proof for Proposition 11.11 in \cite{DHM24} then applies, which leads to (\ref{eq.alg_maintrup_3}). This completes the proof.
\end{proof}
Now we can finish the proof of Proposition \ref{prop.comb_est_extra} under the assumption of Proposition \ref{prop.newcase}.
\begin{proof}[Proof of Proposition \ref{prop.comb_est_extra} assuming Proposition \ref{prop.newcase}] We start with two simple cases.

\emph{Simple case 1}: assume $\Mb$ contains at least one pair of strongly degenerate and primitive atoms $(\mf,\nf)$. In this case we cut $\{\mf,\nf\}$ as free and cut the rest of $\Mb$ into elementary components using either \textbf{UP} or \textbf{DOWN} (similar to the proof of Proposition 10.5 in \cite{DHM24}). This creates one good \{44\}-component and no \{4\}-component (as $\Mb$ is connected), which meets the requirement of Proposition \ref{prop.comb_est_extra} (1). 

Below we shall assume $\Mb$ contains no pair of strongly degenerate and primitive atoms; by the same proof as Proposition 10.4 in \cite{DHM24}, we know that that each \{33A\}-component is either good or can be cut into one \{2\}- and one good \{3\}-atom. Therefore, from now on, we will treat \{33A\}-components as good.

\emph{Simple case 2}: assume $\Mb$ contains a double bond between two atoms $\nf_1$ and $\nf_2$. First assume that (say) $\nf_2$ is connected to another atom $\nf_3$ by another double bond; we may assume that $\nf_1\rightarrow\nf_2\rightarrow\nf_3$, where $\nf\rightarrow\mf$ means $\nf$ is the parent of $\mf$. Then, we apply the following variant of the algorithm \textbf{UP} to $\Mb$ (Definition \ref{def.alg_up}). We start by choosing the deg 4 atom $\nf_2$ as the $\nf$ in Definition \ref{def.alg_up} (\ref{it.alg_up_2}), and let $S_{\nf_2}$ be the set of descendants of $\nf_2$. We then cut all the atoms in $S_{\nf_2}$ as in Definition \ref{def.alg_up} (\ref{it.alg_up_3}) with $\nf_2$ being the first and $\nf_3$ being the second, and then repeat the steps in Definition \ref{def.alg_up} to cut the rest of $\Mb$ (note in particular we cut $\nf_1$ as a \{2\}-component). By slightly adjusting the proof of Proposition \ref{prop.alg_up}, we can show that $\Mb$ is indeed cut into elementary components with $\{\nf_2\}$ being a \{4\}-component. Since $\nf_2$ has two double bonds, it is easy to see that $\{\nf_2\}$ satisfies the condition in Proposition \ref{prop.intmini} (\ref{it.intmini_extra_2}), so it has excess $\varepsilon^{d-1/2}$ by Proposition \ref{prop.newprop_3} (\ref{it.newprop_1}). This meets the requirement of Proposition \ref{prop.comb_est_extra} (3).

Now, assume neither $\nf_1$ nor $\nf_2$ has another double bond. Assume say $\nf_1\rightarrow\nf_2$, then we apply the variant of \textbf{UP} defined above, but we first choose $\nf_1$ (instead of $\nf_2$) as the $\nf$ in Definition \ref{def.alg_up} (\ref{it.alg_up_2}). In this cutting sequence, we first cut $\nf_1$ and next $\nf_2$, and then repeat the remaining steps in Definition \ref{def.alg_up}, to cut $\Mb$ into elementary components. Note that after $\nf_1$ and $\nf_2$ have been cut, there is no deg 2 atom in $\Mb$; then by Proposition \ref{prop.alg_up}, the subsequent application of \textbf{UP} must produce either another good component, or another \{33A\}-component which is also good. Note that this good component has excess $\leq\varepsilon^{1/(8d)}$ by Definition \ref{def.good_normal} and Proposition \ref{prop.newprop_3}; moreover, since $\nf_1$ has a double bond, it must satisfy condition in Proposition \ref{prop.intmini} (\ref{it.intmini_4_extra}), so $\{\nf_1\}$ is also good and has excess $\varepsilon^{d-1}$ by Proposition \ref{prop.newprop_3} (\ref{it.newprop_1}). This meets the requirement of Proposition \ref{prop.comb_est_extra} (3).

\smallskip
Below we assume $\Mb$ has no double bond. We then consider the two main cases.

\emph{Main case 1}: assume there exists a connected transversal set $A$ (Definition \ref{def.trans}), such that $|X^+(A)|\geq G(\#_{\mathrm{conn}}^+(A))$ (or $|X^-(A)|\geq G(\#_{\mathrm{conn}}^-(A))$), where $X^\pm(A)$ is defined in Definition \ref{def.trans} and Proposition \ref{prop.trans}, the function $G$ is as (\ref{defG}), and $\#_{\mathrm{conn}}^\pm(A)$ is defined in Definition \ref{def.alg_maintrup}. We may assume $\#_{\mathrm{conn}}^+(A)=q$ and $|X^+(A)|\geq G(q)$. Now we cut $X^+(A)$ as free from $\Mb$, and subsequently cut $X^+(A)$ and $\Mb\backslash X^+(A)$ separately.
\begin{itemize}
\item Concerning $X^+(A)$: it is clear from the definition of $X^+(A)$ that, after cutting $X^+(A)$ as free, every \emph{bottom} free end of $X^+(A)$ must originally be a bond between an atom in $X^+(A)$ and an atom in $A$, so the number of these bottom free ends does not exceed $\#_{\mathrm{conn}}^+(A)=q$. Since $|X^+(A)|\geq G(q)$, and $X^+(A)$ does not contain any double bond or strongly degenerate primitive pair, by Proposition \ref{prop.newcase} we can cut $X^+(A)$ into elementary components such that Proposition \ref{prop.comb_est_extra} (3) holds true.
\item Concerning $\Mb\backslash X^+(A)$: with Proposition \ref{prop.comb_est_extra} (3) for the molecule $X^+(A)$ alone already providing enough excess ($\leq \varepsilon^{d-1+1/(15d)}$), we now only need to cut $\Mb\backslash X^+(A)$ into \{2\}-, \{3\}- and \{33A\}-components. Note that by Proposition \ref{prop.trans} (1) we may assume that each component of $A^+$ has at least one bond connected to $A$.

Now, after cutting $X^+(A)$ as free, we know $A$ is still connected and has at least one fixed end, but has no \emph{bottom} fixed end. We can then cut $A$ into the needed components using \textbf{DOWN}. Then, each component in the remaining part of $A^+$ must have at least one fixed end, but has no \emph{top} fixed end, so we can cut the remaining part of $A^+$ into the needed components using \textbf{UP}. Finally, after $A\cup A^+$ has been cut, each component of $A^-$ will again have at least one fixed end but has no \emph{bottom} fixed end, so we can cut $A^-$ into the needed components using \textbf{DOWN}. This completes the proof in \emph{Main case 1} by meeting the requirement of Proposition \ref{prop.comb_est_extra} (3).
\end{itemize}

\emph{Main case 2}: assume there does not exist a set $A$ as in \emph{Main case 1} above. We start by choosing a connected transversal subset $A_1$ of $\Mb$ which is a tree (i.e. $\rho(A_1)=0$) and either $A_1=\Mb$ or $X_0(A_1)\neq\varnothing$; the existence of such $A_1$ follows in the same way as in Proposition \ref{prop.trans} (4) starting from a single atom with no children. If $A_1\neq\Mb$, let $X_1=X(A_1)$, then $A_1\cup X_1$ is connected transversal by Proposition \ref{prop.trans} (3), so we can find $A_2\supseteq A_1\cup X_1$ starting from $A_1\cup X_1$, using Proposition \ref{prop.trans} (4). If $A_2\neq \Mb$, then let $X_2=X(A_2)$ and find $A_3$ by Proposition \ref{prop.trans} (4), then define $X_3=X(A_3)$ and so on.

Recall the function $G(q)$ in (\ref{defG}). Define $D=K_1:=(60d)^{60d}$ and $K_{j+1}=(60d\cdot G((60dK_{j})^{60d}))^{60d}$ for $j\geq 1$. We will assume $\Gamma>(60d)^{60d} K_D$ below, and consider several different cases.

(1) Suppose $A_j$ exists and $X_0(A_j)\neq\varnothing$ for all $j\leq D$. By construction $\rho(A_j)=\rho(A_{j-1}\cup X_{j-1})$, so we can list the elements of $B_j:=A_j\backslash (A_{j-1}\cup X_{j-1})$ as $\nf_1^j,\cdots ,\nf_{|B_j|}^j$ such that $\nf_1^j$ has only one bond with $A_{j-1}\cup X_{j-1}$ and $\nf_i^j$ has only one bond with $A_{j-1}\cup X_{j-1}\cup\{\nf_1^j,\cdots,\nf_{i-1}^j\}$. Similarly we can list the atoms in $B_1:=A_1$. Choose also an element $\mf_j\in X_0(A_j)$ for each $j$, then this $\mf_j$ is adjacent to two atoms in $A_j$, with at least one being in $B_j$. We then perform the following cutting sequence:

Starting from $j=1$, for each $j$, cut the atoms $(\nf_i^j:i\leq |B_j|)$ as free in the increasing order of $i$. However, if $\nf_i^j$ is adjacent to $\mf_{j}$ and $\mf_{j}$ has deg 3 when we cut $\nf_i^j$, then we cut $\{\nf_i^j,\mf_{j}\}$ instead of $\nf_i^j$. After all atoms in $B_j$ have been cut, the atom $\mf_{j}$ will also have been cut; we then cut the remaining atoms in $X_j$ as free (starting from the lowest ones; each will have deg 2 when we cut it), and proceed with $B_{j+1}=A_{j+1}\backslash (A_j\cup X_j)$ and so on.

In this way, it is clear that each $\mf_j$ must belong to a \{33\}-component, which also has to be \{33A\}-component because $\mf_j$ will have two top (or two bottom) free ends when it is cut, assuming $\mf_j\in X_0^+(A_j)$ (or $\mf_j\in X_0^-(A_j)$). This produces at least $D$ many \{33A\}-components (which are treated as good), while $\#_{\{4\}}=1$, which meets the requirement of Proposition \ref{prop.comb_est_extra} (2).

(2) Suppose $A_j=\Mb$ for some $j<D$, then $\rho(A_j)\geq\Gamma$ for this $j$. Therefore we may choose the smallest $j$ such that $\rho(A_{j+1})\geq K_{j+1}$. Let $A=A_j$, then we have
\[\rho(A)<K_j,\quad K_{j+1}\leq \rho(A_{j+1})=\rho(A\cup X(A))\leq \rho(A)+10|X(A)|,\] so $|X(A)|\geq (20)^{-1}\cdot K_{j+1}$ (note also that $|X^+(A)|$ and $\rho(A_{j+1})$ etc. are all bounded above by $\rho(\Mb)\leq 2\Gamma$, a constant depending only on $d$). We may assume $|X^+(A)|\geq (40)^{-1}\cdot K_{j+1}$; consider the set $X^+(A)$, by the assumption of \emph{Main case 2} we then know that $\#_{\mathrm{conn}}^+>(60d)^{60d}\cdot K_j$ in Definition \ref{def.alg_maintrup}. We then
\begin{itemize} 
\item Run algorithm \textbf{TRANSUP} and apply Proposition \ref{prop.alg_transup} if $|X_0^+(A)|\geq (30d)^{30d}\cdot K_j$;
\item Run algorithm \textbf{MAINTRUP}, \emph{Option 1} and apply Proposition \ref{prop.alg_maintrup} if $|Y_1|\geq (30d)^{30d}\cdot K_j$;
\item Run algorithm \textbf{MAINTRUP}, \emph{Option 2} and apply Proposition \ref{prop.alg_maintrup}, if $|X_0^+(A)|\leq (30d)^{30d}\cdot K_j$ and $|Y_1|\leq (30d)^{30d}\cdot K_j$.
\end{itemize} In any case we have met the requirements of Proposition \ref{prop.comb_est_extra} (2), so the proof is complete.
\end{proof}
\subsection{The cutting algorithm: special case}\label{sec.newcase} In this subsection we prove Proposition \ref{prop.newcase}. Fix a quantity $\varepsilon^*:=\exp(-|\log\varepsilon|^{1/2})$; note that as $\varepsilon\to 0$, this $\varepsilon^*$ vanishes slower than any power of $\varepsilon$ and faster than any $|\log\varepsilon|^{-C^*}$. In the proof below we will use the notion of \textbf{long bonds}, where a long bond is a bond between two atoms $\nf_1$ and $\nf_2$ such that we have the restriction (by indicator functions) that $|t_{\nf_1}-t_{\nf_2}|\geq\varepsilon^*$.
\begin{lemma} Proposition \ref{prop.newcase} is true, if $\Mb$ contains a triangle with a long bond.
\label{lem.newlem}
\end{lemma}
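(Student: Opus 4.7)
The plan is to extract the triangle carrying the long bond as a single \{334T\}-component whose excess alone already meets the target $\varepsilon^{d-1+1/(15d)}$ of Proposition \ref{prop.comb_est_extra}(3), and to cut the rest of $\Mb$ into \{2\}-, \{3\}-, \{33A\}-components together with exactly one \{4\}-component.

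Let $\{\nf_1,\nf_2,\nf_3\}$ be the triangle, and, after relabeling, assume the long bond $|t_{\nf_1}-t_{\nf_2}|\geq \varepsilon^*$ joins $\nf_1$ and $\nf_2$. First I would cut $\Mb\setminus\{\nf_1,\nf_2,\nf_3\}$ as free and apply algorithm \textbf{UP} (or \textbf{DOWN}, whichever is compatible with the triangle's top/bottom edge structure) to it. By Proposition \ref{prop.alg_up}, together with the hypotheses of Proposition \ref{prop.newcase} (no double bonds, no strongly degenerate primitive pairs), this produces only \{2\}-, \{3\}-, \{33A\}-components and exactly one \{4\}-component (by connectedness of $\Mb$). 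This cutting turns the external bonds of the triangle into fixed ends at the triangle atoms; after at most $O(1)$ auxiliary \{2\}- or \{3\}-cuts of neighbors of the triangle to adjust its external-edge profile, the three atoms $\{\nf_1,\nf_2,\nf_3\}$ form a \{334T\}-molecule in the sense of Definition \ref{def.elementary}, with the two fixed ends at $\nf_1,\nf_2$ both top (or both bottom) and the two fixed ends appearing at $\nf_3$ after cutting $\{\nf_1,\nf_2\}$ as free also both top (or both bottom).

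To bound the excess of this \{334T\}-component, we apply Proposition \ref{prop.intmini_3}(2). The long bond provides $\mu\sim\varepsilon^*$ in \eqref{eq.intmini_3_2_1}. We then introduce a partition of unity of at most $|\log\varepsilon|^{C^*}$ indicator functions that dyadically localize $|x_1-x_7|_\Tb$, $|v_1-v_7|$, and $|x_1-x_7+t_2(v_1-v_7)|_\Tb$ at scales $\mu'$. On the non-degenerate scales $\mu'\gtrsim\varepsilon^*$, Proposition \ref{prop.newprop_3}(4) gives excess at most
\begin{equation*}
\varepsilon^{d-1/2}\cdot(\mu\mu')^{-2d}\leq \varepsilon^{d-1/2}\cdot(\varepsilon^*)^{-4d}=\varepsilon^{d-1/2}\cdot\exp(4d|\log\varepsilon|^{1/2})\ll \varepsilon^{d-1+1/(15d)},
\end{equation*}
since $d-\tfrac{1}{2}>d-1+\tfrac{1}{15d}$ for $d\in\{2,3\}$ and $\varepsilon^*=\exp(-|\log\varepsilon|^{1/2})$ is sub-polynomial in $\varepsilon$. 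On the degenerate scales $\mu'\ll\varepsilon^*$, the resulting small separation of the relevant fixed-end state vectors triggers one of the good-component criteria in Definition \ref{def.good_normal}(\ref{it.good_1})--(\ref{it.good_2}), producing an extra good \{3\}- or \{33A\}-component inside the \textbf{UP} cutting of $\Mb\setminus\{\nf_1,\nf_2,\nf_3\}$; the combined excess of this extra component with the (now weaker) \{334T\}-excess still satisfies the target bound by Proposition \ref{prop.newprop_3}.

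The main obstacle will be twofold: (i) realizing the specific fixed-end configuration required of a \{334T\}-molecule starting from an arbitrary triangle in $\Mb$, which may have more or fewer external bonds than the \{334T\}-template dictates and thus requires a short case analysis on its external-edge structure, and (ii) coordinating the degenerate-$\mu'$ scales in the \{334T\}-excess decomposition with the good-component production of \textbf{UP} on $\Mb\setminus\{\nf_1,\nf_2,\nf_3\}$, so that in every case the combined excess still meets $\varepsilon^{d-1+1/(15d)}$.
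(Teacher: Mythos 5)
Your central move, extracting the original triangle $\{\nf_1,\nf_2,\nf_3\}$ as a single \{334T\}-component, does not work because of a degree accounting problem. Every triangle atom carries exactly two internal bonds, hence exactly two external edges. Once you ``cut $\Mb\setminus\{\nf_1,\nf_2,\nf_3\}$ as free,'' Definition~\ref{def.cutting} turns \emph{every} external edge of the triangle into a fixed end at the triangle atom, so all three atoms become deg~2 (two internal bonds, two fixed ends, zero free ends). That is not a \{334T\}-molecule, which requires two deg~3 atoms and one deg~4 atom; it is not any elementary molecule in the sense of Definition~\ref{def.elementary}, and there is no way to recover: the ``auxiliary cuts of neighbors to adjust the external-edge profile'' cannot change anything, since by the time you reach this step all neighbors have already been cut as free and the fixed/free status of each end is locked in. To land a \{334T\} from a triangle you would need a cutting order in which exactly one of $\nf_2$'s external edges and exactly one of $\nf_3$'s external edges are already fixed while all four of the remaining external edges stay free; your chosen order guarantees the opposite. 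A secondary issue in the same step is the claim of ``exactly one \{4\}-component'': after cutting $\Mb\setminus\{\nf_1,\nf_2,\nf_3\}$ as free, that set has \emph{no} fixed ends, so each of its connected components is full and would produce its own \{4\}-component under \textbf{UP}; removing three atoms from a connected molecule can disconnect the remainder, so this count is not justified.

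The paper resolves these problems by not trying to turn the original triangle into the \{334T\}-component. It first reduces (using the BFK bound on $\Rb^d$ three-particle collisions) to a triangle with a bounded chain length $r\leq 20$ of stacked common parents above it; this step has no analogue in your write-up and is needed so that the later application of \textbf{UP} still finds another \{33A\}-component among the remaining cycles. Then, crucially, it cuts the \emph{top} triangle atom $\nf_1$ as the \{4\}-component; this creates one top fixed end at each of $\nf_2,\nf_3$, and the subsequent cut produces either the \{33A\}-component $\{\nf_2,\nf_3\}$ or, if an \emph{external} common child $\nf_4$ of $\nf_2,\nf_3$ exists, the \{334T\}-component $\{\nf_2,\nf_3,\nf_4\}$ — a genuinely different set of three atoms from your triangle, sharing only the long bond $\nf_2$--$\nf_3$. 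Your proof also does not address the branch where no such $\nf_4$ exists and only a \{33A\}-component appears, in which case the long-bond excess $\varepsilon^{d-1}(\varepsilon^*)^{-d}$ alone is not sufficient and an extra \{33A\}-component from the rest of the molecule is required. The excess arithmetic you do for the \{334T\}-case — $\varepsilon^{d-1/2}(\varepsilon^*)^{-4d}\ll \varepsilon^{d-1+1/(15d)}$ — is correct, and so is the observation that degenerate scales of $\mu'$ must feed a good component elsewhere, but both of these sit on top of an extraction step that does not produce the claimed elementary component, so the argument does not close.
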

\begin{proof} Assume $\Mb$ contains a triangle with atoms $\nf_1\rightarrow\nf_2\rightarrow\nf_3$, where (say) $|t_2-t_3|\geq\varepsilon^*$. We first consider a possible atom $\mf_1$ that is common parent of $\nf_1$ and $\nf_2$, i.e. $\mf_1\rightarrow \nf_1$ and $\mf_1\rightarrow\nf_2$; if $\mf_1$ exists we may then consider possible common parent $\mf_2$ of $\mf_1$ and $\nf_1$, and common parent $\mf_3$ of $\mf_2$ and $\mf_1$, and so on. Assume this stops at $\mf_r$. If $r\geq 20$, we consider all the bonds between $\mf_j$ and $\mf_{j-1}$ for $r-15\leq j\leq r-5$; note that all these atoms belong to at most 3 particle lines. Since we may assume that all the collisions represented by atoms in $\Mb$ can actually occur for some initial configurations, we know that for some initial configuration on $\Tb^{3d}\times\Rb^{3d}$, these 3 particles collide at least 10 times as represented by $\mf_j\,(r-15\leq j\leq r-5)$ on $\Tb^d$. However, we know (see for example \cite{CM93}) that 3 particles can only collide 4 times in the $\Rb^d$ dynamics, so the trajectory of these particles among these 10 collisions \emph{cannot be contained in a single fundamental domain of $\Tb^d$ in $\Rb^d$} (otherwise this portion of the $\Tb^d$ dynamics would coincide with the $\Rb^d$ dynamics and cannot support this many collisions). Since all velocities $|v_e|\leq|\log\varepsilon|^{C^*}$ (see Remark \ref{rem.parameter}), we conclude that $|t_{\mf_j}-t_{\mf_{j-1}}|\geq \varepsilon^*$ for at least one such $j$, so we can choose the triangle with atoms $(\mf_{j+1},\mf_j,\mf_{j-1})$ which has a long bond, and the corresponding $r$ value becomes $r'\leq r-j\leq 15$.

Therefore, we can find a triangle of atoms $(\nf_1,\nf_2,\nf_3)$ that contains a long bond between $\nf_2$ and $\nf_3$, and satisfies $r\leq 20$ with $r$ defined as above. Now we apply the following variant of \textbf{UP} to $\Mb$: first choose the deg 4 atom $\nf_1$ as the $\nf$ in Definition \ref{def.alg_up} (\ref{it.alg_up_2}), and let $S_{\nf_1}$ be the set of descendants of $\nf_1$. Then cut $\{\nf_1\}$ as a \{4\}-component; if $\nf_2$ and $\nf_3$ have a common child atom $\nf_4$ then cut $\{\nf_2,\nf_3,\nf_4\}$ as a \{334T\}-component, otherwise cut $\{\nf_2,\nf_3\}$ as a \{33A\}-component. Then we subsequently cut each $\mf_j$ as a \{2\}-component \emph{before returning to} $S_{\nf_1}$ and carrying out the steps in Definition \ref{def.alg_up} (\ref{it.alg_up_3}). Note that at this time there is still no top fixed end in $\Mb\backslash S_{\nf_1}$ and no bottom fixed end in $S_{\nf_1}$, which guarantees that we can return to algorithm \textbf{UP} and follow the steps in Definition \ref{def.alg_up} to complete the following cuttings; moreover, we either have a \{334T\}-component or have a \{33A\}-component.

\emph{If we get a \{334T\}-component}: in this case, let $\mu':=\varepsilon^{1/(8d)}$ and $(x_j,v_j)=z_{e_j}$ be as in Proposition \ref{prop.intmini_3} (\ref{it.intmini_3_2}) associated with this \{334T\}-component. Note that $e_1$ and $e_7$ are the two fixed ends of the \{334T\}-component, and also correspond to the two bonds at $\nf_1$.

If $|x_1-x_7|_\Tb\leq \mu'$ or $|v_1-v_7|\leq \mu'$, then $\{\nf_1\}$ is a good component with excess $\varepsilon^{1/(8d)}$ by Definition \ref{def.good_normal} (\ref{it.good_3}), which meets the requirement in Proposition \ref{prop.comb_est_extra} (3) upon cutting $\{\nf_2,\nf_3,\nf_4\}$ into the \{33A\}-component $\{\nf_2,\nf_3\}$ and the 2-component $\{\nf_4\}$. If $\min(|x_1-x_7|_\Tb,|v_1-v_7|)\geq\mu'$ and $|x_1-x_7+t_2(v_1-v_7)|_\Tb\geq \mu'$, then this \{334T\}-component satisfies (\ref{eq.intmini_3_2_1})--(\ref{eq.intmini_3_2_2}), so by Proposition \ref{prop.newprop_3} (\ref{it.newprop_3}) we get excess $\varepsilon^{d-3/4}(\varepsilon^*)^{-2d}$, which also meets the requirement in Proposition \ref{prop.comb_est_extra} (3). Finally, if $|x_1-x_7+t_2(v_1-v_7)|_\Tb\leq \mu'$, note also that $|x_1-x_7+t_{\nf_1}(v_1-v_7)|_\Tb\leq \mu'$ due to the collision $\nf_1$, and that $t_2=t_{\nf_3}$ in the  \{334T\}-component we get, and that $t_{\nf_1}-t_2\geq\varepsilon^*$ and $|v_1-v_7|\geq\mu'$. These conditions then imply that $|(v_1-v_7)\times m|\leq\mu'|\log\varepsilon|^{C^*}$ for some nonzero integer vector $m$ with $|m|\leq|\log\varepsilon|^{C^*}$; therefore, by the same arguments as in the proof of Proposition \ref{prop.intmini} (4), we conclude that $\{\nf_1\}$ has excess $\varepsilon^{1/(8d)}$, which again meets the requirement in Proposition \ref{prop.comb_est_extra} (3).

\emph{If we get a \{33A\}-component}: in this case, since $\nf_4$ does not exist, it can be shown that after cutting $\{\nf_1,\nf_2,\nf_3\}$ and the possible $\mf_j$, there is no deg 2 atom in $\Mb$ but there still exists cycles in $\Mb$ (because the total number of cycles in $\Mb$ is $\gg 20\geq r$ by assumption (1) in Proposition \ref{prop.newcase}). Therefore, by the same proof of Proposition \ref{prop.alg_up}, the remaining cutting process following the steps in Definition \ref{def.alg_up} must produce at least one more \{33A\}-component, which has excess $\varepsilon^{1/(8d)}$. Since the first \{33A\}-component has excess $\varepsilon^{d-1}(\varepsilon^*)^{-d}$ by Proposition \ref{prop.newprop_3} (\ref{it.newprop_2}) and the long bond assumption, we get total excess $\varepsilon^{d-1+1/(8d)}$ which meets the requirement in Proposition \ref{prop.comb_est_extra} (3). This completes the proof.
\end{proof}
From now on, we may assume that the $\Mb$ in Proposition \ref{prop.newcase} does not contain any double bond, any strongly degenerate primitive pair, or any triangle with a long bond. Also, throughout the proof, both $q$ and $|\Mc|$ (the number of atoms in $\Mb$) will be bounded by an absolute constant depending only on the dimension $d$, as this is all we need in the application of Proposition \ref{prop.newcase} in the proof of Proposition \ref{prop.comb_est_extra} in Section \ref{sec.cutting_final}. As such, we may fix a time ordering of all the time variables $t_\nf$ at a cost of a constant factor. The next step in the proof is to reduce Proposition \ref{prop.newcase} to the case of a molecule with two sub-layers, namely the following proposition.
\begin{proposition}
\label{prop.new2layer} Suppose $\Mb$ is as in Proposition \ref{prop.newcase} and does not contain any double bond, any strongly degenerate primitive pair, or any triangle with a long bond. Moreover, assume $\Mb$ is divided into two subsets $\Mb=\Mb_U\cup\Mb_D$, such that:
\begin{enumerate}
\item No atom in $\Mb_D$ is parent of any atom in $\Mb_U$;
\item Each particle line intersects both $\Mb_D$ and $\Mb_U$, and each of $\Mb_D$ and $\Mb_U$ is connected by itself;
\item For $\nf\in\Mb_U$ and $\mf\in\Mb_D$, we have the restriction (by indicator functions) that $|t_{\nf}-t_{\mf}|\geq\varepsilon^*$.
\end{enumerate} Then Proposition \ref{prop.newcase} holds true for this $\Mb$.
\end{proposition}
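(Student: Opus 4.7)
The plan is to execute the two-cluster algorithm sketched in Section~\ref{sec.new_alg}. Since every particle line meets both $\Mb_U$ and $\Mb_D$, I would choose a highest atom $\mf_0\in\Mb_D$, with parents $\mf_1^+,\mf_2^+\in\Mb_U$, and label the two particle lines through $\mf_0$ as $\pb_1,\pb_2$, so that $\mf_j^+\in\pb_j$. By hypothesis~(3), each bond $\mf_j^+\to\mf_0$ is long. Let $\nf_0\in\Mb_U$ be the lowest atom at which $\pb_1$ and $\pb_2$ first become dynamically connected through collisions strictly preceding $\nf_0$; let $P_j$ be the particle lines connected to $\pb_j$ before $\nf_0$, and let $A_j$ be the set of atoms involving only particles in $P_j$ strictly below $\nf_0$. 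The proof then splits along the dichotomy: either (i) some $A_j$, say $A_1$, contains no canonical cycle, or (ii) each $A_j$ contains a canonical cycle.

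In case (i) I would cut in the order: $\mf_2^+$ first, then sequentially all atoms along a path within $A_2$ from $\mf_2^+$ down to $\nf_0$, then $\nf_0$, then atoms along a path within $A_1$ from $\nf_0$ up to $\mf_1^+$, and finally $\mf_1^+$. Because $A_1$ contains no canonical cycle, no atom on the $A_1$ path becomes deg~2 before $\mf_1^+$, so $\mf_1^+$ still has deg~3 when it is cut; combined with $\mf_0$ this produces a \{33A\}-component whose only nontrivial cross-layer bond is the long bond $\mf_1^+\to\mf_0$. By Proposition~\ref{prop.newprop_3}~(\ref{it.newprop_2}) applied with $\mu=\varepsilon^*$, this component has excess at most $\varepsilon^{d-1}(\varepsilon^*)^{-d}\le\varepsilon^{d-1-\theta}$ for any $\theta>0$. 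To close the remaining $\varepsilon^{1/(15d)}$ gap, I would use the hypothesis $|\Mc|\ge G(q)$: after the above cuts the residual molecule still carries ample cyclic structure, and continuing with \textbf{UP}/\textbf{DOWN} (Proposition~\ref{prop.alg_up}) extracts either another good component distinct from the long-bond \{33A\} (excess $\le\varepsilon^{1/(8d)}$), or extends the long-bond \{33A\} into a \{333A\}- or \{334T\}-component for which Proposition~\ref{prop.newprop_3}~(\ref{it.newprop_3}) yields excess $\le\varepsilon^{d-1/2}(\varepsilon^*)^{-C}\le\varepsilon^{d-3/4-\theta}$. Either way the product of excesses sits below $\varepsilon^{d-1+1/(15d)}$.

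In case (ii) I instead aim for two separate \{33A\}-components, one inside each $A_j$, produced by a local \textbf{UP} within $A_j$ that exploits the canonical cycle (this is exactly the reason canonical cycles were singled out). By Proposition~\ref{prop.newprop_3}~(\ref{it.newprop_0}) each such component has excess $\le\varepsilon^{1/(8d)}$, and when $d=2$ the combined excess already dominates the target. When $d=3$ the combined $\varepsilon^{2-\theta}$ falls short of $\varepsilon^{2+1/45}$ by a sub-polynomial factor, and I obtain the needed third gain by locating any atom of $\Mb$ lying on a particle line outside $P_1\cup P_2$ and cutting it together with a neighbor to form a third \{33A\}-component, using the abundance of recollisions guaranteed by $|\Mc|\ge G(q)$. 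In the degenerate sub-case where no particle lies outside $P_1\cup P_2$, the particle count is forced to be exactly $6$ with a rigid combinatorial shape; the no-triangle-with-long-bond hypothesis (cf.\ Lemma~\ref{lem.newlem}) together with the no-strongly-degenerate-primitive-pair hypothesis reduces the possibilities to a short list, and for each shape a direct inspection yields a cutting sequence with at least three \{33A\}-components, so that total excess $\le\varepsilon^{3(1-\theta)}\ll\varepsilon^{d-1+1/(15d)}$.

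The main obstacle will be precisely the rigid sub-case of case~(ii), namely when both $A_j$ are minimal canonical triangles and $P_1\cup P_2$ exhausts the available particles. There, the short-time BFK bound that motivated the long-bond reduction is not available (we already operate at long-bond separations), and the required third gain must come from a bare-hands enumeration in which the no-long-triangle hypothesis is used to forbid the most constraining configurations between $\Mb_U$ and $\Mb_D$. This enumeration is dimension-sensitive, and is the step where the restriction $d\in\{2,3\}$ is essential: extending the argument to $d\ge 4$ would require either strengthening Proposition~\ref{prop.intmini_2}~(\ref{it.intmini_n3}) to give an unconditional $\varepsilon^{d-1-\theta}$ excess for long-bond \{33A\}-molecules, or developing a finer multi-sub-layer algorithm in place of the present $\Mb_U\cup\Mb_D$ decomposition.
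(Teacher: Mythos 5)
Your high-level dichotomy (case (i): some $A_j$ without a canonical cycle rooted at $\mf_j^+$; case (ii): both do) and your case (i) analysis match the paper's Proposition~\ref{prop.newcase_1} reasonably well, including the branching into \{333A\}/\{334T\} extensions. But the case (ii) argument has a genuine gap at its crux. You correctly identify that a particle line outside $P_1\cup P_2$ gives a third gain (this is Proposition~\ref{prop.newcase_2}, Case~1), but you collapse the remaining situations — $|P_1|\geq 4$ with $\Cc$ a triangle or not (Cases~2--3 of Proposition~\ref{prop.newcase_2}), and the rigid case $|P_1|=|P_2|=3$, $P_1\cup P_2=P$ — into a hand-wave about ``bare-hands enumeration using the no-long-triangle hypothesis.'' That is not what the paper does, and it is not clear that an enumeration of ``rigid shapes'' is even finite here: $\Mb_D$ can carry an arbitrary number of atoms on those 6 particle lines.

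The paper resolves the rigid sub-case by a structural, not enumerative, argument: it picks a lowest atom $\qf_0\in A_1$ (a two-layer-\emph{dual} analogue of $\mf_0$), runs the same two-cluster construction with the roles of $\Mb_U$ and $\Mb_D$ reversed to get $(P_1',P_2')$ and $(A_1',A_2')$, and again reduces (via Propositions~\ref{prop.newcase_1}--\ref{prop.newcase_2} applied upside-down) to $|P_1'|=|P_2'|=3$. The key observation is then purely counting: the number of bonds between $A_j$ and $A_k'$ equals $|P_j\cap P_k'|$, and since $\pb_1'\in P_1'$, $\pb_2'\in P_2'$ are both in $P_1$, one of $A_1'$, $A_2'$ receives exactly two fixed ends from $A_1$ after it is cut, which forces a third \{33A\}-component there (Proposition~\ref{prop.alg_up}). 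The total is $\varepsilon\cdot\varepsilon\cdot\varepsilon^{1/(8d)}$, not $\varepsilon^{3(1-\theta)}$ as you claim; a bare canonical \{33A\} only gives excess $\varepsilon^{1/(8d)}$, and the paper obtains the stronger excess $\varepsilon$ from $A_1$ and $A_1'$ by pairing the \{33A\} with an earlier \{3\}-component, a step your sketch also omits. Finally, the no-triangle-with-long-bond hypothesis is not what closes this sub-case; it enters through Lemma~\ref{lem.newlem} in the reduction to this proposition and in keeping the \{334T\} branch of case~(i) under control, not at the rigid sub-case of case~(ii).
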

Next we prove Proposition \ref{prop.newcase} assuming Proposition \ref{prop.new2layer}.
\begin{proof}[Proof of Proposition \ref{prop.newcase} assuming Proposition \ref{prop.new2layer}] Note that $\Mb$ contains at most $q$ particle lines by assumption; define $G_0(q)=(2q+1)^{5}\cdot\binom{q}{2}\cdot (32q^{3/2})^{q^2}$. We will prove Proposition \ref{prop.newcase} by induction on $q$. The base case is easy. Suppose Proposition \ref{prop.newcase} is true for values $<q$, now consider the case of $q$. Using the fixed time ordering of all collisions, we can define the set of ``sublayers" of atoms $\Mb_j$ as follows: suppose $\Mb_{j-1}$ is defined (or $j=0$), then we add the next $G_0(q)$ atoms (in the fixed time order as above) into $\Mb_j$. After this, we keep adding the next atom $\nf$ (in the fixed time order) if and only if the time separation between $\nf$ and its successor $\nf'$ satisfies $t_\nf-t_{\nf'}<\varepsilon^*$. Repeat until all the atoms have been exhausted, so $\Mb$ is divided into subsets $\Mb_j$. We then have the followings:
\begin{enumerate}
\item For each atom $\nf\in\Mb_j$ and $\nf'\in\Mb_{j'}\,(j'<j)$ we must have $t_\nf-t_{\nf'}\geq\varepsilon^*$, in particular $\nf'$ can never be parent of $\nf$.
\item We have $G_0(q)\leq |\Mb_j|\leq 2G_0(q)$ for each $j$, so in particular the number of sublayers $\Mb_j$ must be $\gg 1$. The left hand side is obvious. For the right hand side, just notice that one cannot have $G_0(q)$ collisions (represented by atoms) that all happen within a time interval of length $G_0(q)\cdot \varepsilon^*$; in fact, if this happens, then the trajectories of all the (at most $q$) particles are contained in a ball of radius $G_0(q)\cdot \varepsilon^*\cdot |\log\varepsilon|^{C^*}\ll 1$ due to the upper bound $|v_e|\leq |\log\varepsilon|^{C^*}$, so these trajectories are contained in a single fundamental domain of $\Tb^d$ in $\Rb^d$, in which the $\Tb^d$ dynamics coincides with the $\Rb^d$ dynamics. Since we may assume that all the collisions represented by atoms in $\Mb$ can actually occur for some initial configurations, we would get an initial configuration that leads to $\geq G_0(q)$ collisions in the $\Rb^d$ dynamics, which contradicts the result of \cite{BFK98}.
\end{enumerate}

Note that by our construction, if $\nf_1\prec\nf_2\prec\nf_3$ belong to the same particle line (where $\nf_j$ is a descendant of $\nf_{j+1}$), and $\nf_1$ and $\nf_3$ belong to the same sublayer $\Mb_j$, then we must also have $\nf_2\in\Mb_j$. Therefore, if we cut $\Mb_j$ as free, it will produce exactly one top free end and one bottom free end, along each particle line that intersects $\Mb_j$. Now we consider two cases.

In the first case, assume that there exists $j$ such that $\Mb_j$ is not connected, or not all particle lines of $\Mb$ intersect $\Mb_j$. In this case, choose a component $\Mb'$ of $\Mb_j$ with largest number of atoms, then $|\Mb'|\geq q^{-1}\cdot G_0(q)\gg G(q-1)$ (note that there are at most $q$ components). Moreover this $\Mb'$ intersects at most $q-1$ particle lines. We cut $\Mb'$ as free from $\Mb$, then this $\Mb'$ satisfies all the assmptions of Proposition \ref{prop.newcase} with $q$ replaced by $q-1$, so we can apply induction hypothesis to cut $\Mb'$ and meet the requirement of Proposition \ref{prop.comb_est_extra} (3). Then we simply need to cut $\Mb\backslash \Mb'$ into \{2\}-, \{3\}- and \{33A\}-components. For this, define $\Mb''$ to be the set of components of $\cup_{j'<j}\Mb_{j'}$ that are connected to $\Mb'$ by a bond; after cutting $\Mb'$ as free, this $\Mb''$ has no bottom fixed end and no full component, so we can cut it using \textbf{DOWN}. After this, the set $\Mb\backslash (\Mb'\cup\Mb'')$ will have no top fixed end and no full component (as originally $\Mb$ is connected), so we can cut it using \textbf{UP}. This allows us to prove the first case.

In the second case, each $\Mb_j$ is connected and intersects each particle line of $\Mb$. Now let $\Mb_1\cup\Mb_2:=\Mb'$, then this $\Mb'$ clearly satisfies all the assumptions of Proposition \ref{prop.new2layer} (with $\Mb_D=\Mb_1$ and $\Mb_U=\Mb_2$). We then cut $\Mb'$ as free from $\Mb$, and cut $\Mb'$ under the requirement of Proposition \ref{prop.comb_est_extra} (3) by Proposition \ref{prop.new2layer}. Then, since $\Mb\backslash\Mb'$ has no top fixed end and no full component, we can cut it into \{2\}-, \{3\}- and \{33A\}-components using \textbf{UP}. This proves the second case and finishes the proof of Proposition \ref{prop.newcase}. 
\end{proof}

For the rest of this subsection we will prove Proposition \ref{prop.new2layer}. This will rely on the following procedure:
\begin{proposition}\label{prop.2cluster} Consider an atom $\mf_0\in\Mb_D$ that does not have a parent in $\Mb_D$. Let $\pb_1$ and $\pb_2$ be the two particle lines containing $\mf_0$. Now choose the first atom in $\nf_0\in\Mb_U$ (in the fixed time order) such that $\pb_1$ and $\pb_2$ are connected by a path that contains only atoms in $\Mb_U$ \emph{before and including} $\nf_0$ (in the fixed time order). Note that this corresponds to the notion of cluster (i.e. the particles representing $\pb_1$ and $\pb_2$ belong to the same cluster) if we only account for the collisions in $\Mb_U$ before and including $\nf_0$.

Now, let $P_j$ be the set of particle lines that are connected to $\pb_j$ by a path that contains only atoms in $\Mb_U$ \emph{before and not including} $\nf_0$. Then we have $\pb_j\in P_j$ and $P_1\cap P_2=\varnothing$. Let $A_j$ be the set of atoms in $\Mb_U$ before and not including $\nf_0$ that belongs to a particle line in $P_j$, then each $A_j$ is connected, any child of an atom in $A_j$ that belongs to $\Mb_U$ must still be in $A_j$, and $A_1$ and $A_2$ are disjoint and no bond exists between them. Moreover $A_j=\varnothing$ if and only if $P_j=\{\pb_j\}$, in which case $\nf_0$ is connected to $\mf_0$ by a bond along $\pb_j$; if $A_j\neq\varnothing$ for each $j$, then $\mf_0$ has one parent $\mf_j^+$ in each $A_j$, and $\nf_0$ has one children $\nf_j^-$ in each $A_j$.
\end{proposition}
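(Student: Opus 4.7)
The plan is to establish all six sub-claims as consequences of two structural observations plus a careful analysis of $\Mb_U$-segments on particle lines. The first observation I will record is that, under the hypothesis ``no atom in $\Mb_D$ is parent of any atom in $\Mb_U$'', the set $\Mb_U$ is closed under taking parents; equivalently, on each particle line the $\Mb_U$-atoms occupy a consecutive segment at the top (later times) and the $\Mb_D$-atoms occupy the complementary segment at the bottom (earlier times). The second is the pivotal lemma that for every atom $\nf \in A_j$, both particle lines through $\nf$ lie in $P_j$: one is in $P_j$ by the very definition of $A_j$, while $\nf$ itself (being in $\Mb_U$ strictly before $\nf_0$) serves as a one-atom witness connecting the other line to the first, placing it in $P_j$ as well.

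With these tools in hand, most of the claims fall out by bookkeeping. The minimality of $\nf_0$ yields $P_1 \cap P_2 = \varnothing$: a common line $\pb$ would produce a path $\pb_1 \leadsto \pb \leadsto \pb_2$ through atoms of $\Mb_U$ strictly before $\nf_0$, contradicting that $\nf_0$ is required. Combined with the pivotal lemma, this forces $A_1 \cap A_2 = \varnothing$ (a shared atom would have two lines in the empty intersection) and the absence of bonds between $A_1$ and $A_2$ (a bond lies on a particle line, which would otherwise live in both $P_j$). Connectedness of each $A_j$ I will show by chaining along a path $\pb_j = \pb^{(0)}, \dots, \pb^{(k)} = \pb$ of lines in $P_j$ whose consecutive intersection atoms lie in $A_j$ by the pivotal lemma, together with the consecutive $\Mb_U$-segment structure on each $\pb^{(i)}$. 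The closure under $\Mb_U$-children is then immediate: a child $\nf'$ of $\nf \in A_j$ lying in $\Mb_U$ sits on one of the two lines of $\nf$ (both in $P_j$) and satisfies $t_{\nf'} < t_{\nf} < t_{\nf_0}$.

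The remaining claims concern the local structure near $\mf_0$ and $\nf_0$. For the equivalence $A_j = \varnothing \iff P_j = \{\pb_j\}$, the direction $\Leftarrow$ follows from the pivotal lemma (an atom of $A_j$ would have two distinct lines inside a singleton), while $\Rightarrow$ comes from the fact that any $\pb' \in P_j \setminus \{\pb_j\}$ forces an $\Mb_U$-atom on $\pb_j$ before $\nf_0$, which would lie in $A_j$. In the degenerate case $P_j = \{\pb_j\}$, one of the two particle lines of $\nf_0$ must lie in $P_j$ and so must equal $\pb_j$; the immediate parent $\mf_j^+$ of $\mf_0$ on $\pb_j$ lies in $\Mb_U$ (since $\mf_0 \in \Mb_D$ has no parent in $\Mb_D$) and cannot lie strictly before $\nf_0$ (else $\mf_j^+ \in A_j = \varnothing$), so $\mf_j^+ = \nf_0$, producing the claimed bond along $\pb_j$. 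In the generic case $A_j \neq \varnothing$ for both $j$, any $\Mb_U$-atom $\rf$ on $\pb_j$ strictly before $\nf_0$ (which exists since $P_j \supsetneq \{\pb_j\}$) satisfies $t_{\mf_j^+} \leq t_\rf < t_{\nf_0}$ by the segment structure, hence $\mf_j^+ \in A_j$. Finally, because $\nf_0$ merges the two clusters, its two particle lines split as one in $P_1$ and one in $P_2$; on the $P_j$-side line $\pb_a$, the witness $\rf$ forces the immediate child of $\nf_0$ on $\pb_a$ to lie in $\Mb_U$ (otherwise crossing into $\Mb_D$ at this child and staying there by the segment structure would prevent any $\Mb_U$-atom on $\pb_a$ below $\nf_0$), yielding $\nf_j^- \in A_j$.

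The proof is essentially bookkeeping once the two observations are in place; the principal care required is keeping the time direction straight, since in the paper's convention parents are later in time than children, so that ``before $\nf_0$'' corresponds to an initial segment descending from the top along each particle line. No new integral estimates, cutting algorithms, or inductions are required.
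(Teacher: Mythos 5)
Your proof is correct and takes essentially the same route as the paper's (very brief) argument: both reduce everything to the fact that every atom of $A_j$ has \emph{both} of its particle lines in $P_j$ (your ``pivotal lemma,'' which the paper uses implicitly in the parenthetical ``as this particle line contains $\pf$'') plus the observation that $\Mb_U$-atoms occupy an upper time-segment on each particle line. One small point to tighten in the final step: the witness needed to conclude $\nf_j^-\in\Mb_U$ must lie on $\pb_a$ (the line of $\nf_0$ in $P_j$), not on $\pb_j$; when $\pb_a\neq\pb_j$, the witness $\rf$ you defined on $\pb_j$ does not directly apply, and one should instead take the last atom on a path (through atoms of $\Mb_U$ strictly before $\nf_0$) from $\pb_j$ to $\pb_a$, which is guaranteed by $\pb_a\in P_j$; when $\pb_a=\pb_j$, the already-established $\mf_j^+\in A_j$ serves as the witness.
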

\begin{proof} This is straightforward from the definitions of $\nf_0$ and $(P_j,A_j)$. For example, for any $\pf,\pf'\in A_1$, we may assume $\pf\in \pb$ and $\pf'\in\pb'$ where $\pb,\pb'\in P_1$; by definition $\pb$ and $\pb'$ are connected by a path containing only atoms in $\Mb_U$ before and not including $\nf_0$. By definition each of these bonds in this path belongs to a particle line in $P_1$, and each of these atoms in this path belongs to $A_1$. Since also $\pf\in \pb$ and $\pf'\in\pb'$, we know that $\pf$ and $\pf'$ are connected by a path containing only atoms in $A_1$, which proves that $A_1$ is connected. Moreover, if $\pf\in A_j$ and $\qf\in \Mb_U$ is a child of $A_j$, then the particle line containing the bond between $\pf$ and $\qf$ must belong to $P_j$ (as this particle line contains $\pf$), so $\qf\in A_j$ by definition. The other statements are proved in similar ways.
\end{proof}
Next, based on the setup and notions defined in Proposition \ref{prop.2cluster}, we can divide the proof of Proposition \ref{prop.new2layer} into a few major cases, which we treat one by one.
\begin{proposition}\label{prop.newcase_1} Recall the setup and notations in Proposition \ref{prop.2cluster}. Define a \textbf{canonical cycle} to be a cycle which has a unique top atom $\pf_{\mathrm{top}}$ and bottom atom $\pf_{\mathrm{bot}}$, such that the cycle can be divided into two paths between $\pf_{\mathrm{bot}}$ and $\pf_{\mathrm{top}}$, and each one of the two path consists of iteratively taking parents going from $\pf_{\mathrm{bot}}$ to $\pf_{\mathrm{top}}$.

Then, Proposition \ref{prop.new2layer} is true, if for some $j$, either $A_j$ is empty or it does not contain a canonical cycle with bottom atom $\mf_j^+$ (recall the atom $\mf_j^+\in A_j$ in Proposition \ref{prop.2cluster})
\end{proposition}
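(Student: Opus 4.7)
The plan is to execute case (i) of the strategy described in Section \ref{sec.new_alg}. By symmetry, assume the exceptional index is $j=1$. The degenerate sub-case $A_1=\varnothing$ is essentially trivial: here $\nf_0$ is directly joined to $\mf_0$ by a bond along $\pb_1$, which is automatically long, and since double bonds are forbidden we must have $A_2\neq\varnothing$; the argument below then applies with $\nf_0$ in place of $\mf_1^+$ (and even simplifies, since the two parents of $\mf_0$ are then $\nf_0$ and $\mf_2^+$, both in $\Mb_U$). So I focus on the main case $A_1,A_2\neq\varnothing$.

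The cutting sequence would proceed in four stages. First, cut $\mf_2^+$ as free: this yields a \{3\}-component $\{\mf_2^+\}$ and reduces the degree of $\mf_0$ to $3$. Second, apply algorithm \textbf{DOWN} to the remaining atoms of $A_2\cup\{\nf_0\}$; this is legitimate because $A_2$ is closed under children in $\Mb_U$ by Proposition \ref{prop.2cluster}, so no bottom fixed end can appear inside this set at the start of the sweep. Third, apply \textbf{DOWN} to $A_1\setminus\{\mf_1^+\}$, using that no bond crosses between $A_1$ and $A_2$ and that children of $A_1$-atoms that lie in $\Mb_U$ remain in $A_1$. Fourth, cut $\{\mf_1^+,\mf_0\}$ together, and finish by applying \textbf{UP} to $\Mb_D\setminus\{\mf_0\}$, which has no top fixed end since $\mf_0$ has no parent in $\Mb_D$ by construction.

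The main obstacle is verifying that, at the end of stage three, the atom $\mf_1^+$ has degree exactly $3$ with its bond to $\mf_0$ still intact, so that $\{\mf_1^+,\mf_0\}$ is a genuine \{33A\}-component containing a long bond. The hypothesis that $A_1$ contains no canonical cycle with bottom atom $\mf_1^+$ says that the subgraph of ancestors of $\mf_1^+$ within $A_1$ is a tree (no two upward parent-paths from $\mf_1^+$ reconverge). Running \textbf{DOWN} on this tree, and invoking the degree-tracking analysis of Proposition \ref{prop.alg_up}, converts exactly one bond at $\mf_1^+$ into a fixed end before the fourth stage; a short case check distinguishing the possible positions of the non-$\mf_0$ child of $\mf_1^+$ (either in $A_1$ or in $\Mb_D$) confirms the orientation condition of Definition \ref{def.elementary} for \{33A\}. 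The long-bond property of the bond $\mf_1^+\to\mf_0$ is immediate from hypothesis (3) of Proposition \ref{prop.new2layer}, since $\mf_1^+\in\Mb_U$ and $\mf_0\in\Mb_D$.

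With the long-bond \{33A\}-component $\Xc=\{\mf_1^+,\mf_0\}$ in hand, Proposition \ref{prop.intmini_2} part (\ref{it.intmini_n3}), that is, the estimate $(\clubsuit)$, yields excess at most $\varepsilon^{d-1}(\varepsilon^*)^{-d}$. To reach the target $\varepsilon^{d-1+1/(15d)}$ a second gain is needed. Since $\Mb$ has at least $G(q)$ atoms on at most $q$ particle lines, it contains many cycles beyond the one encoded in $\Xc$; hence one of two situations must occur. Either the \textbf{DOWN} sweeps in stages two and three produce an additional \{33A\}- or good component $\Yc$ disjoint from $\Xc$, contributing excess $\leq\varepsilon^{1/(8d)}$ by Proposition \ref{prop.newprop_3}(\ref{it.newprop_0}); or one of the atoms adjacent to $\mf_1^+$ or $\mf_0$ can be absorbed into $\Xc$ to form a \{333A\}- or \{334T\}-component. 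In the latter case the non-degeneracy hypotheses of Proposition \ref{prop.intmini_3} are verified after decomposing $1$ into the indicator functions permitted by Proposition \ref{prop.comb_est_extra}: the required time separation $\mu\gtrsim\varepsilon^*$ is the long-bond condition itself, while the spatial and velocity separations $\mu'\gtrsim\varepsilon^{1/(8d)}$ follow from the absence of strongly degenerate primitive pairs together with the standard support bounds of Remark \ref{rem.parameter}. Proposition \ref{prop.newprop_3}(\ref{it.newprop_3}) then supplies excess $\leq\varepsilon^{d-1/2}(\varepsilon^*)^{-4d}$, and in either scenario the product of excesses satisfies the bound in Proposition \ref{prop.comb_est_extra}(3), completing the proof.
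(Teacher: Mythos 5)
Your high-level plan tracks the sketch in Section \ref{sec.new_alg}, and the goal of manufacturing a long-bond \{33A\}-component $\{\mf_1^+,\mf_0\}$ plus one more gain is the right one, but the execution diverges from what the paper actually does and in several places would not go through as written.

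First, the cutting sequence. Stage 1 cannot produce a \{3\}-component: $\Mb$ is a full molecule, so $\mf_2^+$ has deg 4, and cutting it alone gives a \{4\}-component. More importantly, you never cut $B = A_1\cup A_2\cup\{\mf_0,\nf_0\}$ as free from $\Mb$ before working inside it, which the paper does precisely to isolate $B$ and control where fixed ends accumulate. Without that step, your stages 2--3 create fixed ends that leak into $\Mb\setminus B$ and undermine the later \textbf{UP}/\textbf{DOWN} sweeps. Also, applying \textbf{DOWN} to $A_2\cup\{\nf_0\}\setminus\{\mf_2^+\}$ is not legitimate in general: once $\mf_2^+$ is cut, any parent of $\mf_2^+$ inside $A_2$ (or $\nf_0$, if adjacent) acquires a \emph{bottom} fixed end, which is exactly what \textbf{DOWN} forbids at input. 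The closure of $A_2$ under children in $\Mb_U$ does not prevent this. The paper instead cuts $B$ as free and then runs a \emph{modified} \textbf{UP} inside $B$, starting from $\mf_2^+$, with $\mf_0$ excluded from both the choice of $\nf$ and the choice of $\mf$, and with a priority rule forcing the \{33\}-pair $\{\mf,\mf_0\}$ whenever it becomes available. That bookkeeping, not a separate \textbf{DOWN} pass on each $A_j$, is what guarantees $\mf_1^+$ still has deg 3 and a bond to $\mf_0$ when it is reached; the ``no canonical cycle with bottom atom $\mf_1^+$'' hypothesis is used to rule out $\mf_1^+$ becoming a deg-2 atom or being swept up in a different \{33\}-pair before $\mf_0$. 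Finally, your stage-4 claim that $\Mb_D\setminus\{\mf_0\}$ has no top fixed end is false --- atoms of $\Mb_D$ can and do have parents in $A_1\cup A_2$ which you have just cut --- and the paper accordingly treats $\Mb_D$ with \textbf{DOWN} (no bottom fixed ends, which does hold), not \textbf{UP}; you also omit how $\Mb_U\setminus B$ is handled.

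Second, the ``second gain.'' The paper's proof is organized around a case split on whether some atom $\pf$ is adjacent to two atoms of $B$, and if so whether $\pf$ touches $\{\mf_1^+,\mf_0\}$ or not (Cases 1, 2, 3, and then 3.1 vs. 3.2). In Case 3.2, where a \{333A\}-component arises, the non-degeneracy conditions \eqref{eq.intmini_3_1_1}--\eqref{eq.intmini_3_1_2} of Proposition \ref{prop.intmini_3} do \emph{not} ``follow from the absence of strongly degenerate primitive pairs'': the paper must decompose 1 into indicator functions and treat the failure regions separately (Cases 3.2.2 and 3.2.3), where the gain is instead extracted from earlier \{3\}-components $\Xc,\Yc,\Zc$ via Proposition \ref{prop.newprop_3}(\ref{it.newprop_1}) after discarding the \{333A\} structure and reverting to \{33A\}+\{2\}. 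Your proposal asserts the non-degenerate case holds and therefore skips this entire branch, which is a real gap; the bound $\varepsilon^{d-1+1/(15d)}$ in Proposition \ref{prop.comb_est_extra}(3) cannot be obtained on those regions by the argument you give.
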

\begin{proof} Assume $A_1$ does not contain a canonical cycle with bottom atom $\mf_1^+$. We also assume $A_1\neq \varnothing$ (and similarly $A_2\neq\varnothing$); otherwise we shall replace $\mf_1^+$ by $\nf_0$ and the proof proceeds in the same way (and is much easier). Define $B:=A_1\cup A_2\cup\{\mf_0,\nf_0\}$, now we consider three possibilities.

\emph{Case 1}: assume there is no other atom adjacent to two atoms in $B$. In this case, we first cut $B$ as free from $\Mb$, and apply the algorithm \textbf{UP} to $B$ but with the following modifications:
\begin{enumerate}
\item\label{it.new_alg_1} In the first step, we choose the deg 4 atom $\mf_2^+$ as the atom $\nf$ in Definition \ref{def.alg_up} (\ref{it.alg_up_2}); also, in subsequent steps of Definition \ref{def.alg_up} (\ref{it.alg_up_2}), we always choose $\nf$ to be a lowest atom \emph{in $B\backslash\{\mf_0\}$ (i.e. excluding $\mf_0$)}. Note that $B\backslash\{\mf_0\}$ will always have a deg 3 atom in subsequent steps because it is connected in the beginning.
\item\label{it.new_alg_2} Recall $S_\nf$ is the set of descendants of $\nf$ chosen in Definition \ref{def.alg_up} (\ref{it.alg_up_2}); now in Definition \ref{def.alg_up} (\ref{it.alg_up_3}), we always choose $\mf$ to be a highest atom \emph{in $S_\nf\backslash\{\mf_0\}$ (i.e. excluding $\mf_0$)} that has not been cut. Moreover, if $\mf^\pm$ in Definition \ref{def.alg_up} (\ref{it.alg_up_3}) equals $\mf_0$, we must cut \emph{this particular} \{33A\}-component given by $\{\mf,\mf_0\}$, instead of other possible \{33A\}-components containing $\mf$.
\end{enumerate}

With the above modifications, we claim that $\mf_0$ must belong to the \{33A\}-component $\{\mf_1^+,\mf_0\}$. In fact, $\mf_0$ becomes deg 3 after $\mf_2^+$ is cut in the first step. Now consider the atom $\mf_1^+$; it cannot be cut in Definition \ref{def.alg_up} (\ref{it.alg_up_1}) because it has a child $\mf_0$. Suppose it is cut in Definition \ref{def.alg_up} (\ref{it.alg_up_3}), then it must be a descendant of some $\nf\in B\backslash\{\mf_0\}$. Note that since $\mf_1^+\in A_1$, this $\nf$ must belong to $A_1\cup\{\nf_0\}$. The point is that \emph{at most one parent of $\mf_0$ can be in $S_\nf$}, otherwise there would exists a canonical cycle in $A_1$ with bottom atom $\mf_1^+$ and top atom $\nf$ (or $\nf_1^-$ if $\nf=\nf_0$), contradicting our assumptions. It follows that $\mf_1^+$ must be chosen as $\mf$ in Definition \ref{def.alg_up} (\ref{it.alg_up_3}) when it is cut, and it must have deg 3 at this time. Since $\mf_0$ also has deg 3 at this time, by the specification in (\ref{it.new_alg_2}) we must get a \{33A\}-component $\{\mf_1^+,\mf_0\}$ with two fixed ends being both top.

Note that since $\mf_1^+$ and $\mf_0$ are connected by a long bond (as $\mf_0\in\Mb_D$ and $\mf_1^+\in \Mb_U$), we get a \{33A\}-component with excess $\varepsilon^{d-1}(\varepsilon^*)^{-d}$ by Proposition \ref{prop.newprop_3} (\ref{it.newprop_2}). Now, after cutting $B$, we see that $\Mb_U$ has no top fixed end and no full component, and $\Mb_D$ has no bottom fixed end and no full component, so we can cut $\Mb_U$ using \textbf{UP} and $\Mb_D$ using \textbf{DOWN}. Moreover, there is no deg 2 atom after cutting $B$, and $\Mb_D$ still contains a cycle (because it has way more collisions than particle lines due to our construction of sub-layers $\Mb_j$ in the proof of Proposition \ref{prop.newcase} above), so by Proposition \ref{prop.alg_up} we get another \{33A\}-component in $\Mb\backslash B$ which has excess $\varepsilon^{1/(8d)}$. This leads to total excess $\varepsilon^{d-1+1/(8d)}(\varepsilon^*)^{-d}$, which meets the requirement of Proposition \ref{prop.comb_est_extra} (3).

\emph{Case 2}: assume there is an atom $\pf$ adjacent to two atoms in $B\backslash\{\mf_1^+,\mf_0\}$. In this case, we first cut $B\cup\{\pf\}$ as free from $\Mb$. In the same way as \emph{Case 1}, after cutting this, $\Mb_U$ has no top fixed end and no full component, and $\Mb_D$ has no bottom fixed end and no full component, so we can cut $\Mb_U$ using \textbf{UP} and $\Mb_D$ using \textbf{DOWN}. It then suffices to consider $B\cup\{\pf\}$; for this, we apply \textbf{UP} to $B\cup\{\pf\}$ with similar modifications as in \emph{Case 1} above, except that we \emph{also avoid choosing $\pf$} in Definition \ref{def.alg_up} (\ref{it.alg_up_2})--(\ref{it.alg_up_3}), and also prioritize cutting the \{33A\}-component \emph{containing $\pf$} in addition to (but with proprity lower than) the one containing $\mf_0$. 

Now, the same proof in \emph{Case 1} still implies that $\{\mf_1^+,\mf_0\}$ is cut as a \{33A\}-component; moreover, since $\pf$ is connected to two atoms in $B\backslash\{\mf_1^+,\mf_0\}$, this $\pf$ must belong to a \{33A\}- or \{2\}-component. In the latter case, the same proof of Proposition \ref{prop.alg_up} (1) implies that some atom adjacent to $\pf$ must belong to a \{33A\}-component, and this \{33A\}-component cannot be the same as $\{\mf_1^+,\mf_0\}$. This means we have $\{\mf_1^+,\mf_0\}$ (which is a \{33A\}-component with a long bond) and another \{33A\}-component, so we get total excess $\varepsilon^{d-1+1/(8d)}(\varepsilon^*)^{-d}$ same as in \emph{Case 1}, which meets the requirement of Proposition \ref{prop.comb_est_extra} (3).

\emph{Case 3}: assume there is an atom $\pf$ adjacent to one atom in $\{\mf_1^+,\mf_0\}$ and one atom in $B\backslash\{\mf_1^+,\mf_0\}$ (note that $\pf$ cannot be adjacent to both $\mf_1^+$ and $\mf_0$, otherwise we get a triangle with long bond). In this case we cut $B\cup\{\pf\}$ as free from $\Mb$, and cut the rest of $\Mb$ as in \emph{Case 2} above. Then, to cut $B\cup\{\pf\}$, we apply \textbf{UP} to $B\cup\{\pf\}$ with the same modification as in \emph{Case 2} above, except that: when we choose $\mf_1^+$ as the $\mf$ in Definition \ref{def.alg_up} (\ref{it.alg_up_3}) (at which time both $\mf_1^+$ and $\mf_0$ have deg 3, just as in \emph{Case 2}), if $\pf$ also have deg 3, then we should cut $\{\mf_1^+,\mf_0,\pf\}$ as a \{333A\}-component instead of cutting $\{\mf_1^+,\mf_0\}$ as a \{33A\}-component. There are then two cases:

\emph{Case 3.1}: assume $\pf$ has deg 4 when we cut $\{\mf_1^+,\mf_0\}$ as a \{33A\}-component. Then, since $\pf$ is connected to another atom $\qf\in B\backslash\{\mf_1^+,\mf_0\}$, by the same proof in \emph{Case 2}, we know that either $\pf$ or $\qf$ must belong to another \{33A\}-component which is different from $\{\mf_1^+,\mf_0\}$. This then leads to total excess $\varepsilon^{d-1+1/(8d)}(\varepsilon^*)^{-d}$, which meets the requirement of Proposition \ref{prop.comb_est_extra} (3).

\emph{Case 3.2}: assume we have a \{333A\}-component $\{\mf_1^+,\mf_0,\pf\}$ with a long bond between $\mf_1^+$ and $\mf_0$. Let $(e_1,e_7,e_{10})$ be as in Proposition \ref{prop.intmini_3} (\ref{it.intmini_3_1}) and $(x_j,v_j)=z_{e_j}$ associated with this \{333A\}-component, then we consider three cases (by inserting indicator functions).

\emph{Case 3.2.1}: assume $\min(|x_i-x_j|_\Tb,|v_i-v_j|)\geq \varepsilon^{1/(8d)}$ for $i\neq j\in\{1,7,10\}$, and
\begin{equation}\label{eq.newprop_3_2_cite}\inf_{|t|\leq|\log\varepsilon|^{C^*}}|x_1-x_7-t(v_1-v_7)|_\Tb\geq\varepsilon^{1/(8d)};\end{equation} in this case, by Proposition \ref{prop.newprop_3} (\ref{it.newprop_3}), we get that the excess of the \{333A\}-component $\{\mf_1^+,\mf_0,\pf\}$ is bounded by $\varepsilon^{d-1/2+1/4}(\varepsilon^*)^{-2d}$, which meets the requirement of Proposition \ref{prop.comb_est_extra} (3).

\emph{Case 3.2.2}: assume $\min(|x_i-x_j|_\Tb,|v_i-v_j|)\geq \varepsilon^{1/(8d)}$ for $i\neq j\in\{1,7,10\}$, and
\begin{equation}\label{eq.newprop_3_2_cite_2}\inf_{|t|\leq|\log\varepsilon|^{C^*}}|x_1-x_7-t(v_1-v_7)|_\Tb\leq\varepsilon^{1/(8d)}.\end{equation} In this case we will cut $\{\mf_1^+,\mf_0,\pf\}$ into a \{33A\}-component $\{\mf_1^+,\mf_0\}$ (which has excess $\varepsilon^{d-1}(\varepsilon^*)^{-d}$ by Proposition \ref{prop.newprop_3} (\ref{it.newprop_2})) and a \{2\}-component $\{\pf\}$. 

Note that $z_{e_1}$ and $z_{e_7}$ are respectively equal to $z_{e_1'}$ and $z_{e_7'}$ where $e_1'$ and $e_7'$ are two free ends at elementary components $\Xc$ and $\Yc$ cut before $\{\mf_1^+,\mf_0,\pf\}$ (cf. the pairing between free and fixed ends in Definition \ref{def.cutting}). We may assume all the components cut before $\{\mf_1^+,\mf_0,\pf\}$ are \{3\}-components (with only one \{4\}-component), otherwise we must have another \{33A\}-component which is already acceptable as in \emph{Case 2} above; moreover we must have $\Xc\neq\Yc$ because no atom can be adjacent to both $\mf_1^+$ and $\mf_0$.

We may assume that $\Xc$ is cut before $\Yc$, and let the fixed end at $\Yc$ be $f_7$ which corresponds to a free end $f_7'$ in a component $\Zc$ cut before $\Yc$. Note that $f_7'\neq e_1'$ though it is possible that $\Zc=\Xc$. Let $|v_{e_1'}-v_{f_7'}|\sim \mu'$ with $\varepsilon^{1/(8d)}\lesssim\mu'\lesssim 1$ (with $\sim$ replaced by $\lesssim$ if $\mu'=\varepsilon^{1/(8d)}$), then by Proposition \ref{prop.newprop_3} (\ref{it.newprop_1}) we know that $\Yc$ has excess $\varepsilon^{1/(8d)}(\mu')^{-1}$ and either $\Xc$ or $\Zc$ has excess $\mu'$. Putting together, we get total excess $\varepsilon^{d-1+1/(8d)}(\varepsilon^*)^{-d}$, which meets the requirement of Proposition \ref{prop.comb_est_extra} (3).

\emph{Case 3.2.3}: assume $\min(|x_i-x_j|_\Tb,|v_i-v_j|)\leq \varepsilon^{1/(8d)}$ for $i\neq j\in\{1,7,10\}$. In this case, using this smallness condition and arguing in the same way as in \emph{Case 3.2.2} above, we can also get another component of excess $\varepsilon^{1/(8d)}$ and the same conclusion holds. This completes the proof.
\end{proof}
\begin{proposition}\label{prop.newcase_2} Let $P$ be the set of all particle lines of $\Mb$. Then Proposition \ref{prop.new2layer} is true, if $|P_1|\neq 3$, or $|P_2|\neq 3$, or $P_1\cup P_2\neq P$.
\end{proposition}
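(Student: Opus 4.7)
The plan is to build on the proof scheme of Proposition \ref{prop.newcase_1}, where we may now assume both $A_1$ and $A_2$ are nonempty and each contains a canonical cycle with bottom atom $\mf_j^+$ (otherwise Proposition \ref{prop.newcase_1} already yields the conclusion). We set $B := A_1 \cup A_2 \cup \{\mf_0, \nf_0\}$, cut $B$ as free, dispose of $\Mb_U \setminus B$ by \textbf{UP} and $\Mb_D \setminus B$ by \textbf{DOWN} as in \emph{Case 1} of Proposition \ref{prop.newcase_1}, and run the variant of \textbf{UP} on $B$ that starts at $\mf_2^+$ and avoids $\mf_0$ until necessary. By the same proof as Proposition \ref{prop.alg_up} (1), the canonical cycle inside each $A_j$ forces some atom of $A_j$ to have degree $2$ at the stage it is cut, producing a \{33A\}-component in $A_j$ with two fixed ends of the same orientation. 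By Proposition \ref{prop.intmini_2} (2) each of these contributes excess $\lesssim \varepsilon^{1-o(1)}$, so together they give $\lesssim \varepsilon^{2-o(1)}$. This already meets Proposition \ref{prop.comb_est_extra} (3) in dimension $d=2$; we henceforth focus on $d=3$, where an additional $\varepsilon^{1/(15d)}$ gain beyond $\varepsilon^{2-o(1)}$ is required, and is precisely what the hypothesis of the current proposition provides.

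In \emph{Case A} where $P_1 \cup P_2 \neq P$, fix a particle line $\boldsymbol{s} \in P \setminus (P_1 \cup P_2)$. Since $\Mb_U$ is connected and $\boldsymbol{s}$ intersects $\Mb_U$ by Proposition \ref{prop.new2layer} (2), there is a shortest path $\Pi$ in $\Mb_U$ from some atom $\qf$ on $\boldsymbol{s}$ to $B \cap \Mb_U$. Enlarge the initial cut from $B$ to $B' := B \cup \Pi$ and process the atoms of $\Pi \setminus B$ via \textbf{UP} before launching the modified \textbf{UP} on $A_1 \cup A_2 \cup \{\mf_0, \nf_0\}$. By minimality of $\Pi$ the atom $\qf$ is adjacent to at most one atom of $B \cap \Mb_U$, so it will be cut with degree $3$; by the rule of Definition \ref{def.alg_up} (\ref{it.alg_up_3}) it either pairs with an adjacent deg $3$ atom to form a third \{33A\}-component, or is cut singly as a \{3\}-component rendered good via Definition \ref{def.good_normal} (\ref{it.good_1}), using that $\qf$ lies on a particle line disjoint from those traversed earlier (which forces a velocity or time coincidence with an earlier free end). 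In either case we obtain an extra gain of at least $\varepsilon^{1/(8d)}$, and the total excess becomes $\lesssim \varepsilon^{2+1/(8d)-o(1)} \leq \varepsilon^{d-1+1/(15d)}$.

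In \emph{Case B} where $|P_j| \neq 3$ for some $j$ (say $j=1$), the existence of the canonical cycle forces $|P_1| \geq 3$, hence $|P_1| \geq 4$. Thus $P_1$ contains a particle line $\boldsymbol{s}$ not lying on any atom of the canonical cycle, contributing an atom $\qf \in A_1 \subset B$ off that cycle. We then apply exactly the reasoning of \emph{Case A}, except that no enlargement of $B$ is needed since $\qf$ is already inside $B$. We simply schedule the modified \textbf{UP} order so that $\qf$ is cut with degree $3$ after the canonical-cycle \{33A\}-component in $A_1$ has been produced; the same dichotomy (third \{33A\} versus good \{3\}) then yields the extra $\varepsilon^{1/(8d)}$ gain.

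The main obstacle will be the careful orchestration of the cutting sequence in \emph{Case A}: we must insert the path $\Pi$ into the cutting process without disrupting the two canonical-cycle \{33A\}-components and without accidentally spawning a \{4\}- or \{33B\}-component outside the single unavoidable \{4\}-component of $\Mb$. As in \emph{Cases 2--3} of Proposition \ref{prop.newcase_1}, this will require a sub-case analysis of whether $\qf$ together with its neighbors form a \{333A\}- or \{334T\}-configuration (in which case we invoke Proposition \ref{prop.newprop_3} (\ref{it.newprop_3})), and a separate treatment of strongly degenerate versus weakly degenerate pair groupings along the path $\Pi$.
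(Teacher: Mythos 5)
Your broad plan — produce two \{33A\}-components from the canonical cycles in $A_1$ and $A_2$ to get $\varepsilon^{2-o(1)}$, and then wring a third small gain out of the hypothesis $|P_1|\neq 3$ or $|P_2|\neq 3$ or $P_1\cup P_2\neq P$ — is indeed the right high-level strategy, and your Case A/Case B split matches the case structure of the paper's argument (Case 1 versus Cases 2--3). However, both of your cases as written have concrete gaps.

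In your Case A you enlarge the initial cut to $B'=B\cup\Pi$, process $\Pi\setminus B$ by \textbf{UP}, and claim the endpoint $\qf$ on $\boldsymbol{s}$ is cut as deg $3$ and either pairs into a third \{33A\}-component or is a \emph{good} \{3\}-component because ``$\qf$ lies on a particle line disjoint from those traversed earlier (which forces a velocity or time coincidence with an earlier free end).'' That parenthetical is false: being on a particle line disjoint from $P_1\cup P_2$ does not by itself impose any smallness restriction of the form required by Definition \ref{def.good_normal}\,(\ref{it.good_1}), so $\{\qf\}$ has no reason to be good. The paper's mechanism in Case 1 is different and more structural: it cuts $A_1\cup A_2\cup\{\nf_0\}$ (note, not including $\mf_0$), then separately applies \textbf{UP} to $\Mb_U$ and \textbf{DOWN} to $\Mb_D$ with a twist, and observes that because all the atoms cut so far lie on particle lines in $P_1\cup P_2$ while $\Mb_D$ intersects $\boldsymbol{q}\in P\setminus(P_1\cup P_2)$, the molecule $\Mb_D$ must eventually acquire a deg $2$ atom once $\Mb_U$ is exhausted; that forced appearance of a deg $2$ atom (not a path to $\qf$) is what yields the third \{33A\}-component. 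Your path-enlargement construction does not guarantee any such degree-$2$ event.

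In your Case B the claim ``Thus $P_1$ contains a particle line $\boldsymbol{s}$ not lying on any atom of the canonical cycle'' does not follow from $|P_1|\geq 4$: a canonical cycle of length $\geq 4$ can touch four or more particle lines, so there may be no such $\boldsymbol{s}$. This is exactly why the paper handles $P_1\cup P_2=P$ with $|P_1|\geq 4$ by first choosing a time-minimal canonical cycle $\Cc$ and then splitting on whether $\Cc$ is a triangle: for non-triangle $\Cc$ it uses a time-separation dichotomy on $|t_{\pf_{\mathrm{top}}}-t_\qf|$ combined with Proposition \ref{prop.newprop_3}\,(\ref{it.newprop_2}); for triangle $\Cc$ it builds the maximal triangle-pile $\Cc_1$ (which does touch only $3$ particle lines) and then applies the Case 1 reasoning with $\boldsymbol{q}\in P_1\setminus P_3$. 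Your appeal to \{333A\}/\{334T\} sub-cases is not part of the argument at this point in the paper (those are used in Lemma \ref{lem.newlem} and Proposition \ref{prop.newcase_1}, not here), and without the triangle/non-triangle dichotomy your Case B does not go through.
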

\begin{proof} By Proposition \ref{prop.newcase_1} we may assume each of $A_1$ and $A_2$ has a canonical cycle with bottom atom $\mf_1^+$ and $\mf_2^+$ respectively (in particular $|P_1|\geq 3$ and $|P_2|\geq 3$). We may assume the dimension $d=3$ (the case $d=2$ is much easier), so it would suffice to get excess $\varepsilon^{2+1/(10d)}$. We consider three cases.

\emph{Case 1}: assume $P_1\cup P_2\neq P$, say particle line $\boldsymbol{q}\in P\backslash (P_1\cup P_2)$. In this case we cut $A_1\cup A_2\cup\{\nf_0\}$ as free from $\Mb$, and then cut $A_1$ as free from $A_1\cup A_2\cup\{\nf_0\}$. Note that $A_1$ has no fixed end, $A_2\cup\{\nf_0\}$ has exactly one free end, and each of them has a canonical cycle. We then cut both sets using \textbf{UP}; since each set has no deg 2 atom, at most one deg 3 atom and a cycle, by the same proof of Proposition \ref{prop.alg_up} (2), for each of them we can get a \{33A\}-component $\Xc_j$ such that the two fixed ends at $\Xc_j$ match the two free ends at single-molecule (\{3\}- or \{4\}-) components $\Yc_j$ and $\Zc_j$ cut before $\Xc_j$ (note that it is allowed that $\Yc_j=\Zc_j$). Let $(x_1,v_1)$ and $(x_7,v_7)$ be the vectors associated with these two fixed ends, and assume $|v_1-v_7|\sim\mu$ for some $\varepsilon\sim\mu\lesssim 1$ (with $\sim$ replaced by $\lesssim$ if $\mu=\varepsilon$), then by Proposition \ref{prop.newprop_3} (\ref{it.newprop_1})--(\ref{it.newprop_2}) we know that $\Xc_j$ has excess $\varepsilon\cdot\mu^{-1}$ and either $\Yc_j$ or $\Zc_j$ has excess $\mu$. Putting together we already get excess $\varepsilon^2$.

Next, after cutting $A_1\cup A_2\cup\{\nf_0\}$ as free, we cut all the remaining deg 2 atoms in $\Mb$ as free, until $\Mb$ has no more deg 2 atoms. Note that in the whole process $\Mb_U$ has no top fixed end, and $\Mb_D$ has no bottom fixed end, and all the atoms that have been cut only belong to particle lines in $P_1\cup P_2$. We then apply \textbf{UP} to $\Mb_U$ and apply \textbf{DOWN} to $\Mb_D$ but with the twist that when the $\mf$ chosen in Definition \ref{def.alg_up} (\ref{it.alg_up_3}) has deg 3 and a child $\mf^-$ in $\Mb_D$ of deg 3, then we cut $\{\mf,\mf^-\}$ as a \{33A\}-atom. Since at this time $\Mb_D$ has no deg 2 atom, and $\Mb_D$ will have deg 2 atoms one all atoms in $\Mb_U$ have been cut (since $\Mb_D$ \emph{intersects the particle line $\boldsymbol{q}\in P\backslash (P_1\cup P_2)$}, and any highest atom in $\Mb_D$ will become deg 2), we conclude that this application of \textbf{UP} must produce another \{33A\}-atom, which has excess $\varepsilon^{1/(8d)}$. This leads to total excess $\varepsilon^{2+1/(8d)}$ that meets the requirement of Proposition \ref{prop.comb_est_extra} (3).

\medskip
Below, with \emph{Case 1} proved, we can assume $P_1\cup P_2=P$, and (say) $|P_1|\geq 4$. Recall $A_1$ contains a canonical cycle with bottom atom $\mf_1^+$ and some top atom $\rf$; now choose a canonical cycle $\Cc$ in $A_1$ such that its bottom atom $\pf_{\mathrm{bot}}$ is an ancestor of $\mf_1^+$, the top atom $\pf_{\mathrm{top}}$ is a descendant of $\rf$, and the value of $|t_{\pf_{\mathrm{top}}}-t_{\pf_{\mathrm{bot}}}|$ is the smallest. Under this assumption, it is easy to see that \emph{any atom that is both a descendant of an atom in $\Cc$ and an ancestor of an atom in $\Cc$ must itself belong to $\Cc$}. Now we consider two remaining cases.

\emph{Case 2}: assume $\Cc$ is not a triangle. In this case we cut $A_1\cup A_2\cup\{\nf_0\}$ as free from $\Mb$, then cut $A_1$ as free from $A_1\cup A_2\cup\{\nf_0\}$, then cut $\Cc$ as free from $A_1$. By the same proof in \emph{Case 1} above, we get total excess $\varepsilon$ from $A_2\cup \{\nf_0\}$. Moreover, by the properties of $\Cc$ stated above, we can divide $A_1\backslash \Cc$ into two sets $A_1'$ and $A_1''$ (for example by choosing $A_1''$ as the set of all descendants of atoms in $\Cc$) such that (i) no atom in $A_1''$ is parent of atom in $A_1'$ and (ii) $A_1'$ has no top fixed end, and $A_1''$ has no bottom fixed end or full component. We then deal with $A_1\backslash\Cc$ by cutting $A_1''$ using \textbf{DOWN} and then cutting $A_1'$ using \textbf{UP}, and deal with $\Mb\backslash (A_1\cup A_2\cup\{\nf_0\})$ as in \emph{Case 1}.

Finally, we consider $\Cc$. Choose the top atom $\pf=\pf_{\mathrm{top}}$ and a child $\qf$ of it, and consider the two cases (by inserting cutoff functions) when $|t_{\pf}-t_\qf|\geq \varepsilon^{1/(8d)}$ or when $|t_{\pf}-t_\qf|\leq \varepsilon^{1/(8d)}$. In the first case we start by cutting $\pf_{\mathrm{bot}}$ and cut $\{\pf,\qf\}$ as the final \{33A\}-component, so by Proposition \ref{prop.newprop_3} (\ref{it.newprop_2}) we get excess $\varepsilon^{3/2}$. in the second case we start by cutting $\pf$ and the cutting $\qf$ as a \{3\}-component, then proceed with the rest of $\Cc$ and get a \{33A\}-component. By the same proof as in \emph{Case 1} above, we get total excess $\varepsilon$ for the \{33A\}-component and one previous \{3\}-component, however, the restriction $|t_{\pf}-t_\qf|\leq \varepsilon^{1/(8d)}$ provides excess $\varepsilon^{1/(8d)}$ at $\qf$ which makes the total excess $\varepsilon^{1+1/(8d)}$. Combining with $A_2\cup \{\nf_0\}$ we get total excess $\varepsilon^{2+1/(8d)}$, which meets the requirement of Proposition \ref{prop.comb_est_extra} (3).

\emph{Case 3}: assume $\Cc$ is a triangle. Consider all the atoms that are parent or child of two atoms in $\Cc$, and subsequent atoms that are parent or child of two chosen atoms, and so on. As seen in the proof of Lemma \ref{lem.newlem}, these atoms form a set of triangles piled up against each other. Let this set of atoms (including $\Cc$) be $\Cc_1$; note that all these atoms only belong to \emph{three particle lines} (say in a set $P_3$). There are then a few sub-cases.

\emph{Case 3.1}: assume $A_1\backslash \Cc_1$ is a forest with each component having exactly one bond connected to $\Cc_1$. In this case, recall that $A_1$ contains a canonical cycle with bottom atom $\mf_1^+$, which means that $\mf_1^+$ has two paths reaching some atom $\pf_{\mathrm{top}}$ by iteratively taking parents; since each component of $A_1\backslash \Cc_1$ is a tree and has only one bond connected to $\Cc_1$, it is clear that we must have $\mf_1^+\in \Cc_1$.

\emph{Case 3.1.1}: assume $\nf_1^-\not\in\Cc_1$ (cf. Proposition \ref{prop.2cluster}). In this case we first cut $A_1\cup A_2\cup\{\mf_0,\nf_0\}$ as free from $\Mb$ and cut its complement using \textbf{UP} and $\textbf{DOWN}$ as before. Then we cut $\Cc_1$ as free; since $\Cc_1$ contains a triangle, by the same proof in \emph{Case 1} we get total excess $\varepsilon$. Then we cut $A_2\cup\{\mf_0\}$ as free; note that it has only one fixed end and a canonical cycle, so by the same proof in \emph{Case 1} we again get total excess $\varepsilon$. Finally, note that $\nf_0$ is connected to exactly one atom in $A_1\backslash \Cc_1$ which belongs to a unique tree $T$ in $A_1\backslash\Cc_1$, and $T\cup\{\nf_0\}$ will have two deg 3 atoms (and no deg 2 atoms). We then get another \{33A\}-component by Proposition \ref{prop.alg_up} (1) which has excess $\varepsilon^{1/(8d)}$. This makes total excess $\varepsilon^{2+1/(8d)}$ that meets the requirement of Proposition \ref{prop.comb_est_extra} (3).

\emph{Case 3.1.2}: assume $\nf_1^-\in\Cc_1$. In this case we first cut $\Cc_1$ as free, and then cut $A_2\cup\{\nf_0\}$ as free; by similar arguments as above, we can get excess $\varepsilon$ for components within $\Cc_1$ and the same within $A_2\cup\{\nf_0\}$. After this, we cut all the deg 2 atoms in $\Mb$ (which includes $\mf_0$) until no more deg 2 atom is left. Similar to the proof of \emph{Case 1}, note that in this process $A_1\backslash\Cc_1$ remains the forest with each tree having only one fixed end, that $\Mb_U\backslash A_1$ has no top fixed end, and $\Mb_D$ has no bottom fixed end. Moreover all the atoms that have been cut \emph{only belong to particle lines in $P_2\cup P_3$}. Since $P_1\neq P_3$, we can choose a particle line $\boldsymbol{q}\in P_1\backslash P_3$. 

Then, we cut all the trees forming $A_1\backslash\Cc_1$ into \{3\}-components, and apply \textbf{UP} to the rest of $\Mb_U$ and apply \textbf{DOWN} to $\Mb_D$, but with the twist described in \emph{Case 1} in the whole process. Since $\Mb_D$ will have deg 2 atoms once all atoms in $\Mb_U$ have been cut (since $\Mb_D$ \emph{intersects the particle line $\boldsymbol{q}\in P_1\backslash P_3$}, and any highest atom in $\Mb_D$ will become deg 2), the same proof then guarantees another \{33A\}-component which has excess $\varepsilon^{1/(8d)}$. This makes total excess $\varepsilon^{2+1/(8d)}$ that meets the requirement of Proposition \ref{prop.comb_est_extra} (3).

\emph{Case 3.2}: finally, assume $A_1\backslash \Cc_1$ is not as desctribed in \emph{Case 3.1}. In this case we first cut $A_1\cup A_2\cup\{\nf_0\}$ as free from $\Mb$ and treat its complement as before. Then we cut $A_1$ as free and cut $A_2\cup\{\nf_0\}$ as before to get excess $\varepsilon$, so now we only need to deal with $A_1$. We cut $\Cc_1$ as free and get excess $\varepsilon$ as above; for $A_1\backslash\Cc_1$, we can cut it into \{2\}-, \{3\}- and \{33A\}-components in the same way as in Lemma \ref{lem.newlem}. Since at least one component of $A_1\backslash\Cc_1$ has a cycle or two deg 3 atoms by assumption, this procedure also provides at least one more \{33A\}-component which has excess $\varepsilon^{1/(8d)}$. This makes total excess $\varepsilon^{2+1/(8d)}$ that meets the requirement of Proposition \ref{prop.comb_est_extra} (3). This finishes the proof in all cases.
\end{proof}
Finally, with Propositions \ref{prop.newcase_1} and \ref{prop.newcase_2}, we can now prove Proposition \ref{prop.new2layer}.
\begin{proof}[Proof of Proposition \ref{prop.new2layer}] By Propositions \ref{prop.newcase_1} and \ref{prop.newcase_2}, we may assume $A_1$ and $A_2$ each has a cycle, and $|P_1|=|P_2|=3$ and $P_1\cup P_2=P$ (so $|P|=6$). Now choose a lowest atom $\qf_0$ in $A_1$ (which does not have a child in $\Mb_U$); by the same procedure as in Proposition \ref{prop.2cluster} (with $\mf_0$ replaced by $\qf_0$ and the roles of $\Mb_U$ and $\Mb_D$ reversed), we can define $(P_1',P_2')$ and $(A_1',A_2')$ associated with $\qf$. Again by Propositions \ref{prop.newcase_1} and \ref{prop.newcase_2}, we may assume $A_1'$ and $A_2'$ each has a cycle, and $|P_1'|=|P_2'|=3$ and $P_1'\cup P_2'=P$. Let $\pb_1'$ and $\pb_2'$ be the two particle lines containing $\qf_0$, then by Proposition \ref{prop.2cluster} we know $\pb_1',\pb_2'\in P_1$, while $\pb_1'\in P_1'$ and $\pb_2'\in P_2'$.

From the above, we know that $\{|P_1\cap P_1'|,|P_1\cap P_2'|\}=\{1,2\}$. Note also that the number of bonds between $A_j$ and $A_k'$ is also equal to $|P_j\cap P_k'|$ (because each such bond corresponds to a unique particle line that belongs to $P_j\cap P_k'$), so we may assume there is exactly one bond between $A_1$ and $A_1'$, and exactly two bonds between $A_1$ and $A_2'$. Now we cut as free $A_1$, then $A_1'$, then $A_2'$ in this order. After these cuttings, we know that $\Mb_U$ has no top fixed end and no full component, and $\Mb_D$ has no bottom fixed end and no full component, so we may cut $\Mb_U$ using \textbf{UP} and $\Mb_D$ using \textbf{DOWN}.

It remains to cut $A_1$, $A_1'$ and $A_2'$ into elementary components to meet the requirement in Proposition \ref{prop.comb_est_extra} (3). Since $A_1$ has no fixed end and $A_1'$ has one fixed end, by the same proof as in Proposition \ref{prop.newcase_2} \emph{Case 1}, we get total excess $\varepsilon$ for each of them. Finally, note that $A_2'$ has two fixed ends, so it either has no deg 2 atom or has only one deg 2 atom. In the first case we get a \{33A\}-component with $A_2'$ by Proposition \ref{prop.alg_up} (1); in the second case, we also get a \{33A\}-component by applying Proposition \ref{prop.alg_up} (1) after cutting this deg 2 atom. In any case we get one more \{33A\}-atom which has excess $\varepsilon^{1/(8d)}$, leading to total excess $\varepsilon^{2+1/(8d)}$, which meets the requirement of Proposition \ref{prop.comb_est_extra} (3). This finishes the proof of Proposition \ref{prop.new2layer}.
\end{proof}
\section{Proof of Theorem \ref{th.main}}\label{sec.proof_main} In this section we present the proof of Theorem \ref{th.main}. Most parts of the proof are identical to the corresponding $\Rb^d$ case proof in \cite{DHM24}, but with (i) the elementary molecule estimates in Section 9.1 in \cite{DHM24} replaced by those in Section \ref{sec.elem_int}, (ii) the arguments in Section 13 in \cite{DHM24} replaced by those in Section \ref{sec.error}, and (iii) a few isolated adjustments in Sections 10--11 in \cite{DHM24} concerning double bonds and their generalizations (double ov-segments) involving O-atoms. Below we will provide an overview of the proof in \cite{DHM24}, with explanations at each place where nontrivial adjustment is needed. For convenience, \emph{all section and proposition etc. numbers below refer to those in \cite{DHM24}, except those italized which come from this paper}.

\medskip
\textbf{Part 1: Cluster forest expansions.} This corresponds to Sections 3--4. Here, in the definition of the modified (Definition 3.1), extended (Definition 3.5) and truncated dynamics (Definition 3.7), we obviously replace $|x_i^M(t)-x_j^M(t)|$ by $|x_i^M(t)-x_j^M(t)|_\Tb$ etc., and keep the rest the same. In this way, all the proofs in Sections 3--4 carry out to the periodic case, with the main results (such as Propositions 4.17 and 4.21) being unaffected.

Then, we can reduce \emph{Theorem \ref{th.main}} in this paper to Propositions 5.1--5.4 as in Sections 3--4; note also that Proposition 5.1 can be proved in the same way as in Section 5.3.

\medskip
\textbf{Part 2: Molecules.} This corresponds to Sections 6--8. Here all the main results (such as Proposition 7.8) remain unchanged, and the proof is essentialy identical (of course, with the necessary notational adjustments corresponding to the torus case). The only difference is that, in the proof of Proposition 7.8, the ``$=$" sign in equation (7.8) should be replaced by ``$\geq$" to account for the possibilities that two linear trajectories of particles can collide or overlap for more than one time. This however does not affect the proof of Proposition 7.8, and the rest of Sections 6--8 also stay the same.

In this way, we can then reduce Proposition 5.3 to Proposition 7.11 with the same proof in Sections 6.4 and 7.3.

\medskip
\textbf{Part 3: Integral estimates.} This corresponds to Section 9. Here the results and proof in Section 9.2 carry out in the same way without any change. The estimates for elementary molecules in Section 9.1 (together with the notions of good and normal molecules etc. in Section 9.3) are now replaced by those in \emph{Sections \ref{sec.elem_int}--\ref{sec.summary}} in this paper. Note that the estimates in Section 9.1 essentially form a subset of those in \emph{Section \ref{sec.elem_int}} in this paper, and the proof of Proposition 7.11 in Section 9.3 only require the use of this subset (of course with suitable adaptations, plus a few extra results concerning double bonds, such as \emph{Proposition \ref{prop.intmini} (\ref{it.intmini_4_extra})--(\ref{it.intmini_extra_2})} in this paper).

In this way, we can then reduce Proposition 7.11 to Proposition 9.7 with the same proof in Section 9.3.

\medskip
\textbf{Part 4: The algorithm.} This corresponds to Sections 10--11, which proves Proposition 9.7. Here the same proof in Sections 10--11 carry out without any change, except at places which involve double bonds and their generalizations with O-atoms. These (nontrivial) adjustments include:
\begin{enumerate}
\item The proof of Proposition 10.2 (1). In the C-atom only case, this is explained in the proof of \emph{Proposition \ref{prop.alg_up} (1)} in this paper. In the case involving O-atoms, we only need to replace the double bond by its generalization involving O-atoms, namely the double ov-segment, which is defined as two atoms connected by two ov-segments (i.e. paths formed by serial edges at O-atoms, see Definition 8.2) along two different particle lines. In this case the condition $(\bigstar)$ is replaced by
\begin{itemize}[$(\triangle)$]
\itemindent=14pt
\item $\pf$ and $\mf$ are \emph{connected by a double ov-segment}.
\end{itemize} This still leads to a good component by \emph{Definition \ref{def.good_normal} (\ref{it.good_1}) and (\ref{it.good_3})} in this paper, and the rest of the proof then goes in the same way as \emph{Proposition \ref{prop.alg_up} (1)} in this paper.

\item The proof of Proposition 10.2 (2). There is only one case that requires adjustment, namely when $e_1'$ and $e_2'$ are serial at some O-atom, where $(e_1',e_2')$ are the two free ends that correspond to the two fixed ends $(e_1,e_2)$ at a \{33A\}-component generated in the cutting process. But in this case, the two atoms in this \{33A\}-component must satisfy $(\triangle)$, which again leads to a good component by \emph{Definition \ref{def.good_normal} (\ref{it.good_1}) and (\ref{it.good_3})} in this paper, after cutting the \{33A\}-component into a \{3\}- and a \{2\}-component. The rest of the proof then goes in the same way as \emph{Proposition \ref{prop.alg_up} (2)} in this paper.
\item The proof of Proposition 10.9. Here, for each component $A$ of $\Mb_D$ that contains a cycle, we consider the first atom $\nf\in A$ that has deg 2 when it is cut, and the atom $\pf$ which is cut in a cutting operation that turns $\nf$ into deg 2. If $\pf$ and $\nf$ are not connected by a double ov-segment, then the same proof carries out and implies that $\pf$ must belong to a \{33A\}-component which can be treated as good; if $\pf$ and $\nf$ are connected by a double ov-segment as in $(\triangle)$, then it is easy to see that $\pf$ must belong to either a \{33A\}- or a good \{3\}- or \{4\}-component by \emph{Definition \ref{def.good_normal} (\ref{it.good_1}) and (\ref{it.good_3})} in this paper. The rest of the proof then goes in the same way.
\item The proof of Proposition 11.2. Here the only place that needs adjustment is when a cutting operation in Definition 11.1 (2)--(4) breaks two ov-segments $\sigma_\pb$ and $\sigma_{\pb'}$ (where $\sigma_\pb$ are maximal ov-segments containg atoms in both $\Mb_U$ and $\Mb_D$, and $\pb$ is the particle line containing $\sigma_\pb$), such that $\pb\sim\pb'$ in the sense that $\sigma_\pb$ and $\sigma_{\pb'}$ intersect at an atom $\nf\in\Mb_D$. In this case $\mf$ and $\nf$ must be connected by a double ov-segment as in $(\triangle)$ (where $\mf$ is the atom cut in this cutting operation), which implies that $\mf$ must belong to either a \{33A\}- or a good \{3\}- or \{4\}-component by \emph{Definition \ref{def.good_normal} (\ref{it.good_1}) and (\ref{it.good_3})} in this paper. The rest of the proof then goes in the same way.
\end{enumerate}

In this way, we can finish the proof of Proposition 9.7, and thus Proposition 5.3, as in Sections 10--11.

\medskip
\textbf{Part 5: Asymptotics for $f^\Ac$.} This corresponds to Section 12, which proves Proposition 5.2. Here the same proof in Section 12 carry out without any change, except at the very end of the proof of Proposition 12.1 where we apply the \textbf{DOWN} algorithm to obtain a lower bound on the number of \{33A\}- (and good) components; here again extra argument is needed in the case of a double bond or double ov-segment, but these are essentially the same as those discussed in \textbf{Part 4} above, so we omit the details.

In this way, we can finish the proof of Proposition 5.2 as in Section 12.

\medskip
\textbf{Part 6: The algorithm for $f_s^{\mathrm{err}}$.} This corresponds to Section 13, which proves Proposition 5.4. Here, he same arguments in Section 13.1--13.2 carry out, which allows to reduce Proposition 5.4. to the counterpart of Proposition 13.1, which is \emph{Proposition \ref{prop.comb_est_extra}} in this paper. The proof in Section 13.3 is then replaced by those in \emph{Section \ref{sec.error} (Sections \ref{sec.cutting_final}--\ref{sec.newcase})} in this paper; note that it is in this part that we are using the extra estimates in \emph{Section \ref{sec.elem_int}} in this paper that do not correspond to Section 9.1.

In this way we can finish the proof of Proposition 5.4 and thus \emph{Theorem \ref{th.main}} in this paper, under the assumption that all parameters (such as $\Lf$ and $C_j^*$ etc. in Definition 2.2) are viewed as constants independent of $\varepsilon$. For the quantitative version of this main result with the $\log\log$ upper bound, see \textbf{Part 7} below.

\medskip
\textbf{Part 7: the iterated log upper bound.} Finally, we explain the reason for the upper bound $(\log|\log\varepsilon|)^{1/2}$ in \emph{Theorem \ref{th.main}} in this paper. Recall that in \cite{DHM24}, the final time $t_{\mathrm{fin}}$, the norms $A$ (and $B_0$ and $B$) of the Boltzmann solution, and (if applicable) the collision rate $\alpha$ are treated as constants independent of $\varepsilon$. This implies that the number of layers $\Lf$ (see Definition 2.2 (1)) and all the quantities $C_j^*$ and $C^*$ (see Definition 2.2 (2)) are also treated as constants independent of $\varepsilon$. Now, in this paper, we need to analyze how these quantities are allowed to depend on $\varepsilon$, as follows:
\begin{enumerate}
\item The choice of $\Lf$. We choose the number of layers $\Lf=\kappa\cdot (\log|\log\varepsilon|)^{1/2}$ for sufficiently small constant $\kappa$ depending only on $(\beta,d)$, so the length of each layer is $\tau=t_{\mathrm{fin}}\cdot \Lf^{-1}$. As such, for each molecule $\Mb$ with atom set $\Mc$, the contribution of collisions that are not recollisions to the integral expression associated with $\Mb$ has size (recall that $C$ is absolute constant depending only on $(\beta,d)$)
\begin{equation}\label{eq.remark1}
(C\alpha)^{|\Mc|}\cdot (1+A)^{|\Mc|}\cdot\tau^{|\Mc|}\leq\big(C\cdot \max(1,\alpha)\cdot\max(1,A)\cdot\max(1,t_{\mathrm{fin}})\cdot \Lf^{-1}\big)^{|\Mc|}
\end{equation} (see the proof of Proposition 7.11 in Section 9.3), which is convergent thanks to our choice of $\Lf$ and \emph{(\ref{eq.loglog})} in this paper, provided that the implicit constant there is small enough depending on $\kappa$.

\item The choice of $A_\ell$ and $\Lambda_\ell$ (see Definition 2.2 (3)). Here we set
\begin{equation}\label{eq.remark2}A_\Lf=|\log\varepsilon|;\quad \Lambda_\ell=A_\ell^{10d},\quad A_{\ell-1}=\Lambda_\ell^{10d}.\end{equation} Clearly these parameters are bounded by
\begin{equation}\label{eq.remark2.5}A_j,\Lambda_j\leq(|\log\varepsilon|)^{(10d)^{2\Lf}}\leq|\log\varepsilon|^{C^*}\end{equation} using the notation for $C^*$ in \emph{Remark \ref{rem.parameter}} in this paper.
\item The choice of $C_j^*$ (see Definition 2.2 (2)). Basically we set $C_1^*$ to be a large absolute constant and each $C_j^*$ to be large enough depending on $C_{j-1}^*$; however some caution is needed here. Note that in the proof of Proposition 10.10 and the layer selection process (Definition 10.12 and Proposition 10.13), we have used the assumption that
\begin{equation}\label{eq.remark3}C_{10}^*\gg (C_7^*)^{\Lf},\quad C_6^*\gg (C_5^*)^{\Gamma\Lf},
\end{equation} where $\Gamma$ is an absolute constant depending only on $d$; see equations (10.9), (10.12), (10.18), (10.20), and the arguments in the proof of Propositions 10.10 and 10.13. Note that $\Lf$ is the total number of layers and $\Gamma\Lf$ is the total number of thin layers after the layer refinement process in Secion 10.3. For the other $j$, by examining the proof in \cite{DHM24}, we can set $C_{j+1}^*=(C_j^*)^C$ for some absolute constant $C$.

As such, with our choice of $\Lf$, it is easy to see that all the $C_j^*$ are bounded above by
\begin{equation}\label{eq.remark4}C_j^*\leq C^{C\Gamma\Lf^2}\leq C^*\end{equation} using the notation for $C^*$ in \emph{Remark \ref{rem.parameter}} in this paper.
\end{enumerate}

By examining the proofs in  \cite{DHM24}, we can verify that (\ref{eq.remark1}), (\ref{eq.remark2.5}) and (\ref{eq.remark4}) are the only estimates we need to carry out these proofs quantitatively, with adjustments in \textbf{Parts 1--6} above. This then allows to prove \emph{Theorem \ref{th.main}} (under the quantitative assumption \emph{(\ref{eq.loglog})}) in this paper, With the above choice of parameters.

\end{document}